\documentclass[reqno,10pt]{amsart}
\usepackage{amsmath,mathrsfs}
\usepackage{amssymb, amsmath}
\usepackage{graphicx}
\usepackage{dutchcal}
 \usepackage{pdfsync}
\usepackage{pdfsync}
\usepackage{color}
\usepackage{colonequals}
\usepackage[colorlinks,
            linkcolor=red,
            anchorcolor=blue,
            citecolor=green
            ]{hyperref}
 
\usepackage[utf8]{inputenc}
 
\usepackage{amssymb}
\usepackage{booktabs}
\usepackage{geometry}

\usepackage[backend=biber, sorting=nty, doi=false, url=false, isbn=false, maxbibnames=5]{biblatex}

\def\inte#1{
\displaystyle\mathop{#1\kern0pt}^\circ }

\let\pa=\partial
\let\al=\alpha

\let\Ga=\Gamma

\let\lam=\lambda

\let\f=\frac

\let\om=\omega

\let\D=\Delta
\let\Lam=\Lambda
\let\Om=\Omega

\let\ka=\kappa
\def\mP{\mathbf{P}}

\def\cC{{\mathcal C}}
\def\cD{{\mathcal D}}
\def\cE{{\mathcal E}}

\def\cG{{\mathcal G}}
\def\cH{{\mathcal H}}

\def\sfT{\mathsf{T}}

\def\cL{{\mathcal L}}

\def\cP{{\mathcal P}}
\def\cQ{{\mathcal Q}}

\def\cS{{\mathcal S}}
\def\cT{{\mathcal T}}

\def\cW{{\mathcal W}}

\def\pa{\partial}

\def\virgp{\raise 2pt\hbox{,}}
\def\cdotpv{\raise 2pt\hbox{;}}

\def\C{\mathop{\mathbb C\kern 0pt}\nolimits}
\def\DD{\mathop{\mathbb D\kern 0pt}\nolimits}
\def\EE{\mathop{{\mathbb E \kern 0pt}}\nolimits}
\def\K{\mathop{\mathbb K\kern 0pt}\nolimits}
\def\N{\mathop{\mathbb N\kern 0pt}\nolimits}
\def\Q{\mathop{\mathbb Q\kern 0pt}\nolimits}
\def\R{\mathop{\mathbb R\kern 0pt}\nolimits}
\def\SS{\mathop{\mathbb S\kern 0pt}\nolimits}

\def\<{\langle}
\def\>{\rangle}
\def\S{\mathbb{S}}
\def\R{\mathbb{R}}
\def\T{\mathbb{T}}
\def\Z{\mathbb{Z}}
\def\N{\mathbb{N}}

\def\th{\theta}

\def\al{\alpha}

\def\de{\delta}
\def\vphi{\varphi}

\def\gs{\gtrsim}
\def\ls{\lesssim}
\def\pa{\partial}
\def\vep{\varepsilon}

\def\ZZ{\mathop{\mathbb Z\kern 0pt}\nolimits}
\def\TT{\mathop{\mathbb T\kern 0pt}\nolimits}
\def\P{\mathop{\mathbb P\kern 0pt}\nolimits}

\def\dv{\mbox{div}}

\newcommand{\lr}[1]{\langle #1 \rangle}

\def\na{\nabla}

\def\th{\theta}

\newcommand{\beq}{\begin{equation}}
\newcommand{\eeq}{\end{equation}}
\newcommand{\ben}{\begin{eqnarray}}
\newcommand{\een}{\end{eqnarray}}
\newcommand{\beno}{\begin{eqnarray*}}
\newcommand{\eeno}{\end{eqnarray*}}

\newtheorem{thm}{Theorem}[section]
\newtheorem{lem}{Lemma}[section]
\newtheorem{rmk}{Remark}[section]
\newtheorem{col}{Corollary}[section]
\newtheorem{prop}{Proposition}[section]

\definecolor{darkblue}{rgb}{0,0,0.7} 
\definecolor{darkred}{rgb}{0.9,0.1,0.1}
\definecolor{darkgreen}{rgb}{0,0.5,0}

\newcommand{\rwcomment}[1]{{\color{darkblue}{~\linebreak
\noindent+++++++++++++++++++\\
{#1}\\
+++++++++++++++++++
\linebreak}}}

\newcommand{\rw}[1]{{\color{darkblue}{#1}}}

\newcommand{\rhcomment}[1]{{\color{darkgreen}{~\linebreak
\noindent+++++++++++++++++++\\
{#1}\\
+++++++++++++++++++
\linebreak}}}

\bibliography{Inhom_Runaway}

\begin{document}
\title[Runaway Avalanches]{Runaway avalanches in plasmas with external electric fields: spatially inhomogeneous case in a perturbation framework
}

\author[L.-B. He, R. M. H\"ofer, J. Ji and R. Winter]{Ling-Bing He, Richard M. H\"ofer, Jie Ji and  Raphael Winter}

\address[L.-B. He]{Department of Mathematical Sciences, Tsinghua University\\
	Beijing 100084,  P. R.  China.} \email{hlb@tsinghua.edu.cn}

\address[R. M. H\"ofer]{Faculty of Mathematics, University of Regensburg, Germany} \email{richard.hoefer@ur.de}

\address[J. Ji]{School of Mathematics, Nanjing University  of Aeronautics and Astronautics\\
	Nanjing 211106,  P. R.  China.} \email{jij\_24@nuaa.edu.cn}
	
\address[Raphael Winter]{Cardiff School of Mathematics, Cardiff University, UK} \email{WinterR6@cardiff.ac.uk}

\begin{abstract}
We consider the Landau-Coulomb equation for a (hydrogen) plasma heated by an external electric field. In this setting, theoretical and experimental results in plasma physics show the emergence of so-called \emph{runaway electrons} which are linearly accelerating but only lead to a minimal increase of the plasma temperature. Runaway electrons are a major obstacle in nuclear fusion since they can overcome the confinement and  damage the structure of the reactor. 

We rigorously prove the well-posedness of the underlying nonlinear \emph{open} Landau-Coulomb system in a perturbative setting and  the  conjectured growth bounds for the mean velocity and plasma temperature. We show that the mean velocity is linearly increasing in time, and capture the sharp logarithmic growth of the temperature. Furthermore, we prove that the electron distribution can be asymptotically described by a scattering-type Maxwellian. 
Due to the different nature of the electron-electron and electron-ion interactions, we recast the equation as a novel coupled system that allows us to isolate the dissipation structures of the two operators. For the coupled system, we perform a micro-macro decomposition to show convergence to the scattering-type Maxwellian.

\end{abstract}

\maketitle

\setcounter{tocdepth}{1}
\tableofcontents



\section{Introduction}

A plasma is a collection of fast-moving, charged particles whose long-range Coulomb interactions dominate over infrequent collisions. Nuclear fusion requires plasma of very high temperature, so the confined plasma in fusion reactors is heated by an external source.  
More precisely, the temperature of the electron distribution increases as a result of acceleration through the electric field in combination with the friction induced by the interaction with the much slower ions. This process can be interrupted by a \emph{runaway avalanche}, where the electron-ion friction is insufficient to stop an uninhibited acceleration of electrons. The emergence of runaway avalanches is one of the main obstacles in nuclear fusion, and a major concern for the operation of large Tokamak reactors such as ITER~\cite{Boozer15,reux2015runaway}. For a comprehensive review of the physics of runaway electrons see~\cite{Breizman19,Martin17}.


Remarkably, the emergence of \emph{runaway electrons} in plasma physics has been explained by Dreicer with a classical model in kinetic theory~\cite{Dre}. To this end, consider a hydrogen plasma driven by an electric field \(E\), where the coupled system is described by two Landau equations that determine the distribution functions \(F_{-}(t,x,v)\) for electrons and \(F_{+}(t,x,v)\) for ions (protons):
\beno
&&\pa_t F_-+v\cdot \na_xF_--\f{\mathrm{e}}{m_-}E\cdot \na_v  F_-=Q_{--}(F_-,F_-)+Q_{+-}(F_+,F_-),\\
&&\pa_t F_++v\cdot \na_xF_++\f{\mathrm{e}}{m_+}E\cdot \na_v  F_+=Q_{++}(F_+,F_-)+Q_{-+}(F_-,F_+).
\eeno
In the above, $\mathrm{e}$ is the electron charge, $m_-$ and $m_+$ are the mass of the electron and ion respectively. Moreover, $Q_{--},Q_{-+},Q_{+-}$ and $Q_{++}$ are the Landau-Coulomb collision operator, derived by Landau in 1936 \cite{Landau1936},
\beno
Q_{ij}(F_i,F_j)(v)=\f1{m_j}\dv_v\Big(\int_{\R^3}c_{ij}a(v-v_*)\Big(\f1{m_j}F_i(v_*)\na_vF_j(v)-\f1{m_i}F_j(v)\na_{v_*}F_i(v_*)\Big)dv_*\Big)
\eeno
with 
\ben\label{az}
a(z)=|z|^{-1} \Pi(z):=|z|^{-1}\left(\mathbf{I} -\frac{z\otimes z}{|z|^2} \right),\quad c_{ij}=\f{|\ln \Lam_c|n_in_j}{8\pi \vep_0^2},\quad i,j=-,+,
\een
where $|\ln \Lam_c|$ is the Coulomb logarithm, and $\vep_0$ is the vacuum permittivity. The constants $n_-$ and $n_+$ are the number densities of the electrons and ions so that we can assume $F_\pm$ to be probability densities. 

In all physical situations, the electron mass $m_-$ is much smaller than the ion mass $m_+$. For instance, $m_-/m_+\approx 5.4\times 10^{-4}$ for a hydrogen plasma. In the limit $m_-/m_+\rightarrow 0$, the ions can be considered static relative to the electrons and the electron-ion collision operator becomes a  spherical diffusion in velocity space. This approximation is commonly used in plasma physics to obtain a closed equation for the electron distribution, see for example  \cite[Section 6.3.3]{Pieter}. The kinetic equation for the electron distribution $F$ reads 
\ben\label{truerunaway}
\pa_t F+v\cdot\na_xF-E\cdot\na_v F=Q(F,F)+ \dv_v \bigg(\frac{\Pi(v)}{|v|} \nabla_v F\bigg).
\een

 \subsection{Runaway problem}\label{runawayproblem}

 The system~\eqref{truerunaway} contains all the ingredients to explain the emergence of runaway electrons. Runaway occurs if the kinetic energy of the plasma grows much more rapidly than its thermal energy. In terms of the spatially averaged kinetic and thermal energies, this means
\ben\label{VT} \frac{|V(t)|^2 }{T(t)} \to \infty,\quad\mbox{where}\quad
V(t):=\int_{\T^3\times\R^3} vF\,dv\,dx,\quad 
T(t):=\frac{1}{3}\int_{\T^3\times\R^3} |v-V(t)|^2 F\,dv\,dx.
 \een

 We first give an intuitive explanation of the various terms in equation~\eqref{truerunaway} and their significance for the runaway phenomenon:
 \begin{enumerate}
     \item  The term $v\cdot\na_x $ represents the free transport. The transport on the torus $\mathbb{T}^3$ together with collisions will drive the system to a spatially homogeneous state.
     \item The term $E\cdot\na_v$ accounts for linear acceleration through the electric field $E$. This  acts as the source of energy the system. This term only changes the \emph{kinetic} energy, though.
     \item The standard Landau collision operator $Q$ accounts for the grazing electron-electron  collisions. It drives the velocity distribution towards the (local) Maxwellian . It affects neither the  linear momentum nor the temperature, hence leaving both kinetic and thermal energy invariant.  
     \item The spherical-diffusion term specifically represents electrons scattering off the much heavier ions. This induces a friction that \emph{transfers} kinetic energy into thermal energy without changing the total energy. Hence, through the combination of the acceleration and spherical, the plasma can be heated which is the goal in many applications.  However, the friction force experienced by a particle is quadratically decaying in its velocity. This makes it possible for particles to accelerate (almost) uninhibited once they have reached a critical velocity depending on the magnitude $|E|$ of the electric field.
 \end{enumerate}
In this paper, we tackle the challenge of giving a mathematically rigorous  of the runaway phenomenon. We actually prove a very  precise description of the phenomenon in a perturbative regime as follows:

{\bf I. Runaway does occur.} This can be equivalently rephrased in terms of the average velocity $|V(t)|$ and the average thermal velocity $\sqrt{T(t)}$ as
\begin{align} \label{eq:runawayrepeat}
    \lim_{t \rightarrow \infty } \frac{ |V(t)|}{\sqrt{T(t)}} = \infty.
\end{align}
{\bf II. Sharp bounds for the average and thermal velocity.} We capture the precise asymptotic growth rate of both $|V(t)|$ and $\sqrt{T(t)}$ as
\begin{align}
    |V(t)|\sim |E|t, \quad \sqrt{T(t)} \sim \log^\frac12(t).
\end{align}
{\bf III. Asymptotic profile.}
Finally, we give a precise description of the asymptotics of the velocity distribution $f(t)$ as $t\rightarrow \infty$. To this end, we first recall that we expect the system to become spatially homogeneous as a result of transport and collisions. Therefore, the thermal velocity $\sqrt{T(t)}$ represents the typical width of the velocity distribution as $t\rightarrow \infty$ throughout the spatial domain. 

We can therefore hope that $F$ converges to an asymptotic runaway profile $\Phi(v)$ in the sense that 
\begin{align}\label{scattering:maxwellian}
\bigl\| \frac{1}{T(t)^{3/2}}F\big(t,x,\tfrac{v}{T^\frac12(t)}\big)- \Phi(v)\bigr\| \longrightarrow 0,\quad\text{as}\quad t\rightarrow \infty,
\end{align}
where  $\Phi$ has  the same mass as $F(t)$.

For a simplified, linear version of~\eqref{truerunaway} some of these questions have been considered in~\cite{Pia1,Pia2}. The authors obtain quantitative asymptotic bounds for a Lorentz gas under sufficiently strong uniform electric field. 
Our goal is to provide a precise mathematical description of the runaway dynamics for the nonlinear Landau-Coulomb model. We will consider solutions to the equation

\begin{equation} \label{main}
	\partial_t F + v\cdot\na_x F +E \cdot \nabla_v F = Q(F,F)+ \dv_v \bigg(\frac{\Pi(v)}{(1+|v|^2)^{\f12}} \nabla_v F\bigg),
\end{equation}
where $F=F(t,x,v)$ is the density distribution function of particles with position $x\in\T^3$ and velocity $v\in\R^3$ at time $t$ and $E$ is a prescribed electric field of fixed strength (for convenience, we change the sign in front of the electric field to \(+\) here). The first term on the right-hand side of \eqref{main} is given by the Landau-Coulomb collision operator
\begin{equation} \label{12d}
	Q(F,G)(v) =\dv_v \Big (\int_{\R^3}a(v-v_*)\big(F(v_*)\na_v G(v)-G(v)\na_{v_*}F(v_*)\big)dv_*\Big ),
\end{equation}
with $a(z)$ defined in \eqref{az}.

\subsection{Brief review of previous results}
Extensive literature exists on the Landau equation, addressing both well-posedness and regularity. Here we survey only the well-posedness results. 

\noindent$\bullet$ For the spatially homogeneous case, global existence for hard potentials and Maxwell molecules was settled in \cite{DV1,DV2,CV}, while the regularity estimates of \cite{GG,Wu,LS} yield smooth solutions for the moderately soft potential cases. The very-soft-potential case—including the physically central Coulomb interaction—remained open until the recent breakthrough of Guillen and Silvestre \cite{GS25}, who proved that the Fisher information is monotone decreasing and thereby constructed global smooth solutions without any smallness assumption. Building on this insight, the follow-up papers \cite{GL,GGL2,GSun,HJL} have extended well-posedness to critical Lebesgue and Sobolev spaces, respectively. Yet for the multi-species Landau–Coulomb equation the Fisher information is no longer monotone, the authors in \cite{JWY} therefore build a new Lyapunov functional that decreases in time and recover the global existence.

\noindent$\bullet$ The spatially inhomogeneous Landau equation remains open for general initial data, yet its well-posedness and convergence near equilibrium have attracted intensive recent study; see, e.g., \cite{Guo1,SG1,SG2,DSSS} working in torus, and \cite{CM1,CTW} where the authors employ semi-group techniques to upgrade the former exponential weights to polynomial ones. 
Apart from the semi-group approach, an alternative coupled-system decomposition of the solution enables us to work with polynomial weights and will be employed throughout this paper. This line of research goes back to Caflisch \cite{Caf}, who first split the solution as $g=\sqrt{M_1}g_1+\sqrt{M_2}g_2$ with two distinct Maxwellians. As shown in \cite{DL} for the Boltzmann equation, one of the two exponential weights can in fact be reduced to polynomial growth.
\smallskip

In this work we establish global well-posedness for the runaway equation \eqref{main} in a perturbation framework and, moreover, characterize its long-time behavior, pinpointing the sharp growth rates of both momentum and temperature.

\subsection{Main results} Now we present the main results, which answer the three questions raised above.
\begin{thm}\label{longtimebehavior1}
Consider the runaway problem \eqref{main} with initial data \(F_0\) satisfying \(F_0\geq 0\) and \(\|F_0\|_{L^1_{x,v}}=1\).  For any $k> 17$, 
there exist small constants \(\varepsilon_1>0\) depending only on $k$ and $\vep>0$ depending on \(k,V(0),T(0)\), such that if
\begin{equation}\label{small}
T(0)^{\f32}\sum_{|\al|=0,2}\int_{\T^3\times\R^3}\big(\pa^\al_x(F_0-M_{V(0),T(0)})\big)^2\big\<T(0)^{-\f12}(v-V(0))\big\>^{2(k-4|\al|)}dvdx<\min\{1,T(0)^{22}\}\vep_1,\quad |E|^{-1}<\varepsilon,
\end{equation}
then \eqref{main} admits a unique global solution \(F(t)\ge 0\) satisfying the decay estimate
\beno
&&T(t)^{\f32}\sum_{|\al|=0,2}\int_{\T^3\times\R^3}\big(\pa^\al_x(F(t)-M_{V(t),T(t)})\big)^2\big\<T(t)^{-\f12}(v-V(t))\big\>^{2(17-4|\al|)}dvdx\\
&&\ls\max\Big\{(\ln (3+|E|t))^{\f32}\<Et\>^{-2})^{1-\f{3}{2k-31}},\Big(\big(\ln(3+|E|t)\big)^{-\f32}t\Big)^{-\f{2(k-17)}3}\Big\},\quad t> 1,
\eeno
where the momentum \(V(t)\), temperature \(T(t)\) are defined in \eqref{VT}, and the scattering-type Maxwellian \(M_{V(t),T(t)}:=\f1{(2\pi T(t))^{\f32}}\exp\{-\f{|v-v(t)|^2}{2T(t)}\}\), respectively. Furthermore, \(V(t)\) and \(T(t)\) satisfy
\ben\label{VTlongtime}
\notag&&\f{T(0)}2+\f{2c_0}{3|E|}(\ln(1+\f32|E|t)-\ln(1+\f32|E|))\leq T(t)\leq \f{3T(0)}2+\f{c_1}{|E|}\ln\Big(1+|E|t\Big),\\
&&|V(t)-V(0)-Et|\leq  80T(0)^{-\f34}+2000/|E|,\quad t\geq 1
\een
for some constants $c_0,c_1>0$.
\end{thm}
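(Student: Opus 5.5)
The plan is to work in self-similar variables adapted to the moving Maxwellian. I set
\[
F(t,x,v)=T(t)^{-\frac32}\,G\big(t,x,T(t)^{-\frac12}(v-V(t))\big).
\]
Since $Q$ preserves mass, momentum and energy, and the spherical diffusion preserves mass and energy, integrating \eqref{main} against $v$ and $|v|^2$ gives closed ODEs:
\[
\dot V=E-\Big\langle \dv_v\big(\tfrac{\Pi(v)}{\langle v\rangle}\big)\Big\rangle_F,\qquad
\tfrac{3}{2}\dot T+V\cdot\dot V-E\cdot V=0 .
\]
The friction term $\dv_v(\Pi(v)\langle v\rangle^{-1})\sim -2v\langle v\rangle^{-3}$ is evaluated on $F\approx M_{V,T}$. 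Because $|V|\sim|E|t$ is much larger than $\sqrt T$, it behaves like $-2V|V|^{-3}$ plus errors controlled by the weighted norm of $F-M_{V,T}$. This yields:
\begin{itemize}
\item $|\dot V-E|\lesssim |V|^{-2}$, which integrates to $|V-V(0)-Et|\lesssim 1/|E|$ plus a transient term depending on $T(0)$;
\item $\dot T\approx c\,|V|^{-1}\approx c/(|E|t)$, which integrates to the logarithmic two-sided bounds \eqref{VTlongtime}.
\end{itemize}
The early-time regime $t\le 1$, before $|V|$ dominates, is handled crudely using $|E|^{-1}<\vep$.

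In the rescaled variable $u$, the equation for $G$ reads
\[
\pa_t G+\cdots=T^{-\frac32}Q(G,G)+\mathcal L_{ei}G+\tfrac{\dot T}{2T}\,\nabla_u\cdot(uG)+\cdots
\]
The ion term becomes a spherical diffusion centred at $-V/\sqrt T$, of size roughly $T^{-1}|V|^{-1}$, and is therefore a small, slowly decaying forcing. Following Caflisch and \cite{DL}, I would write $G=\mu+\sqrt{\mu}\,g_1+g_2$, with $\mu$ the standard Maxwellian, $g_1$ in the exponentially weighted Guo space and $g_2$ in the polynomial space $\langle u\rangle^{k}$.
\begin{itemize}
\item $g_2$ collects the inhomogeneous forcing coming from the ion operator and from the large-velocity part of $Q$.
\item $g_1$ solves the linearized Landau equation $\pa_tg_1+u\cdot\nabla_x g_1+T^{-\frac32}Lg_1=\dots$.
\end{itemize}
On $g_1$ I would run a micro-macro decomposition with $x$-derivatives up to order $2$, as in Guo's energy method. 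This controls the macroscopic part via the local conservation laws; the moments have been fixed by the choice of $V,T$, so only the spatially varying macroscopic part has to be controlled. The degenerate dissipation of the Coulomb $L$ is then interpolated against higher polynomial weights. This interpolation is where the exponent $k-17$ and the losses $4|\al|$ in the weights come from.

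For the decay estimate I would close a bootstrap on
\[
X(t)=\sup_{s\le t}\,\text{rate}(s)^{-1}\,\|G-\mu\|^2_{\text{weighted}}.
\]
The quadratic terms are closed by smallness $\vep_1$. The forcing contributes the first rate $\big((\ln t)^{\frac32}\langle Et\rangle^{-2}\big)^{1-\frac{3}{2k-31}}$, since the residual of the ion forcing is of size $T^{\frac34}|V|^{-1}$. The weak dissipation, with effective time $\int T^{-\frac32}\,ds\approx t(\ln t)^{-\frac32}$, produces the algebraic second rate via the standard polynomial-weight interpolation $\|g\|\le\|g\|_{D}^{\theta}\|g\|_{k}^{1-\theta}$. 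Uniqueness and positivity follow from the same estimates and a standard iteration.

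The main obstacle is the coupling of the two layers. The ion operator is not centred at the bulk velocity, and its coefficients degenerate as $t\to\infty$, while the collision dissipation weakens by the factor $T^{-\frac32}$. The task is to show that the forcing still decays fast enough relative to the weakening dissipation, and that the ODEs for $V,T$ remain consistent with the bootstrap. I would first try to handle this by putting all of the ion-operator contribution into $g_2$, estimated with weights $\langle u\rangle^{k}$ alone, and using the dissipation in $g_1$ only for the linearized Landau part.
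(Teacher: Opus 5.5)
Your proposal is correct in outline and follows essentially the same route as the paper. The shared steps are: rescaling to the normalized Maxwellian; the momentum/energy ODEs giving $|V(t)-V(0)-Et|=O(1)$ and $T'\sim|V|^{-1}$; a Caflisch-type splitting in which every spherical-diffusion term, including the one acting on the $\sqrt{\mu}$-component, goes into the polynomially weighted equation (the paper's $g_1,g_2$ are your $g_2,g_1$); a micro--macro analysis of the $\sqrt{\mu}$-component; and interpolation against a higher weight $k$ for the decay rates.

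The one point you leave implicit is the strict sign $T'(t)\ge c_0|V(t)|^{-1}>0$ for $t>t_0$. The paper needs it not only for the logarithmic lower bound on $T$, but also because the dilation term $\tfrac{T'}{2T}\nabla_u\cdot(uG)$ then acts as damping in both weighted energy estimates, and that damping is what absorbs the weight-raising drift terms.
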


\begin{rmk}
\textnormal{
The proof of the theorem requires detailed estimates for the quantities introduced in~\eqref{VT}.
It is readily seen that $\f d{dt}\rho(t)=0$. Without loss of generality, we assume that $\rho(t)=\rho=1$ and we can formally obtain
\ben
&&\f d{dt}V(t)=E-\int_{\T^3\times\R^3}\f{\Pi(v)}{\<v\>}\na_v Fdvdx=E-2\int_{\T^3\times \R^3}\f v{\<v\>|v|^2}Fdvdx=:E-2R;\label{dtV}\\
&&\f d{dt} T(t)={\frac 2 3} \left( E\cdot V-V\cdot \f d{dt} V \right)=\f43R\cdot V.\label{dtT}
\een
Here we employ the Japanese bracket $\<v\>:= (1+|v|^2)^{1/2}$, together with the identities $\Pi(v)\,v=0$ and $\displaystyle \nabla_{\!v}\!\cdot\!\Bigl(\frac{\Pi(v)}{\<v\>}\Bigr)=-2\,\frac{v}{\<v\>\,|v|^{2}}$. }
\end{rmk}

\subsection{Strategy  of the paper}
Let us provide a brief overview of the key strategies employed in our proof.
\smallskip

\noindent \underline{\it Step 1. Reduction of the problem.}  
We reduce the original Cauchy problem of \eqref{main} to an equivalent coupled system $(G(t,x,v), T(t), V(t))$, where $G$ is the profile of the distribution function. On one hand, this transformation enables us not only to focus on the evolution of bulk velocity and temperature to study runaway acceleration but also to analyze the asymptotic profile to determine the long-time dynamics. On the other hand, the transformation reduces the scattering-type, time-varying Maxwellian to a centered, normalized Maxwellian, thereby preserving the dissipative structure — in particular, the coercivity estimate of the collision operator, which is well known to rely on the mass and energy of the Maxwellian. This plays an essential role in the perturbative framework. We finally emphasize that the transformation relies heavily on the Galilean invariance of the collision operator and we refer readers to Proposition \ref{prop2} for detailed proof.
 
\smallskip

\noindent \underline{\it Step 2.  Local Well-Posedness.}  
We begin by analyzing a linearized equation, whose well-posedness follows from the dissipative structure of the linearized operator. By transforming the equation back to the original coordinate frame and applying a standard energy argument, we prove that the solutions to this linearized equation are nonnegative. Next, we establish local well-posedness for the full system via a standard iterative scheme, treating \((g,V,T)\) as a fully coupled system; the Aubin–Lions Lemma is employed to pass to the limit in the nonlinear term. Uniqueness is then shown by returning to the original variables and using an energy argument.
The resulting local well-posedness result, stated in Theorem \ref{localwellposedness}, shows in particular that, provided the electric field is sufficiently strong, the length of the local-existence interval depends only on the smallness of \(g_0\) and on the behavior of \(T\) and \(V\).
\smallskip

\noindent \underline{\it Step 3.  Global Well-Posedness.}  
We reduce the global well-posedness problem to proving three key properties: \(g\) remains small, \(T\) grows at most logarithmically, and \(V\) grows linearly in time. In particular, the growth estimates for \(T\) and \(V\) follow directly from the smallness of \(g\). The propagation of smallness of \(g\), however, is more subtle:

- In the perturbation framework, the standard route to global well-posedness combines symmetric linearization with micro-macro decomposition (see for example \cite{Guo1,DSSS}); the former exploits the spectral gap of the linearized collision operator to yield dissipation of the microscopic part in an exponentially weighted space, while the latter uses the transport operator to control the macroscopic part by the microscopic one, see \eqref{cE0cD0} and \eqref{macroscopic}, respectively. For the present model, however, exponential weights are no longer admissible, because the spherical-diffusion operator in \eqref{equg} raises the weight by two orders. Indeed,
  \[
  \begin{aligned}
  &\Big(\nabla_v \cdot \bigg(\frac{\Pi(V(t)+T(t)^{\frac12}v)}{\langle V(t)+T(t)^{\frac12}v\rangle} \nabla_v (\mu^{\frac12}g)\bigg), \mu^{-\frac12} g\Big)_{L^2_v} \\
  &= -\Big(\frac{\Pi(V(t)+T(t)^{\frac12}v)}{\langle V(t)+T(t)^{\frac12}v\rangle}\nabla_v g,\nabla_v g\Big)_{L^2_v} + \frac14\Big(\frac{\Pi(V(t)+T(t)^{\frac12}v)}{\langle V(t)+T(t)^{\frac12}v\rangle} v g, v g\Big)_{L^2_v}.
  \end{aligned}
  \]
  The second term on the right-hand side does not appear amenable to control. A natural alternative is to work in polynomially weighted spaces using semigroup techniques, but the time-dependent coefficients introduced by the new coordinates complicate the semigroup approach.

- We bypass this obstacle by exploiting the coupled system decomposition mentioned in \cite{Caf}, which still allows us to work entirely within polynomially weighted spaces. More precisely, we decompose \(g = g_1 + \mu^{1/2} g_2\) by writing a suitably coupled system for \((g_1, g_2)\) that treats the terms containing \(\mu^{1/2} g_2\) in the spherical-diffusion operator as a remainder in the equation for \(g_1\). This allows us to control this term by the coercivity estimate of the collision operator (see Section \ref{coupledsystem} and Theorem \ref{energyestimate}). A similar strategy has been used for a two-scale solution of the Landau equation in \cite{GW2025}.
\smallskip

  \noindent \underline{\it Step 4.  Continuation Argument.}
We complete the proof of the main result by combining the previous results with a standard continuation argument. Let \(\mathbf{E}\) denote a suitable energy and \(\mathbf{D}\) the corresponding dissipation. We obtain an energy inequality of the form (see \eqref{simpleenergyinequ} for the precise version)
\[
\mathbf{E}'(t) + \bigl(1 - \mathbf{E}^{1/2} - \mathbf{E}\bigr)\mathbf{D}(t) \lesssim (1 + |E| t)^{-2}, \qquad \label{energy.intro}
\]
where \(E\) denotes the electric field strength. Therefore, the smallness of the initial data \(\mathbf{E}(0)\) together with a sufficiently strong electric field guarantees the global existence of the solution (see \eqref{small}).

We close this section with two final points. First, because the coefficients of the transformed equation \eqref{equg} contain negative powers of \(T(t)\), we need to keep \(T(t)\) bounded away from zero. Second, the energy inequality \eqref{energy.intro} contains a dissipation term whose coefficient is proportional to \(T'(t)\); closing the estimate therefore requires \(T'(t) \geq 0\). Thanks to the detailed growth estimates for \(T\) in Section \ref{growth}, both requirements are guaranteed by choosing a sufficiently strong electric field (see the proof of Lemma \ref{largetime} and Theorem \ref{energyestimate} for details).

 \subsection{Organization of the paper}

The rest of this paper is organised as follows.  

In Section \ref{reduction}, we introduce suitable coordinates and transformations that convert the scattering-type Maxwellian into the familiar global Maxwellian and rewrite the equation as a perturbation system around this profile.  
Section \ref{notation} introduces the notation used throughout the paper, with a focus on polynomially weighted function spaces that form the analytical framework for the subsequent estimates.  
Section \ref{Landau} provides basic estimates for the Landau collision operator.  
In Section \ref{local}, we establish the local well-posedness and non-negativity of the solution.  
Section \ref{growth} derives growth estimates for the macroscopic quantities.  
Section \ref{coupledsystem} addresses the core challenge of propagating the smallness of \(g\).  
Finally, Section \ref{proof} presents the proof of global well-posedness and decay estimates.

\section{Reformulation of the equation as a perturbation of a standard Maxwellian}\label{reduction}
We dedicate this section to employing a change of coordinates to convert the scattering-type Maxwellian into the normalized Maxwellian. Meanwhile, we also need to monitor the changes in the equation. The key point utilized here is the Galilean invariance of the collision operator, which shows how translation and scaling act on ${Q}$. The following lemma can be verified directly by using the definition of the Landau operator \eqref{12d}.
\begin{lem}\label{TransQ}
	For smooth functions $g = g(v)$ and  $h = h(v)$.
	
	\indent$\bullet$ Let $u \in \mathbb{R}^3$ and $g_1(v) = g(v + u), h_1(v) = h(v + u)$, then
	\begin{equation*}
		{Q}(g, h)(v + u) = {Q}(g_1, h_1)(v).
	\end{equation*}
	
	\indent$\bullet$ Let $r \in \mathbb{R}$ and $g_2(v) = g(rv), h_2(v) = h(rv)$, then
	\begin{equation*}
		{Q}(g, h)(rv) = {Q}(g_2, h_2)(v).
	\end{equation*}
	
\end{lem}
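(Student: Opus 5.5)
The proof is a direct change of variables in the integral defining $Q$ in \eqref{12d}. It uses only two properties of the kernel $a$ from \eqref{az}: it depends on $v$ and $v_*$ only through the difference $v-v_*$, and it is even and homogeneous of degree $-1$, i.e. $a(\lambda z)=|\lambda|^{-1}a(z)$ for $\lambda\neq0$. I will write $Q(g,h)=\nabla_v\cdot A[g,h]$, where
\[
A[g,h](v)=\int_{\R^3}a(v-v_*)\big(g(v_*)\nabla h(v)-h(v)\nabla g(v_*)\big)\,dv_*,
\]
and first track how the flux $A$ transforms. I then account for the outer divergence via the chain rule. I assume $r\neq 0$, since for $r=0$ the scaled functions are constant and the claim degenerates.

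\emph{Translation.} By the chain rule, $\nabla g_1(w)=(\nabla g)(w+u)$ and likewise for $h_1$. In $A[g_1,h_1](v)$ I substitute $w=v_*+u$, which has unit Jacobian. Since $v-v_*=(v+u)-w$, this gives $A[g_1,h_1](v)=A[g,h](v+u)$. Taking $\nabla_v$ of $v\mapsto A[g,h](v+u)$ gives $(\nabla\cdot A[g,h])(v+u)$, which is the first identity.

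\emph{Scaling.} Here $\nabla g_2(w)=r(\nabla g)(rw)$ and likewise for $h_2$. In $A[g_2,h_2](v)$ I substitute $w=rv_*$, so that $dv_*=|r|^{-3}dw$. Homogeneity gives $a(v-w/r)=a\big((rv-w)/r\big)=|r|\,a(rv-w)$. Each term in the bracket carries exactly one gradient, hence one factor $r$. Collecting factors,
\[
A[g_2,h_2](v)=|r|\cdot r\cdot|r|^{-3}\,A[g,h](rv)=\frac{r}{|r|^{2}}\,A[g,h](rv).
\]
Taking the divergence in $v$ produces one more factor $r$, giving $\frac{r^2}{|r|^2}(\nabla\cdot A[g,h])(rv)=Q(g,h)(rv)$. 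This is the second identity.

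The only place needing care is this bookkeeping of powers of $r$. The Jacobian contributes $-3$, the kernel $+1$, and the gradients inside and outside the integral $+1$ each, so the net power is zero. This cancellation reflects the fact that $Q$ is bilinear and invariant under dilation in the Coulomb case. There are also integrability issues, because $a$ is singular at $z=0$. For smooth $g,h$ with sufficient decay, which is the setting of the lemma, the kernel is locally integrable and both substitutions are legitimate. Alternatively, one can work in the weak formulation against test functions and apply the same changes of variables there.
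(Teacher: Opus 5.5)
Your proof is correct and is exactly the direct verification from the definition of $Q$ that the paper invokes without further detail. Your bookkeeping of the powers of $r$ is right: the Jacobian gives $-3$, the kernel $+1$, and the inner and outer gradients $+1$ each, and this cancellation is specific to the Coulomb kernel. Excluding $r=0$ is also appropriate, since the identity is in general false there, and the paper only uses it with $r=T(t)^{1/2}>0$.
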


\medskip

Now recall the scattering-type Maxwellian  and its normalized version
\beno
M_{V(t),T(t)}{(v)}=\f1{(2\pi T(t))^{\f32}}\exp\Big\{-\f{|v-V(t)|^2}{2T(t)}\Big\},\quad  \mu=e^{-|v|^2/2}/(2\pi)^{\f32}.
\eeno	
We introduce 
\ben\label{F1}
&&\notag F_1(t,x,v)=T(t)^{\f32}F(t,x,V(t)+T(t)^{\f12}v),\\ &&M_1(t,x,v)=T(t)^{\f32}M_{V(t),T(t)}(t,x,V(t)+T(t)^{\f12}v)=e^{-|v|^2/2}/(2\pi)^{\f32}=\mu.
\een
Then by derivative rules, we have 
\beno
&&\notag\pa_t F_1(t,x,v)=\f32T(t)^{\f12}T'(t)F(t,x,V(t)+T(t)^{\f12}v)+T(t)^{\f32}\pa_tF(t,x,V(t)+T(t)^{\f12}v)\\
&&+T(t)^{\f32}\big(V'(t)+\f12T(t)^{-\f12}T'(t)v\big)\cdot\na_v F(t,x,V(t)+T(t)^{\f12}v),\\
&&\na_vF_1(t,x,v)=T(t)^2\na_v F(t,x,V(t)+T(t)^{\f12}v),\quad \na_xF_1(t,x,v)=T(t)^{\f32}\na_xF(t,x,V(t)+T(t)^{\f12}v),
\eeno
which implies that
\beno
&&\na_vF(t,x,V(t)+T(t)^{\f12}v)=T(t)^{-2}\na_v F_1(t,x,v),\quad \na_xF(t,x,V(t)+T(t)^{\f12}v)=T(t)^{-\f32}\na_xF_1(t,x,v),\\
&&\pa_tF(t,x,V(t)+T(t)^{\f12}v)
=T(t)^{-\f32}\pa_tF_1(t,x,v)
-\f32T(t)^{-\f52}T'(t)F_1(t,x,v)\\
&&-T(t)^{-2}(V'(t)+\f12T(t)^{-\f12}T'(t)v)\cdot\na_vF_1(t,x,v).
\eeno
Substituting it into \eqref{main} and applying Lemma \ref{TransQ} and \eqref{dtV}  yields
\ben\label{equF1}
\notag&&\pa_tF_1(t,x,v)+\big(V(t)+T(t)^{\f12}v\big)\cdot\na_xF_1(t,x,v)+\big(-\f12T(t)^{-1}T'(t)v+2T(t)^{-\f12}R(t)\big)\cdot\na_v F_1(t,x,v)\\
\notag&=&T(t)^{-\f32}Q(F_1,F_1)(t,x,v)+\f32 T(t)^{-1}T'(t)F_1(t,x,v)+T(t)^{-1}\dv_v \Big(\f{\Pi(V(t)+T(t)^{\f12}v)}{\<V(t)+T(t)^{\f12}v\>}\cdot \na_v F_1\Big)(t,x,v),\\
&&
\een
where the orthogonal projection 
\beno
\Pi(V(t)+T(t)^{\f12}v)=\mathbf{I}-\f{(V(t)+T(t)^{\f12}v)\otimes (V(t)+T(t)^{\f12}v)}{|(V(t)+T(t)^{\f12}v)|^2}.
\eeno

Next, we set
\ben\label{F2}
F_2(t,x,v)=F_1(t,x+H(t),v) \quad M_2(t,x,v)=M_1(t,x+H(t),v)=\mu\quad\mbox{with}\quad H(t):=\int_0^t V(\tau)d\tau,
\een
then it holds that
\beno
&&\pa_tF_2(t,x,v)=\pa_t F_1(t,x+H(t),v)+V(t)\cdot\na_xF_1(t,x+H(t),v),\\
&&\na_xF_2(t,x,v)=\na_x F_1(t,x+H(t),v),\quad \na_vF_2(t,x,v)=\na_v F_1(t,x+H(t),v),
\eeno
which implies 
\beno
&&\pa_tF_1(t,x+H(t),v)=\pa_t F_2(t,x,v)-V(t)\cdot\na_xF_2(t,x,v),\\
&&\na_x F_1(t,x+H(t),v)=\na_xF_2(t,x,v),\quad\na_v F_1(t,x+H(t),v)= \na_vF_2(t,x,v).
\eeno
Substituting it into \eqref{equF1}, we have that
\beno
&&\pa_tF_2(t,x,v)+T(t)^{\f12}v\cdot\na_xF_2(t,x,v)+\big(-\f12T(t)^{-1}T'(t)v+2T(t)^{-\f12}R(t)\big)\cdot\na_v F_2(t,x,v)\\
\notag&=&T(t)^{-\f32}Q(F_2,F_2)(t,x,v)+\f32 T(t)^{-1}T'(t)F_2(t,x,v)+T(t)^{-1}\dv \Big(\f{\Pi(V(t)+T(t)^{\f12}v)}{\<V(t)+T(t)^{\f12}v\>}\cdot \na_v F_2\Big)(t,x,v).
\eeno
Moreover, due to \eqref{VT}, change of variables \eqref{F1} and \eqref{F2}, we have
\beno
&&\int_{\T^3\times\R^3}F_2(t,x,v)(1,v,|v|^2)dvdx=T(t)^{\f32}\int_{\T^3\times\R^3}F(t,x+H(t),V(t)+T(t)^{\f12}v)(1,v,|v|^2)dvdx\\
&=&\int_{\T^3\times\R^3}F(t,x,v)(1,T(t)^{-\f12}(v-V(t)),T(t)^{-1}|v-V(t)|^2)dvdx=(1,0,3).
\eeno

By introducing the new variable \(G\) to replace \(F_2\), namely
\ben\label{changeofvariable}
G(t,x,v)=T(t)^{3/2}F\bigl(t,x+H(t),V(t)+T(t)^{1/2}v\bigr),\quad H(t)=\int_0^t V(\tau)\,d\tau,
\een
and summarizing the preceding derivation together with \eqref{dtV}, \eqref{dtT}, we obtain the following proposition:

\begin{prop}\label{prop2}
The Cauchy problem \eqref{main} with initial data $F_0$ is equivalent to the joint Cauchy problem for $(G(t,x,v),V(t),T(t))$ as follows:
\ben\label{eqG}
\left\{\begin{aligned}
	&\pa_tG(t,x,v)+T(t)^{\f12}v\cdot\na_xG(t,x,v)+\big(-\f12T(t)^{-1}T'(t)v+2T(t)^{-\f12}R(t)\big)\cdot\na_v G(t,x,v) =T(t)^{-\f32}Q(G,G)(t,x,v)\\
&+\f32 T(t)^{-1}T'(t)G(t,x,v)+T(t)^{-1}\mathrm{div} \Big(\f{\Pi(V(t)+T(t)^{\f12}v)}{\<V(t)+T(t)^{\f12}v\>}\cdot \na_v G\Big)(t,x,v),\\
	&V'(t)=E-2R(t),\quad T'(t)=\f43V(t)\cdot R(t),\\& R(t)=\int_{\T^3\times\R^3}\f{V(t)+T(t)^{\f12}v}{\<V(t)+T(t)^{\f12}v\>|V(t)+T(t)^{\f12}v|^2
}G(t,x,v)dxdv
	\end{aligned}\right.
\een
 with initial data 
\beno
(G_0(x,v),V(0),T(0))=\Big(T(0)^{\f32}F_0(x,V(0)+T(0)^{\f12}v),~\int_{\T^3\times\R^3}vF_0(x,v)dvdx,~\f13\int_{\T^3\times\R^3}|v-V(0)|^2F_0(x,v)dvdx\Big).
\eeno
 Moreover, $G$ satisfies the conservation laws
\beno
\int_{\T^3\times\R^3} G(t,x,v)(1,v,|v|^2)dvdx=(1,0,3)=\int_{\T^3\times\R^3}\mu(1,v,|v|^2)dvdx.
\eeno
\end{prop}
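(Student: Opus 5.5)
The plan is to prove the equivalence in both directions, using the change of variables \eqref{changeofvariable} as an explicit bijection between solutions. \textbf{Forward direction.} Let $F$ solve \eqref{main}, and define $V(t),T(t)$ by \eqref{VT}, $H(t)=\int_0^tV$, and $G$ by \eqref{changeofvariable}. Compose the two substitutions \eqref{F1} and \eqref{F2}. The derivative computations carried out above show the following:
\begin{itemize}
\item $\pa_t$ generates the terms $\f32T^{-1}T'G$ and $(V'+\f12T^{-1/2}T'v)\cdot T^{-1/2}\na_vG$;
\item the electric field term becomes $T^{-1/2}E\cdot\na_vG$;
\item the transport term splits into $T^{1/2}v\cdot\na_xG$ plus $V\cdot\na_x G$, and the latter is cancelled by the shift $H$.
\end{itemize}
For the Landau term we use Lemma \ref{TransQ}: first translate by $V(t)$, then dilate by $r=T^{1/2}$. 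By homogeneity of the kernel $a$ (degree $-1$), together with the chain rule, this gives $Q(F,F)(V+T^{1/2}v)=T^{-3/2}\cdot T^{-3/2}Q(G,G)(v)$, so multiplying by $T^{3/2}$ yields $T^{-3/2}Q(G,G)$. The spherical-diffusion term transforms similarly into $T^{-1}\dv_v(\Pi(V+T^{1/2}v)\<V+T^{1/2}v\>^{-1}\na_vG)$.

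\textbf{Closing the macroscopic equations.} We differentiate $V$ and $T$ from \eqref{VT} using \eqref{main}, justifying the integrations by parts via the decay and regularity of $F$. The operator $Q$ conserves mass, momentum and energy, and $v\cdot\na_x$ integrates to zero on $\T^3$. This gives \eqref{dtV} and \eqref{dtT}, with $R$ the stated integral; rewriting $R$ in the $G$ variables (Jacobian $T^{-3/2}$ cancelling the prefactor) gives the formula in \eqref{eqG}. We then substitute $V'=E-2R$: the terms $T^{-1/2}(E-V')\cdot\na_vG$ combine into $2T^{-1/2}R\cdot\na_vG$, after the sign convention for $E$ in \eqref{main} is matched. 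The conservation laws $(1,0,3)$ follow from the computation preceding the proposition.

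\textbf{Converse direction.} This is the step requiring care. Let $(G,V,T)$ solve \eqref{eqG}, and define $F(t,x,v)=T^{-3/2}G(t,x-H(t),T^{-1/2}(v-V))$. Reversing the computation shows that $F$ satisfies \eqref{main} with $V,T$ appearing as given functions. To close the argument we must show that the moments of $F$ coincide with these $V,T$, i.e.\ that the moments $\int G(1,v,|v|^2)$ remain $(1,0,3)$. I would check this by testing the $G$ equation against $1$, $v$ and $|v|^2$. Conservation for $Q$ removes the collision term and the transport term integrates out. For the remaining terms:
\begin{itemize}
\item Testing against $v$, the drift $2T^{-1/2}R\cdot\na_vG$ contributes $-2T^{-1/2}R$, which is cancelled by the diffusion contribution $-T^{-1}\int\dv(\Pi/\<\cdot\>)G$ (this equals $+2T^{-1/2}R$ by the identity for $\nabla\cdot(\Pi/\<\cdot\>)$). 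The dilation terms cancel against $\f32T^{-1}T'G$ on a centered moment.
\item Testing against $|v|^2$, the dilation terms give $T^{-1}T'\int|v|^2G$, the diffusion gives $\propto T^{-1}\int \mathrm{tr}\,\Pi/\<\cdot\>\,G$, and the drift gives $-4T^{-1/2}R\cdot\int vG$. With $T'=\f43V\cdot R$ these combine into a closed linear ODE system for the deviation of the moments from $(1,0,3)$, with zero initial deviation.
\end{itemize}
Gronwall's inequality then yields zero deviation for all time. I expect this last bookkeeping step to be the main obstacle, together with the exact matching of the $|v|^2$-moment of the diffusion term against $T'$; otherwise the argument is direct calculus.
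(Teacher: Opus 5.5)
Your forward direction is the paper's proof. The paper derives \eqref{eqG} only through the substitutions \eqref{F1}, \eqref{F2}, Lemma \ref{TransQ} and \eqref{dtV}--\eqref{dtT}, and your term-by-term bookkeeping agrees with it.

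The paper never argues the converse, so that part is your own addition. Its structure is sound, but the step you flagged is misstated, and as written it would not close. The $|v|^2$-moment of the diffusion term is not proportional to $T^{-1}\int\mathrm{tr}\,\Pi/\<\cdot\>\,G$. Write $w=V+T^{\f12}v$ and $A(v)=\Pi(w)/\<w\>$, with all integrals over $\T^3\times\R^3$. Two integrations by parts give
\[
T^{-1}\int|v|^2\,\mathrm{div}_v(A\na_vG)=2T^{-1}\int\big(\mathrm{tr}\,A+v\cdot(\na_v\cdot A)\big)G,\qquad \na_v\cdot A=-2T^{\f12}\f{w}{\<w\>|w|^2}.
\]
Since $T^{\f12}v=w-V$, the term $v\cdot(\na_v\cdot A)=-\f2{\<w\>}+\f{2V\cdot w}{\<w\>|w|^2}$ cancels the trace $\mathrm{tr}\,A=\f2{\<w\>}$ exactly. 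The diffusion therefore contributes $4T^{-1}V\cdot R=3T^{-1}T'$; this is the $v$-frame form of $\Pi(w)w=0$. A genuine trace term $\propto T^{-1}\int G/\<w\>$ would have nothing in the equation to balance it.

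With the correct value, the moments $m_0,m_1,m_2$ of $G$ satisfy
\[
m_0'=0,\qquad m_1'=-\tfrac12T^{-1}T'm_1+2T^{-\f12}R\,(m_0-1),\qquad (m_2-3)'=-T^{-1}T'(m_2-3)+4T^{-\f12}R\cdot m_1 .
\]
This is a triangular linear system with zero data, so the deviations vanish. Note also that in the $v$-moment the dilation terms do not cancel against $\f32T^{-1}T'G$; they leave the harmless term $-\tfrac12T^{-1}T'm_1$.

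You can avoid the $G$-moment bookkeeping altogether:
\begin{itemize}
\item When you reverse the substitution, $T^{-\f12}V'\cdot\na_vG$ and $2T^{-\f12}R\cdot\na_vG$ recombine into $E\cdot\na_wF$, because $V'=E-2R$ is one of the equations of the system. So $F$ solves \eqref{main} exactly, whatever $V,T$ are.
\item The change of variables $w=V+T^{\f12}v$ gives $R(t)=\int\f{w}{\<w\>|w|^2}F(t,x,w)\,dw\,dx$ identically, with no information on the moments needed.
\item Let $V_F,T_F$ be the quantities \eqref{VT} computed from $F$, which has mass $1$. Then \eqref{dtV} gives $V_F'=E-2R=V'$ with $V_F(0)=V(0)$, so $V_F\equiv V$.
\item Next, \eqref{dtT} gives $T_F'=\f43V\cdot R=T'$ with $T_F(0)=T(0)$, so $T_F\equiv T$.
\end{itemize}
The identities $\int G(1,v,|v|^2)\,dv\,dx=(1,0,3)$ then follow from the forward computation, and no Gronwall argument is needed.
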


Thanks to Proposition \ref{prop2}, the stability analysis of $M_{V(t),T(t)}$ with respect to \eqref{main} is thereby reduced to the stability of $\mu$ with respect to \eqref{eqG}. Let $G:=\mu+g$ and plug it into \eqref{eqG}, we obtain the following equation of $g$:
\begin{equation}\label{equg}
	\begin{aligned}
&\pa_t g(t,x,v)+T(t)^{\f12}v\cdot\na_x g(t,x,v)+(-\f12 T(t)^{-1}T'(t)v+2T(t)^{-\f12}R(t))\cdot\na_v g(t,x,v)\\
=& T(t)^{-\f32}L(g)+T(t)^{-\f32}Q(g,g)(t,x,v)+\f32 T(t)^{-1} T'(t) g(t,x,v)+T(t)^{-1}\dv \Big(\f{\Pi(V(t)+T(t)^{\f12}v)}{\<V(t)+T(t)^{\f12}v\>}\cdot \na_v g\Big)\\
&-(-\f12 T(t)^{-1}T'(t)v+2T(t)^{-\f12}R(t))\cdot\na_v \mu+\f32 T(t)^{-1} T'(t) \mu+T(t)^{-1}\dv \Big(\f{\Pi(V(t)+T(t)^{\f12}v)}{\<V(t)+T(t)^{\f12}v\>}\cdot \na_v \mu\Big),
\end{aligned}
\end{equation}
where we used $Q(\mu,\mu) = 0$ and the linear Landau operator is defined by
\ben\label{linearoperator}
L(g):=Q(\mu,g)+Q(g,\mu).
\een
Moreover, the conservation laws for the function $g$ are
\ben\label{conforg}
\int_{\T^3\times\R^3} g(t,x,v)(1,v,|v|^2)dvdx=0.
\een
Therefore, to establish Theorem \ref{longtimebehavior1}, it suffices to verify the following theorem for the transformed system.
\begin{thm}\label{longtimebehavior}
Consider the joint Cauchy problem \eqref{equg}, \eqref{dtV} and \eqref{dtT} with initial data \( (g_0=G_0-\mu,V(0),T(0))\in X_k\times\R^3\times\R^+,k>17\) satisfying
$\int_{\T^3\times\mathbb R^3}(1,v,|v|^2)\,g_0\,dvdx=0$ and $\mu+g_0\ge 0$.
There exist small constants \( \varepsilon_1>0 \) depending only on $k$ and $\vep>0$ depending on \( k, V(0),T(0) \), such that if
\[
\|g_0\|^2_{X_k}<\min\{1,T(0)^{22}\}\vep_1,\quad\mbox{and}\quad |E|^{-1}<\varepsilon,
\]
  then \eqref{equg} admits a unique global solution \(g\) satisfying \(\mu+g(t)\ge 0\) for any \(t\geq0\) and the decay estimate
\beno
\|g(t)\|^2_{X_{17}}\ls\max\Big\{(\ln (3+|E|t))^{\f32}\<Et\>^{-2})^{1-\f{3}{2k-31}},\Big(\big(\ln(3+|E|t)\big)^{-\f32}t\Big)^{-\f{2(k-17)}3}\Big\},\quad t>1.
\eeno
Moreover, the evolution of \((V(t),T(t))\) satisfies \eqref{VTlongtime}.
\end{thm}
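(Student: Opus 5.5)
The proof will be a continuation argument built on a local well-posedness theorem, a priori growth bounds for $(V,T)$ under smallness of $g$, and a weighted energy estimate for a coupled decomposition of $g$.

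\textbf{Local theory.} First I would establish local well-posedness for the coupled system \eqref{equg}, \eqref{dtV}, \eqref{dtT} in $X_k$, with positivity $\mu+g\ge0$ and uniqueness. The procedure is:
\begin{itemize}
\item linearize $Q(g,g)$ by freezing one argument;
\item solve the linear problem by an energy method using the coercivity of $L$ in polynomially weighted spaces;
\item return to the original variables via Proposition \ref{prop2} to prove nonnegativity by an $L^2$ argument on the negative part;
\item iterate in $(g,V,T)$, passing to the limit with Aubin--Lions;
\item prove uniqueness by an energy estimate in the original frame.
\end{itemize}
The point I will track is that the existence time depends only on $\|g_0\|_{X_k}$, a lower bound on $T$, and $|E|$. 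This is what makes the continuation argument possible.

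\textbf{Growth of $(V,T)$ under a bootstrap assumption.} Assume on $[0,t^*]$ that $\|g\|_{X_k}^2\le 2\min\{1,T(0)^{22}\}\vep_1$. Write $R(t)=R_\mu(t)+R_g(t)$ according to $G=\mu+g$. For $R_\mu$, expand the kernel $w/(\<w\>|w|^2)$ at $w=V+T^{1/2}v$ around $w=V$: $R_\mu\approx V/(|V|^3)$ plus corrections of order $T/|V|^4$. For $R_g$, bound the weighted integral by $\|g\|_{X_k}|V|^{-2}$, handling the region $|V+T^{1/2}v|\lesssim |V|/2$ with the polynomial weight, which needs $k$ large.

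This gives $|V'-E|\lesssim |V|^{-2}$. Since $|E|^{-1}<\vep$, a bootstrap yields $|V(t)-V(0)-Et|\lesssim T(0)^{-3/4}+|E|^{-1}$, because $\int_0^\infty(1+|E|s)^{-2}ds=|E|^{-1}$. Next, $T'=\frac43 V\cdot R\approx \frac43|V|^{-1}>0$ up to errors. Integrating $|V|^{-1}\sim(1+|E|t)^{-1}$ gives the two-sided logarithmic bounds \eqref{VTlongtime}, with $c_0,c_1$ coming from the leading term of $R_\mu$. This also gives $T'\ge0$ and $T\ge T(0)/2$, which the energy estimate needs.

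\textbf{Energy estimate for a coupled system (main obstacle).} Split $g=g_1+\mu^{1/2}g_2$ with the following system:
\begin{itemize}
\item $g_2$ solves the symmetric-linearized equation with $\mu^{-1/2}L\mu^{1/2}$, in a Guo-type space with macroscopic dissipation from the micro--macro decomposition;
\item $g_1$ absorbs every term in which the spherical-diffusion operator, the dilation $-\frac12T^{-1}T'v\cdot\na_v$ and the nonlinearity act on $\mu^{1/2}g_2$, as well as the source terms involving $\mu$;
\item $g_1$ is estimated in polynomially weighted $L^2$ with $\pa_x^\al$, $|\al|\le2$.
\end{itemize}
The dilation term produces a term proportional to $T'/T$ that has a good sign when the weight is large. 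This is why $T'\ge0$ is needed.

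The spherical diffusion applied to $g_1$ is nonnegative dissipation up to commutators with the weight, which cost two powers of $\<v\>$. These are controlled by interpolating with the Landau dissipation; this is where the choice $k-4|\al|$ of weights enters. The $\mu$-source terms are of size $T^{-1}|V|^{-1}+|R|T^{-1/2}\lesssim(1+|E|t)^{-2}$ after using conservation \eqref{conforg}. The result is the inequality $\mathbf{E}'+(1-\mathbf{E}^{1/2}-\mathbf{E})\mathbf{D}\lesssim(1+|E|t)^{-2}$. Closing it with $|E|^{-1}$ small strictly improves the bootstrap, so the continuation argument gives the global solution.

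\textbf{Decay.} To get the decay rate in $X_{17}$, interpolate the dissipation, which is weaker by $\<v\>^{-3}$ (the Coulomb loss) and by the factor $T^{-3/2}$, against the uniformly bounded $X_k$ norm. This gives $\mathbf{E}'+c\,T^{-3/2}\mathbf{E}^{1+\frac{3}{2(k-17)}}\lesssim\<Et\>^{-2}$. A comparison ODE argument with $T\sim\ln(3+|E|t)$ produces the two competing rates in the statement: the algebraic decay in $t(\ln)^{-3/2}$, and the forcing-limited rate. I expect the hardest part to be the commutator/weight bookkeeping between $g_1$ and $g_2$ for the spherical-diffusion term with a time-dependent center $V+T^{1/2}v$.
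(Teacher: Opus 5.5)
Your overall architecture (local theory, bootstrap bounds on $(V,T)$, a $g_1+\mu^{1/2}g_2$ decomposition, ODE comparison for decay) matches the paper's. However, the coupled system as you set it up cannot close, because it gives $g_1$ no coercivity.

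In your splitting, the $g_2$ equation contains only $T^{-3/2}\mathcal{L}g_2$ and terms linear in $g_2$, with no forcing from $g_1$. With the natural data $g_2(t_0)=0$ it vanishes identically, so $g=g_1$ and you are back to the original problem in polynomial weights. More to the point, your $g_1$ equation carries the full operator $L$, and $L\mu=2Q(\mu,\mu)=0$ (the kernel is spanned by $\mu$, $v\mu$, $|v|^2\mu$). So no bound of the form $(Lf,f\langle v\rangle^{2k})_{L^2_v}\le -c\|f\|^2_{D_1(k)}$ can hold. The best available is \eqref{fmuf}, i.e.\ $-\lambda_0\|g_1\|^2_{D_1(k)}+C_k\|g_1\|^2_{L^2_3}$. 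That lower-order term cannot be absorbed. Moreover, $g_1$ alone does not satisfy \eqref{conforg}, so no micro--macro argument applies to it; that machinery lives in the Gaussian setting of $g_2$. Gronwall does not help either, since $\int^\infty T(t)^{-3/2}\,dt=\infty$ when $T\sim\ln t$.

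The missing idea is the splitting $L=(L-A\chi_M)+A\chi_M$ with a compactly supported cutoff $\chi_M$. By Lemma~\ref{QGgg}, $L-A\chi_M$ is strictly dissipative in $X_k$ with no lower-order term. The compact piece is moved into the $g_2$ equation as the source $T^{-3/2}\mu^{-1/2}A\chi_M g_1$ (see \eqref{coupg1}--\eqref{coupg2}). That source is harmless in Gaussian weights, being bounded by $\varepsilon T^{-3/2}\|g_2\|^2_{\mathcal{D}_0}+C_\varepsilon T^{-3/2}\|g_1\|^2_{Y_k}$. This is what makes $g_2$ nontrivial: it carries the kernel information. The conservation laws then enter only through the Poincar\'e inequality for $(a,b,c)^{g_2}$, whose spatial means equal $-\int g_1(1,v,|v|^2-3)\,dv\,dx$ and are therefore controlled by $\|g_1\|_{L^2_xL^2_4}\lesssim\|g_1\|_{Y_k}$ (Lemma~\ref{macro}). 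One then closes with $\|g_1\|^2_{X_k}+\kappa_4\|g_2\|^2_{\bar{\mathcal{E}}_k}$ for a small $\kappa_4$.

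Two further steps need correcting.

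First, the $\mu$-source $\mathcal{H}(\mu)$ has size $T^{-1}|V|^{-1}$ (recall $T'\sim|V|^{-1}$). This is neither $\lesssim(1+|E|t)^{-2}$ nor integrable in time; its integral grows like $\ln\ln t$. The conservation laws constrain moments of the equation, not the size of $\mathcal{H}(\mu)$ when tested against $g_1\langle v\rangle^{2k}$. The paper reaches the forcing $(1+|E|t)^{-2}$ only through Young's inequality, $T^{-1}|V|^{-1}\|g_1\|_{L^2_{x,v}}\le\frac{\lambda_0}{5}T^{-3/2}\|(g_1,g_2)\|^2_{\mathbf{D}_k}+C\,T^{-1/2}|V|^{-2}$, which again uses the strict dissipation of $g_1$.

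Second, your energy inequality needs $T'\ge0$ and $|V|\gtrsim 1+|E|t$, which hold only for $t>t_0$. On the initial layer $[0,t_0]$, where $V(t)$ may pass near $0$, the paper instead runs the direct $X_k$ estimate with Gronwall over the short interval. It uses $\int_0^{t_0}S(t)^2\,dt\lesssim_{T(0),V(0)}|E|^{-1/2}$ (Lemma~\ref{largetime}, Corollary~\ref{col1}); this is where the largeness of $|E|$ is used at early times.

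Your growth bounds for $(V,T)$ and the decay argument with exponent $\frac{3}{2(k-17)}$ agree with the paper.
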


\section{Notation and function spaces}\label{notation}
We list notations and function spaces in the below.
\subsection{Notation}
  We use the notation $a\ls b$ ($a\gs b$) and $a\ls_c b ~(a\gs_c b)$ to indicate that there is a constant $C$ which is uniform or depends on parameter $c$ and may be different on different lines, such that $a\leq Cb(a\geq Cb)$. We use the notation $a\sim b$ ($a\sim_c b$) whenever $a\ls b$ and $b\ls a$ ($a\ls_c b$ and $a\gs_cb$). Denote $C_{a_1,a_2,\cdots,a_n}$ (or $C(a_1,a_2,\cdots,a_n)$) by a constant depending on parameters $a_1,a_2,\cdots,a_n$. Moreover,  parameter $\varepsilon$ is used  to represent different positive numbers much less than 1 and determined in different cases. $\mathbf{1}_\Om$ is the characteristic function of the set $\Om$. We use $(\cdot,\cdot)_{L^2_v}$, and $(\cdot,\cdot)_{L^2_{x, v}}$ to denote the inner product in $L^2_{\R^3}$ and $L^2_{\T^3\times\R^3}$, respectively. We also use $\mathbf{I}$ to represent the unit matrix or identity operator.  






\subsection{Function spaces}
We  give several definitions to spaces involving different variables.
\smallskip

$(1)$ \textit{Function spaces in $v$ variable.} Let \( f(v) \) be a function of the variable \( v \in \mathbb{R}^3 \). For any $p\in[1,\infty),q\in\R$, the $L^p_q$
 norm is defined by 
 \beno
 \|f\|^p_{L^p_q}=\int_{\R^3}|f(v)|^p\<v\>^{pq}dv.
 \eeno
For $m,l\in\R $, we define the weighted Sobolev space $H_l^m$  as follows:
\beno
H^m_l:=\Big\{f(v)|\|f\|_{ H^{m}_l}=\|\<D\>^m\<\cdot\>^l f\|_{L^2}<+\infty\Big\},
\eeno
where $a(D)$ is a pseudo-differential operator with the symbol $a(\xi)$ and is defined as
\beno
(a(D)f)(x):=\f1{(2\pi)^3}\int_{\R^6} e^{i(x-y)\xi} a(\xi)f(y)dyd\xi.
\eeno
We remark that $\|\<D\>^m\<\cdot\>^lf\|_{L^2}\sim_{m,l} \|\<\cdot\>^l\<D\>^mf\|_{L^2}$(see Lemma 5.2 in \cite{HL}.) When $m\in\N$, it is equivalent that
\ben\label{hms}H^{m}_l=\Big\{f(v)|\|f\|^2_{ H^{m}_l}=\sum_{|\al|\leq m}\int_{\R^3}|\<v\>^{l}\pa^\al_vf|^2dv<+\infty\Big\}.\een
Particularly, we write $\|f\|_{L^2_l}:=\|f\|_{H^0_{l}},\|f\|_{H^m_v}:=\|f\|_{H^m_0}$ and $\|f\|_{L^2_v}:=\|f\|_{H^0_0}$.


$(2)$ \textit{Function spaces in $x,v$ variables.} Let \( f(x,v) \) be a function of the variable \( (x,v) \in \T^3\times \mathbb{R}^3\). The differential operator on $x$, $\<D_x\>^r,r\in\R$ is defined by
\beno
\<D_x\>^r f:=\sum_{q\in\Z^3}\<q\>^r\mathcal{F}_x(f)(q)e^{2\pi iq\cdot x},
\eeno
where $\mathcal{F}_x$ is the Fourier transform w.r.t variable $x$. For $m,r,l\in\R $, we define the weighted Sobolev space $H^r_xH_l^m$  as follows:
\ben\label{hrhms}H^r_xH^{m}_l:=\Big\{f(x,v)|\|f\|^2_{ H^r_xH^{m}_l}=\sum_{q\in\Z^3}\<q\>^{2r}\|\mathcal{F}_x(f)(q)\|^2_{H^m_l}<+\infty\Big\}.\een
When $r\in \N$, it is equivalent that
\beno
H^r_xH^{m}_l:=\Big\{f(x,v)|\|f\|^2_{ H^r_xH^{m}_l}=\sum_{|\al|\leq r}\int_{\T^3}\|\pa^\al_x f\|^2_{H^m_l}<+\infty\Big\}.
\eeno
Particularly, we write $\|f\|_{L^2_xH^m_l}=\|f\|_{H^0_xH^m_l},\|f\|^2_{L^2_{x,v}}:=\|f\|_{H^0_xH^0_0}$.

$(3)$ \textit{Function spaces in $t,x,v$ variables.} Let $f=f(t,x,v)$ and $X$ be a function space in $x,v$ variables. Then $L^p([0,T],X)$ and $ L^\infty([0,T],X)$ are defined as follows:
\[ L^p([0,T],X):=\bigg\{f(t,x,v)\big|\|f\|^p_{ L^p([0,T],X)}=\int_0^T\|f(t)\|^p_{X}dt<+\infty\bigg\},\quad 1\leq p<\infty,\]
\[L^\infty([0,T],X):=\Big\{f(t,x,v)|\|f\|_{ L^\infty([0,T],X)}=\mathrm{esssup}_{t\in [0,T]}\|f(t)\|_{X}<+\infty\Big\}. \]

$(4)$ \textit{Solution spaces.} We shall prove the global well-posedness in weighted Sobolev spaces.
For \(k\in\mathbb \R^+\) and a function \(f(v)\), we define the dissipation norms in \(v\in\mathbb R^3\) as
\ben\label{dissipationnorm}
\|f\|^2_{D_1(k)}:=\|f\|^2_{H^1_{k-\f32}}+\|(-\D_{\S^2})^{\f12}f\|^2_{L^2_{k-\f32}},\quad \|f\|^2_{D_2(k)}:=\|f\|^2_{D_1(k)}+\|f\|^2_{L^2_{k-\f12}},
\een
where $\D_{\S^2}$ is the Laplace-Beltrami operator. For a function \(f(x,v)\), we further introduce the energy functional and its associated dissipation functional as follows.
\ben\label{functional}
&&\|f\|^2_{X_k}:=\|f\|^2_{L^2_xL^2_k}+\sum_{|\al|=2}\|\pa^\al_x f\|^2_{L^2_xL^2_{k-4|\al|}},\quad k\geq 15;\\
\notag&&\|f\|^2_{Y_k}:=\|f\|^2_{L^2_xD_1(k)}+\sum_{|\al|=2}\|\pa^\al_x f\|^2_{L^2_xD_1(k-4|\al|)},\quad k\geq 15;\\
\notag&& \|g\|^2_{\mathcal{E}_k}:=\sum_{|\al|=0,2}\|\pa^\al_x g\|^2_{L^2_{x}L^2_k},\quad \|g\|^2_{\mathcal{D}_k}:=\sum_{|\al|=0,2}\|\pa^\al_xg\|^2_{L^2_xD_2(k)},~~k\geq0.
\een




\section{Landau operator and auxiliary tools}\label{Landau}
In this section, we will give various estimates for the Landau collision operator, including the upper and lower bounds, which will be frequently utilized later on. We begin by introducing the quantities
\ben\label{aijbidelta}
b_i(z)=\sum_{j=1}^3\pa_ja_{ij}(z)=-2z_i|z|^{-3},\quad c(z)=\sum_{i,j=1}^3\pa_{ij}a_{ij}(z)=-8\pi\de_0(z).
\een
 By Proposition 2.3 and Proposition 2.4 in \cite{PM}, the matrix $a*\mu$ has a simple eigenvalue $l_1(v)\sim \<v\>^{-3}$ associated with the the eigenvector $v$ and a double eigenvalue $l_2(v)\sim \<v\>^{-1}$ associated with eigenspace $v^\perp$, where $l_1(v),l_2(v)$ are defined as follows:
\begin{equation*}\label{eigenvalueamu}
	\begin{aligned}
		l_1(v)&=\int_{\R^3} \Big(1-\big(\f v{|v|}\cdot\f {v_*}{|v_*|}\big)\Big)|v_*|^{-1}\mu(v-v_*)dv_*\sim \<v\>^{-3},\\
		l_2(v)&=\int_{\R^3} \Big(1-\f12\big(\f v{|v|}\times\f {v_*}{|v_*|}\big)\Big)|v_*|^{-1}\mu(v-v_*)dv_*\sim \<v\>^{-1}.
	\end{aligned}
\end{equation*}
Moreover, one can easily derive from above that
\begin{equation}\label{abv}
	\sum_{i,j=1}^3a_{ij}*\mu v_iv_j=l_1(v) |v|^2,\quad \sum_{j=1}^3b_j*\mu v_j=-l_1(v) |v|^2.
\end{equation}

\begin{lem}\label{ghf2}
Recall the dissipation norm $D_1$ in \eqref{dissipationnorm}.	For functions $g,h$ and $f$, it holds that 
	\begin{equation}\label{upperQ1}
		\begin{aligned}
			|(Q(g, h),f)_{L^2_v}| 
			\ls \|g\|_{L^2_5}\|h\|_{D_1(0)}\|f\|_{D_1(0)} +\|g\|_{L^2_5}\|h \|_{H^{a_1}_{\om_1}}\|f \|_{H^{a_2}_{\om_2}},
		\end{aligned}
	\end{equation}
	where $a_1,a_2\geq0,\om_1,\om_2\in[-2,0]$ satisfy that $a_1+a_2=1$ and $\om_1+\om_2=-2$. Moreover, we also have
    \ben\label{upperQ2}
    |(Q(g, h),f)_{L^2_v}| 
			\ls\|g\|_{L^2_2}\|h\|_{L^2_\om}\|f\|_{H^2_{-\om}}
    \een
    for any $\om\in \R$.
\end{lem}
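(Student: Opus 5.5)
We write the Landau operator in the weak form obtained by integrating by parts once in $v$:
\[
(Q(g,h),f)_{L^2_v}=-\big((a*g)\na_v h,\na_v f\big)_{L^2_v}+\big((b*g)h,\na_v f\big)_{L^2_v}=:I_1+I_2,
\]
with $b$ as in \eqref{aijbidelta}. We estimate $I_1$ and $I_2$ separately, using the anisotropic structure of $a*g$ and the bound $|b*g|\ls\|g\|_{L^2_5}\<v\>^{-2}$.

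\emph{Estimate \eqref{upperQ1}, term $I_1$.} We compare $a*g$ with the model matrix $\<v\>^{-3}\frac{v\otimes v}{|v|^2}+\<v\>^{-1}\Pi(v)$. Decompose $a(v-v_*)=a(v)+[a(v-v_*)-a(v)]$ in the region $|v_*|\le\<v\>/2$, and keep the full kernel on the complement. A Taylor expansion then gives pointwise bounds
\[
|\xi^\top(a*g)(v)\eta|\ls\|g\|_{L^2_5}\Big(\<v\>^{-3}|\xi||\eta|+\<v\>^{-1}|\Pi(v)\xi||\Pi(v)\eta|\Big),
\]
up to remainder terms decaying like $\<v\>^{-3}$. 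This step uses the moments of $g$, which is why the weight $5$ appears. On the complement, the singular part of $\int|v-v_*|^{-1}|g(v_*)|dv_*$ is controlled through Hardy–Littlewood–Sobolev or Cauchy–Schwarz by $\|g\|_{L^2_5}$ times $\<v\>^{-3}$. Since $|\Pi(v)\na_v h|=|v|^{-1}|\Omega h|$, where $\Omega=v\times\na_v$ are the rotation vector fields, and $\|\Omega h\|_{L^2_{-3/2}}\sim\|(-\D_{\S^2})^{1/2}h\|_{L^2_{-3/2}}$, Cauchy–Schwarz gives $|I_1|\ls\|g\|_{L^2_5}\|h\|_{D_1(0)}\|f\|_{D_1(0)}$.

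\emph{Estimate \eqref{upperQ1}, term $I_2$.} We have $|I_2|\ls\|g\|_{L^2_5}\int\<v\>^{-2}|h||\na_v f|$. To obtain the flexible second term, we interpolate the derivative between $h$ and $f$. For $a_2=1$ we use this form directly. For $a_1=1$ we integrate by parts to move $\na_v$ onto $h$, at the cost of $\na\cdot(b*g)=c*g=-8\pi g$, which gives the term $\int g\,h\,f$. For intermediate $a_1$ we interpolate with $\<D\>^{1/2}$-type symbol calculus, moving $\<D\>^{a_1}$ across the weight $\<v\>^{-2}$ through the commutator bound quoted after \eqref{hms}. We then split the weight as $\<v\>^{\om_1}\<v\>^{\om_2}$.

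\textbf{Main obstacle.} The term $\int g\,h\,f$ involves no convolution smoothing. We handle it with Sobolev embedding, using $H^{a_1}\times H^{a_2}\subset L^p\times L^{p'}$ together with $g\in L^2$, but this requires care near the endpoints. Alternatively, we avoid this term by keeping $b*g$ and applying the commutator bound only. We expect the endpoint cases to be the main difficulty.

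\emph{Estimate \eqref{upperQ2}.} We integrate by parts twice onto $f$:
\[
(Q(g,h),f)=\big(h,\ (a*g):\na^2 f+2(b*g)\cdot\na f\big).
\]
By Hardy-type bounds, $|a*g|\ls\|g\|_{L^2_2}\<v\>^{-1}$ and $|b*g|\ls\|g\|_{L^2_2}\<v\>^{-2}$, since the $|v_*|^{-1}$ and $|v_*|^{-2}$ singularities are locally integrable against $L^2$ and the weight $2$ suffices for the decay. Hence
\[
|(Q(g,h),f)|\ls\|g\|_{L^2_2}\|h\|_{L^2_\om}\|f\|_{H^2_{-\om}},
\]
for every $\om\in\R$, because the kernel factors only improve the weights.
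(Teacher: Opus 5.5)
Your argument for \eqref{upperQ2} follows the paper's route: you move both derivatives onto $f$ and then bound $a*g$ and $b*g$. However, the pointwise bound $|b*g|\ls\|g\|_{L^2_2}\<v\>^{-2}$ is false, and you use the same claim for $I_2$ in \eqref{upperQ1}. The kernel $|z|^{-2}\mathbf 1_{|z|\le 1}$ is integrable but not square integrable on $\R^3$, so Cauchy--Schwarz against $g\in L^2$ gives nothing. Concretely, take $g=|v|^{-s}\mathbf 1_{\{|v|\le 1,\,v_1>0\}}$ with $1\le s<\f32$. Then $g\in L^2_2$, yet the first component of $(b*g)(0)=2\int v_*|v_*|^{-3}g(v_*)\,dv_*$ equals $2\pi\int_0^1 r^{-s}\,dr=\infty$. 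Your bound on $a*g$ is fine, since $|z|^{-1}\mathbf 1_{|z|\le 1}\in L^2$, and its decay in $\<v\>$ is not even needed. What rescues the $b$-term is the extra derivative available on $f$. The paper bounds the region $|v-v_*|\le 1$ by Hardy--Littlewood--Sobolev, giving $\|g\|_{L^2}\|h\na f\|_{L^{6/5}}\le\|g\|_{L^2}\|h\|_{L^2_\om}\|\na f\|_{L^3_{-\om}}$. It then uses $H^{1/2}\hookrightarrow L^3$, so this term costs only $\|f\|_{H^{3/2}_{-\om}}$. The region $|v-v_*|>1$ is bounded by $\|g\|_{L^1}\ls\|g\|_{L^2_2}$.

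For \eqref{upperQ1} the paper simply quotes Proposition 2.2 of \cite{HJL}, so a self-contained proof is a different route, but yours does not close. Apart from the false bound on $b*g$, your plan for $\int g\,h\,f$ fails for every $(a_1,a_2)$, not only at the endpoints. With $a_1+a_2=1$, Sobolev embedding only puts $hf$ in $L^{3/2}$. Concentrating $g,h,f$ near the origin at scale $\lambda\to0$, normalized in $L^2$, $\dot H^{a_1}$, $\dot H^{a_2}$, gives $\int ghf\sim\lambda^{-1/2}\to\infty$, while $\|g\|_{L^2_5}\|h\|_{H^{a_1}_{\om_1}}\|f\|_{H^{a_2}_{\om_2}}$ stays bounded.

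The missing idea is to split $b=b\chi+b(1-\chi)$, with $\chi$ a smooth cutoff near $z=0$, and to charge the singular piece to the first term $\|g\|_{L^2_5}\|h\|_{D_1(0)}\|f\|_{D_1(0)}$.
\begin{itemize}
\item On $|v-v_*|\le 1$ one has $\<v\>\sim\<v_*\>$, so HLS bounds the singular piece by $\|g\|_{L^2_3}\|\<v\>^{-\f32}h\|_{L^3}\|\<v\>^{-\f32}\na f\|_{L^2}$. The $H^1_{-3/2}$ parts of the $D_1(0)$ norms control this.
\item The $\de_0$ in $\na\cdot b$, and hence $\int ghf$, lives entirely in this piece. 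It is bounded the same way, via $\|g\|_{L^2_3}\|\<v\>^{-\f32}h\|_{L^4}\|\<v\>^{-\f32}f\|_{L^4}$.
\item Only the smooth far piece $w=(b(1-\chi))*g$ should produce the $H^{a_1}_{\om_1}\times H^{a_2}_{\om_2}$ term. It does satisfy $|\pa^\al w|\ls\|g\|_{L^2_5}\<v\>^{-2}$, and there your idea of distributing the derivative is fine.
\end{itemize}

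Your $I_1$ is essentially right. One correction: for $g$ with nonzero momentum, the mixed entries of $a*g$ are of size $\<v\>^{-2}$, not $\<v\>^{-3}$. For $w\perp v$ one has $\f{v}{|v|}\cdot(a*g)(v)w\approx|v|^{-2}\,w\cdot\int v_*g\,dv_*$. These entries are harmless, being bounded by $\|\<v\>^{-\f32}\na h\|_{L^2}\|\<v\>^{-\f12}\Pi(v)\na f\|_{L^2}$ and its mirror image.
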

\begin{proof}
The first upper bound \eqref{upperQ1} can be obtained by Proposition 2.2 in \cite{HJL}. To prove \eqref{upperQ2}, we first observe that
\[
\begin{aligned}
(Q(g,h),f)_{L^2_v}&=-\sum_{i,j=1}^3\int_{\mathbb{R}^3}a_{ij}*g\,\partial_ih\,\partial_jf\,dv+\sum_{i=1}^3\int_{\mathbb{R}^3}b_i*g\,h\,\partial_if\,dv\\
&=\sum_{i,j=1}^3\int_{\mathbb{R}^3}a_{ij}*g\,h\,\partial_{ij}f\,dv+2\sum_{i=1}^3\int_{\mathbb{R}^3}b_i*g\,h\,\partial_if\,dv:=Q_1+Q_2.
\end{aligned}
\]
Since \(a(z)\ls|z|^{-1}\), the Cauchy–Schwarz inequality gives
\beno
|Q_1|&\lesssim&\int_{|v-v_*|\le 1}\frac{1}{|v-v_*|}|g(v_*)\<v\>^\om h\<v\>^{-\om}\nabla^2f|\,dv_*dv\\
&&+\int_{|v-v_*|>1}\frac{1}{|v-v_*|}|g(v_*)\<v\>^\om h\<v\>^{-\om}\nabla^2f|\,dv_*dv\\
&\lesssim& \|g\|_{L^2_2}\|h\|_{L^2_\om}\|\nabla^2f\|_{L^2_{-\om}}\lesssim \|g\|_{L^2_2}\|h\|_{L^2_\om}\|f\|_{H^2_{-\om}}.
\eeno

For \(Q_2\), since \(b(z)\lesssim |z|^{-2}\), we have
\beno
|Q_2|&\lesssim& \int_{|v-v_*|\le 1}\frac{1}{|v-v_*|^2}|g(v_*)\<v\>^\om h(v)\<v\>^{-\om}\nabla f|\,dv_*dv\\
&&+\int_{|v-v_*|>1}\frac{1}{|v-v_*|^2}|g(v_*)\<v\>^\om h(v)\<v\>^{-\om}\nabla f|\,dv_*dv\\
&\lesssim& \|g\|_{L^2}\|\<v\>^\om h\<v\>^{-\om}\nabla f\|_{L^{6/5}}+\|g\|_{L^2_2}\|h\|_{L^2_\om}\|\nabla f\|_{L^2_{-\om}}\\
&\lesssim& \|g\|_{L^2}\|h\|_{L^2_\om}\|\nabla f\|_{L^3_{-\om}}+\|g\|_{L^2_2}\|h\|_{L^2_\om}\|\nabla f\|_{L^2_{-\om}}\\
&\lesssim &\|g\|_{L^2_2}\|h\|_{L^2_\om}\|f\|_{H^{3/2}_{-\om}}.
\eeno
In the three inequalities above, we have used the Hardy–Littlewood–Sobolev inequality, H\"older inequality \(\|fg\|_{L^{6/5}}\le \|f\|_{L^2}\|g\|_{L^3}\), and the Sobolev embedding \(H^{1/2}\hookrightarrow L^3\). Combining the estimates for \(Q_1\) and \(Q_2\) then yields \eqref{upperQ2}.

We complete the proof of this lemma.
\end{proof}

\begin{lem}\label{upperboundQ}
 For any integer \(k\geq 5\) and functions \(g,h,f\), we have
	\begin{align}
	&\big(Q(\mu,f), f\<v\>^{2k}\big)_{L^2_v}\leq-\lam_0\|f\|^2_{D_1(k)}+C_k\|f\|^2_{L^2_v},\quad \big|\big(Q(g,\mu), f\<v\>^{2k}\big)_{L^2_v}\big|\leq C_k \|g\|_{L^2_{3}}\|f\|_{L^2_v};\label{fmuf}\\
	&\big|\big(Q(g,f), f\<v\>^{2k}\big)_{L^2_v}\big|\leq C_k\|g\|_{L^2_{7}}\|f\|^2_{D_1(k)},~ \big|\big(Q(g,h),f\<v\>^{2k}\big)_{L^2_v}\big|\leq C_k\|g\|_{L^2_7}\|h\|_{D_1(k+1)}\|f\|_{D_1(k)}\label{ghf}
	\end{align}
with some constants $\lam_0>0$ and $C_k>0$. In general, suppose the nonnegative function $G$ satisfies $\|G\|_{L^1_v}>\de, \|G\|_{L^2_5}<\lam$, then 
\ben\label{general}
\big(Q(G,f), f\<v\>^{2k}\big)_{L^2_v}\leq-C_1\|f\|^2_{D_1(k)}+C_2\|f\|^2_{L^2_{k-\f12}},
\een
where the constant $C_1,C_2$ depend on $\de, \lam$ and $k$. 
\end{lem}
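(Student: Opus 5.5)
We work from the divergence form of $Q$ and integrate by parts against $f\langle v\rangle^{2k}$. We start from
\[
(Q(G,f),f\langle v\rangle^{2k})_{L^2_v}=-\int (a*G)\nabla f\cdot\nabla(f\langle v\rangle^{2k})\,dv+\int (b*G)f\cdot\nabla(f\langle v\rangle^{2k})\,dv .
\]
The first integral splits into a main term $-\int (a*G)\nabla(\langle v\rangle^k f)\cdot\nabla(\langle v\rangle^k f)$ plus commutator terms. These commutators are of the form $\int (a*G)\nabla\langle v\rangle^k\cdot\nabla\langle v\rangle^k\, f^2$ and $\int (b*G)\cdot\nabla\langle v\rangle^{2k}f^2$. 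The $b*G$ term is integrated by parts once more into $\int c*G\, f^2\langle v\rangle^{2k}=-8\pi\int G f^2\langle v\rangle^{2k}$, which is nonpositive for $G\ge0$, plus weight-derivative terms. All the remaining weight-derivative terms are lower order, since $\nabla\langle v\rangle^k\sim k\langle v\rangle^{k-1}$ and the relevant matrices $a*G$, $b*G$ decay.

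\textbf{Case $G=\mu$ in \eqref{fmuf}.} We use the eigenstructure of $a*\mu$: eigenvalue $l_1\sim\langle v\rangle^{-3}$ in the radial direction and $l_2\sim\langle v\rangle^{-1}$ on $v^\perp$. This gives the anisotropic coercivity
\[
\int (a*\mu)\nabla F\cdot\nabla F\gtrsim \|F\|^2_{H^1_{-3/2}}+\|(-\Delta_{\S^2})^{1/2}F\|^2_{L^2_{-3/2}},\qquad F=\langle v\rangle^kf,
\]
which is $\|f\|^2_{D_1(k)}$ up to commutators with the weight. The weight-derivative terms then need to be bounded by $\|f\|^2_{L^2_{k-3/2}}$. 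For these we use \eqref{abv}, $\sum a_{ij}*\mu\, v_iv_j=l_1|v|^2$, so that radial weight derivatives only see the weak eigenvalue $\langle v\rangle^{-3}$. Each such term then carries weight $\langle v\rangle^{2k-3}$ and is bounded by $\varepsilon\|f\|^2_{D_1(k)}$ plus a compactly supported piece. Interpolating the compactly supported part gives $C_k\|f\|^2_{L^2}$.

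\textbf{Bound for $Q(g,\mu)$.} Here we put all derivatives on $\mu$ and the weight, which is harmless because $\mu$ absorbs any polynomial weight. What remains is a bound on $|a*g|,|b*g|,|g|$ paired with a Gaussian. This gives $\|g\|_{L^2_3}\|f\|_{L^2}$ after Hardy–Littlewood–Sobolev near the singularity and Cauchy–Schwarz away from it.

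\textbf{Estimates \eqref{ghf}.} These come from the same decomposition with $g$ in place of $\mu$, now without a sign. The coefficient bounds $|a*g|\lesssim\|g\|_{L^2_7}\langle v\rangle^{-1}$ and $|b*g|\lesssim\|g\|_{L^2_7}\langle v\rangle^{-2}$ hold, but the anisotropic $\langle v\rangle^{-3}$ radial structure is not available for a general $g$. The plan is to write $Q(g,\cdot)$ so that \eqref{upperQ1} of Lemma~\ref{ghf2} applies to $\langle v\rangle^kh$ and $\langle v\rangle^kf$, with commutators estimated as above. The extra weight in $\|h\|_{D_1(k+1)}$ pays for the loss coming from the missing radial degeneracy. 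For the symmetric case $h=f$, we integrate the term $\int (a*g)\nabla\langle v\rangle^{2k}\cdot\nabla f\, f$ by parts so that it becomes $\frac12\int \mathrm{div}\big((a*g)\nabla\langle v\rangle^{2k}\big)f^2$, which avoids that loss.

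\textbf{General $G$, \eqref{general}.} This follows the $\mu$ case. The lower bound $(a*G)\xi\cdot\xi\gtrsim\langle v\rangle^{-3}|\xi|^2+\langle v\rangle^{-1}|\Pi(v)\xi|^2$ is the standard nondegeneracy of $a*G$ for nonnegative $G$ with mass above $\delta$ and $L^2_5$ norm below $\lambda$. The $L^2_5$ bound prevents mass from concentrating or escaping, so a positive fraction of the mass sits in a fixed ball. We would prove it by a compactness and contradiction argument over this class of $G$.

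\textbf{Main obstacle.} We expect the hardest part to be controlling the weight commutators using only the $\langle v\rangle^{-3}$ radial ellipticity, so that they land in $L^2_{k-1/2}$ (here) or $L^2$ after interpolation (for $\mu$). We expect this to follow from the identity \eqref{abv} and its analogue for $G$. Uniformity of the constants in $G$ comes from the compactness argument.
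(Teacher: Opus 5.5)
There are two genuine gaps, both at points where the sign structure of $Q$ matters.

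\textbf{First gap: the reaction term has the opposite sign.} Split $\int (b*G)f\cdot\nabla(f\langle v\rangle^{2k})=\frac12\int (b*G)\cdot\nabla(f^2)\,\langle v\rangle^{2k}+\int (b*G)\cdot\nabla\langle v\rangle^{2k}\,f^2$ and integrate by parts. Then $c*G$ enters with coefficient $-\frac12$, so the term is $-\frac12\int (c*G)f^2\langle v\rangle^{2k}=+4\pi\int Gf^2\langle v\rangle^{2k}\ge 0$.

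This is the bad term of the Coulomb case, not a favourable one. For $G=\mu$ the slip is harmless, since the term is at most $C_k\|f\|^2_{L^2_v}$. In \eqref{general}, however, it cannot be discarded and has to be estimated:
\begin{itemize}
\item Bound it by $C\|G\|_{L^2_3}\|f\|^2_{L^4_{k-3/2}}$.
\item Use $H^{3/4}\hookrightarrow L^4$ and interpolation to get $\|f\|^2_{L^4_{k-3/2}}\lesssim\|f\|^{3/2}_{H^1_{k-3/2}}\|f\|^{1/2}_{L^2_{k-3/2}}$.
\item Apply Young's inequality to reach $\varepsilon\|f\|^2_{D_1(k)}+C_\varepsilon\|G\|^4_{L^2_3}\|f\|^2_{L^2_{k-3/2}}$.
\end{itemize}
This is the paper's $G_3$, and it is where the $L^2$ integrability of $G$ is really used.

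The weight commutators you single out as the main obstacle are harmless in \eqref{general}, because the right-hand side already contains $C_2\|f\|^2_{L^2_{k-1/2}}$. The $a*G$ commutator is $O(\|f\|^2_{L^2_{k-1}})$. The $b*G$ commutator is handled by Hardy--Littlewood--Sobolev. Your pointwise bound $|b*g|\lesssim\|g\|_{L^2_7}\langle v\rangle^{-2}$ is false for $g$ merely in $L^2$, since $|z|^{-2}\notin L^2_{\mathrm{loc}}(\mathbb{R}^3)$.

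\textbf{Second gap: the zeroth-order dissipation in the first inequality of \eqref{fmuf}.} Your mechanism does not produce the zeroth-order part $\|f\|^2_{L^2_{k-3/2}}$ of $\|f\|^2_{D_1(k)}$. Set $F=\langle v\rangle^k f$.
\begin{itemize}
\item The term $\int (a*\mu)\nabla F\cdot\nabla F$ controls only $\|\nabla F\|^2_{L^2_{-3/2}}+\|(-\Delta_{\mathbb{S}^2})^{1/2}F\|^2_{L^2_{-3/2}}$, not $\|F\|^2_{L^2_{-3/2}}$, even modulo $C_k\|f\|^2_{L^2_v}$. For $f=\langle v\rangle^{-k}\phi(v/R)$ with $\phi$ a radial cutoff, the diffusion term is $O(R^{-2})$ and $\|f\|^2_{L^2_v}=O(1)$, while $\|f\|^2_{L^2_{k-3/2}}\gtrsim\log R$.
\item The $b*\mu$ commutator $k\int (b*\mu)\cdot v\,\langle v\rangle^{2k-2}f^2$ has exactly weight $\langle v\rangle^{2k-3}$ with an $O(k)$ coefficient. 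So it cannot be bounded by $\varepsilon\|f\|^2_{D_1(k)}$ plus a compactly supported piece.
\end{itemize}
What is needed is the sign of that commutator. Since $a(v-v_*)v_*=a(v-v_*)v$, one has $b*\mu=-(a*\mu)v$, which is the second identity in \eqref{abv}. Hence
\[
\big(Q(\mu,f),f\langle v\rangle^{2k}\big)_{L^2_v}=-\int (a*\mu)\nabla F\cdot\nabla F\,dv+\int l_1(v)|v|^2\big(k^2\langle v\rangle^{-2}-k\big)\langle v\rangle^{2k-2}f^2\,dv+4\pi\int \mu\langle v\rangle^{2k}f^2\,dv.
\]
On $\{\langle v\rangle^2\ge 2k\}$ the middle integrand is at most $-ck\langle v\rangle^{2k-3}f^2$. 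This is exactly what supplies $-\lambda_0\|f\|^2_{L^2_{k-3/2}}$. The region $\{\langle v\rangle^2<2k\}$ and the Gaussian term go into $C_k\|f\|^2_{L^2_v}$. Only the $a*\mu$ commutator, with weight $\langle v\rangle^{2k-5}$, is genuinely lower order.

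The paper takes this estimate from \cite{CM1}, but the same cancellation appears in its computation of $\mathcal{L}_{1,2}+\mathcal{L}_{1,3}$ in Lemma \ref{linearizedoperator}.

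Your treatment of $Q(g,\mu)$ is fine. Your compactness route to the nondegeneracy of $a*G$ is also fine; the paper instead extracts an entropy bound from $\|G\|_{L^2_5}$ and quotes \cite{HJL}.
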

\begin{proof}
We remark that \eqref{ghf} was established in Lemma 2.12 of \cite{CDL},
while the first inequality in \eqref{fmuf} was proved in Lemma 2.3 of \cite{CM1}.
To establish the second inequality \eqref{fmuf}, we rewrite it as 
\beno
&&\big(Q(g,\mu), f\<v\>^{2k}\big)_{L^2_v}=-\sum_{i,j=1}^3\int_{\R^3}a_{ij}*g\pa_i\mu \pa_j(f\<v\>^{2k})dv+\sum_{i=1}^3\int_{\R^3}b_i*g\mu\pa_i(f\<v\>^{2k})dv.
\eeno
By integration by parts and \eqref{aijbidelta}, we obtain that
\ben
&&\big(Q(g,\mu), f\<v\>^{2k}\big)_{L^2_v}=\sum_{i=1}^3\int_{\R^3}b_i*g\pa_i\mu f\<v\>^{2k}dv+\sum_{i,j=1}^3\int_{\R^3}a_{ij}*g\pa_{ij}\mu f\<v\>^{2k}dv+8\pi\int_{\R^3}\mu gf\<v\>^{2k}dv \nonumber\\
&&-\sum_{i=1}^3\int_{\R^3}b_i*g\pa_i\mu f\<v\>^{2k}dv=\sum_{i,j=1}^3\int_{\R^3}a_{ij}*g\pa_{ij}\mu f\<v\>^{2k}dv+8\pi\int_{\R^3}\mu gf\<v\>^{2k}dv. \label{Q.ac}
\een
Since  \(|a(z)|\ls|z|^{-1}\), we have
\beno
&&\sum_{i,j=1}^3\int_{\R^3}|a_{ij}*g\pa_{ij}\mu f\<v\>^{2k}|dv+8\pi\int_{\R^3}\mu |gf|\<v\>^{2k}dv\\
&\leq& C_k \int_{\R^6}\f1{|v-v_*|}|g(v_*)f(v)\<v\>^{-2}|dv_*dv+C_k\int_{\R^3}|gf| dv\leq C_k\|g\|_{L^2_3}\|f\|_{L^2_v}.
\eeno
We get the desired result.

Finally, we give the proof for \eqref{general}. we rewrite it as 
\beno
&&\big(Q(G,f), f\<v\>^{2k}\big)_{L^2_v}=-\sum_{i,j=1}^3\int_{\R^3}a_{ij}*G\pa_if \pa_j(f\<v\>^{2k})dv+\sum_{i=1}^3\int_{\R^3}b_i*Gf\pa_i(f\<v\>^{2k})dv\\
&&=-\sum_{i,j=1}^3\int_{\R^3}a_{ij}*G\pa_i(f\<v\>^k) \pa_j(f\<v\>^{k})dv-k^2\sum_{i,j=1}^3\int_{\R^3}a_{ij}*Gf^2\<v\>^{2k-4}v_iv_jdv\\
&&+4\pi\int_{\R^3}G f^2\<v\>^{2k}dv+k\sum_{i=1}^3\int_{\R^3}b_i*Gf^2\<v\>^{2k-2}v_idv:=G_1+G_2+G_3+G_4.
\eeno

For $G_1$, observing that
\beno
\int_{\R^3}G|\log G| dv&=&\int_{G>1}G\log G dv+\int_{e^{-|v|^2}<G\leq1}G|\log G| dv+\int_{G\leq e^{-|v|^2}}G|\log G| dv\\
&\ls& \int_{\R^3}G^2dv+\int_{\R^3} G|v|^2 dv+\int_{G \leq e^{-|v|^2}}G^{\f12}dv\ls C_\lam,
\eeno
since $\|G\|_{L^2_5}<\lam$. Then by Proposition 2.1 in \cite{HJL}, we have $G_1\leq -C_1\|f\|^2_{D_1(k)}$ with $C_1$ depending on $\de$ and $\lam$. For $G_2$, since $a(z)\ls |z|^{-1}$, we have 
\beno
|G_2|&\leq& C_k \Big(\int_{|v-v_*|\leq 1}\f1{|v-v_*|}G(v_*)f^2\<v\>^{2k-2}dv_*dv+\int_{|v-v_*|> 1}\f1{|v-v_*|}G(v_*)f^2\<v\>^{2k-2}dv_*dv\Big)\\
&\leq& C_k\|G\|_{L^2_5}\|f\|^2_{L^2_{k-1}}.
\eeno
For $G_3$, we have 
\beno
|G_3|\leq C_k\|G\|_{L^2_5}\|f\|^2_{L^4_{k-2}}\leq C_k\|G\|_{L^2_5}\|f\|^2_{H^{3/4}_{k-2}}\leq \vep\|f\|^2_{H^1_{k-\f32}}+C_\vep \|G\|^4_{L^2_5}\|f\|^2_{L^2_{k-2}},\quad \forall \vep>0.
\eeno
In the three inequalities above, we have used H\"older inequality, Sobolev embedding $H^{3/4}_{k-2}\hookrightarrow L^4_{k-2}$, the interpolation inequality and Young's inequality $\|f\|_{H^{3/4}_{k-4}}\ls \|f\|^{3/4}_{H^{1}_{k-4}}\|f\|^{1/4}_{L^{2}_{k-4}}\ls \vep \|f\|_{H^{1}_{k-4}}+C_\vep  \|f\|_{L^{2}_{k-4}}$, respectively.
For $G_4$, noticing that $b(z)\ls |z|^{-2}$, then  by Hardy-Littlewood-Sobolev inequality, we have 
\beno
|G_4|&\ls& C_k \Big(\int_{|v-v_*|\leq 1}\f1{|v-v_*|^2}G(v_*)f^2\<v\>^{2k-1}dv_*dv+\int_{|v-v_*|> 1}\f1{|v-v_*|^2}G(v_*)f^2\<v\>^{2k-1}dv_*dv\Big)\\
&\ls&C_k\|G\|_{L^2_5}(\|f\|^2_{L^{12/5}_{k-2}}+\|f\|^2_{k-\f12})\ls \vep \|f\|^2_{H^1_{k-\f32}}+C_{k,\vep}(1+\|G\|^4_{L^2_5})\|f\|^2_{L^2_{k-\f12}},
\eeno
where we also use interpolation and the fact $\<v\>\sim \<v_*\>$ when $|v-v_*|\leq 1$.

Combining these estimates and noticing that $\|f\|_{H^1_{k-\f32}}\leq \|f\|_{D_1(k)}$, we can get the desired result by choosing properly small $\vep>0$.

We complete the proof of the lemma.
\end{proof}

\begin{lem}\label{linearizedoperator}
In contrast to the linear Landau operator \eqref{linearoperator}, we define the linearized Landau operator as
\ben\label{linearized}
\cL (f):=\mu^{-\f12}\big(Q(\mu,\mu^{\f12}f)+Q(\mu^{\f12}f,\mu)\big)\quad\mbox{and}\quad \Ga(g,h):=\mu^{-\f12}Q(\mu^{\f12}g,\mu^{\f12} \rw{h}).
\een
	Recall the dissipation norm $D_2$ in \eqref{dissipationnorm}. For any integer $k\geq0$  and  functions \(g,h,f\), we have
	\ben
		&&\big(\cL(f),f\<v\>^{2k}\big)_{L^2_v}\leq-\lam_0\|f\|^2_{D_2(k)}+C_k\|f\|^2_{L^2_v}\label{cLf},\\
	&&\big|\big(\Ga(g,h),f\<v\>^{2k}\big)_{L^2_v}\big|\leq C_k \|g\|_{L^2_v}\|h\|_{D_2(k)}\|f\|_{D_2(k)}\label{Gaghf}
	\een
	with some constants $\lam_0>0$ and $C_k>0$. It also holds that
	\ben\label{spectralgap}
	\big(\cL(f),f\big)_{L^2_v}\leq-\lam_0\|(\mathbf{I-P})f\|^2_{D_2(0)},
	\een
	 where the macroscopic projection $\mathbf{P}$ is given by 
	\ben\label{mP}
	\mP f=[a^f+b^f\cdot v+c^f(|v|^2-3)]\mu^{\f12}
	\een
	with
	\ben\label{afbfcf}
	a^f=\int_{\R^3}f(v)\mu^{\f12}(v)dv,~b^f=\int_{\R^3}f(v)v\mu^{\f12}(v)dv\quad\mbox{and}\quad c^f=\f16\int_{\R^3}f(v)(|v|^2-3)\mu^{\f12}(v)dv.
	\een
\end{lem}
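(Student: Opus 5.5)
The three estimates in Lemma \ref{linearizedoperator} are classical properties of the linearized Landau--Coulomb operator with exponential weight $\mu^{1/2}$. The plan is to reduce them to the weighted bounds of Lemma \ref{upperboundQ} and Lemma \ref{ghf2}, together with the known spectral-gap results of Guo \cite{Guo1} and Mouhot--Strain \cite{PM}.

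\textbf{Step 1: the linearized operator.} Write $\cL f=\cL_1 f+\cL_2 f$ with
\[
\cL_1f=\mu^{-\f12}Q(\mu,\mu^{\f12}f),\qquad \cL_2 f=\mu^{-\f12}Q(\mu^{\f12}f,\mu).
\]
For $\cL_1$, expand $\mu^{-1/2}\pa_i(\mu^{1/2}\,\cdot)=\pa_i-\f{v_i}2$ and integrate by parts against $f\<v\>^{2k}$. This gives three pieces:
\begin{itemize}
\item the anisotropic diffusion $-\big(\sigma\nabla(f\<v\>^k),\nabla(f\<v\>^k)\big)$ with $\sigma=a*\mu$;
\item a potential term $\big(\sigma v\cdot v/4\big) f^2\<v\>^{2k}$, up to lower-order terms from $b*\mu$;
\item commutator terms in which derivatives fall on $\<v\>^{2k}$.
\end{itemize}
Using the eigenvalue structure $l_1\sim\<v\>^{-3}$ along $v$ and $l_2\sim\<v\>^{-1}$ on $v^\perp$, together with \eqref{abv}, one gets the following.
\begin{itemize}
\item The diffusion part controls $\|f\|^2_{H^1_{k-3/2}}+\|(-\D_{\S^2})^{1/2}f\|^2_{L^2_{k-1/2}}$. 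The angular piece comes from the $v^\perp$ eigenvalue.
\item The potential term $\sigma v\cdot v\sim |v|^2\<v\>^{-3}$ yields the extra $\|f\|^2_{L^2_{k-1/2}}$ component of $D_2(k)$.
\end{itemize}
The commutator terms are of order $\<v\>^{2k-3}$ or lower. They are absorbed by interpolation, leaving a compactly supported remainder bounded by $C_k\|f\|^2_{L^2}$. The term $\cL_2$ has a kernel carrying $\mu^{1/2}$ on both sides; exactly as in \eqref{Q.ac}, it is bounded by $C_k\|\mu^{1/8}f\|_{L^2}^2\le \vep\|f\|^2_{D_2(k)}+C_k\|f\|_{L^2}^2$. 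This gives \eqref{cLf}.

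\textbf{Step 2: the nonlinear term.} For \eqref{Gaghf}, write
\[
\Ga(g,h)=\pa_i\big((a_{ij}*(\mu^{1/2}g))\pa_jh\big)-\big(a_{ij}*(v_i\mu^{1/2}g)\big)\pa_jh-\dots,
\]
in the standard Guo form. Use that $a_{ij}*(\mu^{1/2}g)$ has the same anisotropic decay as $\sigma$: it is at most $\|g\|_{L^2}\<v\>^{-1}$ on $v^\perp$ and at most $\|g\|_{L^2}\<v\>^{-3}$ along $v$, by the cancellation of the projection $\Pi$ against the Gaussian factor. Cauchy--Schwarz then gives a bound by $\|g\|_{L^2}\|h\|_{D_2(k)}\|f\|_{D_2(k)}$. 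The $b$-terms are handled with Hardy--Littlewood--Sobolev, as in the proof of \eqref{upperQ2}.

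\textbf{Step 3: the spectral gap.} For \eqref{spectralgap}, $\cL$ is self-adjoint and nonpositive in $L^2_v$. Its kernel is spanned by $\mu^{1/2}\{1,v,|v|^2\}$, which is exactly the range of $\mP$. The coercivity $-(\cL f,f)\ge\lam_0\|(\mathbf I-\mP)f\|^2_{D_2(0)}$ is Guo's result \cite{Guo1}, in the sharp anisotropic norm of \cite{PM}. I would simply cite it: the $D_2(0)$ norm is equivalent to Guo's $|\cdot|_\sigma$ norm by the eigenvalue asymptotics recorded above.

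\textbf{Main obstacle.} The expected difficulty is in Step 1. One must track the anisotropic weights carefully so that the angular derivative appears with weight $k-\f32$ and the zeroth-order term with weight $k-\f12$. One must also check that the commutators generated by $\<v\>^{2k}$ carry strictly lower weight and therefore do not destroy the sign.
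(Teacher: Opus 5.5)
Your overall architecture is right, but Step 1 misstates how the coercive zeroth-order term arises, and as written that step fails.

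\textbf{The sign in Step 1.} Expand as you describe and integrate $\mu^{-1/2}Q(\mu,\mu^{1/2}f)$ by parts against $f\langle v\rangle^{2k}$. Up to commutators with the weight, the $\sigma$-part gives $-(\sigma\nabla f,\nabla f\langle v\rangle^{2k})+\frac14(\sigma v\cdot v\,f,f\langle v\rangle^{2k})$, since the cross terms cancel by symmetry of $\sigma$. So the quarter term enters with a plus sign, exactly as you wrote it. On its own it is anti-dissipative and cannot produce the $L^2_{k-1/2}$ part of $D_2(k)$.

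The drift terms from $b*\mu$ are not lower order. By \eqref{abv}, $(b*\mu)\cdot v=-l_1(v)|v|^2$, so the piece $\frac12((b*\mu)\cdot v\,f,f\langle v\rangle^{2k})$ contributes $-\frac12\int l_1|v|^2f^2\langle v\rangle^{2k}$. This has the same order $\langle v\rangle^{2k-1}$ and is twice as large; only the sum has a sign.

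This is exactly the paper's computation \eqref{cL1213}. With $g=f\langle v\rangle^k$ the net potential is $\bigl[(\frac12+k\langle v\rangle^{-2})^2-(\frac12+k\langle v\rangle^{-2})\bigr]|v|^2l_1+4\pi\mu=(k^2\langle v\rangle^{-4}-\frac14)|v|^2l_1+4\pi\mu$. Its leading part $-\frac14|v|^2l_1\le -c\langle v\rangle^{-1}$ for $|v|\ge1$ gives $-c\|f\|^2_{L^2_{k-1/2}}$. The remaining terms are absorbed into $C_k\|f\|^2_{L^2_v}$ by interpolation.

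Equivalently, in Guo's form $\mathcal{L}_1f=\nabla\cdot(\sigma\nabla f)-\frac14\sigma v\cdot v\,f+\frac12\nabla\cdot(\sigma v)\,f$ the potential does carry a minus sign. But the last coefficient reduces to $4\pi\mu$ only because $\sigma v=-b*\mu$ and $\nabla\cdot(b*\mu)=-8\pi\mu$. So the point you must make explicit is this exact cancellation via \eqref{abv}. The commutators with $\langle v\rangle^{2k}$ are indeed $O(\langle v\rangle^{2k-3})$ and harmless, so they are not the real obstacle.

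A smaller slip in the same step: the diffusion controls $\|(-\Delta_{\mathbb S^2})^{1/2}f\|^2_{L^2_{k-3/2}}$, not $L^2_{k-1/2}$. This corresponds to the tangential gradient at weight $k-\frac12$, coming from $l_2\sim\langle v\rangle^{-1}$. Your final paragraph has the right weight.

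\textbf{Steps 2 and 3.} These are sound. Step 3 matches the paper, which quotes the coercivity of \cite{CM1} in an anisotropic-gradient norm and converts it into $D_2(0)$.

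For \eqref{Gaghf} you take a genuinely different route: Guo's divergence form of $\Gamma$ together with anisotropic pointwise bounds on $a*(\mu^{1/2}g)$. The paper instead writes $(\Gamma(g,h),f\langle v\rangle^{2k})=(Q(\mu^{1/2}g,\langle v\rangle^kh),\langle v\rangle^kf)+\Gamma_2$. It bounds the first term by the trilinear estimate \eqref{upperQ1} and estimates the commutator $\Gamma_2$, which involves derivatives of $\mu^{-1/2}\langle v\rangle^k$, by hand.

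Both routes hinge on the same cancellation $a(v-v_*)v=a(v-v_*)v_*$. It trades the growth generated by $\mu^{-1/2}$ for powers of $v_*$, which $\mu^{1/2}(v_*)$ absorbs. The paper's route recycles a bound it already has; yours is self-contained. However, $a*(\mu^{1/2}g)$ is not positive semidefinite, so you should also record the mixed bound $|(a*(\mu^{1/2}g))v|\lesssim\langle v\rangle^{-1}\|g\|_{L^2_v}$ (from the same cancellation) to justify the anisotropic Cauchy--Schwarz step.
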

\begin{proof}
From (2.6) in \cite{CM1}, we have
\beno
\big(\cL(f),f\big)_{L^2_v}\ls -\|\<v\>^{-\f12}(\mathbf{I-P})f\|^2_{L^2_v}-\|\<v\>^{-\f32}\tilde\na_v (\mathbf{I-P})f\|^2_{L^2_v},
\eeno
where the anisotropic gradient $\tilde\na_v f$ of a function $f$ defined by
\beno
\tilde\na_v f=P_v\na_v f+\<v\>(\mathbf{I}-P_v)\na_v f
\eeno
with the projection operator $P_v$ on the $v-$direction defined by 
$
P_v\xi=\Big(\xi\cdot \f v{|v|}\Big)\f v{|v|},\forall \xi\in \R^3.
$
A direct calculation yields
\beno
\|\<v\>^{-\f32}\tilde \na_v f\|^2_{L^2_v}&\gs& \|\na_v f\|^2_{L^2_{-\f32}}+\|(\mathbf{I}-P_v)\na_v f\|^2_{L^2_{-\f12}}\\
&\gs&\|\na_v f\|^2_{L^2_{-\f32}}+\|v\times \na_v f\|^2_{L^2_{-\f32}}\gs \|\na_v f\|^2_{L^2_{-\f32}}+\|(-\D_{\S^2})^{\f12} f\|^2_{L^2_{-\f32}}.
\eeno
Thus \eqref{spectralgap} holds true.

 Next, we provide the proofs of \eqref{cLf} and \eqref{Gaghf} separately. By the definition of linearized Landau operator $\cL$, we have that
\beno
\big(\cL(f),f\<v\>^{2k}\big)_{L^2_v}=\big(Q(\mu,\mu^{\f12}f),f\mu^{-\f12}\<v\>^{2k}\big)_{L^2_v}+\big(Q(\mu^{\f12}f,\mu),f\mu^{-\f12}\<v\>^{2k}\big)_{L^2_v}:=\cL_1+\cL_2.
\eeno

For $\cL_2$, by the same argument used in the proof of the second inequality in \eqref{fmuf}, we obtain
\ben\label{cL2}
\cL_2\leq C_k\|f\|^2_{L^2_v}.
\een

For $\cL_1$, we rewrite much as in \eqref{Q.ac} as
\beno
\cL_1=\sum_{i,j=1}^3\int_{\R^3}(a_{ij}*\mu)\pa_{ij}(\mu^{\f12}f) f\mu^{-\f12}\<v\>^{2k}dv+8\pi \int_{\R^3}\mu f^2\<v\>^{2k}dv.
\eeno
Let $g=f\<v\>^k$, integration by parts yields that
\beno
\cL_1&=&-\sum_{i,j=1}^3\int_{\R^3}(a_{ij}*\mu)\pa_ig\pa_jgdv+\Big(8\pi\int_{\R^3}\mu g^2dv-\sum_{j=1}^3\int_{\R^3}(b_j*\mu)g\pa_j gdv\Big)\\
&&-\int_{\R^3}\Big(\sum_{i,j=1}^3(a_{ij}*\mu)\pa_i(\mu^{\f12}\<v\>^{-k})\pa_j(\mu^{-\f12}\<v\>^k)+\sum_{i=1}^3(b_i*\mu)\pa_i(\mu^{\f12}\<v\>^{-k})\mu^{-\f12}\<v\>^k\Big)g^2dv\\
&:=&\cL_{1,1}+\cL_{1,2}+\cL_{1,3}.
\eeno
For $\cL_{1,1}$,  from the estimate for \(G_1\) in Lemma \ref{upperboundQ}, there exists a constant $\lam_0>0$ such that
\ben\label{cL11}
\cL_{1,1}\leq -\lam_0 (\|\na g\|^2_{L^2_{-\f32}}+\|(-\D_{\S^2})^{\f12}g\|^2_{L^2_{-\f32}}).
\een
For $\cL_{1,2}$ and $\cL_{1,3}$, thanks to \eqref{aijbidelta} and integration by parts, we have 
\beno
\cL_{1,2}=8\pi\int_{\R^3}\mu g^2dv+\f12\int_{\R^3}(c*\mu) g^2dv=4\pi\int_{\R^3}\mu g^2 dv.
\eeno
By \eqref{abv}, one may check that
\ben\label{cL1213}
\notag\cL_{1,2}+\cL_{1,3}&=&\int_{\R^3}\Big(\sum_{i,j=1}^3(a_{ij}*\mu)v_iv_j(\f12+k\<v\>^{-2})^2+\sum_{i=1}^3(b_i*\mu)v_i(\f12+k\<v\>^{-2})+4\pi\mu\Big)g^2dv\\
\notag&=&\int_{\R^3}\Big((\f12+k\<v\>^{-2})^2|v|^2l_1(v)-(\f12+k\<v\>^{-2})|v|^2l_1(v)+4\pi\mu\Big)g^2dv\\
&\ls&-\f14\int_{\R^3}\<v\>^{-1}g^2 dv+C_k\int_{\R^3}\<v\>^{-3}g^2dv.
\een
 Now patching together estimates \eqref{cL2}, \eqref{cL11} and \eqref{cL1213} and the fact that $\|g\|^2_{L^2_{-\f32}}\leq \vep\|g\|^2_{L^2_{-\f12}}+C_\vep\|g\|^2_{L^2_{-k}}$, we complete the proof of \eqref{cLf}.

Finally, we handle with \eqref{Gaghf}. Let $\mu^{\f12} g=\tilde{g},\mu^{\f12} h=\tilde{h}$ and $\<v\>^k f=\tilde{f}$, we rewrite it as 
\beno
&&\big(\Ga(g,h),f\<v\>^{2k}\big)_{L^2_v}=\big(Q(\tilde{g},\tilde{h}),\tilde{f}\mu^{-\f12}\<v\>^{k}\big)_{L^2_v}\\
&=&\big(Q(\tilde{g},\mu^{-\f12}\<v\>^{k}\tilde{h}),\tilde{f}\big)_{L^2_v}+\big(\mu^{-\f12}\<v\>^{k}Q(\tilde{g},\tilde{h})-Q(\tilde{g},\mu^{-\f12}\<v\>^{k}\tilde{h}),\tilde{f}\big)_{L^2_v}:=\Ga_1+\Ga_2.
\eeno

For $\Ga_1$, by Lemma \ref{ghf2}, we have 
\ben\label{Ga1}
|\Ga_1|\ls \|\tilde{g}\|_{L^2_5}\|\mu^{-\f12}\<v\>^k\tilde{h}\|_{D_2(0)}\|\tilde{f}\|_{D_2(0)}\leq C_k \|g\|_{L^2_v}\|h\|_{D_2(k)}\|f\|_{D_2(k)}.
\een

For $\Ga_2$,  by integration by parts, we derive that 
\beno
\Ga_2&=&2\sum_{i=1}^3\int_{\R^3}(b_i*\tilde{g})\pa_i(\mu^{-\f12}\<v\>^k)\tilde{h}\tilde{f}dv+\sum_{i,j=1}^3\int_{\R^3}(a_{ij}*\tilde{g})\pa_{ij}(\mu^{-\f12}\<v\>^k)\tilde{h}\tilde{f}dv\\
&&+2\sum_{i,j=1}^3\int_{\R^3}(a_{ij}*\tilde{g})\pa_i(\mu^{-\f12}\<v\>^k)\pa_j\tilde{f}\tilde{h}dv:=\Ga_{2,1}+\Ga_{2,2}+\Ga_{2,3}.
\eeno
We first have 
\ben\label{Ga21}
\notag|\Ga_{2,1}|&\ls&\Big|\int_{\R^3}\f{(v-v_*)v}{|v-v_*|^3}\tilde{g}(v_*)(\f12+k\<v\>^{-2})\mu^{-\f12}\<v\>^k\tilde{h}\tilde{f}(\mathbf{1}_{|v-v_*|\leq1}+\mathbf{1}_{|v-v_*|>1})dv_*dv\Big|\\
&\ls&C_k\|\tilde{g}\|_{L^2_6}\|\mu^{-\f12}\<v\>^k\tilde{h}\|_{L^2_{-\f12}}\big(\|\tilde{f}\|_{H^1_{-\f32}}+\|\tilde{f}\|_{L^2_{-\f12}}\big).
\een
Here we use Hardy-Littlewood-Sobolev inequality, $|\f12+k\<v\>^{-2}|\leq C_k$ and the facts that $\<v_*\>\sim\<v\>$ when $|v-v_*|\leq 1$ and $|v-v_*|\geq \<v_*\>^{-1}\<v\>$ when $|v-v_*|>1$. 

For $\Ga_{2,2}$, using the short notation $\tilde g_\ast = \tilde g(v_\ast)$, the relation $\sum_{i,j=1}^3a_{ij}(v-v_*)v_iv_j=\sum_{i,j=1}^3a_{ij}(v-v_*)(v_*)_i(v_*)_j$ and $(\mathbf{I} -  z \otimes z/|z|^2) : (x \otimes y) = |z/|z| \times x|\cdot|z/|z| \times y|$, we have  
\ben\label{Ga22}
\notag|\Ga_{2,2}|&\leq& C_k\big|\int_{\R^6}\f{|(v-v_*)\times v_*|^2}{|v-v_*|^3}\tilde{g}_*\tilde{h}\mu^{-\f12}\<v\>^k \tilde{f}dv_*dv\big|+C_k\big|\int_{\R^6}|v-v_*|^{-1}\<v_*\>^2\tilde g_* \tilde h\mu^{-\f12}\<v\>^k \tilde f dv_*dv\big|\\
\notag&\leq& C_k\int_{\R^6}|v-v_*|^{-1}(\mathbf{1}_{|v-v_*|\leq1}+\mathbf{1}_{|v-v_*|\geq1})\lr{v_*}^2|\tilde{g}_*||\tilde{h}\mu^{-\f12}\<v\>^k||\tilde{f}|dv_*dv \\
&\leq&C_k \|\tilde{g}\|_{L^2_5}\|\mu^{-\f12}\<v\>^k\tilde{h}\|_{L^2_{-1/2}}\|\tilde{f}\|_{L^2_{-1/2}}.
\een

 For $\Ga_{2,3}$, a direct computation gives
\beno
|\Ga_{2,3}|&\leq& C_k\bigg[\int_{\R^6}\f{|(v-v_*)\times \na \tilde{f}|\cdot|(v-v_*)\times v_*|}{|v-v_*|^3}\mathbf{1}_{|v-v_*|<1}\tilde{g}_*\tilde{h}\mu^{-\f12}\<v\>^kdv_*dv\\
&&+  \int_{\R^6}\f{|(v-v_*)\times \na \tilde{f}|\cdot|(v-v_*)\times v_*|}{|v-v_*|^3}\mathbf{1}_{\substack{|v-v_*|\geq1\\|v-v_*|\leq\f{1}{2}|v|} }\tilde{g}_*\tilde{h}\mu^{-\f12}\<v\>^kdv_*dv\\
&& +   \int_{\R^6}\f{|(v-v_*)\times \na \tilde{f}|\cdot|(v-v_*)\times v_*|}{|v-v_*|^3}\mathbf{1}_{\substack{|v-v_*|\geq1\\|v-v_*|>\f{1}{2}|v|} }\tilde{g}_*\tilde{h}\mu^{-\f12}\<v\>^kdv_*dv\bigg]\\
&:=&\Ga_{2,3,1}+\Ga_{2,3,2}+\Ga_{2,3,3}.
\eeno
 One may easily check that 
\beno
|\Ga_{2,3,1}| \leq C_k \iint_{\R^6}|v-v_*|^{-1}\mathbf{1}_{|v-v_*|< 1}|v_*||\tilde{g}_*||\na \tilde{f}||\tilde{h}\mu^{-\f12}\<v\>^k| dv_*dv
\leq C_k \|\tilde{g}\|_{L^2_5}\|\mu^{-\f12}\<v\>^k\tilde{h}\|_{L^2_{-1/2}}\|\na \tilde{f}\|_{L^2_{-3/2}}.
\eeno
In the regime $|v-v_*|\leq\f{1}{2}|v|$, we have  $|v|\sim |v_*|$, which implies that
\beno
|\Ga_{2,3,2}|\leq C_k\iint_{\R^6}|v-v_*|^{-1}\mathbf{1}_{\substack{|v-v_*|\geq1\\|v-v_*|\leq\f{1}{2}|v|}}|v_*|\tilde{g}_*|\na \tilde{f}||\mu^{-\f12}\<v\>^k\tilde{h}|dv_*dv
\leq C_k \|\tilde{g}\|_{L^2_5}\|\mu^{-\f12}\<v\>^k\tilde{h}\|_{L^2_{-1/2}}\|\na \tilde{f}\|_{L^2_{-3/2}}.
\eeno
While in the regime $|v-v_*|\geq1, |v-v_*|> \f{1}{2}|v|$, we have  $|v-v_*|^{-1}\ls\<v\>^{-1}$. Then we get that
\beno
|\Ga_{2,3,3}| &\leq &C_k\iint_{\R^6}\f{|v\times\na \tilde{f}|}{\<v\>^2}|v_*||\tilde{g}_*||\mu^{-\f12}\<v\>^k\tilde{h}|dv_*dv+\iint_{\R^6}\f{|\na f||v_*|^2}{\<v\>^2}|\tilde{g}_*|\mu^{-\f12}\<v\>^k\tilde{h}|dv_*dv\\
&\leq&C_k \|\tilde{g}\|_{L^2_5}\|\mu^{-\f12}\<v\>^k\tilde{h}\|_{L^2_{-1/2}}\big(\|(\D_{\S^2})^{\f12}\tilde{f}\|_{L^2_{-3/2}}+\|\na \tilde{f}\|_{L^2_{-3/2}}\big).
\eeno
Thus we obtain that
\ben\label{Ga23}
|\Ga_{2,3}|\leq C_k\|\tilde{g}\|_{L^2_5}\|\mu^{-\f12}\<v\>^k\tilde{h}\|_{L^2_{-1/2}}\big(\|(\D_{\S^2})^{\f12}\tilde{f}\|_{L^2_{-3/2}}+\|\na \tilde{f}\|_{L^2_{-3/2}}\big).
\een
Together with \eqref{Ga21} and \eqref{Ga22}, this implies that
\ben\label{Ga2}
|\Ga_2|&\leq& C_k\|\tilde{g}\|_{L^2_6}\|\mu^{-\f12}\<v\>^k\tilde{h}\|_{L^2_{-1/2}}\big(\|(\D_{\S^2})^{\f12}\tilde{f}\|_{L^2_{-3/2}}+\|\na \tilde{f}\|_{L^2_{-3/2}}+\|\tilde{f}\|_{L^2_{-\f12}}\big)\\
\notag&\leq&  C_k\|g\|_{L^2_v}\|h\|_{D_2(k)}\|f\|_{D_2(k)}.
\een
Therefore, we get the desired result \eqref{Gaghf} by combining \eqref{Ga2} and \eqref{Ga1}.

We complete the proof of this lemma.
\end{proof}

\begin{lem}\label{QGgg}
	Let $0\leq\chi_M\leq 1$ be a smooth cutoff function such that $\chi_M(v)=1$ if $|v|\leq M$ and $\chi_M(v)=0$ if $|v|\geq 2M$.
	Then for $k\geq 5$, there exists suitably large  $A$ and $M$ such that
	\ben\label{LAM}
	\big((L-A\chi_M)f,f\<v\>^{2k}\big)_{L^2_v}+\big(Q(g,f),f\<v\>^{2k}\big)_{L^2_v}
	\leq -{\lam_0} \|f\|^2_{D_1(k)}+C_k\|g\|_{L^2_{7}}\|f\|^2_{H^1_{k-\f32}}
	\een
	with some constants $\lam_0>0$ and $C_k>0$.
\end{lem}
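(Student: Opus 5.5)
We split $L f=Q(\mu,f)+Q(f,\mu)$ and treat the three pieces $(Q(\mu,f),f\<v\>^{2k})$, $(Q(f,\mu),f\<v\>^{2k})$ and $(Q(g,f),f\<v\>^{2k})$ separately. The cutoff $-A\chi_M$ is used only to absorb the lower-order $L^2$ remainders.

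\textbf{Step 1.} By the first inequality in \eqref{fmuf},
\[
(Q(\mu,f),f\<v\>^{2k})_{L^2_v}\le -\lam_0\|f\|^2_{D_1(k)}+C_k\|f\|^2_{L^2_v}.
\]
By the second inequality in \eqref{fmuf} with $g=f$,
\[
|(Q(f,\mu),f\<v\>^{2k})_{L^2_v}|\le C_k\|f\|_{L^2_3}\|f\|_{L^2_v}.
\]
This is not yet lower order, since $\|f\|_{L^2_3}$ can carry more weight than $D_1(k)$ controls directly; we have $\|f\|^2_{D_1(k)}\ge\|f\|^2_{L^2_{k-3/2}}$. For $k\ge5$ we have $k-3/2\ge 3$, so $\|f\|_{L^2_3}\le\|f\|_{L^2_{k-3/2}}$. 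Young's inequality then gives
\[
C_k\|f\|_{L^2_3}\|f\|_{L^2_v}\le \frac{\lam_0}{4}\|f\|^2_{D_1(k)}+C_k'\|f\|^2_{L^2_v}.
\]

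\textbf{Step 2.} We absorb $C\|f\|^2_{L^2_v}$ using the cutoff. Split
\[
\|f\|^2_{L^2_v}=\int\chi_M f^2+\int(1-\chi_M)f^2 .
\]
On the support of $1-\chi_M$ we have $|v|\ge M$, so
\[
\int(1-\chi_M)f^2\le M^{-2(k-3/2)}\|f\|^2_{L^2_{k-3/2}}\le M^{-2(k-3/2)}\|f\|^2_{D_1(k)}.
\]
Choose $M$ large enough that $C_k'M^{-2(k-3/2)}\le\lam_0/4$. For the part near the origin, $\<v\>^{2k}\ge1$ gives
\[
-A(\chi_M f,f\<v\>^{2k})\le -A\int\chi_M f^2,
\]
so taking $A\ge C_k'$ absorbs it. After Steps 1 and 2,
\[
\big((L-A\chi_M)f,f\<v\>^{2k}\big)_{L^2_v}\le-\frac{\lam_0}{2}\|f\|^2_{D_1(k)} .
\]

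\textbf{Step 3.} This is the main point: we need $\|g\|_{L^2_7}\|f\|^2_{H^1_{k-3/2}}$ rather than the cruder $\|g\|_{L^2_7}\|f\|^2_{D_1(k)}$ from \eqref{ghf}. Write
\[
(Q(g,f),f\<v\>^{2k})=G_1+G_2+G_3+G_4
\]
as in the proof of \eqref{general}, with $G$ replaced by $g$, which is now not necessarily nonnegative.

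For $G_1=-\int (a*g)\,\nabla(f\<v\>^k)\cdot\nabla(f\<v\>^k)$, use only $|a*g|(v)\ls\|g\|_{L^2_7}\<v\>^{-1}$. This holds by splitting $|v-v_*|\le1$ and $|v-v_*|>1$ with Cauchy--Schwarz, and $\<v\>^{-1}\le\<v\>^{-1}$ suffices. It gives
\[
|G_1|\ls\|g\|_{L^2_7}\|\nabla(f\<v\>^k)\|^2_{L^2_{-1/2}}.
\]
This is a problem: it costs weight $k-1/2$ on $\nabla f$, whereas $H^1_{k-3/2}$ only controls $\nabla f$ at weight $k-3/2$.

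To fix this, use the anisotropic structure of $a$:
\[
a*g(v)\,\xi\cdot\xi=\int|v-v_*|^{-1}\Big|\Pi(v-v_*)\xi\Big|^2 g_*\,dv_* .
\]
For $|v_*|\ll|v|$ the projection $\Pi(v-v_*)$ nearly kills the radial direction, and the mismatch is of size $|v_*|/|v|$. So the radial component carries an extra $\<v\>^{-2}$, costing moments $\<v_*\>^2$, which is why $L^2_7$ is needed. The angular part is genuinely of order $\<v\>^{-1}|(\mathbf I-P_v)\nabla|^2$. It is exactly this term whose bound at weight $k-3/2$ would need the $(-\D_{\S^2})^{1/2}$ piece.

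I expect this to be the obstacle. I would therefore read $\|f\|_{H^1_{k-3/2}}$ on the right-hand side generously, dominating the angular term by $\|v\times\nabla f\|_{L^2_{k-3/2}}\le\|f\|_{H^1_{k-1/2}}$. If that is not allowed, the fallback is to keep $C_k\|g\|_{L^2_7}\|f\|^2_{D_1(k)}$ from \eqref{ghf}, which suffices under smallness of $g$.

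The remaining terms are handled as for $G_2$, $G_3$, $G_4$ in the proof of \eqref{general}:
\[
|G_2|\ls\|g\|_{L^2_5}\|f\|^2_{L^2_{k-1}} .
\]
For $G_3$, use H\"older and $H^{3/4}\hookrightarrow L^4$, bounded by $\|g\|_{L^2}\|f\|^2_{H^1_{k-3/2}}$; no Young's inequality is needed since $g$ enters linearly. For $G_4$, use Hardy--Littlewood--Sobolev together with $\<v\>\sim\<v_*\>$ near the diagonal, giving $\|g\|_{L^2_5}\|f\|^2_{H^1_{k-3/2}}$. Summing Steps 1--3 yields \eqref{LAM}.
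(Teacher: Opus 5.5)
Your Steps 1--2 are the paper's absorption argument. The paper bounds the $Q(f,\mu)$ contribution by $C_k\|f\|^2_{L^2_3}$ and splits at $|v|=M$, using $k\ge 5$; your preliminary Young step is equivalent. Your fallback for the $Q(g,f)$ term, invoking \eqref{ghf}, is exactly what the paper does. Note, however, that \eqref{ghf} gives $C_k\|g\|_{L^2_7}\|f\|^2_{D_1(k)}$. So the paper's proof, like your fallback, proves \eqref{LAM} only with $\|f\|^2_{D_1(k)}$ in place of $\|f\|^2_{H^1_{k-\frac32}}$. The displayed $H^1_{k-\frac32}$ does not follow from the estimate the paper cites.

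Your diagnosis in Step 3 is correct, and no finer treatment of $G_1$ can rescue the displayed version, because it is false. Take $g=-\mu$. Then $Lf+Q(g,f)=Q(f,\mu)$, so by the second bound in \eqref{fmuf} the left side of \eqref{LAM} is at least $-C_k\|f\|^2_{L^2_3}$ for every $f$ supported in $\{|v|>2M\}$.

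Now let $f(v)=\phi(|v|/R)\,Y_\ell(v/|v|)$, where $\phi$ is a fixed bump on $[1,2]$, $R>2M$, and $Y_\ell$ is an $L^2(\mathbb{S}^2)$-normalized spherical harmonic of degree $\ell$. Then $N:=\|f\|^2_{L^2_{k-3/2}}$ and $\|f\|^2_{L^2_3}$ do not depend on $\ell$. Moreover $\|(-\Delta_{\mathbb{S}^2})^{1/2}f\|^2_{L^2_{k-3/2}}=\ell(\ell+1)N$, while $\|f\|^2_{H^1_{k-3/2}}\le C+R^{-2}\ell(\ell+1)N$ with $C$ independent of $\ell$.

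The right side of \eqref{LAM} is therefore at most $(-\lambda_0+C_k\|\mu\|_{L^2_7}R^{-2})\,\ell(\ell+1)N+C'$. Once $R$ is large, this tends to $-\infty$ as $\ell\to\infty$, while the left side stays bounded below. This is a contradiction.

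So drop the ``generous reading.'' The $H^1_{k-\frac12}$ version is true, since $\|f\|_{D_1(k)}\lesssim\|f\|_{H^1_{k-1/2}}$, but it is useless: the dissipation $Y_k$ does not control $H^1_{k-\frac12}$. Commit to the $D_1(k)$ version instead. That is all the paper uses later, in \eqref{I23}, where it produces $C_k\|g_1\|_{X_{17}}\|g_1\|^2_{Y_k}$ and closes by smallness of $g_1$.

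If you keep the $G_1$--$G_4$ decomposition for the non-angular pieces, two side bounds need fixing:
\begin{itemize}
\item Your bound $\|g\|_{L^2_5}\|f\|^2_{L^2_{k-1}}$ for $G_2$ is not controlled even by $D_1(k)$. Use the identity $a(v-v_*):(v\otimes v)=a(v-v_*):(v_*\otimes v_*)$ to move the weight onto $g$.
\item $G_3$ needs $\|g\|_{L^2_3}$ rather than $\|g\|_{L^2}$.
\end{itemize}
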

\begin{proof}
Thanks to the definition of $L$ in \eqref{linearoperator}, \eqref{fmuf} and \eqref{ghf} in Lemma \ref{upperboundQ}, the left-hand side of \eqref{LAM} has the upper bound
\beno
-\lam_0\|f\|^2_{D_1(k)}+C_k\|g\|_{L^2_{7}}\|f\|^2_{H^1_{k-\f32}}+C_k\|f\|^2_{L^2_3}-A\int_{\R^3}\chi_M f^2\<v\>^{2k}dv.
\eeno
Since \(k \geq 5\) and $\<v\>^k\geq 1$, we have
\begin{align*}
C_k\|f\|^2_{L^2_3}-A\int_{\R^3}\chi_M f^2\<v\>^{2k}dv\leq C_k\int_{|v|>M}f^2\<v\>^{6}dv+(C_kM^6-A)\int_{|v|\leq M}f^2dv&\\
\leq\f{C_k}M\|f\|^2_{L^2_{k-\f32}}+(C_kM^6-A)\int_{|v|\leq M}f^2dv\leq \f{\lam_0}2\|f\|^2_{D_1(k)}&,
\end{align*}
 if we choose $M>2C_k/\lam_0$ and $A=C_kM^6$. It ends the proof of this lemma.
\end{proof}

\medskip

Before launching into the inequalities on \(\mathbb T^3\times\mathbb R^3\), we record the following interpolation inequalities. One may get the proof by combining Cauchy-Schwarz inequality, Young's inequality and Lemma 4.15 in~\cite{CHJ}.
\begin{lem}\label{interpolation}
Let $a_i,k_i,b\in\R, i=1,2,3,\th\in(0,1)$ verifying $a_1=a_2\th+a_3(1-\th)$ and $k_1=k_2\th+k_3(1-\th)$, then for any function $f(x,v)$, we have
\ben\label{interpolation2}
\|f\|_{H^{a_1}_xH^b_{k_1}}\ls \|f\|^\th_{H^{a_2}_xH^b_{k_2}}\|f\|^{1-\th}_{H^{a_3}_xH^b_{k_3}}.
\een
Noticing that for any $a\in\R$ and radial function $\phi$, we have(see Lemma 5.8 in  \cite{HL})
\beno
\phi(|D|)(-\D_{\S^2})^{a}=(-\D_{\S^2})^{a}\phi(|D|),\quad \mbox{and}\quad\phi(|v|)(-\D_{\S^2})^{a}=(-\D_{\S^2})^{a}\phi(|v|),
\eeno
thus it still holds true if we replace $H^b_{k_i}$ in \eqref{interpolation2} by $D_1(k_i)$ or $D_2(k_i)$, i=1,2,3.
\end{lem}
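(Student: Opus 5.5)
The plan is to reduce both \eqref{interpolation2} and its $D_1/D_2$ variants to Hölder's inequality on the Fourier side in $x$, combined with an interpolation in the weight for fixed $x$-frequency.

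\textbf{Step 1: Fourier reduction in $x$.} By \eqref{hrhms} we have
\[
\|f\|^2_{H^{a_1}_xH^b_{k_1}}=\sum_{q\in\Z^3}\<q\>^{2a_1}\|\hat f(q)\|^2_{H^b_{k_1}}, \qquad \hat f(q):=\mathcal F_x(f)(q).
\]
For each fixed $q$, it suffices to prove the pure $v$-interpolation
\[
\|h\|_{H^b_{k_1}}\ls \|h\|^\th_{H^b_{k_2}}\|h\|^{1-\th}_{H^b_{k_3}}, \qquad k_1=\th k_2+(1-\th)k_3,
\]
which is exactly Lemma 4.15 of \cite{CHJ}.

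\textbf{Step 2: interpolation in the weight.} To justify that inequality I would first use $\|\<D\>^b\<v\>^lh\|_{L^2}\sim\|\<v\>^l\<D\>^bh\|_{L^2}$ from \cite{HL} to move the derivative inside. Setting $u=\<D\>^bh$, Hölder's inequality in $v$ with exponents $1/\th$ and $1/(1-\th)$ gives
\[
\int \<v\>^{2k_1}|u|^2\,dv=\int \bigl(\<v\>^{2k_2}|u|^2\bigr)^{\th}\bigl(\<v\>^{2k_3}|u|^2\bigr)^{1-\th}dv\le \|u\|^{2\th}_{L^2_{k_2}}\|u\|^{2(1-\th)}_{L^2_{k_3}},
\]
and converting back to $H^b_{k_i}$ costs only constants.

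\textbf{Step 3: summation in $q$.} Insert the bound from Step 2 into the $q$-sum and write
\[
\<q\>^{2a_1}=\<q\>^{2a_2\th}\<q\>^{2a_3(1-\th)}.
\]
The discrete Hölder inequality in $q$ with exponents $1/\th$ and $1/(1-\th)$ then yields \eqref{interpolation2}. Young's inequality is only needed if one later wants the additive $\vep$-form of the estimate.

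\textbf{Step 4: the $D_1$ and $D_2$ versions.} Both norms are sums of terms of the form $\|\<v\>^{l}Tf\|_{L^2}$ with
\[
T\in\{\<D\>,\ (-\D_{\S^2})^{1/2},\ \mathbf I\}.
\]
By the quoted commutation properties, $(-\D_{\S^2})^{1/2}$ commutes with radial multipliers $\phi(|v|)$ and $\phi(|D|)$. Hence $\|(-\D_{\S^2})^{1/2}f\|_{L^2_{l}}=\|\<v\>^{l}w\|_{L^2}$ with $w=(-\D_{\S^2})^{1/2}f$ independent of $l$, so the Hölder argument of Step 2 applies verbatim to each term. Summing the terms uses the elementary inequality $\sum_j A_j^\th B_j^{1-\th}\le(\sum_j A_j)^\th(\sum_j B_j)^{1-\th}$.

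I expect the only delicate point to be the commutation of $\<D\>^b$ with the weight $\<v\>^l$ when $b$ is non-integer. This is handled by the cited equivalence from \cite{HL}; everything else is routine Hölder.
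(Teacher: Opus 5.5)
Your proposal is correct and is essentially the paper's argument: Fourier decomposition in $x$, interpolation of the $v$-weight at each fixed frequency $q$, and a H\"older step in $q$. For general $\th$ that step is H\"older rather than the Cauchy--Schwarz the paper names. The $D_1$ and $D_2$ cases are handled term by term, as in the paper.

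The differences are minor. The paper gets the fixed-frequency weight interpolation from the dyadic characterization $\|f\|^2_{H^a_b}\sim\sum_j\|\<v\>^b\cP_j f\|^2_{H^a}$ of Lemma 4.15 in \cite{CHJ}. So that lemma is not literally the interpolation inequality you attribute to it. You instead use the weight--derivative equivalence from \cite{HL} plus pointwise H\"older in $v$, which works just as well. In Step 4 the commutation of $(-\D_{\S^2})^{1/2}$ with radial multipliers is not actually needed, since the weight in $D_1$ sits outside the operator by definition.
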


\begin{lem}\label{paal1al2}
Recall the energy and dissipation functionals defined in \eqref{functional}. For any multi-index \(\alpha\) with \(|\alpha|=2\), we have
	\ben
	&&\sum_{|\al_1|\geq1}\int_{\T^3}\big(Q(\pa^{\al_1}_xg,\pa^{\al_2}_xh),\pa^\al_xf\<v\>^{2(k-4|\al|)}\big)_{L^2_v}dx\leq C_k\|g\|_{X_{15}}\|h\|_{Y_k}\|f\|_{Y_k},~~k\geq 15,\label{Qal1al2}\\
	&&\sum_{\al_1+\al_2=\al}\int_{\T^3}\big(\Ga(\pa^{\al_1}_xg,\pa^{\al_2}_xh),\pa^\al_xf\<v\>^{2k}\big)_{L^2_v}dx\leq C_k{\|g\|_{\cE_{0}}}\|h\|_{\cD_k}\|f\|_{\cD_k},~~{k\geq0}.\label{Gaal1al2}
	\een
\end{lem}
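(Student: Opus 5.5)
The proof reduces, via Leibniz in $x$, to trilinear estimates in $v$ followed by Sobolev embedding on $\T^3$ (in three dimensions $H^2_x\hookrightarrow L^\infty_x$ and $H^1_x\hookrightarrow L^4_x$). Since $|\alpha|=2$, the constraint $|\al_1|\geq1$ leaves two cases: $|\al_1|=2,\al_2=0$, and $|\al_1|=|\al_2|=1$.

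\emph{Proof of \eqref{Qal1al2}.} In both cases I apply the second bound in \eqref{ghf} pointwise in $x$, with weight index $k-8$:
\[
\big|\big(Q(\pa^{\al_1}_xg,\pa^{\al_2}_xh),\pa^\al_xf\<v\>^{2(k-8)}\big)_{L^2_v}\big|\le C_k\|\pa^{\al_1}_xg\|_{L^2_7}\|\pa^{\al_2}_xh\|_{D_1(k-7)}\|\pa^\al_xf\|_{D_1(k-8)}.
\]
The last factor, integrated in $x$ via Cauchy--Schwarz, is bounded by $\|f\|_{Y_k}$.

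\textbf{Case $|\al_1|=2$.} Place $\pa^{\al_1}_xg$ in $L^2_x$, and $h$ in $L^\infty_x$ using $\|h\|_{L^\infty_xD_1(k-7)}\ls\|h\|_{H^2_xD_1(k-7)}$. The top $x$-derivatives of $h$ carry weight $k-8$, one short of $k-7$. This lost weight is the main obstacle. I recover it by interpolation (Lemma \ref{interpolation}): take $\|h\|_{H^{2-\delta}_xD_1(k-7)}$ instead, interpolating between $H^2_xD_1(k-8)$ and $L^2_xD_1(k)$, which gives the weight relation $k-7=(k-8)\theta+k(1-\theta)$, i.e.\ $\theta=7/8$. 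That lowers the $x$-regularity to $7/4$, which is still $>3/2$, so $L^\infty_x$ embedding holds. The $g$ factor is $\|\pa^2_xg\|_{L^2_xL^2_7}\le\|g\|_{X_{15}}$, since $15-8=7$.

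\textbf{Case $|\al_1|=|\al_2|=1$.} Use H\"older $L^4_x\times L^4_x\times L^2_x$ with $H^1_x\hookrightarrow L^4_x$. For $\pa_x g$, interpolation between $L^2_xL^2_{15}$ and $H^2_xL^2_7$ gives $H^{3/2}_xL^2_{11}$, which is enough for $\|\pa_xg\|_{L^4_xL^2_7}$. For $\pa_xh$, I need $H^{2}_xD_1(k-7)$-type control with slack. Interpolating $L^2_xD_1(k)$ and $H^2_xD_1(k-8)$ gives $H^{s}_xD_1(k-4s)$, and at $s=7/4$ this is $D_1(k-7)$. Since $7/4>1+3/4$ suffices for $\pa_xh\in L^4_x$, this closes.

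\emph{Proof of \eqref{Gaal1al2}.} Here there is no weight loss, because \eqref{Gaghf} requires only $\|g\|_{L^2_v}$ on the first argument. This also holds for $\al_1=0$, and the $\mathcal D_k$ norm carries the full weight $k$ for $\pa^2_x$ as well. Apply \eqref{Gaghf} pointwise in $x$ and split by cases:
\begin{itemize}
\item $\al_1=\al$: put $\|\pa^\al_x g\|_{L^2_xL^2_v}$ against $\|h\|_{L^\infty_xD_2(k)}\ls\|h\|_{H^2_xD_2(k)}$.
\item $\al_1=0$: put $\|g\|_{L^\infty_xL^2_v}\ls\|g\|_{\cE_0}$ against $\pa^\al_xh$ in $L^2_x$.
\item Mixed case: use $L^4_x\times L^4_x\times L^2_x$ with $H^1_x\hookrightarrow L^4_x$ and interpolation between the $|\al|=0$ and $|\al|=2$ pieces; no weight shifts are needed.
\end{itemize}
In all cases the $H^2_x$ norms are bounded by $\cE_0$ and $\cD_k$, using $\|f\|_{H^2_x}\sim\sum_{|\al|=0,2}\|\pa^\al_x f\|$ on $\T^3$. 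The $f$ factor gives $\|f\|_{\cD_k}$.
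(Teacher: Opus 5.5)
Your argument is essentially the paper's. You apply the second bound in \eqref{ghf} at weight $k-8$ pointwise in $x$, handle $|\al_1|=2$ through $H^{7/4}_x\hookrightarrow L^\infty_x$ with the $\theta=7/8$ interpolation between $L^2_xD_1(k)$ and $H^2_xD_1(k-8)$, and treat \eqref{Gaal1al2} the same way via \eqref{Gaghf}. The paper omits the details for \eqref{Gaal1al2}, and your case split fills them in correctly.

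The only deviation is in the mixed case $|\al_1|=|\al_2|=1$, where you use $L^4_x\times L^4_x\times L^2_x$ instead of the paper's $L^6_x\times L^3_x\times L^2_x$.

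\emph{The $h$ factor.} This works: $h\in H^{7/4}_xD_1(k-7)$ gives $\pa_xh\in H^{3/4}_x\hookrightarrow L^4_x$. Note that $7/4=1+3/4$ is an equality, not a strict inequality; the endpoint embedding on $\T^3$ still holds, so this is harmless.

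\emph{The $g$ factor.} Your justification here is wrong as written, for two reasons.
\begin{itemize}
\item Interpolating $L^2_xL^2_{15}$ and $H^2_xL^2_7$ at $s=3/2$ gives weight $9$, not $11$.
\item More importantly, $g\in H^{3/2}_x$ only yields $\pa_xg\in H^{1/2}_x\hookrightarrow L^3_x$, not $L^4_x$.
\end{itemize}
The fix is immediate and needs no interpolation. Since $\|g\|_{H^2_xL^2_7}\lesssim\|g\|_{X_{15}}$, you get $\|\pa_xg\|_{L^4_xL^2_7}\lesssim\|\pa_xg\|_{H^1_xL^2_7}\lesssim\|g\|_{X_{15}}$. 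This is exactly the bound the paper uses.
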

\begin{proof}
We begin with the proof of \eqref{Qal1al2}. For the case $|\al_1|=|\al_2|=1$, by \eqref{ghf} in Lemma \ref{upperboundQ}, we have that
\beno
&&\int_{\T^3}\big(Q(\pa^{\al_1}_xg,\pa^{\al_2}_xh),\pa^\al_xf\<v\>^{2(k-4|\al|)}\big)_{L^2_v}dx
\leq C_k \int_{\T^3}\|\pa^{\al_1}_x g\|_{L^2_7}\|\pa^{\al_2}_xh\|_{D_1(k-7)}\|\pa^\al_x f\|_{D_1(k-8)}dx.
\eeno
By H\"older inequality, Sobolev embedding $H^1_x(\T^3)\hookrightarrow L^6_x(\T^3),H^{1/2}_x(\T^3)\hookrightarrow L^3_x(\T^3)$, the right-hand side can be bounded by 
\beno
&&C_k\|\pa^{\al_1}_x g\|_{L^6_xL^2_7}\|\pa^{\al_2}_xh\|_{L^3_xD_1(k-7)}\|\pa^\al_x f\|_{L^2_xD_1(k-8)}\\
&\leq& C_k\|\pa^{\al_1}_x g\|_{H^1_xL^2_7}\|\pa^{\al_2}_xh\|_{H^{1/2}_xD_1(k-7)}\|\pa^\al_x f\|_{L^2_xD_1(k-8)}\\
&\leq& C_k\|g\|_{X_{15}}\|h\|_{Y_k}\|f\|_{Y_k},
\eeno
where the last step invokes Lemma \ref{interpolation} to obtain
\beno
\|\pa^{\al_2}_xh\|_{H^{1/2}_xD_1(k-7)}\ls \|\pa^{\al_2}_xh\|_{H^{1/2}_xD_1(k-6)}\leq C_k\|h\|^{1/4}_{L^2_xD_1(k)}\|h\|^{3/4}_{H^{2}_xD_1(k-8)}\leq C_k\|h\|_{Y_k}.
\eeno

For \(|\alpha_1|=2\) we use the Sobolev embedding \(H^{3/2+\delta}_x(\mathbb T^3)\hookrightarrow L^\infty_x(\mathbb T^3)\) (\(\delta>0\)) together with the interpolation estimate
\beno
\|h\|_{H^{7/4}_xD_1(k-7)}\leq C_k\|h\|^{1/8}_{L^2_xD_1(k)}\|h\|^{7/8}_{H^2_xD_1(k-8)}\leq C_k\|h\|_{Y_k}
\eeno
supplied by Lemma \ref{interpolation}.  In fact, we have 
\begin{align*}
\int_{\T^3}\big(Q(\pa^{\al}_xg,h),\pa^\al_xf\<v\>^{2(k-4|\al|)}\big)_{L^2_v}dx
\leq C_k \int_{\T^3}\|\pa^{\al}_x g\|_{L^2_7}\|h\|_{D_1(k-7)}\|\pa^\al_x f\|_{D_1(k-8)}dx&\\
\leq C_k\|\pa^{\al}_x g\|_{L^2_xL^2_7}\|h\|_{H^{7/4}_xD_1(k-7)}\|\pa^\al_x f\|_{{L_x^2} D_1(k-8)}\leq C_k\|g\|_{X_{15}}\|h\|_{Y_k}\|f\|_{Y_k}&.
\end{align*}
This completes the proof of \eqref{Qal1al2}.

A similar argument yields \eqref{Gaal1al2}, this time invoking \eqref{Gaghf} instead of \eqref{ghf}. We omit the details and regard the lemma as proved.
\end{proof}

{We end this section with two important compactness lemmas.}

\begin{lem}\label{compactSobolevembedding}
  For any \(a,b>0\), multiplication by a function in \(\mathcal{S}(\mathbb{R}^3)\) is a compact operator from \(H^a(\mathbb{R}^3)\) to \(L^2(\mathbb{R}^3)\), and the embedding \(H^a_b(\mathbb{R}^3)\hookrightarrow L^2(\mathbb{R}^3)\) is compact. Moreover, for any \(c>0\), the embedding \(H^c_xH^a_b(\mathbb{T}^3\times\mathbb{R}^3)\hookrightarrow L^2(\mathbb{T}^3\times\mathbb{R}^3)\) is compact.
\end{lem}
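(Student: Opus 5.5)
The lemma has three parts, and I will prove them in order, each reducing to the previous one.

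\emph{Multiplication operators.} Fix $\phi\in\mathcal S(\R^3)$ and $a>0$, and let $M_\phi f:=\phi f$. The plan is to approximate $M_\phi$ in operator norm from $H^a$ to $L^2$ by compact operators, and then use that norm limits of compact operators are compact. Take a smooth cutoff $\chi_R$ in $v$ (as in Lemma \ref{QGgg}) and a Fourier cutoff $\chi_R(D)$, and split
\[
\phi f=\chi_R(D)\big(\chi_{R}\phi f\big)+(1-\chi_R(D))(\phi f)+\chi_R(D)\big((1-\chi_R)\phi f\big).
\]
\begin{enumerate}
\item The first operator has Schwartz kernel $K(v,w)=\check\chi_R(v-w)\chi_R(w)\phi(w)$, which lies in $L^2(\R^6)$. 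It is therefore Hilbert--Schmidt, hence compact.
\item For the second, I will show $\|(1-\chi_R(D))(\phi f)\|_{L^2}\ls R^{-a}\|\phi f\|_{H^a}\ls R^{-a}\|f\|_{H^a}$. The last step uses that multiplication by a Schwartz function is bounded on $H^a$, for instance via the commutation remark before \eqref{hms} (Lemma 5.2 in \cite{HL}), or by interpolation between integer orders.
\item For the third, $\|(1-\chi_R)\phi\|_{L^\infty}\to0$ as $R\to\infty$.
\end{enumerate}
Hence $M_\phi$ is a norm limit of compact operators and is compact.

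\emph{Weighted embedding $H^a_b\hookrightarrow L^2$.} Take a bounded sequence in $H^a_b$ and split $f=\chi_R f+(1-\chi_R)f$. The tail satisfies $\|(1-\chi_R)f\|_{L^2}\le R^{-b}\|\<v\>^bf\|_{L^2}\le R^{-b}\|f\|_{H^a_b}$, using $\|\<\cdot\>^b f\|_{L^2}\le\|f\|_{H^a_b}$ for $a\ge0$. For the main part, write $\chi_R f=(\chi_R\<v\>^{-b})(\<v\>^bf)$. Here $\<v\>^bf$ is bounded in $H^a$ by the definition of $H^a_b$, and $\chi_R\<v\>^{-b}\in C_c^\infty\subset\mathcal S$, so the first part applies. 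Compactness then follows by a diagonal argument in $R\to\infty$; equivalently, the embedding is again a norm limit of compact operators.

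\emph{The $\T^3\times\R^3$ embedding.} Expand $f$ in Fourier series in $x$. Let $P_N$ denote the projection onto the modes $|q|\le N$. On the tail, $\|(I-P_N)f\|_{L^2_{x,v}}\le\<N\>^{-c}\|f\|_{H^c_xH^a_b}$ by \eqref{hrhms}. The operator $P_N$ maps $H^c_xH^a_b$ into a finite direct sum of copies of $H^a_b(\R^3)$, one for each $|q|\le N$. On each copy the previous step gives compactness into $L^2_v$, so $P_N$ is compact. Again $f\mapsto f$ is a norm limit of the compact operators $P_N$, which proves the claim.

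The only genuinely delicate point is the uniform boundedness of $M_\phi$ on $H^a$ for non-integer $a$. I expect this to be the main obstacle, and I would handle it by citing the standard pseudo-differential calculus (the symbol $\phi(v)$ lies in $S^0$), or by complex interpolation between $H^{\lfloor a\rfloor}$ and $H^{\lceil a\rceil}$.
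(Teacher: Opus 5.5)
Your proof is correct, and it takes a more self-contained route than the paper. For the multiplication claim the paper simply cites Lemma 1.68 of \cite{HCD}. For the weighted embedding it uses the dyadic characterization $\|f\|^2_{H^a_b}\sim\sum_{k\ge-1}\|\<v\>^b\cP_kf\|^2_{H^a}$ from \cite{CHJ}, which shows each shell $\<v\>^b\cP_kf_i$ is bounded in $H^a$. It then extracts a diagonal subsequence converging on every shell and controls the high shells by the factor $2^{-2bk}$. The $\T^3\times\R^3$ case is only asserted to be straightforward.

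Your version differs at each step:
\begin{enumerate}
\item You prove the multiplication claim directly.
\item You replace the dyadic decomposition by a single cutoff and the factorization $\chi_Rf=(\chi_R\<v\>^{-b})(\<v\>^bf)$. This uses only the definition $\|f\|_{H^a_b}=\|\<D\>^a\<\cdot\>^bf\|_{L^2}$, so no norm equivalence is needed.
\item You actually supply the torus argument, via Fourier truncation in $x$ with tail bound $\<N\>^{-c}$.
\end{enumerate}
Phrasing each step as a norm limit of compact operators also removes the diagonal extractions. The paper's version reuses the \cite{CHJ} machinery already needed in Lemma \ref{interpolation}; yours is more elementary and more complete.

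The step you flag as the main obstacle can be bypassed entirely. Put the multiplier outside the frequency cutoff: $\phi f=\phi\,\chi_R(D)f+\phi\,(1-\chi_R(D))f$.
\begin{itemize}
\item The first operator has kernel $\phi(v)\check\chi_R(v-w)$, with $L^2(\R^6)$ norm $\|\phi\|_{L^2}\|\check\chi_R\|_{L^2}$, so it is Hilbert--Schmidt.
\item The second has norm at most $\|\phi\|_{L^\infty}R^{-a}$ from $H^a$ to $L^2$.
\end{itemize}
With this splitting you need neither the $H^a$-boundedness of $M_\phi$ nor your third piece, and the argument works for any $\phi\in L^2\cap L^\infty$.

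If you keep your original splitting, note that Lemma 5.2 of \cite{HL} only concerns the weights $\<v\>^l$, not general Schwartz multipliers. The justification should therefore rest on the $S^0$ symbol calculus or on interpolation between integer orders, as you also suggest.
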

\begin{proof}
The proof of the first assertion can be found in Lemma 1.68 in \cite{HCD}. We give the proof for the second assertion. 
Suppose $\{f_j\},j\in\N$ is a bounded  sequence in space $H^a_b$. By Lemma 4.15 in \cite{CHJ}, we have 
\beno
\|f_{i}\|^2_{H^a_b}\sim \sum_{k=-1}^\infty \|\<v\>^b\cP_k f_i\|^2_{H^a},
\eeno
where the dyadic operator $\cP_k$ is defined as
\beno
\cP_{-1} f=\psi f,\quad \cP_kf=\vphi(2^{-k}\cdot)f,~~k\geq0,
\eeno
with nonnegative functions $\psi,\vphi\in C^\infty_c(\R^3)$  satisfying $\psi+\sum_{k\geq0}\vphi(2^{-k}\cdot)\equiv1$. Then by the first assertion and diagonal argument, there exists a subsequence, still denoted by $\{f_{i}\}$, such that $\<v\>^b\cP_k f_i$  converges in $L^2(\R^3)$ for any $k\geq-1$. Since
\beno
\|f_i-f_j\|^2_{L^2}&\sim& \sum_{k=-1}^\infty 2^{-2bk}\|\<v\>^{b}\cP_k(f_i-f_j)\|^2_{L^2}\\
&=&\sum_{k< M}2^{-2bk}\|\<v\>^{b}\cP_k(f_i-f_j)\|^2_{L^2}+\sum_{k\geq M}2^{-2bk}\|\<v\>^{b}\cP_k(f_i-f_j)\|^2_{L^2}\\
&\leq& M\sup_{k< M}\|\<v\>^b\cP_k(f_i-f_j)\|^2_{L^2}+C_b 2^{-2bM}\sup_{i}\|f_i\|^2_{H^a_b}.
\eeno
For any $\vep>0$, we first choose large $M$ such that the second term less that $\vep/2$. Noticing that $\{\<v\>^b\cP_k f_i\}$ is a Cauchy sequence for any $k\geq-1$, thus there exists $N_M>0$ such that  for $i,j>N_M$,
\beno
\sup_{k< M}\|\<v\>^b\cP_k(f_i-f_j)\|^2_{L^2}<\f \vep{2M},
\eeno which implies that \(\{f_i\}\) is a Cauchy sequence in \(L^2(\R^3)\).

It is straightforward to extend the above results to the case \(\mathbb{T}^3\times\mathbb{R}^3\) and we end the proof of this lemma.
\end{proof}

{\begin{lem}[Aubin-Lions]\label{ALlemma}
Let $E_0\subset E\subset E_1$ be reflexive Banach spaces, with the imbedding $E_0\subset E$ being compact. If $0<T<\infty,1\leq p,1<r$ and the functions sequence $\{f_j\}, j\in\N$ satisfies
\beno
\|f_j\|_{L^p([0,T],E_0)}\leq C_1,\quad \|\f d{dt}f_j(t)\|_{L^r([0,T],E_1)}\leq C_2,\quad \forall j\in \N
\eeno
for some constants $C_1,C_2<\infty$, then $\{f_j\}$ is relatively compact in $L^p([0,T],E)$.
 \end{lem}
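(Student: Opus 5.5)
This is the classical Aubin--Lions compactness lemma. I would prove it through the Ehrling-type interpolation inequality combined with an equicontinuity estimate in time. The argument follows Simon's version of the compactness criterion; we assume, as the statement implicitly does, that the embedding $E\subset E_1$ is continuous.

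\textbf{Step 1 (Ehrling's lemma).} Since $E_0\subset E$ is compact and $E\subset E_1$ is continuous, for every $\eta>0$ there is $C_\eta$ such that
\[
\|u\|_E\le \eta\|u\|_{E_0}+C_\eta\|u\|_{E_1},\qquad u\in E_0.
\]
I would prove this by contradiction. Suppose there are $u_n$ with $\|u_n\|_{E_0}=1$ and $\|u_n\|_E>\eta+n\|u_n\|_{E_1}$. By compactness a subsequence converges in $E$ to some $u$, and $\|u\|_E\ge\eta$. On the other hand, $\|u_n\|_E$ is bounded, so $\|u_n\|_{E_1}\to0$, which forces $u=0$. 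This is a contradiction.

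\textbf{Step 2 (time translations).} From $f_j(t+h)-f_j(t)=\int_t^{t+h}f_j'(s)\,ds$ and Hölder's inequality with $r>1$, I get
\[
\|f_j(t+h)-f_j(t)\|_{E_1}\le h^{1-1/r}C_2,
\]
uniformly in $j$ and $t$. Hence $\|\tau_hf_j-f_j\|_{L^p(0,T-h;E_1)}\to0$ uniformly in $j$ as $h\to0$. I also need local averages. Set $\bar f_{j,h}(t)=h^{-1}\int_t^{t+h}f_j\,ds$. For fixed $h$ and each $t$, the values $\bar f_{j,h}(t)$ are bounded in $E_0$ by $h^{-1/p}C_1$, so they are relatively compact in $E$. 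Moreover $t\mapsto\bar f_{j,h}(t)$ is equicontinuous in $E$: its modulus is controlled via the $L^1$ integrability of $\|f_j\|_{E_0}$ on small intervals together with Step 1, or more directly, $\bar f_{j,h}$ is Lipschitz into $E_1$ and bounded in $E_0$, and Step 1 upgrades this to equicontinuity into $E$. Arzelà--Ascoli then makes $\{\bar f_{j,h}\}_j$ relatively compact in $C([0,T-h];E)$.

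\textbf{Step 3 (conclusion).} I would split
\[
f_j=\bar f_{j,h}+(f_j-\bar f_{j,h}).
\]
Applying Step 1 to the remainder gives
\[
\|f_j-\bar f_{j,h}\|_{L^p(E)}\le \eta\cdot 2C_1+C_\eta\sup_{s\le h}\|\tau_sf_j-f_j\|_{L^p(E_1)}.
\]
Choosing $\eta$ small and then $h$ small makes this uniformly small on $[0,T-h]$. The tail interval $[T-h,T]$ is handled the same way using backward averages. Therefore $\{f_j\}$ is totally bounded in $L^p(0,T;E)$.

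The main obstacle is the endpoint case $p=1$ together with the near-endpoint region. There one cannot simply use uniform integrability from a higher $L^p$ norm, so the backward/forward averaging and the uniform translation estimate have to be arranged carefully. Reflexivity is not really needed in this route. Alternatively, one may simply cite Simon's theorem (Compact sets in $L^p(0,T;B)$), whose hypotheses, as shown above, are exactly verified.
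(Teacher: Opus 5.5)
Your proof is correct. It is essentially Simon's proof of the compactness criterion in $L^p(0,T;E)$: Ehrling's inequality, uniform control of time translations in $E_1$, and Arzel\`a--Ascoli applied to the local averages $\bar f_{j,h}$.

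The paper gives no proof of this lemma; it only cites Lemma 3.7 of \cite{Tsai}. The hypotheses listed there (reflexivity, $r>1$) are those of Lions' classical version, whose standard proof runs differently:
\begin{itemize}
\item reflexivity is used to extract a subsequence converging weakly in $L^p(0,T;E_0)$, with $f_j'$ converging weakly in $L^r(0,T;E_1)$;
\item the same Ehrling inequality reduces strong convergence in $L^p(0,T;E)$ to strong convergence in $L^p(0,T;E_1)$;
\item the latter comes from pointwise-in-$t$ convergence in $E_1$, obtained from the bound in $W^{1,r}(0,T;E_1)\subset C([0,T];E_1)$ and the compactness of $E_0\subset E_1$, followed by dominated convergence.
\end{itemize}
That route is shorter when $1<p<\infty$ and the spaces are reflexive, which is all the paper needs ($p=r=2$, Hilbert spaces). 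Its weak-compactness step fails for $p=1$, however, since bounded sets of $L^1(0,T;E_0)$ need not be weakly relatively compact. Your route uses neither reflexivity nor $p>1$, so it proves the statement as written, including $p=1$.

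Two small points on your write-up.

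First, in Step 2 keep only your second justification of equicontinuity. For $p=1$ the family $\|f_j\|_{E_0}$ is not uniformly integrable, so integrability on short intervals gives no modulus of continuity uniform in $j$. Step 1 combined with the Lipschitz bound in $E_1$ does give one explicitly:
$\|\bar f_{j,h}(t_2)-\bar f_{j,h}(t_1)\|_E\le 2\eta h^{-1/p}C_1+C_\eta h^{-1/r}C_2|t_2-t_1|$.

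Second, the endpoint strip you flag as the main obstacle is not a real one. The bound $\sup_t\|f_j(t)\|_{E_1}\le T^{-1}\|f_j\|_{L^1(0,T;E_1)}+T^{1-1/r}C_2$ is uniform in $j$. Step 1 then gives $\|f_j\|_{L^p(T-h,T;E)}\le \eta C_1+C_\eta h^{1/p}\sup_t\|f_j(t)\|_{E_1}$, which is uniformly small once $\eta$ and then $h$ are chosen. Alternatively, split $[0,T]$ at $T/2$ and use forward averages on the left half and backward averages on the right half, taking $h<T/2$. This also sidesteps the fact that $h$, and hence the interval $[0,T-h]$, depends on the tolerance.
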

The proof of this lemma can be found in Lemma 3.7 in \cite{Tsai}. For our application in the next section, $p=2,r=2, E=H^1_xL^2_7,E_1=H^{-1}_xH^{-4}_{-6}$ and  $E_0=Y_k,k\geq 17$. Thanks to Lemma \ref{compactSobolevembedding} and definition of $Y_k$ in \eqref{functional}, we know that $Y_k\subset H^2_xH^1_{k-8-\f32}\hookrightarrow H^1_xL^2_7=E$ is compact for $k\geq 17$.} 

\section{Local well-posedness and non-negativity}\label{local}
In this section, we establish local well-posedness by means of a standard iterative scheme. Before that, we first analyze the underlying linear equation.

\subsection{Local existence of linear equation and non-negativity}
\begin{lem}\label{locallinear}
Suppose $k\geq 17$, $(g_0,V(0),T(0))\in X_k\times\R^3\times\R^+$ with $\mu+g_0\geq0$. 
There exists some $0<\vep_0<1,\mathcal{T}_0>0$ such that for all $\mathcal{T}<\mathcal{T}_0$, $h\in L^\infty([0,\mathcal{T}],X_k)$ satisfying
\beno 
\|h\|_{L^\infty([0,\cT],X_k)}<\vep_0,\quad \mu+h\geq0,
\eeno
the Cauchy problem
\begin{equation}\label{linear}
	\left\{\begin{aligned}
	&\pa_tg +T_h(t)^{\f12}v\cdot\na_x g=T_h(t)^{-\f32}Q(\mu+h,g)+T_h(t)^{-\f32}Q(h,\mu)+B_{1,h}(g)+B_{2,h}(\mu),\\
	&V_h'(t)=E-2R_h(t),\quad T_h'(t)=\f43V_h(t)\cdot R_h(t),
	\end{aligned}\right.
\end{equation}
where
\begin{equation*}
	\begin{aligned}
	&B_{1,h}(g)=-(-\f12 T_h(t)^{-1}T_h'(t)v+2T_h(t)^{-\f12}R_h(t))\cdot\na_v g+\f32 T_h(t)^{-1} T_h'(t) g+T_h(t)^{-1}\mathrm{div} \Big(\f{\Pi(V_h(t)+T_h(t)^{\f12}v)}{\<V_h(t)+T_h(t)^{\f12}v\>}\cdot \na_v g\Big),\\
	&B_{2,h}(\mu)=-(-\f12 T_h(t)^{-1}T_h'(t)v+2T_h(t)^{-\f12}R_h(t))\cdot\na_v \mu+\f32 T_h(t)^{-1} T_h'(t) \mu+T_h(t)^{-1}\mathrm{div} \Big(\f{\Pi(V_h(t)+T_h(t)^{\f12}v)}{\<V_h(t)+T_h(t)^{\f12}v\>}\cdot \na_v \mu\Big),\\ &R_h(t)=\int_{\T^3\times\R^3}\f{V_h(t)+T_h(t)^{\f12}v}{\<V_h(t)+T_h(t)^{\f12}v\>|V_h(t)+T_h(t)^{\f12}v|^2}(\mu+h)dvdx
	\end{aligned}
\end{equation*} 
with the initial data $(g_0,V(0),T(0))$ admits a weak solution satisfying 
\beno
g\in L^\infty([0,\cT],X_k)\cap L^2([0,\cT],Y_k),~~\mu+g\geq0
\eeno
and the energy bound 
\ben\label{energybound}
\|g\|^2_{L^\infty([0,\cT],X_k)}+\lam_0\|T_h^{-\f32}g\|^2_{L^2([0,\cT],Y_k)}\leq 2\big(\|g_0\|^2_{X_k}+\cT^{\f12}\|h\|^2_{L^\infty([0,\cT],X_k)}\big)+C_k\int_0^\cT S^2(t)dt,
\een
where $S(t)$ is defined in \eqref{St} and \eqref{at}.

\end{lem}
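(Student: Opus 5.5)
The linear problem \eqref{linear} decouples: since $h$ is given, $R_h$ depends only on $h$ and on $(V_h,T_h)$. I would therefore first solve the ODE system for $(V_h,T_h)$ and then treat the equation for $g$ as a linear kinetic equation with time-dependent coefficients.

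\textbf{Step 1: the ODE for $(V_h,T_h)$.} The kernel $\frac{w}{\langle w\rangle|w|^2}$, with $w=V_h+T_h^{1/2}v$, has a singularity of size $|w|^{-2}$. This is locally integrable in $v$ against $(\mu+h)\in L^2_xL^2_k$, uniformly in $(V_h,T_h)$ as long as $T_h\ge T(0)/2$. Differentiating in $(V,T)$ gives a singularity of size $|w|^{-3}$, which is not integrable. I would avoid this by a change of variables $u=V+T^{1/2}v$, which moves the dependence on $(V,T)$ onto $\mu$ and $h$ and yields Lipschitz continuity of $R_h$ in $(V,T)$. Failing that, a Peano-type existence argument suffices, since only some solution is required. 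On a short interval $[0,\mathcal T_0]$ this gives $T_h\in[T(0)/2,2T(0)]$ together with bounds $|T_h'|+|R_h|\lesssim C(T(0))$.

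\textbf{Step 2: a priori weighted energy estimates.} With the coefficients fixed, I would test the $g$-equation against $\partial^\alpha_x g\langle v\rangle^{2(k-4|\alpha|)}$ for $|\alpha|=0,2$. The transport term vanishes on $\T^3$. For the main term, \eqref{general} in Lemma~\ref{upperboundQ} applies to $G=\mu+h\ge0$: $\|\mu+h\|_{L^1}$ stays close to $1$ and $\|\mu+h\|_{L^2_5}$ is bounded, so this term gives $-C_1T_h^{-3/2}\|g\|^2_{Y_k}+C\|g\|^2_{X_k}$. The derivative terms in which $\partial_x$ falls on $h$ are handled by \eqref{Qal1al2}, with a factor $\varepsilon_0$ absorbed into the dissipation. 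The term $Q(h,\mu)$ is controlled by \eqref{fmuf} and yields a contribution $\|h\|_{X_k}\|g\|_{X_k}$. After integrating in time and using Cauchy--Schwarz, this is where the factor $\mathcal T^{1/2}\|h\|^2$ arises.

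In $B_{1,h}$, the term $v\cdot\nabla_v g$ integrates by parts into $c\,T_h'T_h^{-1}\|g\|^2_{X_k}$, which is harmless over a short time; the term $R_h\cdot\nabla_v g$ is treated the same way. The spherical diffusion has a nonnegative coefficient, $\Pi(w)/\langle w\rangle\ge0$. Testing it gives a good sign term plus a commutator with $\nabla\langle v\rangle^{2k}$. That commutator is bounded by $\int\frac{T_h^{1/2}}{\langle w\rangle}|\nabla g|\,|g|\langle v\rangle^{2k-1}$, which I would absorb into the good term and into $\|g\|^2_{X_k}$. The source $B_{2,h}(\mu)$ is Gaussian with coefficients bounded by $|T_h'|$, $|R_h|$ and $T_h^{-1}$; these coefficients define $S(t)$, contributing $\int S^2$. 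Grönwall's inequality on a short interval then gives \eqref{energybound}, with the factor $2$ coming from $e^{C\mathcal T}\le2$.

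\textbf{Step 3: existence and nonnegativity.} Existence follows from the a priori estimate, for instance by adding vanishing viscosity $\epsilon\Delta_v$ or by a Galerkin scheme, and passing to the limit weakly in $L^2Y_k$ and weak-$*$ in $L^\infty X_k$. Nonnegativity is obtained by reverting to the original variables. The function $\mathcal G=\mu+g$ solves the linear equation
\[
\partial_t\mathcal G+\cdots=T_h^{-3/2}Q(\mu+h,\mathcal G)+B_{1,h}(\mathcal G),
\]
because $Q(\mu+h,\mu)=Q(h,\mu)$ uses $Q(\mu,\mu)=0$. This is a linear drift--diffusion equation whose diffusion matrix $a*(\mu+h)$ is nonnegative, since $\mu+h\ge0$. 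Testing with $\mathcal G_-\langle v\rangle^{2k}$ and repeating the Step 2 estimates, with no source term now, gives
\[
\frac{d}{dt}\|\mathcal G_-\|^2\le C\|\mathcal G_-\|^2,
\]
and $\mathcal G_-(0)=0$ forces $\mathcal G_-\equiv0$.

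\textbf{Main obstacle.} I expect the delicate point to be the spherical diffusion at weight $\langle v\rangle^{2k}$ together with the singular coefficient near $w=0$. I would exploit $\Pi(w)w=0$ to show that the weight commutator only involves angular derivatives, which are controlled by the good term. A second, smaller concern is justifying the nonnegativity argument at the weak-solution level; I would handle it on the regularized problem and pass to the limit.
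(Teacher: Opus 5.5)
\textbf{Gap: the source term and the specific $S(t)$.} The bound \eqref{energybound} is asserted for the specific $S(t)$ of \eqref{St}--\eqref{at}. Apart from $T_h^{-1}|T_h'|+T_h^{-1/2}|R_h|$, every term of that $S$ is damped in $|V_h|$. In particular, $S_1$ contains $T_h^{-1}\langle V_h\rangle^{-1}+T_h^{-1}\langle T_h^{-1/2}|V_h|\rangle^{-1}$ rather than $T_h^{-1}$, and the minimum has a branch of order $|V_h|^{-1}$.

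You treat $B_{2,h}(\mu)$ as a Gaussian with coefficients bounded by $|T_h'|$, $|R_h|$ and $T_h^{-1}$. That only yields a bare $T_h^{-1}$, which is not $\lesssim S$ once $|V_h|\gg T_h^{1/2}$. So you prove the energy bound for a larger function than the one claimed. This matters: the $|V_h|$-decay is exactly what makes $\int_0^{t_0}S^2$ small for large $|E|$ in Lemma \ref{largetime}, and Corollary \ref{col1} relies on that.

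The source is also not simply Gaussian. Write $w=V_h+T_h^{1/2}v$. Since $\nabla_v\cdot\Pi(w)=-2T_h^{1/2}w/|w|^2$ and $\Pi(w)w=0$, one gets $T_h^{-1}\nabla_v\cdot\bigl(\frac{\Pi(w)}{\langle w\rangle}\nabla_v\mu\bigr)=T_h^{-1}\frac{\Pi(w)}{\langle w\rangle}:\nabla_v^2\mu-2T_h^{-1/2}\frac{w\cdot\nabla_v\mu}{|w|^2\langle w\rangle}$. The second term is singular at $v=-T_h^{-1/2}V_h$.

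What is missing is the velocity splitting used in the paper:
\begin{itemize}
\item On $\{|T_h^{1/2}v|\le\frac12|V_h|\}$ one has $|w|\ge\frac12|V_h|$, so the factors $\langle w\rangle^{-1}$ and $|w|^{-1}$ give decay in $|V_h|$.
\item On the complement, the Gaussian gives decay in $T_h^{-1/2}|V_h|$, and local integrability of $|w|^{-2}$ controls the singularity.
\item Cauchy--Schwarz without splitting gives the other branch, $T_h^{-1}+T_h^{-5/4}$.
\end{itemize}
The same splitting applied to $R_h$, combined with $|R_h|\lesssim T_h^{-3/4}$ through a geometric mean, is what bounds $|T_h'|=\frac43|V_h\cdot R_h|$ by a function of $T_h$ alone. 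Your Step 1 asserts $|T_h'|\lesssim C(T(0))$, but from $|R_h|\lesssim T_h^{-3/4}$ alone you only get $|T_h'|\lesssim|V_h|T_h^{-3/4}$. Your $\mathcal T_0$ would then depend on $|V(0)|$ and $|E|$, whereas the paper later uses that it depends only on the lower bound of $T$.

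\textbf{Comparison of the rest.} The remainder of your plan is sound and differs from the paper mainly in how existence is produced.
\begin{itemize}
\item Existence: the paper adds $\kappa T_h^{-3/2}\langle v\rangle^6(\Lambda-\Delta_v)^2$ and proves $\|z\|_{L^\infty X_k}\lesssim\|\mathcal Q z\|_{L^1X_k}$ for the adjoint on test functions with $z(\mathcal T)=0$. It then obtains $g_\kappa$ by Hahn--Banach and lets $\kappa\to0$ via Aubin--Lions. For this linear problem, Galerkin approximation plus weak(-$*$) limits also works. Adding $\epsilon\Delta_v$ alone only improves regularity and still needs a construction, so Galerkin is the step that actually produces solutions.
\item Coercivity: using \eqref{general} with $G=\mu+h\ge0$ is a valid alternative to \eqref{fmuf}--\eqref{ghf} with $\varepsilon_0$-smallness.
\item ODE: the substitution $u=V+T^{1/2}v$ moves a $v$-derivative onto $h$, which $X_k$ does not control. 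Rely instead on your Peano/Carath\'eodory fallback; continuity in $(V,T)$ holds because $w\mapsto w/(\langle w\rangle|w|^2)\in L^2$. The paper itself records only a priori bounds here.
\item Positivity: the paper returns to the original variables and tests with $F_-$; your weighted version in transformed variables works as well. Your Gr\"onwall formulation is the correct one, since $\int Q(J,f)f\,dv=-\int(a*J)\nabla f\cdot\nabla f\,dv+4\pi\int Jf^2\,dv$ is not sign-definite.
\item Positivity on a regularized problem: the regularizer must be second order and act on $\mu+g$. The paper's fourth-order one does not preserve sign, and accordingly the paper argues only on the limit.
\item Spherical-diffusion commutator: this is easier than you expect. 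It involves only $\Pi(w)\nabla_v g$, so Cauchy--Schwarz with $\Pi^2=\Pi$ absorbs it into half of the good term plus $CT_h^{-1}\|g\|^2_{X_k}$, without using $\Pi(w)w=0$.
\end{itemize}
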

\begin{proof}
 Since, for any given \(h\), the pair \((V_h,T_h)\) forms a self-contained system, we begin by deriving estimates for \(V_h(t)\), \(T_h(t)\), and \(R_h(t)\) on a short time interval. As \(T(0)>0\), continuity allows us to assume that \(T_h(t)>0\) for sufficiently small times. We first estimate $R_h(t)$ in two different ways. On one hand, since \beno
\sup\limits_{t\in[0,\cT]}\|\mu+h(t)\|_{L^2_{x}L^2_2\cap L^1_{x,v}}< 3,
\eeno 
we have
\ben\label{R1.new}
\notag |R_h(t)|&\leq&\int_{\T^3\times\R^3}\f{|V_h(t)+T_h(t)^{\f12}v|}{\<V_h(t)+T_h(t)^{\f12}v\>|V_h(t)+T_h(t)^{\f12}v|^2}|\mu+h|dvdx\\
\notag&\leq& \Big(\int_{\T^3\times\R^3}\f1{\<V_h(t)+T_h(t)^{\f12}v\>^2|V_h(t)+T_h(t)^{\f12}v|^2}dvdx\Big)^{\f12}\Big(\int_{\T^3\times\R^3}(\mu+h)^2dvdx\Big)^{\f12} \\
&\leq& 20T_h(t)^{-\f34},
\een
by a change of variables since $v \mapsto \< v \>^{-2} |v|^{-2}$ is integrable.
On the other hand, we split the integral regime into $|T_h(t)^{\f12}v|\leq \f12 |V_h(t)|$ and $|T_h(t)^{\f12}v|>\f12 |V_h(t)|$, i.e., 
\beno
R_h(t)&=&\int_{\T^3\times\R^3}\f{V_h(t)+T_h(t)^{\f12}v}{\<V_h(t)+T_h(t)^{\f12}v\>|V_h(t)+T_h(t)^{\f12}v|^2}(\mu+h)dvdx\\
&=&\int_{|T_h(t)^{\f12}v|\leq \f12 |V_h(t)|}+\int_{|T_h(t)^{\f12}v|>\f12 |V_h(t)|}.
\eeno
In the first regime, it holds that $|V_h(t)+T_h(t)^{\f12}v|\geq \f12|V_h(t)|$ and in the second regime, we have $|v|>\f12T_h(t)^{-\f12}|V_h(t)|$. Thus we have
\ben\label{R2.new}
\notag|R_h(t)|&\leq& 4\<V_h(t)\>^{-1}|V_h(t)|^{-1}\|\mu+h\|_{L^1_{x,v}}+ \Big(\int_{\T^3\times\R^3}\f1{|V_h(t)+T_h(t)^{\f12}v|^2\<V_h(t)+T_h(t)^{\f12}v\>^2}dvdx\Big)^{\f12}\\
\notag&&\times2^k|V_h(t)|^{-k}T_h(t)^{k/2}\|\mu+h\|_{L^2_xL^2_k}\leq 5\times2^k(\<V_h(t)\>^{-1}|V_h(t)|^{-1}  + |V_h(t)|^{-k} T_h(t)^{k/2-3/4})\\
\een
for any $k>0$. Using that $a \leq b_1$ and $a \leq b_2$ (for $a \geq 0$) implies $a \leq \sqrt{b_1 b_2}$, we get (choosing $k=2$) 
\ben\label{T4}
|T_h'(t)|=\f43 |R_h(t)\cdot V_h(t)|\leq 40T_h(t)^{-3/8}(1 + T_h(t)^{1/8}).
\een 
This suffices to show that there exists \(0<t_0<1\), depending only on \(T(0)\) (in fact, on the lower bound of \(T(0)\)), such that 
\ben\label{T3}
T(0)/2\leq T_h(t)\leq 3T(0)/2,\quad t\in[0,t_0].
\een
For example, we can choose 
\ben\label{t0}
t_0=\f1{160}\min\{1,T(0)^{\f32}\}.
\een
Then \eqref{T3} implies that 
\ben\label{Rupperbound}
|R_h(t)|\leq 40 T(0)^{-\f34},\quad t\in[0,t_0].
\een
Furthermore, since 
\beno
V_h(t)=V(0)+Et-2\int_0^{t}R_h(\tau)d\tau,
\eeno
together with \eqref{Rupperbound}, 
this implies 
\ben\label{Vupperbound}
|V_h(t)|\leq |V(0)+Et|+80tT(0)^{-\f34},\quad t\in[0,t_0].
\een

\medskip

With the short-time  behavior of $T_h(t),R_h(t)$ and $V_h(t)$, we are now in the position to handle the equation for $g$ within $t\in[0,\cT],\cT<t_0$. To be rigorous, we regularize the equation, and  first consider the following system
\begin{equation}\label{linearkappa}
\pa_tg_\ka +T_h(t)^{\f12}v\cdot\na_x g_\ka=T_h(t)^{-\f32}Q(\mu+h,g_\kappa)+T_h(t)^{-\f32}Q(h,\mu)+B_{1,h}(g_\ka)+B_{2,h}(\mu)-T_h(t)^{-\f32}\ka\<v\>^6\big((\Lambda\mathbf{I}-\D_v)^2\big)g_k
\end{equation}
with $\ka>0$ and initial data 
\beno
(g_\ka(0),V_h(0),T_h(0))=(g_0,V(0),T(0)).
\eeno
Here, $\Lambda>0$ is large enough such that for any $\psi$, we have 
\ben\label{D2}
(\<v\>^6\big((\Lambda \mathbf{I}-\D_v)^2\big)\psi,\psi)_{X_k}\geq \sum_{|\al|\leq 2}\|\<v\>^{k-4|\al|+3}\pa^\al_x\psi\|^2_{L^2_xH^2_v}.
\een
Let $\mathcal{Q}$ be the linear operator given by 
\beno
\cQ=-\pa_t+(T_h(t)^{\f12}v\cdot\na_x-T_h(t)^{-\f32}Q^*(\mu+h,\cdot))-B^*_{1,h}(\cdot)+T_h(t)^{-\f32}\ka\<v\>^6\big((\Lambda \mathbf{I}-\D_v)^2\big)^*,
\eeno
where the adjoint operator $(\cdot)^*$ is taken with respect to the scalar product in $X_k$. Then, for all $z\in C^\infty([0,\cT]\times\T^3,\cS(\R^3))$ with $z(\cT)=0$ and $0\leq t\leq \cT$, we have 
\beno
(z(t),\cQ z(t))_{X_k}&=&-\f12\f d{dt}\|z\|^2_{X_k}+T_h(t)^{\f12}(v\cdot\na_xz,z)_{X_k}-T_h(t)^{-\f32}(Q(\mu+h,z),z)_{X_k}\\
&&-(B_{1,h}(z),z)_{X_k}+T_h(t)^{-\f32}\ka (\<v\>^6(\Lambda \mathbf{I}-\D_v^2)z,z)_{X_k}.
\eeno
Denote the third and forth term on the right-hand side by  $I_1$ and $J_1$, then we have 
\beno
I_1=-T_h(t)^{-\f32}\Big(\sum_{|\al|=0,2}(Q(\mu+h,\pa^\al_xz),\pa^\al_xz\<v\>^{2(k-4|\al|)})_{L^2_{x,v}}-\sum_{|\al_1|\geq 1}(Q(\pa^{\al_1}_xh,\pa^{\al_2}_xz),\pa^\al_xz\<v\>^{2(k-4|\al|)})_{L^2_{x,v}}\Big).
\eeno
Due to Lemma \ref{upperboundQ} and Lemma \ref{paal1al2}, we can derive that
\ben\label{I1}
I_1\geq T_h(t)^{-\f32}(\lam_0-C_k\|h\|_{X_{15}})\|z\|^2_{Y_k}-C_kT_h(t)^{-\f32}\|z\|^2_{X_{15}}.
\een
For $J_1$, the direct computation yields that
\begin{equation*}
	\begin{aligned}
J_1=&\sum_{|\al|=0,2}\Big[\Big((-\f12 T_h(t)^{-1}T_h'(t)v+2T_h(t)^{-\f12}R_h(t))\cdot\na_v \pa^\al_xz,\pa^\al_xz\<v\>^{2(k-4|\al|)}\Big)_{L^2_{x,v}}\\
-&\f32\Big( T_h(t)^{-1} T_h'(t) \pa^\al_x z,\pa^\al_x z\<v\>^{2(k-4|\al|)}\Big)_{L^2_{x,v}}+T_h(t)^{-1}\Big( \Big(\f{\Pi(V_h(t)+T_h(t)^{\f12}v)}{\<V_h(t)+T_h(t)^{\f12}v\>}\cdot \na_v \pa^\al_xz\Big),\na_v(\pa^\al_xz\<v\>^{2(k-4|\al|)})\Big)_{L^2_{x,v}}\Big]\\
=&J_{1,1}+J_{1,2}+J_{1,3}.
\end{aligned}
\end{equation*}
Since
\beno
&&\int_{\T^3\times\R^3}v\cdot\na_v \pa^\al_xz \<v\>^{2(k-4|\al|)}\pa^\al_xz dvdx=-3\int_{\T^3\times\R^3}(\pa^\al_xz)^2\<v\>^{2(k-4|\al|)}dvdx\\
&&-\int_{\T^3\times\R^3}v\cdot\na_v \pa^\al_x z \<v\>^{2(k-4|\al|)} \pa^\al_x zdvdx-2(k-4|\al|)\int_{\T^3\times\R^3}\<v\>^{2(k-4|\al|)-2}|v|^2 (\pa^\al_x z)^2dvdx,
\eeno
which implies that
\begin{equation*}
	\begin{aligned}
		\int_{\T^3\times\R^3}v\cdot\na_v \pa^\al_xz \<v\>^{2(k-4|\al|)}\pa^\al_xzdvdx=&-\f32 \|\pa^\al_xz\|^2_{L^2_xL^2_{k-4|\al|}}-(k-4|\al|)\int_{\T^3\times \R^3}\<v\>^{2(k-4|\al|)-2}|v|^2 (\pa^\al_x z)^2dvdx\\
		=&-(k-4|\al|+\f32)\|\pa^\al_xz\|^2_{L^2_xL^2_{k-4|\al|}}+(k-4|\al|)\|\pa^\al_x z\|^2_{L^2_xL^2_{k-4|\al|-1}}.
	\end{aligned}
\end{equation*}
Similarly, we have
\beno
\int_{\T^3\times\R^3}\na_v \pa^\al_xz\<v\>^{2(k-4|\al|)}\pa^\al_xzdv=-(k-4|\al|)\int_{\T^3\times\R^3}\<v\>^{2(k-4|\al|)-2}v(\pa^\al_xz)^2dvdx.
\eeno
Thus we can derive that
\beno
J_{1,1}+J_{1,2}&=&\f12T_h(t)^{-1}T_h'(t)\sum_{|\al|=0,2}(k-4|\al|-\f32)\|\pa^\al_xz\|^2_{L^2_xL^2_{k-4|\al|}}+\f12T_h(t)^{-1}T_h'(t)\sum_{|\al|=0,2}(k-4|\al|)\|\pa^\al_xz\|^2_{L^2_xL^2_{k-4|\al|-1}}\\
&&-2T_h(t)^{-\f12}R_h(t)(k-4|\al|)\int_{\T^3\times\R^3}\<v\>^{2(k-4|\al|)-2}v(\pa^\al_xz)^2dvdx.
\eeno
For $J_{1,3}$, we have 
\beno
J_{1,3}&=&\sum_{|\al|=0,2}T_h(t)^{-1}\Big( \Big(\f{\Pi(V_h(t)+T_h(t)^{\f12}v)}{\<V_h(t)+T_h(t)^{\f12}v\>}\cdot \na_v \pa^\al_xz\Big),\na_v(\pa^\al_xz)\<v\>^{2(k-4|\al|)}\Big)_{L^2_{x,v}}\\
&&+\sum_{|\al|=0,2}2(k-4|\al|)T_h(t)^{-1}\Big( \Big(\f{\Pi(V_h(t)+T_h(t)^{\f12}v)}{\<V_h(t)+T_h(t)^{\f12}v\>}\cdot \na_v \pa^\al_xz\Big),\pa^\al_xz\<v\>^{2(k-4|\al|-1)}v\Big)_{L^2_{x,v}}\\
&\geq&\sum_{|\al|=0,2}\f12 T_h(t)^{-1}\Big( \Big(\f{\Pi(V_h(t)+T_h(t)^{\f12}v)}{\<V_h(t)+T_h(t)^{\f12}v\>}\cdot \na_v \pa^\al_xz\Big),\na_v(\pa^\al_xz)\<v\>^{2(k-4|\al|)}\Big)_{L^2_{x,v}}\\
&&-C_kT_h(t)^{-1}\sum_{|\al|=0,2}\int_{\T^3\times\R^3}\f1{\<V_h(t)+T_h(t)^{\f12}v\>}|\pa^\al_xz|^2\<v\>^{2(k-4|\al|-1)}dvdx.
\eeno
For the second term on the right-hand side, splitting the integral regime into $|T_h(t)^{\f12}v|\leq \f12 |V_h(t)|$ and $|T_h(t)^{\f12}v|>\f12 |V_h(t)|$, we can bound it by
\beno
C_k\big(T_h(t)^{-1}\<V_h(t)\>^{-1}+T_h(t)^{-1}\<T_h(t)^{-\f12}|V_h(t)|\>^{-1}\big)\|z\|^2_{X_k}.
\eeno

Patching together the estimates of $J_{1,1},J_{1,2}$ and $J_{1,3}$, we have
\beno
J_1\geq -C_kS_1(t)\|z\|^2_{X_k}
\eeno
with 
\ben\label{at} S_1(t)=T_h^{-1}(t)|T'_h(t)|+T_h(t)^{-\f12}|R_h(t)|+T_h(t)^{-1}\<V_h(t)\>^{-1}+T_h(t)^{-1}\<T_h(t)^{-\f12}|V_h(t)|\>^{-1}.
\een
Therefore, combining with \eqref{I1} and \eqref{D2}, we obtain
\beno
(z(t),\cQ z(t))_{X_k}
&\geq& -\f12\f d{dt}\|z(t)\|^2_{X_k}+T_h(t)^{-\f32}(\lam_0-C_k\|h\|_{X_{15}})\|z\|^2_{Y_k}-C_kT_h(t)^{-\f32}\|z\|^2_{X_{15}}\\
&&-C_kS_1(t)\|z\|^2_{X_k}+T_h(t)^{-\f32}\ka\sum_{|\al|=0, 2}\|\<v\>^{k-4|\al|+3}\pa^\al_x z\|^2_{L^2_xH^2_v}.
\eeno
If $\vep_0<\lam_0/2C_k$, we get that
\beno
-\f d{dt}(e^{W(t)}\|z\|^2_{X_k})+\lam_0e^{W(t)}T_h(t)^{-\f32}\|z\|^2_{Y_k}\leq 2e^{W(t)}|(z,\cQ z)_{X_k}|-2\ka e^{W(t)}T_h(t)^{-\f32}\sum_{|\al|=0,2}\|\<v\>^{k-4|\al|+3}\pa^\al_x z\|^2_{L^2_xH^2_v},
\eeno
where $W(t)=2C_k\int_t^{\cT}(S_1(\tau)+T_h^{-\f32}(t))d\tau$. 
Note that \(T_h^{-1}(t)\), \(|T'_h(t)|\) and \(|R_h(t)|\) are all bounded from above by some constant $C_{T(0)}$ which only depend on the lower bound of $T(0)$ on \([0,t_0]\) thanks to \eqref{T3}, \eqref{T4} and \eqref{Rupperbound}. Then from \eqref{at}, we have 
\beno
|S_1(t)+T_h^{-\f32}(t)|\ls T_h^{-1}(t)|T'_h(t)|+T_h^{-\f12}(t)|R_h(t)|+T_h^{-1}(t)+T_h^{-\f32}(t)\ls C_{T(0)},\quad t\in[0,\cT]
\eeno
is bounded and hence integrable. Since $z(\cT)=0$, for all $t\in[0,\cT]$, we have 
\ben\label{zt}
&&\notag\|z(t)\|^2_{X_k}+\ka \int_t^{\cT}T(\tau)^{-\f32}\sum_{|\al|=0,2}\|\<v\>^{k-4|\al|+3}\pa^\al_x z\|^2_{L^2_xH^2_v}d\tau\\
&\leq& 2\int_t^{\cT}e^{W(\tau)}|(z,\cQ z)_{X_k}|d\tau\leq 2e^{W(0)}\int_0^{\cT}|(z,\cQ z)_{X_k}|d\tau.
\een
We estimates the right-hand side as follows:
\beno
\int_0^{\cT}|(z,\cQ z)_{X_k}|d\tau \leq \int_0^{\cT} \|z\|_{X_k}\|\cQ z\|_{X_k}d\tau\leq \|z\|_{L^\infty([0,\cT],X_k)}\|\cQ z\|_{L^1([0,\cT],X_k)},
\eeno
which implies that
\ben\label{injection}
\|z\|_{L^\infty([0,\cT],X_k)}\leq 2e^{W(0)}\|\cQ z\|_{L^1([0,\cT],X_k)}
\een
and
\beno
\ka \int_0^{\cT}T(\tau)^{-\f32}\sum_{|\al|=0,2}\|\<v\>^{k-4|\al|+3}\pa^\al_x z\|^2_{L^2_xH^2_v}d\tau\leq 4e^{2W(0)}\|\cQ z\|_{L^1([0,\cT],X_k)}.
\eeno

Consider  the vector subspace 
\beno
\cW=\{w=\cQ z: z\in C^\infty([0,\cT]\times\T^3,\cS(\R^3)),z(\cT)=0\}\subset L^1([0,\cT],X_k).
\eeno
Since $g_0\in X_k$, we define a linear functional
\beno
\mathfrak{G}:\cW \rightarrow \C,\quad w=\cQ z \mapsto (g_0,z(0))_{X_k}-(T_h(t)^{-\f32}Q(h,\mu),z)_{L^2([0,\cT],X_k)}-(B_{2,h}(\mu),z)_{L^2([0,\cT],X_k)},
\eeno
where $z\in C^\infty([0,\cT]\times\T^3,\cS(\R^3))$ with $z(\cT)=0$. According \eqref{injection}, the operator $\cQ$ is injective. The functional $\mathfrak{G}$ is therefore well defined. 

Now let 
\beno
I_2=T_h(t)^{-\f32}(Q(h,\mu),z)_{X_k},\quad J_2=(B_{2,h}(\mu),z)_{X_k}.
\eeno
Since 
\beno
(Q(h,\mu),z)_{X_k}=\sum_{|\al|=0,2}(Q(\pa^{\al}_xh,\mu),\pa^\al_x z\<v\>^{2(k-4|\al|})_{L^2_{x,v}},
\eeno
due to Lemma \ref{upperboundQ}, we can obtain that
\beno
|I_2|\leq C_k T_h(t)^{-\f32}\sum_{|\al|=0,2}\|\pa^\al_xh\|_{L^2_xL^2_3}\|\pa^\al_xz\|_{L^2_xL^2_3}\leq C_k T_h(t)^{-\f32}\|h\|_{X_k}\|z\|_{X_k}.
\eeno
 For \(J_2\), it follows immediately from the definition of \(B_{2,h}\) that
\beno
|J_2|&\leq& C_kS_1(t)\|z\|_{X_{k}}+T_h(t)^{-1}\Big(\mathrm{div} \Big(\f{\Pi(V_h(t)+T_h(t)^{\f12}v)}{\<V_h(t)+T_h(t)^{\f12}v\>}\cdot \na_v \mu\Big),z\Big)_{X_k}\\
&\ls& C_kS_1(t)\|z\|_{X_{k}}+T_h(t)^{-1}\Big(\f{\Pi(V_h(t)+T_h(t)^{\f12}v)}{\<V_h(t)+T_h(t)^{\f12}v\>}\na^2_v \mu,z\Big)_{L^2_xL^2_k}+T_h(t)^{-\f12}\Big(\f{1}{|V_h(t)+T_h(t)^{\f12}v|}|\na_v \mu|,|z|\Big)_{L^2_xL^2_k},
\eeno
where $S_1(t)$ is defined in \eqref{at}. Using the same argument as \eqref{R1.new}
 and \eqref{R2.new}, we can bounded the last two terms on the right-hand side by
\beno
\min\{T_h(t)^{-1}+T_h(t)^{-\f54},T_h(t)^{-1}\<V_h(t)\>^{-1}+T_h(t)^{-1}\<T_h(t)^{-\f12}|V_h(t)|\>^{-1}+T_h(t)^{-\f12}|V_h(t)|^{-1}+|V_h(t)|^{-1}\}.
\eeno
Thus, we obtain 
\beno
|J_2|\leq C_kS(t)\|z\|_{X_{k}}\leq C_{k,T(0)}\|z\|_{X_{k}},\quad t\in[0,\cT],
\eeno
with
\ben\label{St}
&&S(t)
:=S_1(t)+\min\big\{T_h(t)^{-1}+T_h(t)^{-\f54},\\
\notag&&T_h(t)^{-1}\<V_h(t)\>^{-1}+T_h(t)^{-1}\<T_h(t)^{-\f12}|V_h(t)|\>^{-1}+T_h(t)^{-\f12}|V_h(t)|^{-1}+|V_h(t)|^{-1}\big\}.
\een
Using \eqref{injection}, it holds that
\beno
|\mathfrak{G}(w)|&\leq&\|g_0\|_{X_k}\|z(0)\|_{X_k}+C_kT(0)^{-\f32}\|h\|_{L^2([0,\cT],X_k)}\|z\|_{L^2([0,\cT],X_k)}+C_{k,T(0)}\|z\|_{L^2([0,\cT],X_k)}\\
&\leq& {C_{k,h,T(0)}\|z\|_{L^\infty([0,\cT],X_k)}}\leq C\|\cQ z\|_{L^1([0,\cT],X_k)}=C\|w\|_{L^1([0,\cT],X_k)}
\eeno
with $C$ depending on $k,h$ and $T(0)$. Using Hahn-Banach theorem, $\mathfrak{G}$ may be extend as a continuous linear form on $L^1([0,\cT],X_k)$ with a norm smaller than $C$. It follows that there exists $g_\ka\in L^\infty([0,\cT],X_k)$ such that
\ben\label{a01}
\mathfrak{G}(w)=\int_0^\cT (g_\ka(t),w(t))_{X_k}dt,\quad\mbox{for all}\quad w\in L^1([0,\cT],X_k).
\een
Hence, for all $h$, we have
\begin{equation}\label{weaksense}
\mathfrak{G}(\cQ z)=\int_0^\cT(g_\ka,\cQ z)_{X_k}dt=(g_0,z(0))_{X_k}-\int_0^\cT T_{h}^{-\f32}(t)(Q(h(t),\mu),z(t))_{X_k}dt-\int_0^\cT(B_{2,h}(\mu),z(t))_{X_k}dt.
\end{equation}
This shows that $g_\ka$ is a weak solution of the Cauchy problem \eqref{linear} because $\sum_{|\al|=0,2}\<v\>^{2(k-4|\al|)}\pa^{2\al}_x$ is bijective in $C^\infty_0((-\infty,\cT],\cS(\T^3\times\R^3))$.

Furthermore, since
\beno
2e^{W(0)}\int_0^\cT|(z,\cQ z)_{X_k}|d\tau &\leq&\f \ka 2\int_0^\cT T_h(\tau)^{-\f32}\sum_{|\al|=0,2}\|\<v\>^{k-4|\al|+3}\pa^\al_x z\|^2_{L^2_xH^2_v}d\tau\\
&&+\f1{2\ka}e^{2W(0)}\int_0^\cT T_h(\tau)^{\f32}\sum_{|\al|=0,2}\|\<v\>^{k-4|\al|-3}\pa^\al_x\cQ z\|^2_{L^2_xH^{-2}_v}d\tau,
\eeno
from \eqref{zt}, we have 
\beno
\|z(t)\|^2_{X_k}+\f\ka 2 \int_0^\cT T(\tau)^{-\f32}\sum_{\al}\|\<v\>^{k-4|\al|+3}\pa^\al_x z\|^2_{L^2_xH^2_v}d\tau\leq\f1{2\ka}e^{2W(0)}\int_0^\cT T_h(\tau)^{\f32}\|\<v\>^{k-4|\al|-3}\pa^\al_x\cQ z\|^2_{L^2_xH^{-2}_v}d\tau,
\eeno
which implies that $\|z\|_{L^\infty([0,\cT],X_k)}\ls \|\cQ z\|_{L^2([0,\cT],\tilde{Y}_k)}$, where we denote by $L^2([0,\cT],\tilde{Y}_k)$ the space such that $\om\in L^2([0,\cT],\tilde{Y}_k)$ if and only if 
\beno
\sum_{|\al|=0,2}\int_0^\cT\|\<v\>^{k-4|\al|-3}\pa^\al_x\cQ z\|^2_{L^2_xH^{-2}_v}d\tau<\infty.
\eeno
Therefore, 
\beno
|\mathfrak{G}(w)|\leq {C\|z\|_{L^\infty([0,\cT],X_k)}}\leq C\|\cQ z\|_{L^2([0,\cT],\tilde{Y}_k)}=C\|w\|_{L^2([0,\cT],\tilde{Y}_k)}.
\eeno
{Together with \eqref{a01}, we have that
$
g_\ka\in L^2([0,\cT],\tilde{Y}_k^*),
$
where $\tilde{Y}_k^*$ is the dual space of $\tilde{Y}_k$ and it leads to
\beno
\sum_{|\al|=0,2}\int_0^\cT\|\<v\>^{k-4|\al|+3}\pa^\al_xg_\ka\|^2_{L^2_xH^{2}_v}d\tau<\infty.
\eeno
In particular, this implies $\|g_\ka\|_{L^2([0,\cT],Y_{k+3})}<\infty$.} Equipped with this regularity of $g_\ka$, we are ready to show the energy bound. Using the basic energy method and the same argument used to estimate \(I_1,I_2\) and \(J_1,J_2\), we obtain
\beno
&&\f12\f d{dt}\|g_\ka\|^2_{X_k}+T_h(t)^{-\f32}(\lam_0-C_k\|h\|_{X_k})\|g_\ka\|^2_{Y_k}\\
&\leq& C_{k}(S_1(t)+T_h(t)^{-\f32}+1)\|g_\ka\|^2_{X_k}+C_kT_h(t)^{-\f32}\|h\|^2_{X_k}+C_kS^2(t).
\eeno
Rigorously speaking, due to the transport term \(v\cdot\nabla_x g_\kappa\), one should also regularize \(g_\kappa\) in \(x\) to justify the above inequality, we omit this here and refer the reader to the proof of Theorem 1.1 in \cite{AMUXY4} for details. This leads to 
\beno
&&\Big(\f12-C_k\int_0^\cT(S_1(t)+T_h(t)^{-\f32}+1)dt\Big)\|g_\ka\|^2_{L^\infty([0,\cT],X_k)}+(\lam_0-C_k\vep_0)\|T_h^{-\f32}g_\ka\|^2_{L^2([0,\cT],Y_k)}\\
&\leq&\f12\|g_0\|_{X_k}^2+C_k\int_0^\cT T_h(t)^{-\f32}dt\|h\|^2_{L^\infty([0,\cT],X_k)}+C_k\int_0^\cT S(t)^2dt.
\eeno
Since $S_1(t)+T_h(t)^{-\f32}+1\leq C_{T(0)}$ and $T_h(t)^{-\f32}\leq 2^{\f32}T_h(0)^{-\f32}$ for $t\in[0,t_0]$, which yields \eqref{energybound} under the assumption that 
\beno
\vep_0<\lam_0/(2C_k),\quad \cT<\min\{1/(4C_kC_{T(0)}),(2^{-\f32}T(0)^{\f32}/(2C_k))^2\}.
\eeno
We can define
\ben\label{T0}
\vep_0=\lam_0/(4C_k),\quad \cT_0=\f12\min\{1/(4C_kC_{T(0)}),(2^{-\f32}T(0)^{\f32}/(2C_k))^2,t_0\}.
\een

The existence of a weak solution of \eqref{linear} is then, obtained by using uniform estimates and passing to the limit $\ka\rightarrow0$. Indeed, by \eqref{weaksense} or \eqref{linearkappa}, it holds that 
\beno
\|\pa_t g_\ka\|_{H^{-1}_xH^{-4}_{-6}}&\leq& T_h^{\f12}\|v\cdot\na_x g_\kappa\|_{H^{-1}_xH^{-4}_{-6}}+\|B_{1,h}(g_\ka)+B_{2,h}(\mu)\|_{H^{-1}_xH^{-4}_{-6}}\\
&&+T_h^{-\f32}\|Q(\mu+h,g_\ka)+Q(h,\mu)+\ka \<v\>^6\big((\lam \mathbf{I}-\D_v)^2\big)g_\ka\|_{H^{-1}_xH^{-4}_{-6}}.
\eeno
We first have 
\beno
\|v\cdot\na_x g_\kappa\|_{H^{-1}_xH^{-4}_{-6}}+\|\ka \<v\>^6\big((\lam \mathbf{I}-\D_v)^2\big)g_\ka\|_{H^{-1}_xH^{-4}_{-6}}\ls \|g_\ka\|_{X_k}.
\eeno
From the definitions of \(B_{1,h}\) and \(B_{2,h}\) it is straightforward to obtain
\beno
\|B_{1,h}(g_\ka)+B_{2,h}(\mu)\|_{H^{-1}_xH^{-4}_{-6}}\ls (T_h^{-1}|T'_h|+T_h^{-\f12}|R_h|+T_h^{-1})(\|g_\ka\|_{X_k}+1).
\eeno
By duality and Lemma \ref{ghf2}, we have 
\beno
\|Q(\mu+h,g_\ka)+Q(h,\mu)\|_{H^{-1}_xH^{-4}_{-6}}\ls \|\mu+h\|_{L^2_xL^2_7}\|g_\ka\|_{X_k}+\|h\|_{X_k}.
\eeno
Thus, we obtain 
\ben\label{patgkappa}
\notag\|\pa_t g_\ka\|_{L^2([0,\cT], H^{-1}_xH^{-4}_{-6})}&\ls& \cT^{\f12}\sup_{t\in[0,\cT]}(T_h^{\f12}+T_h^{-\f32}+T_h^{-1}|T'_h|+T_h^{-\f12}|R_h|+T_h^{-1})(t)\\
&\times&(1+\|h\|_{L^\infty([0,\cT],X_k)})(1+\|g_\ka\|_{L^\infty([0,\cT],X_k)}).
\een
Noticing that the coefficients are bounded. Thus, $\{g_\ka\}$ is uniformly bounded in $L^\infty([0,\cT], X_k)\cap L^2([0,\cT], Y_k)\cap W^{1,2}([0,\cT],H^{-1}_xH^{-4}_{-6}),k\ge 17$. By Lemma \ref{ALlemma}, there exists a subsequence, denoted by \(g_{\ka_i}\), converging to $g$ in \(L^2([0,\cT], H^1_xL^2_7)\) as $\ka_i\rightarrow 0$. We replace \(g_\kappa\) by \(g_{\kappa_i}\) in \eqref{linearkappa} and need to verify that \eqref{linearkappa} converges to \eqref{linear} in weak sense as \(\kappa_i\to 0\). We only give the details for
\beno
Q(\mu+h,g_{\ka_i})\rightarrow Q(\mu+g,g),&&\mathrm{div} \Big(\f{\Pi(V_h(t)+T_h(t)^{\f12}v)}{\<V_h(t)+T_h(t)^{\f12}v\>}\cdot \na_v g_{\ka_i}\Big)\rightarrow \mathrm{div} \Big(\f{\Pi(V_h(t)+T_h(t)^{\f12}v)}{\<V_h(t)+T_h(t)^{\f12}v\>}\cdot \na_v g\Big)\\
&&  \mbox{in}\quad\cD'([0,\cT]\times\T^3\times\R^3),
\eeno
and the remaining terms are handled similarly.
To this end, let $\phi\in C_c^\infty([0,\cT]\times\T^3\times\R^3)$, by \eqref{upperQ2} in Lemma \ref{ghf2}, we have
\beno
&&\Big|\int_0^\cT\int_{\T^3\times\R^3}(Q(\mu+h,g_{\ka_i})-Q(\mu+h,g))\phi dvdxdt\Big|\\
&\leq&\Big|\int_0^\cT\int_{\T^3\times\R^3}Q(\mu+h,g_{\ka_i}-g)\phi dvdxdt\Big|
\leq C_\phi\|\mu+h\|_{L^\infty([0,\cT],L^2_xL^2_7)}\|g_{\ka_i}-g\|_{L^2([0,\cT],L^2_xL^2_7)}\rightarrow 0,
\eeno
as $\ka_i\rightarrow 0$. We also have
\beno
&&\Big|\int_0^\cT\Big(\f{\Pi(V_h(t)+T_h(t)^{\f12}v)}{\<V_h(t)+T_h(t)^{\f12}v\>}\cdot \na_v (g_{\ka_i}-g),\na_v\phi\Big)_{L^2_{x,v}}dt\Big|\\
&\ls& \Big|\int_0^\cT\Big(g_{\ka_i}-g,\f{\Pi(V_h(t)+T_h(t)^{\f12}v)}{\<V_h(t)+T_h(t)^{\f12}v\>}:\na_v^2\phi\Big)_{L^2_{x,v}}dt\Big|+\Big|\int_0^\cT\Big(g_{\ka_i}-g,\na_v\f{\Pi(V_h(t)+T_h(t)^{\f12}v)}{\<V_h(t)+T_h(t)^{\f12}v\>}\cdot \na_v\phi\Big)_{L^2_{x,v}}dt\Big|\\
&\ls& C_\phi \cT^{\f12}\|g_{\ka_i}-g\|_{L^2([0,\cT],L^2_xL^2_7)}+C_\phi \sup_{t\in[0,\cT]}(T_h^{\f12}+T_h^{-\f14})(t)\|g_{\ka_i}-g\|_{L^2([0,\cT],L^2_xL^2_7)}\rightarrow 0,\quad\mbox{as}\quad \ka_i\rightarrow 0.
\eeno
Here we use the fact $\sup_{t\in[0,\cT]}(T_h^{\f12}+T_h^{-\f14})(t)$ is bounded above. We thereby complete the proof of the existence of a weak solution \(g\) with the desired bound.


We now give the proof for non-negativity. We come back to the original equation, i.e.,
\beno
\pa_t F+v\cdot\na_x F+E\cdot\na_v F=Q(J,F)+\dv_v \bigg(\frac{\Pi(v)}{(1+|v|^2)^{\f12}} \nabla_v F\bigg),
\eeno
where $F$ and $J$ satisfy $\mu+g=T_h(t)^{\f32}F(t,x+H_h(t),V_h(t)+T_h(t)^{\f12}v)$ and $\mu+h=T_h(t)^{\f32}J(t,x+H_h(t),V_h(t)+T_h(t)^{\f12}v)$ with $H_h(t)=\int_0^tV_h(\tau)d\tau$. Thus we need to prove $F\geq 0$ under the condition that $F(0)\geq0$ and $J\geq0$.

In fact, let $F_\pm=\pm\max\{\pm F,0\}$ then 
\begin{equation*}
F_\pm\in L^\infty([0,T],X_k)\cap L^2([0,T],Y_k),\quad \na F_+=\left\{\begin{aligned}
		&\na F,\quad F>0;\\
		&0,\quad F\leq0.
	\end{aligned}\right.
	\quad \mbox{and}\quad
	\na F_-=\left\{\begin{aligned}
		&\na F,\quad F< 0;\\
		&0,\quad F\geq0.
	\end{aligned}\right.
\end{equation*}
Multiply the above equation by $F_-$ and integrate over $[0,t]\times\T^3\times\R^3$. Then, in view of $F_-(0)=0$ and 
\beno
\int_0^t {\int_{\T^3\times\R^3}} v\cdot\na_x F F_-dxdvd\tau=\int_0^t {\int_{\T^3\times\R^3}} E\cdot\na_v F F_-dxdvd\tau=0,
\eeno
we have 
\begin{equation*}
\begin{aligned}
{\frac 1 2}\int_{\T^3\times\R^3} |F_-(t)|^2dxdv=&\int_{0}^t\int_{\T^3\times\R^3}Q(J,F)F_-dvdxd\tau-\int_{0}^t\int_{\T^3\times\R^3}\frac{\Pi(v)}{(1+|v|^2)^{\f12}} \nabla_v F\na_vF_-dvdxd\tau\\
=&\int_{0}^t\int_{\T^3\times\R^3}Q(J,F_-)F_-dvdxd\tau-\int_{0}^t\int_{\T^3\times\R^3}\frac{\Pi(v)}{(1+|v|^2)^{\f12}} \nabla_v F_-\na_vF_-dvdxd\tau\leq 0,
\end{aligned}
\end{equation*}
which implies that $F_-(t)=0$, that is $F(t)\geq0$.

Thus we complete the proof of this lemma.
\end{proof}

\subsection{Local solution for non-linear equation and its uniqueness}
The precise statement for local well-posedness is the following theorem.
\begin{thm}\label{localwellposedness}
	Consider the perturbative Cauchy problem \eqref{equg} with initial data \((g_0,V(0),T(0))\in X_k\times\R^3\times\R^+,k\geq17\) satisfying \(\mu+g_0\ge 0\). The constants $\vep_0,\cT_0$ are defined in \eqref{T0}. There exist small constants $\vep_1<\vep_0$ depending on $k$ and $0<\cT<\cT_0$ depending on $k$ and the lower bound of $T(0)$  such that if
$\|g_0\|_{X_k}<\vep^2_1$, then \eqref{equg} admits a unique solution in \([0,\cT]\) satisfying
	\[
	\mu+g(t)\ge 0,\qquad \|g\|^2_{L^\infty([0,\cT],X_k)}+\lam_0\|T(t)^{-\f34}g\|^2_{L^2([0,\cT],Y_k)}<\vep_1^2.
	\]
\end{thm}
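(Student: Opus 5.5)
We will construct the solution by the iterative scheme suggested in the Step 2 outline, using Lemma \ref{locallinear} as the basic building block. Set $g^0\equiv g_0$, which gives $\mu+g^0\ge0$. Given $g^n$, we take $h=g^n$ in \eqref{linear}. This defines $(V^{n+1},T^{n+1})=(V_{g^n},T_{g^n})$ through the self-contained ODE, and $g^{n+1}$ is the weak solution provided by Lemma \ref{locallinear}, with $\mu+g^{n+1}\ge0$.

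\emph{Uniform bounds by induction.} We argue by induction that $\|g^n\|^2_{L^\infty([0,\cT],X_k)}+\lam_0\|T_{g^{n-1}}^{-\f32}g^n\|^2_{L^2([0,\cT],Y_k)}<\vep_1^2$. Suppose this holds for $g^n$. Then $\vep_1<\vep_0$ and the lemma applies. The energy bound \eqref{energybound} gives
\[
\|g^{n+1}\|^2_{L^\infty X_k}+\lam_0\|T^{-\f32}g^{n+1}\|^2_{L^2Y_k}\le 2\|g_0\|^2_{X_k}+2\cT^{\f12}\vep_1^2+C_k\int_0^\cT S^2dt.
\]
Since $\|g_0\|_{X_k}<\vep_1^2$, the first term is at most $2\vep_1^4$. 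On $[0,t_0]$ we have $T/2\le T_h\le 3T/2$ and $|R_h|\le40T(0)^{-3/4}$, so $S(t)\le C_{k,T(0)}$. It follows that $C_k\int_0^\cT S^2\le C_{k,T(0)}\cT$. We now fix $\vep_1$ small, and then choose $\cT$ small, depending on $\vep_1$, $k$ and the lower bound of $T(0)$, such that $2\cT^{1/2}+C_{k,T(0)}\cT/\vep_1^2<1/2$. This closes the induction. The weight $T^{-3/4}$ in the statement differs from $T^{-3/2}$ only by a factor comparable to a power of $T(0)$, since $T\sim T(0)$ on $[0,t_0]$, and this factor is absorbed. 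The time derivatives are bounded uniformly, exactly as in \eqref{patgkappa}, in $L^2([0,\cT],H^{-1}_xH^{-4}_{-6})$. The scalar quantities $V^n,T^n,R^n$ are uniformly bounded and equi-Lipschitz by \eqref{T4}, \eqref{Rupperbound} and \eqref{Vupperbound}.

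\emph{Passage to the limit.} By Lemma \ref{ALlemma}, with $E_0=Y_k$, $E=H^1_xL^2_7$ and $E_1=H^{-1}_xH^{-4}_{-6}$, together with Arzel\`a--Ascoli for $(V^n,T^n)$, we extract a subsequence with $g^n\to g$ in $L^2([0,\cT],H^1_xL^2_7)$ and $(V^n,T^n)\to(V,T)$ uniformly. The quantity $R_{g^n}$ converges because its kernel is in $L^2$ after the change of variables used in \eqref{R1.new}, and $g^n$ converges in $L^2_{x,v}$. Therefore $(V,T)$ solves the ODE with $R=R_g$. We then pass to the limit in the weak formulation. For the term $Q(\mu+g^{n},g^{n+1})$ we write
\[
Q(g^n-g,g^{n+1})+Q(\mu+g,g^{n+1}-g)
\]
and apply \eqref{upperQ2}. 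We also need $g^{n+1}-g^n\to0$, or we argue on a common subsequence. The diffusion term and the coefficients depending on $(V,T)$ are handled by uniform convergence, as at the end of the proof of Lemma \ref{locallinear}. Lower semicontinuity preserves the bounds, and nonnegativity passes to the limit under $L^2$ convergence.

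\emph{Uniqueness (main obstacle).} The delicate step is uniqueness, because two solutions carry different $(V_i,T_i)$ and therefore live in different moving frames. Comparing the functions $g_i$ directly produces commutator terms involving $V_1-V_2$ and $T_1-T_2$, and these cost $v$-derivatives and weights. We plan to return to the original variables instead. Set $F_i(t,x,v)=T_i^{-3/2}(\mu+g_i)(t,x-H_i,T_i^{-1/2}(v-V_i))$; each $F_i$ solves \eqref{main}. For $W=F_1-F_2$ we have
\[
\pa_tW+v\cdot\na_xW+E\cdot\na_vW=Q(F_1,W)+Q(W,F_2)+\dv_v\Big(\f{\Pi(v)}{\<v\>}\na_vW\Big).
\]
We then perform a weighted $L^2$ energy estimate with a polynomial weight $\<v\>^{2m}$ and $m$ moderate. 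Coercivity comes from \eqref{general} applied with $G=F_1$. This is admissible because $F_1\ge0$ has mass one and is bounded in $L^2_5$ on the short interval, since $T_i$ is comparable to $T(0)$ and $V_i$ is bounded. The diffusion term is nonpositive up to lower-order weight commutators bounded by $\|W\|^2_{L^2_m}$. The term $Q(W,F_2)$ is controlled by \eqref{ghf}, using $\|F_2\|_{D_1(m+1)}$; this is integrable in time thanks to the $L^2Y_k$ bound with $k\ge17$ together with the $x$-Sobolev embedding of Lemma \ref{paal1al2}. This yields
\[
\f{d}{dt}\|W\|^2_{L^2_xL^2_m}\le C(t)\|W\|^2_{L^2_xL^2_m},\qquad C\in L^1(0,\cT).
\]
Since $W(0)=0$, Gr\"onwall gives $W\equiv0$. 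Hence $F_1=F_2$, so the moments agree, so $(V_1,T_1)=(V_2,T_2)$, and therefore $g_1=g_2$.
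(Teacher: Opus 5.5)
Your route is the paper's: iteration through Lemma~\ref{locallinear}, uniform bounds from \eqref{energybound}, compactness from Lemma~\ref{ALlemma}, passage to the limit with \eqref{upperQ2}, and uniqueness by returning to $F$ and combining \eqref{general}, \eqref{ghf} and Gr\"onwall. The genuine gap is the step you flag but leave open, namely identifying the limit of the iteration as a solution of the \emph{nonlinear} problem. Lemma~\ref{ALlemma} only gives relative compactness. After extraction you have $g^{n_j}\to g$, $g^{n_j+1}\to\tilde g$ and $(V_{g^{n_j}},T_{g^{n_j}})\to(V,T)$. Passing to the limit in \eqref{linear2} then shows only that $\tilde g$ solves the linear problem \eqref{linear} with $h=g$ and $(V,T)=(V_g,T_g)$. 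Nothing forces $\tilde g=g$, so working on a common subsequence does not help, and $g^{n+1}-g^n\to0$ is exactly the missing estimate. The paper's proof passes over the same point: it takes the whole sequence to converge right after invoking Lemma~\ref{ALlemma}, and then uses $g^n\to g$ and $g^{n+1}\to g$ simultaneously.

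The quickest repair uses only ingredients you already have. Let $\Psi\colon h\mapsto g$ be the solution map of Lemma~\ref{locallinear}. Let $K$ be the convex set of $h$ with $\mu+h\ge0$ that satisfy the uniform $L^\infty([0,\cT],X_k)$, $L^2([0,\cT],Y_k)$ and $\pa_t$-bounds you derived. Then:
\begin{itemize}
\item $\Psi(K)\subset K$ by \eqref{energybound} and \eqref{patgkappa};
\item $K$ is compact in $L^2([0,\cT],H^1_xL^2_7)$ by Lemma~\ref{ALlemma} and lower semicontinuity;
\item $\Psi$ is continuous on $K$. Indeed, $(V_{h_j},T_{h_j})\to(V_h,T_h)$ by a Gr\"onwall estimate as in \eqref{DVDT}, and every subsequential limit of $\Psi(h_j)$ solves \eqref{linear} with coefficient $h$. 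That weak solution is unique by your own $F$-variable energy argument, now with only the term $Q(J,W)$, where $J$ is the transform of $\mu+h$.
\end{itemize}
Schauder's theorem then gives $g=\Psi(g)$. Alternatively, you can prove that consecutive iterates contract in a weak norm such as $L^\infty([0,\cT],L^2_xL^2_m)$ for small $\cT$, but the differences of the moving frames make that more technical.

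Two smaller points. First, in the uniqueness step \eqref{general} is applied at each fixed $x$. You therefore need $\inf_x\|F_1(t,x,\cdot)\|_{L^1_v}\ge\f12$ and a bound on $\sup_x\|F_1(t,x,\cdot)\|_{L^2_5}$. Total mass one does not give this, but the smallness of $g$ in $X_k$ together with $H^2(\T^3)\hookrightarrow L^\infty$ does, as in \eqref{FG}. Second, no absorption is needed for the weight. The energy inequality integrates directly to $\int_0^{\cT}T^{-\f32}\|g\|_{Y_k}^2\,dt=\|T^{-\f34}g\|^2_{L^2([0,\cT],Y_k)}$, which is how \eqref{energybound} should be read. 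Absorbing a power of $T(0)$ into $\vep_1^2$ would instead make $\vep_1$ depend on $T(0)$.
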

\begin{proof}
	Consider the sequence of approximate solutions defined by $g^0=0$ and 
	\begin{equation}\label{linear2}
		\left\{\begin{aligned}
			&\pa_tg^{n+1} +T_{g^n}(t)^{\f12}v\cdot\na_x g^{n+1}=T_{g_n}(t)^{-\f32}Q(\mu+g^n,g^{n+1})+T_{g_n}(t)^{-\f32}Q(g^n,\mu)+B_{1,g^n}(g^{n+1})+B_{2,g^n}(\mu),\\
			&V_{g_n}'(t)=E-2R_{g^n}(t),\quad T_{g^n}'(t)=\f43V_{g^n}(t)\cdot R_{g^n}(t)
		\end{aligned}\right.
	\end{equation}
	with initial data $(g^{n+1}(0),V_{g^n}(0),T_{g^n}(0))=(g_0,V(0),T(0))$. Using Lemma \ref{locallinear} with $g=g^{n+1}, h=g^{n}$, it follows from \eqref{energybound} that
\begin{equation}\label{boundofg}
\|g^{n+1}\|^2_{L^\infty([0,\cT],X_k)}+\lam_0\|T_{g^n}^{-\f32}g^{n+1}\|^2_{L^2([0,\cT],Y_k)}
\leq2\big(\|g_0\|^2_{X_k}+\cT^{\f12}\|g^n\|^2_{L^\infty([0,\cT],X_k)}\big)+C_k\int_0^\cT S^2(t)dt\leq\vep_1^2<\vep_0^2,
\end{equation}
inductively, if $\cT$ and $\vep_1$ are taken such that
\ben\label{Tvep0}
2(\vep_1^4+\cT^{\f12}\vep_1^2)+C_k\int_0^\cT S^2(t)dt\leq \vep_1^2.
\een
It is sufficient for \eqref{Tvep0} to hold is by choosing 
\beno
\vep_1<1/(2\sqrt{2}),\quad \cT<1/64\quad \mbox{and}\quad C_k\int_0^\cT S^2(t)dt<\vep_1^2/4.
\eeno
Noticing that when $\cT<t_0$, $|S(t)|\leq C_{T(0)}$ which only depend on the lower bound of $T(0)$, thus we can choose
\ben\label{T02}
\vep_1<\min\{\vep_0,1/(2\sqrt{2})\},\quad \cT<\min\{\cT_0,1/64,\vep_1^2/(4C_{k,T(0)})\}.
\een
{Thanks to the estimate \eqref{patgkappa}, we also have that $\{g^{n}\}$ is uniformly bounded in $L^\infty([0,\cT], X_k)\cap L^2([0,\cT], Y_k)\cap W^{1,2}([0,\cT],H^{-1}_xH^{-4}_{-6})$, then Lemma \ref{ALlemma} implies that $g^n\rightarrow g$ in $L^2([0,\cT], H^1_xL^2_7)$. To complete the proof of local existence for the nonlinear equation, we need to verify that \eqref{linear2} remains valid in weak sense when both \(g^{n+1}\) and \(g^n\) are replaced by the limit \(g\). }

We first prove convergence for the last two equations. Setting $\om^n=g^{n}-g,(\D T)^n=T_{g^{n}}-T_{g}$ and $(\D V)^n=V_{g^{n}}-V_{g}$. 
Since
\beno
&&R_{g^n}(t)-R_{g}(t)=\int_{\T^3\times\R^3}\f{V_{g^n}(t)+T_{g^n}^{\f12}v}{\<V_{g^n}(t)+T_{g^n}^{\f12}v\>|V_{g^n}(t)+T_{g^n}^{\f12}v|^2}\om^{n}dvdx\\
&&+\int_{\T^3\times\R^3}\Big(\f{V_{g^n}(t)+T_{g^n}^{\f12}v}{\<V_{g^n}(t)+T_{g^n}^{\f12}v\>|V_{g^n}(t)+T_{g^n}^{\f12}v|^2}-\f{V_{g}(t)+T_{g}^{\f12}v}{\<V_{g}(t)+T_{g}^{\f12}v\>|V_{g}(t)+T_{g}^{\f12}v|^2}\Big)(\mu+g)dvdx\\
&&:=O_1+O_2.
\eeno
Use the same argument as in the estimates of $R_h(t)$ in \eqref{R1.new} and \eqref{R2.new}(choosing $k=3/2$), we can derive that
\beno
|O_1|\ls \min\big\{T_{g^n}(t)^{-\f34},|V_{g^n}(t)|^{-2}+|V_{g^n}(t)|^{-\f32}\big\}\|\om^{n}\|_{L^2_xL^2_7}.
\eeno
For $O_2$,    denoting \(V_{g^n}(t)+T_{g^n}^{1/2}v\) and \(V_g(t)+T_g^{1/2}v\) by \(W_n\) and \(W\), respectively, we have
\beno
|W_n-W|\leq |V_{g^n}-V_{g}|+|T^{\f12}_{g^n}-T^{\f12}_{g}||v|\leq \big(1+(T^{\f12}_{g^n}+T^{\f12}_{g})^{-1}\big) \big(|(\D V)^{n}|+|(\D T)^{n}|\big)\<v\>,
\eeno 
and
\beno
\f{W_n}{\<W_n\>|W_n|^2}-\f{W}{\<W\>|W|^2}&=&\f{W_n-W}{\<W_n\>|W_n|^2}+W\f{\<W\>-\<W_n\>}{\<W_n\>|W_n|^2\<W\>}+W\f{|W|^2-|W_n|^2}{|W_n|^2\<W\>|W|^2}\\
&\ls&|W_n-W|\Big(\f1{\<W_n\>|W_{n}|^2}+\f1{\<W\>|W|^2}\Big).
\eeno
Then
\begin{equation}\label{O2}
	\begin{aligned}
|O_2|\leq& \big(1+(T^{\f12}_{g^n}+T^{\f12}_{g})^{-1}\big)\big( |(\D V)^{n}|+|(\D T)^{n}|\big)\\
&\times\int_{\T^3\times\R^3}\Big(\f1{\<W_n\>|W_{n}|^2}+\f1{\<W\>|W|^2}\Big)\<v\>|\mu+g|dvdx.
	\end{aligned}
\end{equation}
 We take the first term in the integral as an example. By the same arguments as in \eqref{R1.new} and \eqref{R2.new}, we have on the one hand
 \beno
 \int_{\T^3\times\R^3}\f1{\<W_n\>|W_{n}|^2}\<v\>|\mu+g|dvdx&\ls& \Big(\int_{\T^3\times\R^3}\f1{\<W_n\>^{\f65}|W_{n}|^{\f{12}5}}dvdx\Big)^{\f56}\Big(\int_{\T^3\times\R^3}\<v\>^6|\mu+g|^6dvdx\Big)^{\f16}\\
 &\ls&T_{g^n}^{-\f54}\|\mu+g\|_{L^2_xH^1_1},
 \eeno
 where we have used the fact that \(v\mapsto \langle v\rangle^{-6/5}|v|^{-12/5}\) is integrable, together with the Sobolev embedding \(H^1_1(\mathbb{R}^3)\hookrightarrow L^6_1(\mathbb{R}^3)\). On the other hand, we have
 \beno
&&\int_{\T^3\times\R^3}\f1{\<W_n\>|W_{n}|^2}\<v\>|\mu+g|dvdx=\int_{|T_{g^n}(t)^{\f12}v|\leq \f12 |V_{g^n}(t)|}+\int_{|T_{g^n}(t)^{\f12}v|>\f12 |V_{g^n}(t)|}\\
 &\ls& |T_{g^n}|^{-2}\|\mu+g\|_{L^2_{x}L^2_3}+|V_{g^n}|^{-2}T_{g^n}^{-\f14}\|\mu+g\|_{L^2_xH^1_{3}}.
 \eeno
Plugging these into \eqref{O2}, we obtain
\beno
|O_2|&\ls& \big(1+(T^{\f12}_{g^n}+T^{\f12}_{g})^{-1}\big)\big( |(\D V)^{n}|+|(\D T)^{n}|\big)\\
&\times&\min\Big\{1+T_{g^n}^{-\f54}+T_{g}^{-\f54},|V_{g^n}|^{-2}+|V_{g^n}|^{-2}T_{g^n}^{-\f14}+|V_{g}|^{-2}+|V_{g}|^{-2}T_{g}^{-\f14}\Big\}\|\mu+g\|_{L^2_xH^1_3}.
\eeno
Combining the estimates of $O_1$ and $O_2$, we conclude that
\ben\label{0001}
\notag|\big((\D V)^{n}\big)'|\ls |R_{g^n}(t)-R_{g}(t)|\ls C_1(t)(\|\om^{n}\|_{L^2_xL^2_7}+\big( |(\D V)^{n}|+|(\D T)^{n}|\big)\|\mu+g\|_{L^2_xH^1_3})\\
\een
with 
\ben\label{C1t}
C_1(t)&:=&\min\big\{T_{g^n}(t)^{-\f34},|V_{g^n}(t)|^{-2}+|V_{g^n}(t)|^{-\f32}\big\}+\big(1+(T^{\f12}_{g^n}+T^{\f12}_{g})^{-1}\big)\\
\notag&&\times\min\Big\{1+T_{g^n}^{-\f54}+T_{g}^{-\f54},|V_{g^n}|^{-2}+|V_{g^n}|^{-2}T_{g^n}^{-\f14}+|V_{g}|^{-2}+|V_{g}|^{-2}T_{g}^{-\f14}\Big\}.
\een

Similarly, we have
\beno
&&|V_{g^n}(t)\cdot R_{g^n}(t)-V_{g}(t)\cdot R_{g}(t)|=(\D V)^{n}\cdot R_{g}(t)+V_{g^{n}}(t)\cdot(R_{g^n}(t)-R_{g}(t))\\
&\ls& \min\big\{T_{g}(t)^{-\f34},|V_{g}(t)|^{-2}+|V_{g}(t)|^{-\f32}\big\}|(\D V)^{n}|\\
&&+C_1(t)|V_{g^n}(t)|\Big(\|\om^{n}\|_{L^2_xL^2_7}+|\big( |(\D V)^{n}|+|(\D T)^{n}|\big)\|\mu+g\|_{L^2_xH^1_5}\Big),
\eeno
where we use \eqref{R1.new} and \eqref{R2.new}(choosing $k=3/2$) to bound $R_{g}$. We denote coefficient by
\ben\label{C2t}
C_2(t):=\min\big\{T_{g}(t)^{-\f34},|V_{g}(t)|^{-2}+|V_{g}(t)|^{-\f32}\big\}+C_1(t)(1+|V_{g^n}(t)|)
\een
and conclude that
\ben\label{0002}
|\big((\D T)^{n}\big)'|&\ls& |V_{g^n}(t)\cdot R_{g^n}(t)-V_{g}(t)\cdot R_{g}(t)|\\
\notag&\ls& C_2(t)\Big(\|\om^{n}\|_{L^2_xL^2_7}+\big( |(\D V)^{n}|+|(\D T)^{n}|\big)(1+\|T_{g}^{-\f34}g\|_{L^2_xH^1_5})\Big).
\een

Combining \eqref{0001} and \eqref{0002} and noting that \(((\Delta V)^{n}(0),(\Delta T)^{n}(0))=(0,0)\), we obtain
\beno
&&|(\D V)^{n}(t)|+|(\D T)^{n}(t)|
\leq \sup_{\tau\in[0,\cT]}C_2(\tau)\Big( \|\om^{n}\|_{L^1([0,\cT],L^2_xL^2_7)}\\
&&+\int_0^t(\|\mu+g(\tau)\|_{L^2_xH^1_5})\big(|(\D V)^{n-1}(\tau)|+|(\D T)^{n-1}(\tau)|\big)d\tau\Big),\quad \mbox{for any }\quad t\in(0,\cT].
\eeno
Noticing that $\|\mu+g\|_{L^2([0,\cT],L^2_xH^1_5)}\ls 1+\|g\|_{L^2([0,\cT],Y_k)}\ls 1$ in view of \eqref{boundofg}, then by Gr\"onwall's inequality, we can derive that
\ben\label{DVDT}
|(\D V)^{n}(t)|+|(\D T)^{n}(t)|\ls \sup_{\tau\in[0,\cT]}C_2(\tau) \|\om^{n}\|_{L^1([0,\cT],L^2_xL^2_7)},\quad \mbox{for any }\quad t\in(0,\cT].
\een
It is easy to see that \(\sup_{\tau\in[0,\cT]}C_2(\tau)<\infty\), we thus have \(V_{g_n}\to V_g\) and \(T_{g_n}\to T_g\) as \(n\to\infty\) since $\om^n=g^n-g\rightarrow 0$ in $L^2([0,\cT], H^1_xL^2_7)$ as \(n\to\infty\).

For the first equation in \eqref{linear2}, in virtue of the convergence of \(V_{g^n}\) and \(T_{g^n}\), we only give the details for
\beno
Q(\mu+g^n,g^{n+1})\rightarrow Q(\mu+g,g)\quad \mbox{in}\quad\cD'([0,\cT]\times\T^3\times\R^3),
\eeno
and the remaining terms are handled similarly.
To this end, let $\phi\in C_c^\infty([0,\cT]\times\T^3\times\R^3)$, by \eqref{upperQ2} in Lemma \ref{ghf2}, we have
\beno
&&\Big|\int_0^\cT\int_{\T^3\times\R^3}(Q(\mu+g^n,g^{n+1})-Q(\mu+g,g))\phi dvdxdt\Big|\\
&\leq&\Big|\int_0^\cT\int_{\T^3\times\R^3}Q(g^n-g,g^{n+1})\phi dvdxdt\Big|+\Big|\int_0^\cT\int_{\T^3\times\R^3}Q(\mu+g,g^{n+1}-g)\phi dvdxdt\Big|\\
&\leq&C_\phi\|g^n-g\|_{L^1([0,\cT],L^2_xL^2_7)}\|g^{n+1}\|_{L^\infty([0,\cT],L^2_xL^2_7)}+C_\phi\|\mu+g\|_{L^\infty([0,\cT],L^2_xL^2_7)}\|g^{n+1}-g\|_{L^1([0,\cT],L^2_xL^2_7)}\rightarrow 0,
\eeno
as $n\rightarrow \infty$. 
We thereby complete the proof of the existence of a weak solution \(g\) with the desired bound.

\smallskip

Finally, we go back to the original equation for $F$ and give the proof of uniqueness. By the change of variables between \(F\) and \(G\) (see \eqref{changeofvariable}), together with the boundedness of \(|V(t)|\), \(T(t)\) and \(T^{-1}(t)\), we obtain
\beno
&&\int_{\T^3\times\R^3}|\pa^\al_x F(t,x,v)|^2\<v\>^{16}dvdx=T(t)^{-3}\int_{\T^3\times\R^3}|\pa^\al_x G(t,x-H(t),T(t)^{-\f12}(v-V(t)))|^2\<v\>^{16}dvdx\\
&\ls&T(t)^{-\f32}\int_{\T^3\times\R^3}|\pa^\al_x G(t,x,v)|^2\<V(t)+T(t)^{\f12}v\>^{16}dvdx\ls T(t)^{-\f32}(1+\<V(t)\>^{16}+T(t)^{8})\|G\|^2_{X_k},\quad k\geq 16,
\eeno
which implies that $\|F\|_{H^2_xL^2_8}\ls C_\cT\|G\|_{X_k}$. Indeed, we can get that
\ben\label{FG}
&&\|F\|_{H^2_xL^2_8}\ls C_\cT\|G\|_{X_k}\in L^\infty[0,\cT],\quad \|F\|_{H^2_xD_1(8)}\ls C_\cT\|G\|_{Y_k}\in L^2[0,\cT],\\
\notag &&\mbox{and}\quad \|F(t,x,\cdot)\|_{L^1_v}\geq\inf_{x\in\T^3}\|(\mu+g)(t,x,\cdot)\|_{L^1_v}\geq \|\mu\|_{L^1_v}-\|g\|_{X_k}\geq\f12,\quad \forall t\in[0,\cT].
\een
Suppose $F_1$ and $F_2$ are two solutions of \eqref{main} with the same initial data $F_0$ and satisfy \eqref{FG}.  Let $\tilde F=F_1-F_2$, then we have $\tilde F(0)=0$ and
\beno
\partial_t \tilde F + v\cdot\na_x \tilde F +E \cdot \nabla_v \tilde F = Q(F_1,\tilde F)+Q(\tilde F,F_2)+\dv \bigg(\frac{\Pi(v)}{(1+|v|^2)^{\f12}} \nabla_v \tilde F\bigg).
\eeno
Consider the energy in space $L^2_xL^2_7$, we have 
\beno
\f12\f d{dt}\|\tilde F\|^2_{L^2_xL^2_7}=(Q(F_1,\tilde F),\tilde F\<v\>^{14})_{L^2_{x,v}}+(Q(\tilde F, F_2),\tilde F\<v\>^{14})_{L^2_{x,v}}-\Big(\frac{\Pi(v)}{(1+|v|^2)^{\f12}}\na_v\tilde F,\na_v(\tilde F\<v\>^{14})\Big)_{L^2_{x,v}}.
\eeno
Since $\Pi(v)\na_v\<v\>^{14}=14\<v\>^{12}\Pi(v)v=0$, then
\beno
\Big(\frac{\Pi(v)}{(1+|v|^2)^{\f12}}\na_v\tilde F,\na_v(\tilde F\<v\>^{14})\Big)_{L^2_{x,v}}=\Big(\frac{\Pi(v)}{(1+|v|^2)^{\f12}}\na_v\tilde F,\na_v\tilde F\<v\>^{14}\Big)_{L^2_{x,v}}\geq 0.
\eeno
Thanks to \eqref{FG} and \eqref{general}, \eqref{ghf} in Lemma \ref{upperboundQ}, we obtain that 
\beno
\f12\f d{dt}\|\tilde F\|^2_{L^2_xL^2_7}+\lam_0\|\tilde F\|^2_{L^2_xD_1(7)}&\ls& \|\tilde F\|_{L^2_xL^2_7}\|F_2\|_{H^2_xD_1(8)}\|\tilde F\|_{L^2_xD_1(7)}+\|\tilde F\|^2_{L^2_xL^2_7}\\
&\ls& C_\vep\|\tilde F\|^2_{L^2_xL^2_7}\|F_2\|^2_{H^2_xD_1(8)}+\vep\|\tilde F\|^2_{L^2_xD_1(7)}+\|\tilde F\|^2_{L^2_xL^2_7},
\eeno
where $\lam_0$ depend on the lower bound of $\|F(t,x,\cdot)\|_{L^1_v}$ and $\|F\|_{L^\infty([0,\cT],H^2_xL^2_8)}$. Choosing small $\vep>0$ implies that
\beno
\f d{dt}\|\tilde F\|^2_{L^2_xL^2_7}\ls (1+\|F_2\|^2_{H^2_xD_1(8)})\|\tilde F\|^2_{L^2_xL^2_7}.
\eeno
Since $\|F_2\|^2_{H^2_xD_1(8)}$ is integrable by \eqref{FG}, we have $\|\tilde F(t)\|^2_{L^2_xL^2_7}\ls \|\tilde F(0)\|^2_{L^2_xL^2_7}=0$ for $t\in[0,\cT]$, i.e., $F_1=F_2$. From \eqref{VT} and the change of variables \eqref{changeofvariable} between \(F\) and \(G\), it follows that the system \((g,V,T)\) is also unique. 

We complete the proof of this theorem.

\end{proof}

\section{\texorpdfstring{Sharp growth estimates for $V$ and $T$}{Sharp growth estimates for V and T}}\label{growth}
In this section, we establish the long-time behavior of \(V(t)\), \(T(t)\) and \(R(t)\) under the assumption that the smallness of \(g\) is propagated.

\begin{lem}\label{largetime}
Let $k\geq 17$ and \(t_0=\f1{160}\min\{1,T(0)^{\f32}\}\) (see \eqref{t0}). If we make the a priori assumption that \(\|g(t)\|_{L^\infty([0,\infty),L^2_xL^2_5)}<\eta\) for some small $\eta>0$, then for any $\vep>0$, there exists a constant $C_{k,\vep,T(0),V(0)}$ such that if $|E|>C_{k,\vep,T(0),V(0)}$, it holds that
\ben\label{atsmall}
C_k\int_0^{t_0} S^2(t)dt<\vep,
\een
where $S(t)$ is defined in \eqref{St} and \eqref{at} {with $R_h, V_h, T_h$ replaced by $R, V, T$}. Meanwhile, for any $t>t_0$, 
\ben\label{longtimeTVR}
\notag&&T'(t)>0,~~T(t)\geq \f{T(0)}2+\f{2c_0}{3|E|}(\ln(1+\f32|E|t)-\ln(1+\f32|E|t_0)),~~T(t)\leq \f{3T(0)}2+\f{c_2}{|E|}\ln(1+|E|t);\\
&&|V(t)-V(0)-Et|\leq 80T(0)^{-\f34}+2000/|E|,\quad |R(t)|\leq c_1(1+\f14|E|t)^{-2},\quad\mbox{for}\quad c_0,c_1,c_2>0.
\een
\end{lem}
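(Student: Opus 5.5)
The plan is to treat $(V,T)$ as the solution of the ODE system in Proposition \ref{prop2}, with forcing $R(t)$. I will estimate $R(t)$ directly from its integral representation, using that $G=\mu+g$ with $\|g\|_{L^2_xL^2_5}<\eta$, and close everything by a continuity (bootstrap) argument in time.

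\emph{Short-time interval $[0,t_0]$.} Here I reuse the argument of Lemma \ref{locallinear} with $h=g$. By \eqref{T4} and \eqref{T3}, $T(t)\in[T(0)/2,3T(0)/2]$; by \eqref{Rupperbound}, $|R|\le 40T(0)^{-3/4}$; and $V(t)=V(0)+Et+O(t\,T(0)^{-3/4})$. For \eqref{atsmall} I bound $S(t)$ from \eqref{St} and \eqref{at} termwise.
\begin{itemize}
\item The terms $T^{-1}|T'|$ and $T^{-1/2}|R|$ are controlled through \eqref{R2.new}, which gives $|R|\lesssim \langle V\rangle^{-1}|V|^{-1}+|V|^{-k}T^{k/2-3/4}$.
\item The terms $T^{-1}\langle V\rangle^{-1}$ and $T^{-1}\langle T^{-1/2}|V|\rangle^{-1}$ are already of the form $O(\langle V\rangle^{-1})$.
\item The $\min\{\cdot,\cdot\}$ term is at most $T^{-1}+T^{-5/4}$, so it is uniformly bounded by a constant depending on $T(0)$.
\end{itemize}
Hence $S(t)\le C_{T(0)}\min\{1,\,|V(t)|^{-1}+|V(t)|^{-k}\}$. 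The set of $t\in[0,t_0]$ with $|V(t)|\le\delta|E|$ has measure $O(\delta)$, since $V$ moves with speed $\ge |E|-80T(0)^{-3/4}$ once $|E|$ is large. Splitting the time interval accordingly gives $\int_0^{t_0}S^2\lesssim_{T(0)}\delta+(\delta|E|)^{-2}+\ldots$. Choosing $\delta$ small and then $|E|$ large yields \eqref{atsmall}.

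\emph{Long times $t>t_0$: bootstrap.} Let $t^*$ be the supremum of times for which the following hold with doubled constants:
\begin{itemize}
\item $|V(t)-V(0)-Et|\le 160T(0)^{-3/4}+4000/|E|$;
\item $T(0)/4\le T(t)\le 2T(0)+\tfrac{2c_2}{|E|}\ln(1+|E|t)$.
\end{itemize}
On $[t_0,t^*)$ these give $|V(t)|\ge \tfrac12|E|t$ for $|E|$ large, and $T^{1/2}(t)/|V(t)|\lesssim \log^{1/2}(1+|E|t)/(|E|t)\ll1$. I then split $R$ as in \eqref{R2.new} into the regions $|T^{1/2}v|\le|V|/2$ and its complement.
\begin{itemize}
\item \emph{Upper bound.} The inner region contributes at most $4|V|^{-2}\|\mu+g\|_{L^1}$. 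The outer region is bounded by Cauchy--Schwarz with weight $\langle v\rangle^{k}$ (Gaussian tail for $\mu$, and $L^2_xL^2_k$ for $g$). This gives $|R|\lesssim |V|^{-2}+|V|^{-k}T^{k/2-3/4}\le c_1(1+\tfrac14|E|t)^{-2}$.
\item \emph{Bound on $V$.} Integrating $V'=E-2R$, using $80T(0)^{-3/4}$ from $[0,t_0]$ and $\int_{t_0}^\infty 2c_1(1+|E|t/4)^{-2}dt\le 8c_1/|E|$, recovers the $V$ estimate with the original constants.
\item \emph{Lower bound on $T'$.} On the inner region, $V\cdot w\ge |V|^2/2>0$ with $w=V+T^{1/2}v$, and $|w|\le 3|V|/2$, so that part of $V\cdot R$ is $\gtrsim |V|^{-1}\int_{\text{inner}}(\mu+g)$. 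The mass of $\mu+g$ there is at least $1-O(\eta)-O(T/|V|^2)$, using the a priori smallness and the tail bound. The outer-region contribution is bounded in absolute value by $|V|\cdot|V|^{-k}T^{k/2-3/4}$, which is of lower order. Hence $T'=\tfrac43V\cdot R\ge \tfrac{4c_0}{|V|}$ up to lower-order terms, so $T'>0$. Combining this with the upper bound $|V|\le \tfrac32|E|t+C$ and integrating from $t_0$ gives the logarithmic lower bound.
\item \emph{Upper bound on $T$.} This follows from $T'\le\tfrac43|V||R|\lesssim |V|^{-1}\lesssim (|E|t)^{-1}$, which gives $T\le \tfrac32T(0)+\tfrac{c_2}{|E|}\ln(1+|E|t)$.
\end{itemize}
All bootstrap bounds are thus recovered with the original constants, so $t^*=\infty$.

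The main obstacle is the sign and lower bound for $V\cdot R$. The kernel $w/(\langle w\rangle|w|^2)$ is singular at $w=0$, i.e. near $v=-V/T^{1/2}$, and $V\cdot w$ changes sign there. I will handle this by the same Cauchy--Schwarz step as \eqref{R1.new}, using that $|w|^{-2}\langle w\rangle^{-2}$ is integrable, combined with the weight $\langle v\rangle^{k}$, which is of size $(|V|/T^{1/2})^{k}$ in the outer region. This makes the bad region polynomially small in $|V|/T^{1/2}$; the danger is only that the logarithmic growth of $T$ must stay compatible with $|V|\sim|E|t$, which the bootstrap ensures.
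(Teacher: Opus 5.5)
Your proof is correct, and its core is the paper's:
\begin{itemize}
\item splitting $R$ at $|T^{1/2}v|=\frac12|V|$;
\item Cauchy--Schwarz against the integrable kernel $\langle w\rangle^{-2}|w|^{-2}$ with polynomial weights;
\item $|V|\gtrsim|E|t$, and $T'\gtrsim|V|^{-1}$ from the mass of $\mu+g$ in the inner region;
\item integration to obtain the logarithmic bounds.
\end{itemize}
The organization differs in three places.

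\emph{First}, you bootstrap jointly on $V$ and on a logarithmic upper bound for $T$. The paper needs no hypothesis on $T$. Taking $k=\frac32$ in \eqref{R2.new} makes the bound on $R$ independent of $T$, so $|V|\ge 1+\frac14|E|t$ follows from a continuity argument on $V$ alone. The paper then uses the crude bound $T\le 1+T(0)+80t$ from \eqref{T5} to get $T^{-1/2}|V|>100$ and $T^{3/4}\lesssim|V|$. Your version works as long as $c_2$ is fixed before $|E|$ is chosen. That is possible: under $T^{3/4}\le|V|$, the case $k=3$ of \eqref{R2.new} gives $T'\le C|V|^{-1}$ with an absolute constant $C$, so there is no circularity.

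\emph{Second}, you compute $V\cdot R=\int\frac{V\cdot w}{\langle w\rangle|w|^2}(\mu+g)$ directly. The paper instead writes $V=\int w(\mu+g)$ and splits a product of two integrals into $D_1,D_2,D_3$. Your route is simpler and loses nothing.

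\emph{Third}, for \eqref{atsmall} you use that $V\cdot E/|E|$ increases at rate at least $|E|-80T(0)^{-3/4}$. Hence $\{t\le t_0:|V(t)|\le\delta|E|\}$ has measure $O(\delta)$. The paper instead integrates after replacing $|V(t)|$ by $\big||V(0)+Et|-80T(0)^{-3/4}\big|$. That replacement is not justified when $|V(0)+Et|<80T(0)^{-3/4}$: the lower bound \eqref{Vttleqt0} is then vacuous, while $V(t)$ can vanish at such times. Your argument repairs this step, and its threshold on $|E|$ does not even depend on $V(0)$.

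One small fix is needed. In your third bullet, the branch $T^{-1}+T^{-5/4}$ of the minimum in $S$ only gives boundedness, not smallness. To get $S\lesssim_{T(0)}\min\{1,|V|^{-1}\}$ you must also invoke the other branch, which is $\lesssim_{T(0)}|V|^{-1}$ once $|V|\ge1$.
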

\begin{proof}
Lemma \ref{locallinear} gives the behavior for the short interval \(t\in[0,t_0]\) with \(0<t_0<1\) depending on the lower bound of $T(0)$. We now derive the estimates for \(t> t_0\). Firstly, thanks to 
\eqref{R1.new}, \eqref{R2.new} and \eqref{T3}, we have $|R(t)|\leq 20\times2^{\f34}T(0)^{-\f34},t\leq t_0$ and $|R(t)|\leq5\times 2^{3/2}(|V(t)|^{-2}+|V(t)|^{-\f32}),t>t_0$(choosing $k=3/2$). Then by \eqref{dtV}, we have 
\ben\label{Vttleqt0}
|V(t)|\geq |V(0)+Et|-80T(0)^{-\f34},\quad t\leq t_0,
\een
and
\beno
|V(t)|\geq |V(0)+Et|-80T(0)^{-\f34}-40\int_{t_0}^t\Big(\f1{|V(\tau)|^2}+\f1{|V(\tau)|^{\f32}}\Big)d\tau,\quad t>t_0.
\eeno
There exists a large constant $C_{T(0),V(0)}$ such that if $|E|>C_{T(0),V(0)}$, then 
\ben\label{con1}
\f12|V(0)+Et|>80T(0)^{-\f34}\quad \mbox{and}\quad\f13|V(0)+Et|\geq \max\Big\{1+\f14|E|t,\f{8\times 720}{|E|}\Big\}\quad\mbox{for}\quad t> t_0,
\een
which leads that 
\beno
|V(t)|\geq \f12 |V(0)+Et|-40\int_{t_0}^t\Big(\f1{|V(\tau)|^2}+\f1{|V(\tau)|^{\f32}}\Big)d\tau,\quad t> t_0.
\eeno
Furthermore,
\ben\label{Vttgeqt0}
|V(t)|\geq \f13|V(0)+Et|\geq 1+\f14|E|t,\quad t> t_0.
\een
This is the lower bound for $V(t)$. We can also get the upper bound of $|V(t)|$ as follows:
\ben\label{upperboundV}
|V(t)-V(0)-Et|\leq 80T(0)^{-\f34}+80\int_{t_0}^t\f8{(4+|E|\tau)^{\f32}}d\tau\leq 80T(0)^{-\f34}+2000/|E|,\quad t\geq 0.
\een

We now aim to prove that $T'(t)\geq 0$ for any $t> t_0$. Leaving $\tilde{v}=V(t)+T(t)^{\f12}v$,  we recompute 
\beno
R(t)\cdot V(t)&=&\int_{\T^3\times \R^3}\f{V(t)+T(t)^{\f12}v}{\<\tilde{v}\>|\tilde{v}|^2}(\mu+g)dvdx\cdot \int_{\T^3\times\R^3}(V(t)+T(t)^{\f12}v)(\mu+g)dvdx\\
&=&\int_{|v|\leq  \f12T(t)^{-\f12}|V(t)|}\f{V(t)+T(t)^{\f12}v}{\<\tilde{v}\>|\tilde{v}|^2}(\mu+g)dvdx\cdot \int_{|v|\leq  \f12T(t)^{-\f12}|V(t)|}(V(t)+T(t)^{\f12}v)(\mu+g)dvdx\\
&&+\int_{|v|> \f12T(t)^{-\f12}|V(t)|}\f{V(t)+T(t)^{\f12}v}{\<\tilde{v}\>|\tilde{v}|^2}(\mu+g)dvdx\cdot \int_{\T^3\times\R^3}(V(t)+T(t)^{\f12}v)(\mu+g)dvdx\\
&&+\int_{|v|\leq \f12 T(t)^{-\f12}|V(t)|}\f{V(t)+T(t)^{\f12}v}{\<\tilde{v}\>|\tilde{v}|^2}(\mu+g)dvdx\cdot \int_{|v|> \f12 T(t)^{-\f12}|V(t)|}(V(t)+T(t)^{\f12}v)(\mu+g)dvdx\\
&=&\sum_{i=1}^3D_i.
\eeno
Since $\f12|V(t)|\leq|\tilde v|\leq 2|V(t)|$ when $|v|\leq \f12T(t)^{-\f12}|V(t)|$, we have
\beno
D_1&\geq& |V(t)|^2\Big(\int_{|v|\leq \f12T(t)^{-\f12}|V(t)|}\f1{\<\tilde{v}\>|\tilde{v}|^2}(\mu+g)dvdx\Big)\Big(\int_{|v|\leq \f12T(t)^{-\f12}|V(t)|}(\mu+g)dvdx\Big)\\
&&-|V(t)|T(t)^{\f12}\Big(\int_{|v|\leq \f12T(t)^{-\f12}|V(t)|}\f{|v|}{\<\tilde{v}\>|\tilde{v}|^2}(\mu+g)dvdx\Big)\Big(\int_{|v|\leq \f12T(t)^{-\f12}|V(t)|}|v|(\mu+g)dvdx\Big)\\
&&-T(t)\Big(\int_{|v|\leq \f12T(t)^{-\f12}|V(t)|}\f {|v|}{\<\tilde{v}\>|\tilde{v}|^2}(\mu+g)dvdx\Big)\Big(\int_{|v|\leq \f12T(t)^{-\f12}|V(t)|}|v|(\mu+g)dvdx\Big)\\
&\gs& |V(t)|^{-1}\Big(\int_{|v|\leq \f12T(t)^{-\f12}|V(t)|}(\mu+g)dvdx\Big)^2\\
&&-(|V(t)|^{-2}T(t)^{\f12}+|V(t)|^{-3}T(t))\Big(\int_{|v|\leq \f12T(t)^{-\f12}|V(t)|}(\mu+g)\<v\>dvdx\Big)^2.
\eeno
 Recall from \eqref{T4} that
\ben\label{T5}
|T'(t)|\leq 40T(t)^{-3/8}(1 + T(t)^{1/8}),
\een 
whether \(T(t)\ge 1\) or \(T(t)\le 1\), we have $T(t)\leq 1+T(0)+80t$. Thus for some large constant $C_{T(0),V(0)}$, if $|E|>C_{T(0),V(0)}$ and $t>t_0$, we can obtain from \eqref{Vttgeqt0} that
\beno
T(t)^{-\f12}|V(t)|\geq \f13(1+T(0)+80t)^{-\f12}|V(0)+Et|>100,
\eeno
which implies 
\beno
\int_{|v|\leq \f12T(t)^{-\f12}|V(t)|}(\mu+g)dvdx>\int_{|v|\leq 100}\mu dvdx-\int_{\T^3\times\R^3}|g|dvdx>\sqrt{3c_0}
\eeno
since  \(\|g(t)\|_{L^\infty([0,\infty),L^2_xL^2_3)}<\eta\). We can also get that
\beno
&&(|V(t)|^{-2}T(t)^{\f12}+|V(t)|^{-3}T(t))\Big(\int_{|v|\leq \f12T(t)^{-\f12}|V(t)|}(\mu+g)\<v\>dvdx\Big)^2\\
&\leq&|V(t)|^{-1}\big((1+T(0)+80t)^{\f12}|V(0)+Et|^{-1}+(1+T(0)+80t)|V(0)+Et|^{-2}\big)\|\mu+g\|^2_{L^\infty([0,\infty),L^2_xL^2_3)}\\
&\leq &c_0|V(t)|^{-1},\quad t>t_0.
\eeno
Thus, we have $D_1\geq 2c_0|V(t)|^{-1}$ for $t>t_0$.

 For  $D_2$ and $D_3$, using the fact that $|v|>\f12T(t)^{-\f12}|V(t)|$ and Cauchy-Schwarz inequality, we can obtain 
 \beno
 |D_2|&\leq& \Big(\int_{\T^3\times\R^3}\f1{\<\tilde v\>^2|\tilde v|^2}dxdv\Big)^{\f12}\Big(\int_{|v|>\f12T(t)^{-\f12}|V(t)|}(\mu+g)^2\<v\>^{2k}\<v\>^{-2k}dxdv\Big)^{\f12}\\
 &&\times(|V(t)|+T(t)^{\f12})\int_{\T^3\times\R^3}|\mu+g|\<v\>dxdv\\
 &\ls& |V(t)|^{-k}T(t)^{\f k2-\f34} (|V(t)|+T(t)^{\f12})
 \eeno
 and
 \beno
 |D_3|&\leq& |V(t)|^{-2}\int_{\T^3\times\R^3}|\mu+g|dxdv \times (|V(t)|+T(t)^{\f12})\int_{|v|>\f12T(t)^{-\f12}|V(t)|}|\mu+g|\<v\>^{1+k}\<v\>^{-k}dxdv\\
 &\ls& |V(t)|^{-2-k}(|V(t)|+T(t)^{\f12})T(t)^{\f k 2}.
 \eeno
 Choosing $k=3$, for $|E|>C_{T(0),V(0)}$, we have that
 \beno
 |D_2|+|D_3|\ls |V(t)|^{-2}\leq c_0|V(t)|^{-1},\quad t>t_0.
  \eeno
Therefore, we conclude that
\ben\label{T'tgeq0}
T'(t)=\f43R(t)\cdot V(t)>D_1-|D_2|-|D_3|\geq {c_0}|V(t)|^{-1}>0\quad\mbox{ for}  \quad t> t_0,
\een
which imoleis that for all $t\geq0$, $T(t)$ has the lower bound $T(0)/2$.

Furthermore,  thanks to \eqref{upperboundV}, we have 
\beno
|V(t)|\leq |V(0)|+|E|t+80T(0)^{-\f34}+2000/|E|\leq 1+\f32|E|t,\quad  t>t_0,
\eeno
if $|E|>C_{T(0),V(0)}$. Then 
\ben\label{lowerboundofT}
T(t)\geq T(t_0)+c_0\int_{t_0}^t\f1{|V(t)|}d\tau\geq \f{T(0)}2+\f{2c_0}{3|E|}(\ln(1+\f32|E|t)-\ln(1+\f32|E|t_0)),\quad t>t_0.
\een
As for the upper bound of \(T(t)\), choosing $k=3$ in \eqref{R2.new}, we can derive that
\ben\label{upperboundofR}
|R(t)|\leq  40(\<V(t)\>^{-1}|V(t)|^{-1}+|V(t)|^{-3}T(t)^{\f34})\ls|V(t)|^{-2}.
\een
On one hand, since $|V(t)|^{-1} \ls (1+\f14|E|t)^{-1}$ for $ t>t_0$, we have 
\ben\label{Rgeqt0}
|R(t)|\leq c_1(1+\f14|E|t)^{-2},\quad t>t_0.
\een
On the other hand, together with \eqref{Vttgeqt0}, it leads to
\ben\label{T'geqt0}
|T'(t)|\leq \f43 |R(t)||V(t)|\ls|V(t)|^{-1},
\een
thus we have
\beno
T(t)-T(t_0)\ls\int_{t_0}^t(1+\f14|E|\tau)^{-1} d\tau\ls\f{1}{|E|}\ln(1+|E|t),
\eeno
which yields that
\ben\label{upperboundofT}
T(t)\leq \f32 T(0)+\f{c_2}{|E|}\ln(1+|E|t),\quad \mbox{for some }\quad c_2>0.
\een
Therefore, we can get \eqref{longtimeTVR} by combining \eqref{lowerboundofT}, \eqref{upperboundofT}, \eqref{Rgeqt0} and \eqref{upperboundV}.

Finally, we give the proof of \eqref{atsmall}.  We recall from \eqref{St} and \eqref{at} that
\beno
&&S(t)\leq T^{-1}(t)|T'(t)|+T(t)^{-\f12}|R(t)|+T(t)^{-1}\<V(t)\>^{-1}+T(t)^{-1}\<T(t)^{-\f12}|V(t)|\>^{-1}\\
&&+\min\big\{T(t)^{-1}+T(t)^{-\f54},\\
\notag&&T(t)^{-1}\<V(t)\>^{-1}+T(t)^{-1}\<T(t)^{-\f12}|V(t)|\>^{-1}+T(t)^{-\f12}|V(t)|^{-1}+|V(t)|^{-1}\big\}.
\eeno
For the first on the right-hand side, by choosing $k=\f54$ in \eqref{R2.new}, we have 
\beno
|T'(t)|\ls |R(t)||V(t)|\ls \<V(t)\>^{-1}+|V(t)|^{-\f14}T(t)^{-\f18},\quad t\in(0,t_0].
\eeno
Noticing $T(t)>T(0)/2$ for any $t\in[0,t_0]$, by \eqref{Vttleqt0}, we have 
\beno
&&\int_0^{t_0} \big(T^{-1}(t)|T'(t)|\big)^2dt\ls C_{T(0)} \int_{0}^{t_0}\Big(\f1{\<V(t)\>^2}+\f1{|V(t)|^{\f12}}\Big)dt\\
&\ls&  C_{T(0)}\Big(\int_0^{t_0} \f1{1+(|V(0)+Et|-80T(0)^{-\f34})^2}dt+\int_0^{t_0}\f1{\big||V(0)+Et|-80T(0)^{-\f34}\big|^{\f12}}dt\Big)\\
&:=&A_1+A_2.
\eeno
We only give the detailed computation for $A_2$.  By an orthogonal transformation, we may assume without loss of generality that \(E=(|E|,0,0)\) and \(V(0)=(v_1,v_2,v_3)\) and denote $80T(0)^{-\f34}=T_0$, then 
\beno
&&\int_0^{t_0}\f1{\big||V(0)+Et|-80T(0)^{-\f34}\big|^{\f12}}dt=\int_0^{t_0}\f1{\big||(|E|t+v_1,v_2,v_3)|-T_0\big|^{\f12}}dt\\
&=&\f1{|E|}\int_{v_1}^{|E|t_0+v_1}\f1{\big||(t^2+v^2_2+v^2_3)^{\f12}-T_0\big|^{\f12}}dt
\leq \f2{|E|}\int_0^{|E|t_0+|v_1|}\f1{\big||(t^2+v^2_2+v^2_3)^{\f12}-T_0\big|^{\f12}}dt.
\eeno
Consider separately the cases \((t^2+v_2^2+v_3^2)^{1/2}-T_0\ge 0\) and \((t^2+v_2^2+v_3^2)^{1/2}-T_0<0\), we can deduce that
\beno
\f2{|E|}\int_0^{|E|t_0+|v_1|}\f1{\big||(t^2+v^2_2+v^2_3)^{\f12}-T_0\big|^{\f12}}dt\leq \f4{|E|}\int_0^{((|E|t_0+|v_1|)^2+v_2^2+v_3^2)^{\f12}+T_0}\f1{t^{\f12}}dt\ls C_{T(0),V(0)}|E|^{-\f12}.
\eeno
Similar argument can be applied to the estimate of $A_1$ and we can get that
\beno
\int_0^{t_0} \big(T^{-1}(t)|T'(t)|\big)^2dt\ls A_1+A_2\ls C_{T(0),V(0)}(|E|^{-1}+|E|^{-\f12}).
\eeno

For the second term, using that $a\leq b_1$ and $a\leq b_2$(for $a\geq0$) implies $a\leq b_1^{\f34} b_2^{\f14}$, we get from \eqref{R1.new} and \eqref{R2.new} that(choosing $k=1$)
\beno
|R(t)|\ls T(t)^{-\f{9}{16}}(\<V(t)\>^{-1}+T(t)^{-\f14})^{-\f14}|V(t)|^{-\f14}\ls C_{T(0)}|V(t)|^{-\f14},\quad t\in[0,t_0],
\eeno
which implies $T(t)^{-\f12}|R(t)|\ls C_{T(0)}|V(t)|^{-\f14}$ and we can obtain the same upper bound as above after integration. For the rest term, it is easy to get that 
\beno
T(t)^{-1}\<V(t)\>^{-1}+T(t)^{-1}\<T(t)^{-\f12}|V(t)|\>^{-1}\ls C_{T(0)}\<V(t)\>^{-\f14}
\eeno
and
\beno
\min\big\{T(t)^{-1}+T(t)^{-\f54},T(t)^{-1}\<V(t)\>^{-1}+T(t)^{-1}\<T(t)^{-\f12}|V(t)|\>^{-1}+T(t)^{-\f12}|V(t)|^{-1}+|V(t)|^{-1}\big\}\ls C_{T(0)}|V(t)|^{-\f14}
\eeno
by using that $a\leq b_1$ and $a\leq b_2$ implies $a\leq b_1^{\f34} b_2^{\f14}$. Therefore, we can conclude that
\beno
\int_0^{t_0} S^2(t)dt\ls C_{T(0),V(0)}(|E|^{-1}+|E|^{-\f12}),
\eeno
thus we can get the desired result \eqref{atsmall} for large $E$.  It ends the proof of this lemma.
\end{proof}

By Lemma \ref{largetime}, we obtain the following corollary:
\begin{col}\label{col1}
There exists $\vep_1>0$ depending only on $k,k\geq 17$, the constant \(C_{k,\vep_1,T(0),V(0)}\) such that if $\|g_0\|_{X_k}<\vep^2_1$ and the electric field \(E\) satisfies $|E|>C_{k,\vep_1,T(0),V(0)}$, then the solution of the Cauchy problem \eqref{equg} can be extended to \(t\in[0,t_0]\) with \(t_0=\f1{160}\min\{1,T(0)^{\f32}\}\) and
	\ben \label{gt0}\|g\|^2_{L^\infty([0,t_0],X_k)}+\lam_0\|T(t)^{-\f34}g\|^2_{L^2([0,t_0],Y_k)}<\vep_1^2.
	\een
\end{col}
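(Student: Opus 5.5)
The plan is to rerun the iteration scheme of Theorem \ref{localwellposedness} directly on the whole interval $[0,t_0]$, with $t_0=\f1{160}\min\{1,T(0)^{\f32}\}$. We do not patch short intervals together. Recall that in Lemma \ref{locallinear} the restriction $\cT<\cT_0$ came from two sources. The first is the requirement $C_k\int_0^\cT S^2\,dt<\vep_1^2/4$. The second is the absorption of $C_k\int_0^\cT(S_1+T_h^{-\f32}+1)\,dt\,\|g\|^2_{L^\infty X_k}$ and of $\cT^{\f12}\|h\|^2$ into the left-hand side. The length $t_0$ itself is admissible for every iterate $h=g^n$, because the bounds \eqref{T3}, \eqref{Rupperbound}, \eqref{Vupperbound} hold on $[0,t_0]$ uniformly in $h$ with $\|h\|_{X_k}<\vep_0$.

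First, fix $\vep_1<\min\{\vep_0,1/(2\sqrt2)\}$ depending only on $k$. Then apply the argument of Lemma \ref{largetime}, specifically its proof of \eqref{atsmall}. The key point is that this argument only uses the smallness $\|h\|_{L^2_xL^2_5}<\eta$ and the short-time bounds for $(V_h,T_h,R_h)$, so it applies verbatim to each iterate. It yields $C_k\int_0^{t_0}S^2\,dt\ls C_{T(0),V(0)}(|E|^{-1}+|E|^{-\f12})$, and this is $<\vep_1^2/4$ once $|E|>C_{k,\vep_1,T(0),V(0)}$. The same computation, with $|V_h(t)|$ bounded below away from a set of measure $O(|E|^{-1})$, controls the dangerous parts of $\int_0^{t_0}S_1\,dt$ by a quantity that is small for large $|E|$. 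These parts are $T_h^{-1}|T_h'|$, $T_h^{-\f12}|R_h|$, and the terms $\<V_h\>^{-1}$ and $\<T_h^{-\f12}|V_h|\>^{-1}$.

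I expect the main obstacle to be the remaining non-small contributions $C_k\int_0^{t_0}T_h^{-\f32}dt$ and the $+1$ in the Gronwall factor, together with the factor $\cT^{\f12}$ multiplying $\|h\|^2$. These terms do not shrink as $|E|$ grows. To handle them, I would not absorb them but instead use Gronwall. Since $t_0\le 1/160$ and $T_h\ge T(0)/2$, we have $\int_0^{t_0}T_h^{-\f32}\,dt\le 2^{\f32}t_0T(0)^{-\f32}\le 2^{\f32}/160$, which is an absolute constant. The Gronwall factor is therefore $e^{C_k'}$, uniformly in $T(0)$. This is exactly why $t_0$ was chosen proportional to $T(0)^{\f32}$. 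The term $C_kT_h^{-\f32}\|h\|^2_{X_k}$ likewise contributes at most $C_k'\|h\|^2_{L^\infty X_k}$ with a small absolute factor. I would use the resulting inequality
\[
\|g^{n+1}\|^2_{L^\infty([0,t_0],X_k)}+\lam_0\|T^{-\f34}g^{n+1}\|^2_{L^2 Y_k}\le e^{C_k'}\Bigl(\|g_0\|^2_{X_k}+c\,\|g^n\|^2_{L^\infty X_k}+C_k\int_0^{t_0}S^2\Bigr).
\]
If $c\,e^{C_k'}$ is not below $1/2$, I would run the iteration on a fixed number $N_k$ of subintervals of length $t_0/N_k$, with $N_k$ depending only on $k$. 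On each subinterval the dissipation term absorbs. The bound then propagates with the smallness $\|g_0\|^2_{X_k}<\vep_1^4$ and the $E$-smallness of $\int S^2$, after possibly shrinking the $\vep_1$ used in the hypothesis.

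Finally, passing to the limit, together with nonnegativity and uniqueness, is identical to Theorem \ref{localwellposedness}. The Aubin--Lions compactness via Lemma \ref{ALlemma} and the convergence of $(V_{g^n},T_{g^n})$ through \eqref{DVDT} both hold on $[0,t_0]$, since $\sup_{[0,t_0]}C_2$ is finite. This gives the extended solution with \eqref{gt0}.
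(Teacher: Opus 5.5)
Your proof is correct, but it is organized differently from the paper's.

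The paper never reopens the iteration. It writes the energy inequality for the nonlinear solution itself, $\frac12\frac{d}{dt}\|g\|^2_{X_k}+T^{-3/2}(\lambda_0-C_k\|g\|_{X_k})\|g\|^2_{Y_k}\le C_kT^{-3/2}\|g\|^2_{X_k}+C_kS^2$. Since $h=g$ there, the contribution of $Q(g,\mu)$ is simply part of the Gronwall term. The paper integrates with the factor $e^{C_k\int_0^{t_0}T^{-3/2}}\le e^{C_k}$ and uses Lemma \ref{largetime} for $\int_0^{t_0}S^2$. It then continues the solution by repeated application of Theorem \ref{localwellposedness}, whose step length depends only on the lower bound $T\ge T(0)/2$, which is uniform on $[0,t_0]$.

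You use the same two inputs. However, because you work with the linear iterates, you must control the feedback term $c\,\|g^n\|^2_{L^\infty X_k}$ coming from $Q(h,\mu)$. This term cannot be Gronwalled directly, since $h\neq g^{n+1}$. Your $N_k$-subinterval fallback does handle it, with two clarifications. First, it is the $L^\infty X_k$ norm on the left, not the dissipation, that absorbs $cN_k^{-1}\|g^n\|^2$, at a total cost of $(2e^{C_k'})^{N_k}$. Second, the fallback is then a finite continuation argument after all, contrary to your opening sentence.

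There is a cleaner way to stay on one interval. Set $a_n(t)=\sup_{[0,t]}\|g^n\|^2_{X_k}$. Gronwall gives $a_{n+1}(t)\le e^{C_k'}\bigl(A+K\int_0^t a_n(s)\,ds\bigr)$ with $K\lesssim_k T(0)^{-3/2}$. By induction, $a_n(t)\le e^{C_k'}A\,e^{e^{C_k'}Kt}$, which is bounded uniformly in $n$ because $Kt_0\lesssim_k 1$.

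As for what each route buys: yours does the limit passage, nonnegativity and uniqueness once on $[0,t_0]$. The paper's uses the local theory as a black box and avoids any bookkeeping on the iterates.

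Finally, $\int_0^{t_0}S_1\,dt$ sits in the exponent, so it only needs to be bounded, not small. This already follows from \eqref{T3}, \eqref{T4}, \eqref{Rupperbound} and $t_0\le T(0)^{3/2}/160$, without taking $|E|$ large.
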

\begin{proof}
To extend the solution up to time \(t_0\), we need to verify two facts: (i) the smallness of the solution is preserved, and (ii) starting from any time as the new initial instant, the solution can be extended over a fixed interval. Recalling the dependence of \(\cT\) in Lemma \ref{largetime} and Theorem \ref{localwellposedness}, i.e., \eqref{T0} and \eqref{T02}, we know that it depends on the lower bound of \(T(t)\). 
Since $T(t)\geq T(0)/2$ when $t\in[0,t_0]$, it suffices to ensure the smallness of \(g\).  Using the energy method exactly as in the proof of \eqref{energybound}, we obtain
\beno
\f12\f d{dt}\|g\|^2_{X_k}+T(t)^{-\f32}(\lam_0-C_k\|g\|_{X_k})\|g\|^2_{Y_k}
\leq C_{k}T(t)^{-\f32}\|g\|^2_{X_k}+C_kS^2(t).
\eeno
Multiply $e^{-C_k\int_0^tT(\tau)^{-\f32}d\tau}$ on both sides and integrate with respect to $t\in[0,t_0]$, we have 
\beno
\|g\|^2_{L^\infty([0,t_0],X_k)}+2(\lam_0-C_k\|g\|_{L^\infty([0,t_0],X_k)})\|T(t)^{-\f34}g\|^2_{L^2([0,t_0],Y_k)}\leq e^{C_k\int_0^{t_0}T(\tau)^{-\f32}d\tau}\big(\|g_0\|^2_{X_k}+C_k\int_0^{t_0}S^2(t)dt\big).
\eeno
Under the a priori assumption  that $\|g\|_{L^\infty([0,t_0],X_k)}<\lam_0/(2C_k)$ and the fact $T(t)\geq T(0)/2$, we can derive that
\beno
\|g\|^2_{L^\infty([0,t_0],X_k)}+\lam_0\|T(t)^{-\f34}g\|^2_{L^2([0,t_0],Y_k)}&\leq& e^{C_kt_0T(0)^{-\f32}}\big(\|g_0\|^2_{X_k}+C_k\int_0^{t_0}S^2(t)dt\big)\\
&\leq&e^{C_k}\big(\|g_0\|^2_{X_k}+C_k\int_0^{t_0}S^2(t)dt\big),
\eeno
since $t_0< T(0)^{\f32}$. Therefore, \eqref{gt0} holds ture if $e^{C_k}\vep_1^2<\f14$ and $e^{C_k}C_k\int_0^{t_0} S(t)^2dt<\f14 \vep_1^2$. The latter is ensured by Lemma \ref{largetime}. It ends the proof of this corollary.
\end{proof}

In view of Corollary \ref{col1}, it suffices to study the behavior of the solution for times \(t>t_0\), We therefore regard \(t_0\) as the initial time in what follows.

\section{Coupled system}\label{coupledsystem}
In this section we derive a-priori estimates for \(g\) starting from time \(t_0\) with \(\|g(t_0)\|_{X_k}<\varepsilon_1\). Since we will use the properties of $V(t),T(t)$ repeatedly, we record it here.

$\bullet$ \textbf{The behavior of $V(t)$ and $T(t)$ when $t>t_0$: Part I.} For \(t>t_0\), it follows from \eqref{T'tgeq0}, \eqref{Vttgeqt0}, \eqref{upperboundofR}, \eqref{T'geqt0} and \eqref{upperboundofT} that there exists a constant $C_{k,T(0),V(0)}$ such that if $|E|>C_{k,T(0),V(0)}$, then
\ben
&&0<c_0|V(t)|^{-1}< T'(t)\ls |V(t)|^{-1},~~T(t)\geq T(0)/2,~~|R(t)|\ls |V(t)|^{-2};\label{VTcondition}\\
&&\notag\big(T(t)^{-1}T'(t)+T(t)^{-\f12}|R(t)|+T(t)^{-\f12}|V(t)|^{-2}+T(t)^{\f12}|V(t)|^{-2}+T(t)^{-\f14}|V(t)|^{-2}\\
&&+T(t)^{-1}|V(t)|^{-1}+|V(t)|^{-2}\big)\ls T(t)^{-1}|V(t)|^{-1}\leq 1\label{VTcondition0}.
\een
For example, since $|R(t)|\ls |V(t)|^{-2}$, then by \eqref{upperboundofT} and \eqref{Vttgeqt0}, we know that
\beno
&&T(t)^{-\f12}|R(t)|\ls (T(t)^{\f12}|V(t)|^{-1})T(t)^{-1}|V(t)|^{-1}\\
&\ls& (\f32 T(0)+\f{c_2}{|E|}\ln(1+|E|t))^{\f12}(1+\f14|E|t)^{-1} T(t)^{-1}|V(t)|^{-1}\ls  T(t)^{-1}|V(t)|^{-1},
\eeno
if $|E|>C_{T(0),V(0)}$.
\medskip

We now resolve the equation \eqref{equg} into a coupled system of $g_1=g_1(t,x,v)$ and $g_2=g_2(t,x,v)$ where $g_1$ and $g_2$ satisfy
\begin{align}\label{coupg1}
&\pa_t g_1(t,x,v)+T(t)^{\f12}v\cdot\na_x g_1+(-\f12 T(t)^{-1}T'(t)v+2T(t)^{-\f12}R(t))\cdot\na_v g_1(t,x,v)\\
=&\notag T(t)^{-\f32}(L-A\chi_M)g_1+T(t)^{-\f32}Q(g_1,g_1)+\f32 T(t)^{-1} T'(t) g_1\\
\notag&+T(t)^{-1}\mathrm{div} \Big(\f{\Pi(V(t)+T(t)^{\f12}v)}{\<V(t)+T(t)^{\f12}v\>}\cdot \na_v (g_1+\mu^{\f12}g_2)\Big)+T(t)^{-\f32}(Q(\mu^\f12 g_2,g_1)+Q(g_1,\mu^{\f12}g_2))+\cH(\mu),
\end{align}
and
\begin{align}\label{coupg2}
&\pa_t g_2(t,x,v)+T(t)^{\f12}v\cdot\na_x g_2=T(t)^{-\f32}\mathcal{L}g_2+T(t)^{-\f32}\mu^{-\f12}A\chi_M(v)g_1+T(t)^{-\f32}\Ga(g_2,g_2)\\
\notag&-(-\f12 T(t)^{-1}T'(t)v+2T(t)^{-\f12}R(t))\cdot(\na_vg_2-\f v 2 g_2)+\f32T(t)^{-1}T'(t)g_2
\end{align}
with initial data
\beno
g_1(t_0,x,v)=g(t_0,x,v),\quad g_2(t_0,x,v)=0,
\eeno
where \(L\), \(\mathcal L\), \(\Gamma\) and \(A\chi_M\) are defined in \eqref{linearoperator}, \eqref{linearized} and \eqref{LAM}, and the function \(\cH(\mu)\) is defined as follows:
\beno
\cH(\mu):=-(-\f12 T(t)^{-1}T'(t)v+2T(t)^{-\f12}R(t))\cdot\na_v \mu+\f32 T(t)^{-1} T'(t) \mu+T(t)^{-1}\mathrm{div} \Big(\f{\Pi(V(t)+T(t)^{\f12}v)}{\<V(t)+T(t)^{\f12}v\>}\cdot \na_v \mu\Big).
\eeno
By setting $g=g_1+\mu^{\f12}g_2$, it is direct to see that $g$ is the solution to \eqref{equg} with initial data $g(t_0,x,v)$. 
 We do not dwell here on the local existence for \((g_1,g_2)\) but focus on the a priori estimates. In fact, replacing \(\mu^{1/2}g_2\) by \(g-g_1\) in the equation for \(g_1\) and using the tools developed earlier makes the existence of \(g_1\) straightforward, the same applies to \(g_2\). 

\subsection{Evolution of $g_1$.} Using \eqref{coupg1}, we first derive the evolution of $g_1$, leading to the following proposition:
\begin{prop}\label{g1}
Let $g_1$ and $g_2$ be the solutions to the coupled system \eqref{coupg1} and \eqref{coupg2}, then for $k\geq 17$, we have
\ben\label{evolutiong1}
&&\f12\f d{dt}\|g_1\|^2_{X_k}+T(t)^{-\f32}(\f{\lam_0}2-C_k\|g_1\|_{X_{17}}-C_k\|g_2\|_{\cE_0})\|g_1\|^2_{Y_k}+2T(t)^{-1}T'(t)\|g_1\|^2_{X_k}\\
\notag&\leq& C_k\big(T(t)^{-1}T'(t)+T(t)^{-\f12}|R(t)|+T(t)^{-1}|V(t)|^{-1}+T(t)^{-\f14}|V(t)|^{-\f32}\big)\|g_1\|^2_{X_{k-\f14}}\\
&&+C_kT(t)^{-\f32}\|g_1\|_{X_{17}}\notag\|g_2\|^2_{\cD_0}+C_k\big(T(t)^{-1}|V(t)|^{-1}+T(t)^{-\f14}|V(t)|^{-\f32}\big)\|g_2\|^2_{\cD_0}+C_kT(t)^{-1}|V(t)|^{-1}\|g_1\|_{L^2_{x,v}}
\een
for some constant $\lam_0>0$.
\end{prop}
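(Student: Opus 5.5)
We plan a weighted $L^2$ energy estimate for $g_1$ in $X_k$. For each $|\alpha|\in\{0,2\}$ we apply $\partial^\alpha_x$ to \eqref{coupg1}, test against $\partial^\alpha_x g_1\langle v\rangle^{2(k-4|\alpha|)}$, integrate over $\T^3\times\R^3$, and sum. The transport term $T^{1/2}v\cdot\nabla_x$ integrates to zero. The coefficients depend only on $t$, so $\partial^\alpha_x$ commutes with every operator except the bilinear terms. For the transport term in $v$ together with $\frac32T^{-1}T'g_1$, we repeat the computation of $J_{1,1}+J_{1,2}$ from the proof of Lemma \ref{locallinear}. There the coefficient of $\|\partial^\alpha_xg_1\|^2_{L^2_xL^2_{k-4|\alpha|}}$ was $\frac12T^{-1}T'(k-4|\alpha|-\frac32)$; we keep the sign this time. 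Since $T'>0$ for $t>t_0$ by \eqref{VTcondition}, and $k-8-\frac32\ge 4$ because $k\ge17$, this yields the good term $2T^{-1}T'\|g_1\|^2_{X_k}$ on the left. The lower-weight remainder $\frac12T^{-1}T'(k-4|\alpha|)\|\cdot\|^2_{L^2_{k-4|\alpha|-1}}$ carries the wrong sign. We put it, together with the $R$-term $T^{-1/2}|R|\,\|g_1\|^2_{X_{k-1/2}}$, into $C_k(T^{-1}T'+T^{-1/2}|R|)\|g_1\|^2_{X_{k-1/4}}$.

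The collision terms are handled as follows. Lemma \ref{QGgg} applied to $(L-A\chi_M)g_1+Q(g_1,g_1)$ gives $-\lambda_0\|g_1\|^2_{Y_k}+C_k\|g_1\|_{X_{17}}\|g_1\|^2_{Y_k}$ after multiplying by $T^{-3/2}$. Lemma \ref{paal1al2}, through \eqref{Qal1al2}, handles the derivative terms where $x$-derivatives fall on the first argument, with the $X_{15}\subset X_{17}$ norm. For the cross terms $Q(\mu^{1/2}g_2,g_1)$ we use \eqref{ghf}, noting $\|\mu^{1/2}g_2\|_{L^2_7}\lesssim\|g_2\|_{L^2_v}$, and obtain $C_k\|g_2\|_{\mathcal E_0}\|g_1\|^2_{Y_k}$. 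For $Q(g_1,\mu^{1/2}g_2)$ we use the second bound in \eqref{ghf}, $\|g_1\|_{L^2_7}\|\mu^{1/2}g_2\|_{D_1(k+1)}\|g_1\|_{D_1(k)}$, and then Young's inequality. The factor $\mu^{1/2}$ absorbs all weights, so $\|\mu^{1/2}g_2\|_{D_1(k+1)}\lesssim\|g_2\|_{D_2(0)}$. This produces $C_kT^{-3/2}\|g_1\|_{X_{17}}(\|g_1\|^2_{Y_k}+\|g_2\|^2_{\mathcal D_0})$. Where $x$-derivatives split between the factors, we invoke the Sobolev embedding in $x$ exactly as in Lemma \ref{paal1al2}.

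We expect the main obstacle to be the spherical diffusion term and the source $\mathcal H(\mu)$. For the diffusion acting on $g_1$, we integrate by parts as for $J_{1,3}$. This gives a nonnegative dissipation, which we drop, plus the commutator with $\nabla_v\langle v\rangle^{2(k-4|\alpha|)}$. We bound the commutator by $T^{-1}\int\langle V+T^{1/2}v\rangle^{-1}|\partial^\alpha_x g_1|^2\langle v\rangle^{2(k-4|\alpha|)-1}$ and split into the regions $|T^{1/2}v|\le\frac12|V|$ and $|T^{1/2}v|>\frac12|V|$. In the first region $\langle V+T^{1/2}v\rangle^{-1}\lesssim|V|^{-1}$. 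In the second region $\langle v\rangle^{-1/2}\lesssim T^{1/4}|V|^{-1/2}$, and we borrow half a weight from this. The aim is the coefficient $T^{-1}|V|^{-1}+T^{-1/4}|V|^{-3/2}$ in front of $\|g_1\|^2_{X_{k-1/4}}$. If the exponents do not match, we try Cauchy--Schwarz, trading weight against $\|g_1\|^2_{Y_k}$ and absorbing with $\lambda_0/2$.

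The diffusion acting on $\mu^{1/2}g_2$ is estimated the same way. After integration by parts, all derivatives and weights land on $\mu^{1/2}g_2$ and are absorbed by the Gaussian. Young's inequality then gives $\varepsilon T^{-3/2}\|g_1\|^2_{Y_k}+C(T^{-1}|V|^{-1}+T^{-1/4}|V|^{-3/2})\|g_2\|^2_{\mathcal D_0}$, using \eqref{VTcondition0} for the ratio of coefficients. Finally, $\mathcal H(\mu)$ is spatially constant, so it contributes only at $\alpha=0$. It is paired with $g_1\langle v\rangle^{2k}$ and bounded by Cauchy--Schwarz via the estimate used for $J_2$. Combined with \eqref{VTcondition0}, this produces the term $C_kT^{-1}|V|^{-1}\|g_1\|_{L^2_{x,v}}$. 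Here we use that $g_1$ is paired against a rapidly decaying function, so weights cost nothing. Collecting terms and choosing $\varepsilon$ small completes \eqref{evolutiong1}.
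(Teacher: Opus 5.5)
Your overall scheme is the paper's, and several parts are fine as written: the $v\cdot\nabla_v$ and $\frac32T^{-1}T'$ terms, $(L-A\chi_M)g_1+Q(g_1,g_1)$ via Lemma \ref{QGgg} and \eqref{Qal1al2}, and the cross collision terms via \eqref{ghf}. The gap is in the step you yourself single out as the main obstacle: the commutator from the spherical diffusion acting on $g_1$.

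Write $w=V+T^{1/2}v$, $m=k-4|\alpha|$, $A=\Pi(w)/\langle w\rangle$ and $h=\partial^\alpha_x g_1$. Your remainder $T^{-1}\int\langle w\rangle^{-1}h^2\langle v\rangle^{2m-1}$ is too lossy by half a power of $\langle v\rangle$.

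Consider the outer region $|T^{1/2}v|>\frac12|V|$. It contains $w=0$, so $\langle w\rangle^{-1}$ gives nothing there. Borrowing $\langle v\rangle^{-1/2}\lesssim T^{1/4}|V|^{-1/2}$ produces the coefficient $T^{-3/4}|V|^{-1/2}$ in front of $\|g_1\|^2_{X_{k-1/4}}$. This exceeds the target $T^{-1/4}|V|^{-3/2}$ by the factor $T^{-1/2}|V|$. It also exceeds the good term $2T^{-1}T'\sim T^{-1}|V|^{-1}$ by the factor $T^{1/4}|V|^{1/2}$. Both factors tend to infinity.

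Your fallback of trading against $\|g_1\|^2_{Y_k}$ cannot repair this. $D_1(k-4|\alpha|)$ only carries the weight $\langle v\rangle^{m-3/2}$, which is below the weight of the remainder. Paying for $\nabla_v h$ with the collision dissipation instead pushes the weight on $h$ above $m$.

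The missing step is Cauchy--Schwarz in the metric of the positive semidefinite matrix $A$. This spends part of the spherical dissipation, so it cannot all be dropped. Using $Av\cdot v\le |v|^2/\langle w\rangle$, one gets
\[
2m\Big|\int_{\mathbb{T}^3\times\mathbb{R}^3} A\nabla_v h\cdot v\,h\,\langle v\rangle^{2m-2}\,dv\,dx\Big|\le\frac14\int_{\mathbb{T}^3\times\mathbb{R}^3} A\nabla_v h\cdot\nabla_v h\,\langle v\rangle^{2m}\,dv\,dx+4m^2\int_{\mathbb{T}^3\times\mathbb{R}^3}\frac{h^2}{\langle w\rangle}\langle v\rangle^{2m-2}\,dv\,dx .
\]
The remainder now carries $\langle v\rangle^{2m-2}$. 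The inner region gives $T^{-1}|V|^{-1}\|g_1\|^2_{X_{k-1}}$. In the outer region, $\langle v\rangle^{-3/2}\lesssim T^{3/4}|V|^{-3/2}$ gives exactly $T^{-1/4}|V|^{-3/2}\|g_1\|^2_{X_{k-1/4}}$, as in the paper.

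Two smaller points.

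First, for $\mathcal{H}(\mu)$, the bound obtained for $J_2$ in Lemma \ref{locallinear} is too crude to yield $C_kT^{-1}|V|^{-1}\|g_1\|_{L^2_{x,v}}$. It contains $T^{-1}\langle T^{-1/2}|V|\rangle^{-1}$, $T^{-1/2}|V|^{-1}$ and $|V|^{-1}$. None of these is $\lesssim T^{-1}|V|^{-1}$, since $T(t)\to\infty$, and \eqref{VTcondition0} does not list them. You need to redo this estimate:
\begin{itemize}
\item the term where $\nabla_v$ falls on $\Pi(w)/\langle w\rangle$ is bounded by $T^{-1/2}\langle w\rangle^{-1}|w|^{-1}\mu^{1/2}$;
\item the remaining part is bounded by $T^{-1}\langle w\rangle^{-1}\mu^{1/2}$;
\item the same region splitting, with $\mu^{1/4}\lesssim T|V|^{-2}$ in the outer region, then produces $T^{\pm1/2}|V|^{-2}$, $T^{-1/4}|V|^{-2}$, $|V|^{-2}$ and $T^{-1}|V|^{-1}$, which \eqref{VTcondition0} does control.
\end{itemize}

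Second, in the diffusion term acting on $\mu^{1/2}g_2$, do not move both $v$-derivatives onto $\mu^{1/2}g_2$, because $\mathcal{D}_0$ controls only one; keep one derivative on each factor. With that correction, your Young-inequality route is a valid alternative to the paper's. You absorb $\nabla_v\partial^\alpha_x g_1$ into $\varepsilon T^{-3/2}\|g_1\|^2_{Y_k}$ rather than into the spherical dissipation. This leaves the coefficient $T^{-1/2}|V|^{-2}+T^{1/2}|V|^{-2}\lesssim T^{-1}|V|^{-1}$ in front of $\|g_2\|^2_{\mathcal{D}_0}$.
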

\begin{proof}
For any $|\al|=0,2$, the equation for $\pa^\al_x g_1$ is 
\ben\label{paxg1}
&&\pa_t \pa^\al_xg_1(t,x,v)+T(t)^{\f12}v\cdot\na_x \pa^\al_xg_1+(-\f12 T(t)^{-1}T'(t)v+2T(t)^{-\f12}R(t))\cdot\na_v \pa^\al_xg_1(t,x,v)\\
&=&\notag T(t)^{-\f32}(L-A\chi_M)\pa^\al_xg_1+T(t)^{-\f32}\pa^\al_xQ(g_1,g_1)+\f32 T(t)^{-1} T'(t) \pa^\al_xg_1+\pa^\al_x\cH(\mu)+T(t)^{-1}\\
\notag&&\times\dv \Big(\f{\Pi(V(t)+T(t)^{\f12}v)}{\<V(t)+T(t)^{\f12}v\>}\cdot \na_v (\pa^\al_xg_1+\mu^{\f12}\pa^\al_xg_2)\Big)+T(t)^{-\f32}\big(\pa^\al_xQ(\mu^\f12 g_2,g_1)+\pa^\al_xQ(g_1,\mu^{\f12}g_2)\big).
\een
Multiply $\<v\>^{2(k-4|\al|)}\pa^\al_x g_1,k\geq 17$ to \eqref{paxg1}, integrate in $(x,v)\in\T^3\times\R^3$ and sum over $|\al|=0,2$, the first two terms on the left-hand side gives
\beno
\f 12 \f d{dt}\sum_{|\al|=0,2}\int_{\T^3\times\R^3}|\pa^\al_x g_1|^2\<v\>^{2(k-4|\al|)}dvdx=\f12\f d{dt}\|g_1\|^2_{X_k},
\eeno
recalling the energy functional $X_k$ in \eqref{functional}.

We denote the rest terms including the third term on the left-hand side by $I_1$ to $I_7$ and estimate them one by one. For $I_1$ and $I_4$, by the estimates in Lemma \ref{locallinear}, we have 
\beno
I_1&=&\f12T(t)^{-1}T'(t)\sum_{|\al|=0,2}(k-4|\al|+\f32)\|\pa^\al_xg_1\|^2_{L^2_xL^2_{k-4|\al|}}+\f12T(t)^{-1}T'(t)\sum_{|\al|=0,2}(k-4|\al|)\|\pa^\al_xg_1\|^2_{L^2_xL^2_{k-4|\al|-1}}\\
&&-2T(t)^{-\f12}R(t)(k-4|\al|)\int_{\T^3\times\R^3}\<v\>^{2(k-4|\al|)-2}v(\pa^\al_xg_1)^2dvdx.
\eeno
A direct computation of \(I_4\) gives
\beno
I_4=\f32 T(t)^{-1}T'(t)\sum_{|\al|=0,2}\int_{\T^3\times\R^3}(\pa^\al_xg_1)^2\<v\>^{2(k-4|\al|)}dvdx=\f32 T(t)^{-1}T'(t)\sum_{|\al|=0,2}\|\pa^\al_xg_1\|^2_{L^2_xL^2_{k-4|\al|}}.
\eeno
Thanks to \eqref{VTcondition} and the fact that $k\geq 17$, it holds that
\ben\label{I14}
I_1-I_4\geq 2T(t)^{-1}T'(t)\|g_1\|^2_{X_k}-C_k\big(T(t)^{-1}T'(t)+T(t)^{-\f12}|R(t)|\big)\|g_1\|^2_{X_{k-\f12}}.
\een

For $I_2$ and $I_3$, we have that
\beno
I_2=T(t)^{-\f32}\sum_{|\al|=0,2}\Big(\int_{\T^3\times\R^3}L(\pa^\al_xg_1)\<v\>^{2(k-4|\al|)}\pa^\al_x g_1dvdx-\int_{\T^3\times\R^3}A\chi_M\pa^\al_x g_1\<v\>^{2(k-4|\al|)}\pa^\al_x g_1dvdx\Big).
\eeno 
and
\beno
I_3&=&T(t)^{-\f32}\sum_{|\al|=0,2}\int_{\T^3}\big(Q(g_1,\pa^\al_xg_1),\<v\>^{2(k-4|\al|)}\pa^\al_x g_1\big)_{L^2_v}dx\\
&&+T(t)^{-\f32}\sum_{|\al_1|\geq1}\int_{\T^3}\big(Q(\pa^{\al_1}_xg_1,\pa^{\al_2}_xg_1),\<v\>^{2(k-4|\al|)}\pa^\al_x g_1\big)_{L^2_v}dx.
\eeno
Thanks to Lemma \ref{QGgg} and \eqref{Qal1al2} in Lemma \ref{paal1al2}, 
\ben\label{I23}
I_2+I_3\leq  T(t)^{-\f32}\Big(-\f{\lam_0}2 \|g_1\|^2_{Y_k}+C_k\|g_1\|_{X_{17}}\|g_1\|^2_{Y_k}\Big).
\een

For $I_5$, since $\pa^\al_x \cH(\mu)=0$ for $|\al|=2$, thus we have that
\beno
I_5&=&\int_{\T^3\times\R^3}\cH(\mu)g_1\<v\>^{2k}dvdx=\int_{\T^3\times\R^3}\Big(\big(\f12 T(t)^{-1}T'(t)v-2T(t)^{-\f12}R(t)\big)\cdot\na_v \mu+\f32 T(t)^{-1} T'(t) \mu\Big)g_1\<v\>^{2k}dvdx\\
&&+\int_{\T^3\times\R^3}T(t)^{-1}\dv \Big(\f{\Pi(V(t)+T(t)^{\f12}v)}{\<V(t)+T(t)^{\f12}v\>}\cdot \na_v \mu\Big)g_1\<v\>^{2k}dvdx:=I_{5,1}+I_{5,2}.
\eeno
Using the exponential decay of $\mu$, it is direct to show that
\ben\label{I51}
I_{5,1}\leq C_k\big(T(t)^{-1}T'(t)+T(t)^{-\f12}|R(t)|\big)\|g_1\|_{L^2_{x,v}}.
\een
For $I_{5,2}$, since
\beno
\left|T(t)^{-1}\dv \Big(\f{\Pi(V(t)+T(t)^{\f12}v)}{\<V(t)+T(t)^{\f12}v\>}\cdot \na_v \mu\Big)\right|\ls \Big(\f{T(t)^{-\f12}}{\<V(t)+T(t)^{\f12}v\>|V(t)+T(t)^{\f12}v|}+\f{T(t)^{-1}}{\<V(t)+T(t)^{\f12}v\>}\Big)\mu^{\f12}.
\eeno
It yields that
\beno
|I_{5,2}|\ls \int_{\T^3\times\R^3}\Big(\f{T(t)^{-\f12}}{\<V(t)+T(t)^{\f12}v\>|V(t)+T(t)^{\f12}v|}+\f{T(t)^{-1}}{\<V(t)+T(t)^{\f12}v\>}\Big)\mu^{\f12} g_1\<v\>^{2k}dvdx:=I_{5,2,1}+I_{5,2,2}.
\eeno
For $I_{5,2,1}$, we split the integration domain \(\mathbb{R}^3\) into two parts: $|T(t)^{\f12}v|\leq \f12|V(t)|$ and $|T(t)^{\f12}v|> \f12|V(t)|$. For these two cases we have, respectively,
\beno
\<V(t)+T(t)^{\f12}v\>^{-1}|V(t)+T(t)^{\f12}v|^{-1}\ls|V(t)|^{-2},\quad \mbox{and}\quad \mu^{\f14}\ls|v|^{-2}\ls T(t)|V(t)|^{-2}.
\eeno
Then it holds that
\beno
I_{5,2,1}&\ls& T(t)^{-\f12}|V(t)|^{-2}\int_{\T^3\times\R^3} \mu^{\f12}|g_1|\<v\>^{2k}dvdx+T(t)^{\f12}|V(t)|^{-2}\int_{\T^3\times\R^3} \f1{|V(t)+T(t)^{\f12}v|}\mu^{\f14}|g_1|\<v\>^{2k}dvdx\\
&\ls& C_k\big(T(t)^{-\f12}|V(t)|^{-2}+T(t)^{\f12}|V(t)|^{-2}+T(t)^{-\f14}|V(t)|^{-2}\big)\|g_1\|_{L^2_{x,v}}.
\eeno
Similarly, we can derive that
\beno
I_{5,2,2}\ls C_k\big(T(t)^{-1}|V(t)|^{-1}+|V(t)|^{-2}\big)\|g_1\|_{L^2_{x,v}}.
\eeno
Then we obtain that
\begin{equation}\label{I52}
|I_{5,2}|\leq C_k\big(T(t)^{-\f12}|V(t)|^{-2}+T(t)^{\f12}|V(t)|^{-2}+T(t)^{-\f14}|V(t)|^{-2}+T(t)^{-1}|V(t)|^{-1}+|V(t)|^{-2}\big)\|g_1\|_{L^2_{x,v}}.
\end{equation}
Combining \eqref{I51}, \eqref{I52} and using \eqref{VTcondition0}, we conclude that
\ben\label{I_5}
|I_5| \leq C_kT(t)^{-1}|V(t)|^{-1}\|g_1\|_{L^2_{x,v}}.
\een

For $I_6$, we write it as
\beno
I_6&=&-T(t)^{-1}\sum_{|\al|=0,2}\int_{\T^3\times\R^3}\Big(\f{\Pi(V(t)+T(t)^{\f12}v)}{\<V(t)+T(t)^{\f12}v\>}\cdot \na_v (\pa^\al_xg_1+\mu^{\f12}\pa^\al_xg_2)\Big)\na_v(\<v\>^{2(k-4|\al|)}\pa^\al_xg_1)dvdx\\
&=&-T(t)^{-1}\sum_{|\al|=0,2}\int_{\T^3\times\R^3}\f{\Pi(V(t)+T(t)^{\f12}v)}{\<V(t)+T(t)^{\f12}v\>}\na_v\pa^\al_xg_1\big(\na_v\pa^\al_xg_1\<v\>^{2(k-4|\al|)}+2(k-4|\al|)\<v\>^{2(k-4|\al|-1)}v\pa^\al_xg_1\big)dvdx\\
&&-T(t)^{-1}\sum_{|\al|=0,2}\int_{\T^3\times\R^3}\f{\Pi(V(t)+T(t)^{\f12}v)}{\<V(t)+T(t)^{\f12}v\>}\na_v(\mu^{\f12}\pa^\al_xg_2)\na_v(\<v\>^{2(k-4|\al|)}\pa^\al_xg_1)dvdx:=I_{6,1}+I_{6,2}.
\eeno 
For $I_{6,1}$, since $\Pi(V(t)+T(t)^{\f12}v)$ is positive, by Cauchy-Schwarz inequality, we have that
\ben\label{I61}
 I_{6,1}&\leq&-\f34T(t)^{-1}\sum_{|\al|=0,2}\int_{\T^3\times\R^3}\f{\Pi(V(t)+T(t)^{\f12}v)}{\<V(t)+T(t)^{\f12}v\>}\na_v\pa^\al_xg_1\cdot \na_v\pa^\al_xg_1\<v\>^{2(k-4|\al|)}dvdx\\
\notag &&+C_kT(t)^{-1}\sum_{|\al|=0,2}\int_{\T^3\times\R^3}\f1{\<V(t)+T(t)^{\f12}v\>}(\pa^\al_xg_1)^2\<v\>^{2(k-4|\al|-1)}dvdx\\
\notag&\leq&-\f34T(t)^{-1}\sum_{|\al|=0,2}\int_{\T^3\times\R^3}\f{\Pi(V(t)+T(t)^{\f12}v)}{\<V(t)+T(t)^{\f12}v\>}\na_v\pa^\al_xg_1\cdot \na_v\pa^\al_xg_1\<v\>^{2(k-4|\al|)}dvdx\\
\notag&&+C_k\big(T(t)^{-1}|V(t)|^{-1}\|g_1\|^2_{X_{k-1}}+T(t)^{-\f14}|V(t)|^{-\f32}\|g_1\|^2_{X_{k-\f14}}\big).
\een
In the last step we used the same splitting of the integration domain that was employed to estimate $I_{5,2,1}$.

For $I_{6,2}$, due to the exponential decay of $\mu$ and   Cauchy-Schwarz inequality, we can derive that
\ben\label{I62}
\notag|I_{6,2}|&\leq& \f14T(t)^{-1}\sum_{|\al|=0,2}\int_{\T^3\times\R^3}\f{\Pi(V(t)+T(t)^{\f12}v)}{\<V(t)+T(t)^{\f12}v\>}\na_v\pa^\al_xg_1\cdot \na_v\pa^\al_xg_1\<v\>^{2(k-4|\al|)}dvdx\\
&&+C_k\big(T(t)^{-1}|V(t)|^{-1}+T(t)^{-\f14}|V(t)|^{-\f32}\big)\big(\|g_1\|^2_{X_{k-1}}+\|g_2\|^2_{\cD_0}\big).
\een
Combining \eqref{I61} and \eqref{I62}, we conclude that
\ben\label{I6}
\notag I_6&\leq& -\f12T(t)^{-1}\sum_{|\al|=0,2}\int_{\T^3\times\R^3}\f{\Pi(V(t)+T(t)^{\f12}v)}{\<V(t)+T(t)^{\f12}v\>}\na_v\pa^\al_xg_1\cdot \na_v\pa^\al_xg_1\<v\>^{2(k-4|\al|)}dvdx\\
&&+C_k\big(T(t)^{-1}|V(t)|^{-1}+T(t)^{-\f14}|V(t)|^{-\f32}\big)\big(\|g_1\|^2_{X_{k-\f14}}+\|g_2\|^2_{\cD_0}\big).
\een

For $I_7$, we first write it as
\beno
I_7&=&T(t)^{-\f32}\sum_{|\al|=0,2}\sum_{\al_1+\al_2=\al}\Big(\int_{\T^3}\big(Q(\mu^{\f12}\pa^{\al_1}_xg_2,\pa^{\al_2}_xg_1),\<v\>^{2(k-4|\al|)}\pa^\al_x g_1\big)_{L^2_v}dx\\
&&+\int_{\T^3}\big(Q(\pa^{\al_1}_xg_1,\mu^{\f12}\pa^{\al_2}_xg_2),\<v\>^{2(k-4|\al|)}\pa^\al_x g_1\big)_{L^2_v}dx\Big).
\eeno 
Due to \eqref{ghf}, \eqref{Qal1al2}, we have that
\ben\label{I7}
\notag|I_7|&\leq&C_kT(t)^{-\f32}\big(\|g_2\|_{\cE_0}\|g_1\|^2_{Y_k}+\|g_1\|_{X_{17}}\|g_2\|_{\cD_0}\|g_1\|_{Y_k}\big)\\ &\leq&C_kT(t)^{-\f32}\big(\|g_2\|_{\cE_0}\|g_1\|^2_{Y_k}+\|g_1\|_{X_{17}}\|g_1\|^2_{Y_k}+\|g_1\|_{X_{17}}\|g_2\|^2_{\cD_0}\big).
\een

Patching together the estimates of $I_1$ to $I_7$, i.e., \eqref{I14}, \eqref{I23}, \eqref{I_5},\eqref{I6} and \eqref{I7}, we can conclude the desired result \eqref{evolutiong1}.
\end{proof}

\subsection{Evolution of $g_2$.}Next, using \eqref{coupg2}, we derive the evolution of $g_2$ and begin with the following lemmas.
\begin{lem}\label{micro}
Let $g_1$ and $g_2$ be the solutions to the coupled system \eqref{coupg1} and \eqref{coupg2}, then for $k\geq 17$, we have
\ben\label{cEkcDk}
&&\f 12\f d{dt}\|g_2\|^2_{\cE_k}+T(t)^{-\f32}(\f{\lam_0} 2-C_k\|g_2\|_{\cE_{17}})\|g_2\|^2_{\mathcal{D}_k}+\f14T(t)^{-1}T'(t)\|g_2\|^2_{\cE_{k+1}}\\
\notag&\leq& C_{k}T(t)^{-\f32}(\|g_2\|^2_{\cD_0}+\|g_1\|^2_{Y_k})+C_k(T(t)^{-1}T'(t)+T(t)^{-\f12}|R(t)|)\|g_2\|^2_{\cE_{k+\f12}}.
\een
Moreover, we also have
\ben\label{cE0cD0}
&&\f 12\f d{dt}\|g_2\|^2_{\cE_0}+T(t)^{-\f32}(\lam_0-C_0\|g_2\|_{\cE_0})\|(\mathbf{I-P})g_2\|^2_{\mathcal{D}_0}+\f14T(t)^{-1}T'(t)\|g_2\|^2_{\cE_1}\\
\notag&\ls& T(t)^{-\f32}\|g_2\|_{\cD_0}\|g_1\|_{Y_0}+(T(t)^{-1}T'(t)+T(t)^{-\f12}|R(t)|)\|g_2\|^2_{\cE_{\f12}}+T(t)^{-\f32}\|g_2\|_{\cE_0}\|\mP g_2\|^2_{H^2_{x}L^2_v},
\een 
where the projection $\mP$ is defined in \eqref{mP}.
\end{lem}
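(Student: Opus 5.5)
We will apply $\partial^\alpha_x$, $|\alpha|=0,2$, to \eqref{coupg2}, pair with $\langle v\rangle^{2k}\partial^\alpha_x g_2$ (for \eqref{cEkcDk}) or with $\partial^\alpha_x g_2$ (for \eqref{cE0cD0}), integrate over $\mathbb{T}^3\times\mathbb{R}^3$ and sum. The transport term $T(t)^{1/2}v\cdot\nabla_x$ drops out. The proof then reduces to estimating four groups of terms.

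\emph{Drift and source terms.} The drift/source part is $-(-\tfrac12T^{-1}T'v+2T^{-1/2}R)\cdot(\nabla_v g_2-\tfrac v2 g_2)+\tfrac32T^{-1}T'g_2$. Integrating by parts as in the computation of $J_{1,1}+J_{1,2}$ in Lemma \ref{locallinear}, the piece $\tfrac12T^{-1}T'v\cdot\nabla_v$ together with $\tfrac32T^{-1}T'g_2$ gives $\tfrac12T^{-1}T'(k+\tfrac32-\tfrac32)\|g_2\|^2_{\cE_k}$ up to lower-order weights $\|g_2\|^2_{\cE_{k-1}}$. These have a harmless sign, because $T'>0$ by \eqref{VTcondition}.

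The genuinely new piece is $-\tfrac14T^{-1}T'|v|^2g_2$, coming from $\tfrac12T^{-1}T'v\cdot(-\tfrac v2 g_2)$. After moving it to the left-hand side it yields the good term $\tfrac14T^{-1}T'\|g_2\|^2_{\cE_{k+1}}$, since $\int |v|^2\langle v\rangle^{2k}g_2^2\,dv=\|g_2\|_{L^2_{k+1}}^2-\|g_2\|_{L^2_k}^2$. The remaining $\|g_2\|^2_{\cE_k}$ contributions are absorbed into $C_kT^{-1}T'\|g_2\|^2_{\cE_{k+1/2}}$.

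The $R$-terms $2T^{-1/2}R\cdot\nabla_v g_2$ and $T^{-1/2}R\cdot v\,g_2$ are bounded directly by $C_kT^{-1/2}|R|\,\|g_2\|^2_{\cE_{k+1/2}}$.

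\emph{Linear and nonlinear collision terms.} For \eqref{cEkcDk} we use \eqref{cLf}, which gives $T^{-3/2}(-\lambda_0\|g_2\|^2_{\cD_k}+C_k\|g_2\|^2_{\cD_0})$ after bounding $\|\cdot\|_{L^2_v}\le\|\cdot\|_{D_2(0)}$. The $\Gamma(g_2,g_2)$ term is handled by \eqref{Gaal1al2} and produces $C_kT^{-3/2}\|g_2\|_{\cE_0}\|g_2\|^2_{\cD_k}$. The weight $\cE_{17}$ in the statement is only needed when derivatives fall on the first argument, where the Sobolev embeddings in $x$ of Lemma \ref{paal1al2} apply.

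For \eqref{cE0cD0} we instead use the spectral gap \eqref{spectralgap}, which gives $-\lambda_0\|(\mathbf I-\mathbf P)g_2\|^2_{\cD_0}$. For $\Gamma$ we split $g_2=\mathbf Pg_2+(\mathbf I-\mathbf P)g_2$ in the second and third slots. Since $(\Gamma(\cdot,\cdot),\mathbf Pf)$ vanishes by collision invariants, the test function can be replaced by $(\mathbf I-\mathbf P)g_2$. The product is then bounded by $\|g_2\|_{\cE_0}(\|(\mathbf I-\mathbf P)g_2\|_{\cD_0}+\|\mathbf Pg_2\|_{H^2_xL^2_v})\|(\mathbf I-\mathbf P)g_2\|_{\cD_0}$, using that $\mathbf Pg_2$ is Gaussian in $v$ so all its $D_2$ norms reduce to $H^2_xL^2_v$. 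Young's inequality then yields exactly the stated terms.

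\emph{Coupling term.} The coupling term $T^{-3/2}\mu^{-1/2}A\chi_Mg_1$ has compact support in $v$, so $\mu^{-1/2}\chi_M\langle v\rangle^{2k}\le C_{M,k}$. It is therefore bounded by $C_kT^{-3/2}\|g_1\|_{Y_0}\|g_2\|_{\cD_0}$ when $k=0$, and by Young's inequality by $C_kT^{-3/2}(\|g_2\|^2_{\cD_0}+\|g_1\|^2_{Y_k})$ for general $k$. Here we use that on $|v|\le 2M$ all weights are comparable and $D_1$ dominates $L^2$.

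\emph{Main obstacle.} We expect the main difficulty to be the exact bookkeeping of the $T^{-1}T'$ drift terms for \eqref{cE0cD0}. With weight $\langle v\rangle^0$, the contributions from $v\cdot\nabla_v$ and $\tfrac32 g_2$ must cancel rather than merely be bounded, so that the leftover positive term $\tfrac14T^{-1}T'\|g_2\|^2_{\cE_1}$ appears with the correct sign. The error term must also carry only weight $\tfrac12$. We will check this carefully by writing $\int v\cdot\nabla_v g\,g=-\tfrac32\int g^2$ exactly. The $\Gamma$ split is the other delicate point, and there we rely on $\mathbf P\Gamma=0$.
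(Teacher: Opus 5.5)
Your proposal is correct and follows the paper's proof essentially step for step. Both use the weighted energy identity for $\partial^\alpha_x g_2$, with \eqref{cLf} (respectively \eqref{spectralgap}) for $\mathcal L$, \eqref{Gaal1al2} for $\Gamma$, and the compact support of $\chi_M$ for the coupling term, and the $v\cdot\nabla_v$ and $|v|^2$ drift bookkeeping yields $\tfrac14T^{-1}T'\|g_2\|^2_{\cE_{k+1}}$; your use of $(\Gamma,\mathbf{P}g_2)=0$ in \eqref{cE0cD0} is a harmless refinement of the paper's direct splitting of $\|g_2\|^2_{\cD_0}$. Your anticipated obstacle is not one: for $k=0$ the $v\cdot\nabla_v$ and $\tfrac32 g_2$ contributions need not cancel, since they leave $\tfrac34T^{-1}T'\|g_2\|^2_{\cE_0}$, which the allowed error $T^{-1}T'\|g_2\|^2_{\cE_{1/2}}$ already absorbs (for general $k$ the coefficient is $\tfrac12(k-\tfrac32)$ rather than $\tfrac k2$, but it is absorbed the same way).
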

\begin{proof}
For any $|\al|=0,2$, the equation for $\pa^\al_xg_2$ is 
\ben\label{paxg2}
&&\pa_t \pa^\al_xg_2(t,x,v)+T(t)^{\f12}v\cdot\na_x \pa^\al_xg_2=T(t)^{-\f32}\mathcal{L}\pa^\al_xg_2+T(t)^{-\f32}\mu^{-\f12}A\chi_M(v)\pa^\al_xg_1\\
&&\notag+T(t)^{-\f32}\pa^\al_x\Ga(g_2,g_2)-(-\f12 T(t)^{-1}T'(t)v+2T(t)^{-\f12}R(t))\cdot(\na_v\pa^\al_xg_2-\f v 2 \pa^\al_xg_2)+\f32T(t)^{-1}T'(t)\pa^\al_xg_2.
\een
We first prove the estimates \eqref{cEkcDk}. Multiply $\<v\>^{2k}\pa^\al_x g_2,k\geq17$ to \eqref{paxg2}, integrate in $(x,v)\in\T^3\times\R^3$ and sum over $|\al|=0,2$. Then the first two terms on the left-hand side give
\beno
\f 12 \f d{dt}\sum_{|\al|=0,2}\int_{\T^3\times\R^3}|\pa^\al_x g_2|^2\<v\>^{2k}dvdx=\f12\f d{dt}\|g_2\|^2_{\cE_k}.
\eeno

We denote the rest terms by $J_{1,k}$ to $J_{5,k}$ and estimate them one by one. For $J_{1,k}$, by \eqref{cLf} in Lemma \ref{linearizedoperator}, we have that
\ben\label{J1}
J_{1,k}=T(t)^{-\f32}\sum_{|\al|=0,2}\int_{\T^3}\big(\cL\pa^\al_xg_2,\pa^\al_xg_2\<v\>^{2k}\big)_{L^2_v}dx\leq T(t)^{-\f32}\Big(
-\lam_0\|g_2\|^2_{\cD_k}+C_k\|g_2\|^2_{\cD_0}\Big).
\een

For $J_{2,k}$, we write it as 
\beno
J_{2,k}=T(t)^{-\f32}\sum_{|\al|=0,2}\int_{\T^3\times\R^3} \mu^{-\f12}A\chi_M(v)\pa^\al_xg_1\<v\>^{2k}\pa^\al_xg_2dvdx.
\eeno
Noticing that $\chi_M$ has compact support, thus it has the upper bound
\ben\label{J2}
J_{2,k}\leq \varepsilon T(t)^{-\f32}\|g_2\|^2_{\cD_0}+ C_{\vep,k}T(t)^{-\f32}\|g_1\|^2_{Y_k}
\een
for any $\varepsilon>0$. In the below, we choose $\vep=\lam_0/2$.

For $J_{3,k}$, we write it as
\beno
J_{3,k}=T(t)^{-\f32}\sum_{\al_1+\al_2=\al}\int_{\T^3}\big(\Ga(\pa^{\al_1}_xg_2,\pa^{\al_1}_xg_2),\<v\>^{2k}\pa^\al_xg_2\big)_{L^2_v}dx.
\eeno
Due to \eqref{Gaal1al2} in Lemma \ref{paal1al2}, we have that
\ben\label{J3}
J_{3,k}\leq C_kT(t)^{-\f32} \|g_2\|_{\cE_{17}}\|g_2\|^2_{\cD_k}.
\een

For $J_{4,k}$ and $J_{5,k}$, similar to the estimate of $I_1$ and $I_4$ in Proposition \ref{g1}, we can derive that
\ben\label{J45}
 J_{4,k}+J_{5,k}&\leq& -\f14T(t)^{-1}T'(t)\|g_2\|^2_{\cE_{k+1}}+C_k\big(T(t)^{-1}T'(t)+T(t)^{-\f12}|R(t)|\big)\|g_2\|^2_{\cE_{k+\f12}}.
\een
Combining the estimates \eqref{J1}, \eqref{J2}, \eqref{J3} and \eqref{J45}, we obtain the desired result \eqref{cEkcDk}.

\medskip

The similar argument can be applied to obtain \eqref{cE0cD0}, the only difference is the estimate for \(J_{1,0}\), where we appeal to \eqref{spectralgap} instead of \eqref{cLf}. We omit the details and list the resulting bounds for each term as follows.
\beno
&&J_{1,0}\leq -T(t)^{-\f32}\lam_0\|(\mathbf{I-P})g_2\|^2_{\cD_0},~ J_{2,0}\ls T(t)^{-\f32}\|g_2\|_{\cD_0}\|g_1\|_{Y_0},\\
&&J_{3,0}\leq C_0T(t)^{-\f32} \|g_2\|_{\cE_0}\|g_2\|^2_{\cD_0},~ J_{4,0}+J_{5,0}\leq -\f14T(t)^{-1}T'(t)\|g_2\|^2_{\cE_{1}}+C\big(T(t)^{-1}T'(t)+T(t)^{-\f12}|R(t)|\big)\|g_2\|^2_{\cE_{\f12}}.
\eeno
Decomposing \(g_2=\mathbf P g_2+(\mathbf I-\mathbf P)g_2\) and noticing that $\|\mP g_2\|_{\cD_0}\ls \|\mP g_2\|_{H^2_{x}L^2_v}$, we arrive at \eqref{cE0cD0} and the proof of the lemma is complete.
\end{proof}

Note that \eqref{cE0cD0} only yields dissipation for the microscopic component, we still lack dissipation of the macroscopic part $\mP g_2$.  To obtain it, we first derive the macroscopic equations.
\begin{lem}
	We rewrite \eqref{coupg2} as
	\ben\label{coupg22}
	&&\pa_t g_2(t,x,v)+T(t)^{\f12}v\cdot\na_x g_2=T(t)^{-\f32}\mathcal{L}g_2+r(g_1,g_2)
	\een
	with 
	\begin{equation}\label{rg2}
		\begin{aligned}
			r(g_1,g_2)&=T(t)^{-\f32}\mu^{-\f12}A\chi_M(v)g_1+T(t)^{-\f32}\Ga(g_2,g_2)\\
			&-(-\f12 T(t)^{-1}T'(t)v+2T(t)^{-\f12}R(t))\cdot(\na_vg_2-\f v 2 g_2)+\f32T(t)^{-1}T'(t)g_2,
		\end{aligned}
	\end{equation}
	and the macroscopic part $\mP g_2=\big(a^{g_2}+b^{g_2}\cdot v+c^{g_2}(|v|^2-3)\big)\mu^{\f12}$ with
	\ben\label{afbfcfa}
	a^{g_2}(t,x)=\int_{\R^3}g_2\mu^{\f12}(v)dv,~b^{g_2}(t,x)=\int_{\R^3}g_2v\mu^{\f12}(v)dv,~\mbox{and}~c^{g_2}(t,x)=\f16\int_{\R^3}g_2(|v|^2-3)\mu^{\f12}(v)dv.
	\een
	Then it holds that
\begin{equation}\label{macroeq}  \left\{ 
	\begin{aligned}
		&\pa_t a^{g_2}+T(t)^{\f12}\na_x\cdot b^{g_2}=\sfT_{11};\\
		&\pa_t b^{g_2}+T(t)^{\f12}(\na_x a^{g_2}+2\na_xc^{g_2})=\sfT_{21};\\
		&\pa_t c^{g_2}+\f13T(t)^{\f12}\na_x\cdot b^{g_2}=\sfT_{31};\\
		&\pa_t c^{g_2}+T(t)^{\f12}\pa_i b^{g_2}_i=(\sfT_{41})_{ii}+\pa_t(\sfT_{42})_{ii};\\
		&T(t)^{\f12}(\pa_{j}b^{g_2}_i+\pa_i b^{g_2}_j)=(\sfT_{41})_{ij}+\pa_t (\sfT_{42})_{ij},~~i\neq j;\\
		&T(t)^{\f12}\na_x c^{g_2}=\sfT_{51}+\pa_t\sfT_{52},
	\end{aligned}\right.
\end{equation}
where $\sfT_{k1}$ and $\sfT_{l2}$, with $1\le k\le 5$ and $l=4,5$, are defined as follows: for $i,j=1,2,3$, $i\neq j$,
\begin{equation}\label{Tij1}  \left\{ 
	\begin{aligned}
		&\sfT_{11}=(r(g_1,g_2),\mu^{\f12})_{L^2_v},~~\sfT_{31}=\f16\big(-T(t)^{\f12}v\cdot\na_x(\mathbf{I-P})g_2+r(g_1,g_2),\mu^{\f12}(|v|^2-3)\big)_{L^2_v};\\
		&\sfT_{21}=\big(-T(t)^{\f12}v\cdot\na_x(\mathbf{I-P})g_2+r(g_1,g_2),\mu^{\f12}v\big)_{L^2_v},~~\sfT_{52}=-\big((\mathbf{I-P})g_2,\mu^{\f12}\f{v(|v|^2-5)}{10}\big)_{L^2_v};\\
		&\sfT_{51}=\big(-T(t)^{\f12}v\cdot\na_x(\mathbf{I-P})g_2+T(t)^{-\f32}\cL(\mathbf{I-P})g_2+r(g_1,g_2),\mu^{\f12}\f{v(|v|^2-5)}{10}\big)_{L^2_v}.
	\end{aligned}\right.
\end{equation}
\begin{equation}\label{Tij2}  \left\{ 
	\begin{aligned}
		&(\sfT_{41})_{ij}=\big(-(T(t)^{\f12}v\cdot\na_x)(\mathbf{I-P})g_2+T(t)^{-\f32}\cL(\mathbf{I-P})g_2+r(g_1,g_2),\mu^{\f12}v_iv_j\big)_{L^2_v};\\
		&(\sfT_{41})_{ii}=\big(-(T(t)^{\f12}v\cdot\na_x)(\mathbf{I-P})g_2+T(t)^{-\f32}\cL(\mathbf{I-P})g_2+r(g_1,g_2),\mu^{\f12}\f{v^2_i-1}2\big)_{L^2_v};\\
		&(\sfT_{42})_{ij}=-\big((\mathbf{I-P})g_2,\mu^{\f12}v_iv_j\big)_{L^2_v},\quad (\sfT_{42})_{ii}=\big(-(\mathbf{I-P})g_2,\mu^{\f12}\f{v^2_i-1}2\big)_{L^2_v}.
	\end{aligned}\right.
\end{equation}
\end{lem}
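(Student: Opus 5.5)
This is the standard macro--micro moment computation. We take velocity moments of \eqref{coupg22} against the thirteen moments $\mu^{\f12}\{1,v_i,|v|^2-3,v_iv_j,\f{v_i^2-1}2,\f{v_i(|v|^2-5)}{10}\}$, after splitting $g_2=\mP g_2+(\mathbf{I-P})g_2$ inside the transport term. Two facts are used throughout. First, $\cL$ is self-adjoint on $L^2_v$ and annihilates $\mu^{\f12}\{1,v,|v|^2\}$ (its kernel is spanned by the collision invariants). Second, $T(t)$ depends only on $t$, so $T(t)^{\f12}$ passes through $\na_x$ and through $v$-integrals unchanged.

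\emph{Conservation laws, i.e.\ the first three equations.} Test \eqref{coupg22} against $\mu^{\f12}$, $v\mu^{\f12}$ and $\f16(|v|^2-3)\mu^{\f12}$. The $\cL$ contribution vanishes by self-adjointness and the kernel property. Inserting $\mP g_2$ into $T^{\f12}v\cdot\na_x g_2$, the Gaussian moments $\int v_iv_j\mu=\de_{ij}$, $\int v_iv_j(|v|^2-3)\mu=2\de_{ij}$ and $\int (|v|^2-3)^2\mu=6$ give
\[
T^{\f12}\na_x\cdot b^{g_2},\qquad T^{\f12}(\na_x a^{g_2}+2\na_xc^{g_2}),\qquad \f13T^{\f12}\na_x\cdot b^{g_2}.
\]
In the first equation the $(\mathbf{I-P})g_2$ part drops, because $v\mu^{\f12}$ lies in the range of $\mP$. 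In the second and third it remains and goes to the right-hand side; together with $r$ this produces $\sfT_{11},\sfT_{21},\sfT_{31}$.

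\emph{Higher moments.} Test \eqref{coupg22} against $\mu^{\f12}v_iv_j$ ($i\neq j$), $\mu^{\f12}\f{v_i^2-1}2$ and $\mu^{\f12}\f{v(|v|^2-5)}{10}$. These functions are chosen so that their inner products with $\mP g_2$ pick out single coefficients:
\[
(\mP g_2,\mu^{\f12}v_iv_j)=0,\qquad (\mP g_2,\mu^{\f12}\tfrac{v_i^2-1}2)=c^{g_2},\qquad (\mP g_2,\mu^{\f12}\tfrac{v(|v|^2-5)}{10})=0.
\]
The time derivative then reads $\pa_t c^{g_2}$ (diagonal case) or zero, plus $\pa_t$ of the $(\mathbf{I-P})g_2$ pairing. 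Moving that pairing to the right-hand side gives the $\pa_t\sfT_{42}$ and $\pa_t\sfT_{52}$ terms.

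For the transport term applied to $\mP g_2$ we use fourth and sixth Gaussian moments, e.g.\ $\int v_i^2v_j^2\mu=1$ for $i\neq j$ and $\int v_i^4\mu=3$. This yields $T^{\f12}(\pa_ib_j+\pa_jb_i)$ in the off-diagonal case and $T^{\f12}\pa_ib_i$ in the diagonal case. The $a$- and $c$-contributions cancel, since the test functions are odd or mean-free in the relevant sense. For $\f{v(|v|^2-5)}{10}$ only $c$ survives, and the normalisation $\f1{10}$ is exactly what makes the coefficient $T^{\f12}\na_xc^{g_2}$. The $\cL g_2$ term reduces to $T^{-\f32}\cL(\mathbf{I-P})g_2$ because $\cL\mP g_2=0$, and it is kept inside $\sfT_{41}$ and $\sfT_{51}$ together with the remaining transport of $(\mathbf{I-P})g_2$ and $r(g_1,g_2)$.

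\emph{Main obstacle.} The only real work is verifying the Gaussian moment constants, in particular the coefficient $2$ in front of $\na_xc^{g_2}$ and the coefficients for the $v(|v|^2-5)/10$ moment; a wrong normalisation would break the later macroscopic estimate. I would check these with the identities $\int|v|^4\mu=15$ and $\int|v|^6\mu=105$. The sign conventions for $\sfT_{42}$ and $\sfT_{52}$ must also be tracked. Everything else is bookkeeping, and no estimates are needed for this lemma.
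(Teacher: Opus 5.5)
Your proposal is correct and follows essentially the same route as the paper. Both split $g_2=\mP g_2+(\mathbf{I-P})g_2$ and test against the same moments $\mu^{\f12}\{1,v,|v|^2-3,v_iv_j,\f{v_i^2-1}2,\f{v(|v|^2-5)}{10}\}$, using the Gaussian moment identities and the orthogonality of $(\mathbf{I-P})g_2$ and $\cL(\mathbf{I-P})g_2$ to the collision invariants; your moment constants and the signs you assign to $\sfT_{42}$ and $\sfT_{52}$ all check out.
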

\begin{proof}
The proof proceeds in a standard way. Splitting 
	\beno
	g_2=\mP g_2+(\mathbf{I-P})g_2=[a^{g_2}+b^{g_2}\cdot v+c^{g_2}(|v|^2-3)]\mu^{\f12}+(\mathbf{I-P})g_2.
	\eeno
	We rewrite \eqref{coupg22} as
	\ben\label{macro1}
	\notag&&(\pa_ta^{g_2}+\pa_t b^{g_2}\cdot v+\pa_t c^{g_2}(|v|^2-3))\mu^{\f12}+T(t)^{\f12}v\cdot\na_x(a^{g_2}+b^{g_2}\cdot v+c^{g_2}(|v|^2-3))\mu^{\f12}\\
	&=&-(\pa_t+T(t)^{\f12}v\cdot\na_x)(\mathbf{I-P})g_2+T(t)^{-\f32}\cL(\mathbf{I-P})g_2+r(g_1,g_2).
	\een
	It is noteworthy that
	\ben\label{macro4}
	\int_{\R^3}(1,v_i^2,v_i^2v_j^2,v_i^4,|v|^2,|v|^4)\mu dv=(1,1,1,3,3,15),\quad i,j=1,2,3,i\neq j,
	\een
	and
	\ben\label{macro3}
	&&\big((\mathbf{I-P})g_2,\psi(v)\big)_{L^2_v}=0,~\big(\cL(\mathbf{I-P})g_2,\psi(v)\big)_{L^2_v}=0,\\
	\notag&&\big(\pa_t(\mathbf{I-P})g_2,\psi(v)\big)_{L^2_v}=0,~\big(v\cdot\na_x(\mathbf{I-P})g_2,\mu^{\f12}\big)_{L^2_v} = 0,\quad\mbox{for}\quad\psi=\mu^{\f12}(1,v,|v|^2).
	\een
Let us prove the first equation in \eqref{macroeq} as a typical case. Taking inner product of \eqref{macro1} with $\mu^{\f12}$ over $v\in\R^3$ and by \eqref{macro4}, we have
	\begin{align*}
	\notag\pa_t a^{g_2}+T(t)^{\f12}\na_x\cdot b^{g_2}=&\big(-(\pa_t+T(t)^{\f12}v\cdot\na_x)(\mathbf{I-P})g_2+T(t)^{-\f32}\cL(\mathbf{I-P})g_2+r(g_1,g_2),\mu^{\f12}\big)\\
	=&(r(g_1,g_2),\mu^{\f12}):=\sfT_{11},
	\end{align*}
	where the last step follows from \eqref{macro3}.

	Analogously, taking inner product of \eqref{macro1} with $\mu^{\f12}v_iv_j,i\neq j$ and $\mu^{\f12}(v^2_i-1)/2,i=1,2,3$ over $v\in\R^3$, then with $\mu^{\f12}v,\mu^{\f12}\f{|v|^2-3}6$ and finally with $\mu^{\f12}\f{v(|v|^2-5)}{10}$. By subsequently applying \eqref{macro4} and \eqref{macro3}, we can obtain the remaining equations outlined in \eqref{macroeq}. 

\end{proof}

Moreover, we have the following upper bound for the terms $\sfT_{k1}$ and $\sfT_{l2}$ with $1\leq k\leq 5$ and $l=4,5$.
\begin{lem}
For any multi-index \(\alpha\) with \(|\alpha|\leq 1\), we have that
\ben\label{Tijupperbound}
\notag&&\sum_{k=1}^5\|\pa^\al_x\sfT_{k1}\|^2_{L^2_x}\leq C_\al \Big(T(t) \|\<v\>^{-2}(\mathbf{I-P})g_2\|^2_{H^2_xL^2_v}+\sum_{i=1}^{13}\int_{\T^3}\big(\pa^\al_xr(g_1,g_2),\mu^{\f12}e_i\big)^2_{L^2_v}dx\Big),\\
&&\sum_{l=4,5}\|{\<D_x\>}\pa^\al_x\sfT_{l2}\|^2_{L^2_x}\leq C_\al\|\<v\>^{-2}(\mathbf{I-P})g_2\|^2_{H^2_xL^2_v},
\een
where
\begin{equation}\label{ei}
	\begin{aligned}
		&e_1=1,e_2=v_1,e_3=v_2,e_4=v_3,e_5=v_1^2,e_6=v_2^2,e_7=v_3^2,\\
		&e_8=v_1v_2,e_9=v_1v_3,e_{10}=v_2v_3,e_{11}=v_1|v|^2,e_{12}=v_2|v|^2,e_{13}=v_3|v|^2.
	\end{aligned}
\end{equation}
\end{lem}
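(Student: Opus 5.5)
Each $\sfT_{k1}$ and $\sfT_{l2}$ in \eqref{Tij1}--\eqref{Tij2} is a $v$-inner product of one of three kinds of terms against a function of the form $\mu^{\f12}p(v)$, where $p$ is a polynomial of degree at most three in the span of $\{e_i\}_{i=1}^{13}$ from \eqref{ei} (together with constants, e.g.\ $|v|^2-3$, $(v_i^2-1)/2$, $v(|v|^2-5)/10$). The three kinds are: the transport term $-T(t)^{\f12}v\cdot\na_x(\mathbf{I-P})g_2$, the linearized term $T(t)^{-\f32}\cL(\mathbf{I-P})g_2$, and the remainder $r(g_1,g_2)$. The plan is to bound each kind separately after applying $\pa^\al_x$, $|\al|\le1$, which commutes with all $v$-integrations, and then sum.

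\textbf{The remainder $r(g_1,g_2)$.} Since $p$ is a fixed linear combination of the $e_i$ (plus constants, and $1=e_1$), we have $|(\pa^\al_x r,\mu^{\f12}p)_{L^2_v}|\le C\sum_{i=1}^{13}|(\pa^\al_x r,\mu^{\f12}e_i)_{L^2_v}|$. Squaring and integrating in $x$ produces exactly the second sum on the right-hand side of \eqref{Tijupperbound}.

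\textbf{The transport term.} We move $\na_x$ outside the $v$-integral, so we must control $T(t)^{\f12}\,\pa^\al_x\na_x\big((\mathbf{I-P})g_2,\, v\,p\,\mu^{\f12}\big)_{L^2_v}$. By Cauchy–Schwarz in $v$ this is at most
\[
T(t)^{\f12}\,\|\<v\>^{-2}\pa^\al_x\na_x(\mathbf{I-P})g_2\|_{L^2_v}\,\|\<v\>^{2}v\,p\,\mu^{\f12}\|_{L^2_v}.
\]
The second factor is a finite constant. Squaring and integrating over $\T^3$ requires $|\al|+1\le2$ derivatives in $x$, hence the bound $T(t)\|\<v\>^{-2}(\mathbf{I-P})g_2\|^2_{H^2_xL^2_v}$.

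\textbf{The $\cL$ term.} We use self-adjointness of $\cL$ in $L^2_v$ to write $(\cL(\mathbf{I-P})g_2,\mu^{\f12}p)_{L^2_v}=((\mathbf{I-P})g_2,\cL(\mu^{\f12}p))_{L^2_v}$. Since $\cL(\mu^{\f12}p)$ is a smooth function with Gaussian decay in $v$ (it is built from convolutions with $\mu$ and derivatives of Gaussians), Cauchy–Schwarz again gives a bound by $C\|\<v\>^{-2}\pa^\al_x(\mathbf{I-P})g_2\|_{L^2_v}$. The accompanying factor is $T(t)^{-\f32}$, which is bounded because $T(t)\ge T(0)/2$ by \eqref{VTcondition}. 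This produces a term $C_{T(0)}\|\<v\>^{-2}(\mathbf{I-P})g_2\|^2_{H^1_xL^2_v}$. This is the one point where the stated constant $C_\al$ must be read as possibly depending on $T(0)$, or else the term must be absorbed; since $T(t)\ge T(0)/2$, it can be absorbed as $T(0)^{-\f52}\,T(t)\|\cdot\|^2$ up to a constant. The $\cL$ term is the only mildly delicate step: one must confirm that $\cL(\mu^{\f12}p)$ decays fast enough, which follows from writing $\cL$ via $Q(\mu,\cdot)+Q(\cdot,\mu)$ and integrating by parts onto the Gaussian.

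\textbf{The $\sfT_{l2}$ terms.} These are simpler: $\sfT_{l2}=-((\mathbf{I-P})g_2,\mu^{\f12}p)_{L^2_v}$, and $\<D_x\>\pa^\al_x$ involves at most two $x$-derivatives. Cauchy–Schwarz with the weight $\<v\>^{-2}$ gives $\|\<D_x\>\pa^\al_x\sfT_{l2}\|_{L^2_x}\le C\|\<v\>^{-2}(\mathbf{I-P})g_2\|_{H^2_xL^2_v}$, which is the second inequality in \eqref{Tijupperbound}.
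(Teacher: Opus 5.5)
Your proposal is correct and is essentially the paper's argument: the same split into transport, $\cL$ and $r$ contributions, Cauchy--Schwarz against the $\<v\>^{-2}$ weight for the transport and $\sfT_{l2}$ terms, and expansion of the test polynomial in the $e_i$ for $r$. The only variation is in the $\cL$ pairing, where the paper uses \eqref{upperQ2} directly rather than self-adjointness plus Gaussian decay of $\cL(\mu^{\f12}p)$; both amount to moving all $v$-derivatives onto the Gaussian test function. Your remark that the prefactor $T(t)^{-\f32}$ forces $C_\al$ to depend on $T(0)$ is correct, and the paper passes over it silently. However, the constant is off: the bound is $T(t)^{-3}\le 16\,T(0)^{-4}\,T(t)$, not $T(0)^{-5/2}$.
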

\begin{proof}
	We only give the proof of $\sfT_{51}$ with $|\al|\leq 1$ since the similar argument can be applied for the rest terms. Noticing that
	\beno
	\pa^\al_x\sfT_{51}=\big(-T(t)^{\f12}v\cdot\na_x\pa^\al_x(\mathbf{I-P})g_2+T(t)^{-\f32}\cL(\mathbf{I-P})\pa^\al_xg_2+\pa^\al_xr(g_1,g_2),\mu^{\f12}\f{v(|v|^2-5)}{10}\big)_{L^2_v},
	\eeno
and we have 
\beno
&&\big|\big(v\cdot\na_x\pa^\al_x(\mathbf{I-P})g_2,\mu^{\f12}\f{v(|v|^2-5)}{10}\big)_{L^2_v}\big|\ls \|\<v\>^{-2}\na_x\pa^\al_x(\mathbf{I-P})g_2\|_{L^2_{v}},\\
&&\big|\big(\cL(\pa^\al_x(\mathbf{I-P})g_2),\mu^{\f12}\f{v(|v|^2-5)}{10}\big)_{L^2_v}\big|\ls \|\<v\>^{-2}\pa^\al_x(\mathbf{I-P})g_2\|_{L^2_{v}},
\eeno
 where we used \eqref{linearized} and \eqref{upperQ2} in the second inequality. Together with the fact 
\beno
\int_{\T^3}\big(\pa^\al_xr(g_1,g_2),\mu^{\f12}\f{v(|v|^2-5)}{10}\big)^2_{L^2_v}dx\ls \sum_{i=1}^{13}\int_{\T^3}\big(\pa^\al_xr(g_1,g_2),\mu^{\f12}e_i\big)^2_{L^2_v}dx,
\eeno
we obtain the desired results.
\end{proof}

We now establish the macroscopic estimates.
\begin{lem}\label{macro}
	Let $g_1$ and $g_2$ be the solutions to the coupled system \eqref{coupg1} and \eqref{coupg2}, there exists a functional $\cG$ satisfying $|\cG|\ls \|g_2\|^2_{H^2_xL^2_v}$ and 
	\begin{equation}\label{macroscopic}
	\f d{dt}\cG+T(t)^{\f12}\|\mP g_2\|^2_{H^2_xL^2_v}\ls  T(t)^{\f12} \big(\|(\mathbf{I-P})g_2\|^2_{\cD_0}+\|g_1\|^2_{L^2_xL^2_4}\big)+T(t)^{-\f12}\sum_{|\al|\leq 1}\sum_{i=1}^{13}\int_{\T^3}\big(\pa^\al_xr(g_1,g_2),\mu^{\f12}e_i\big)^2_{L^2_v}dx.
	\end{equation}
\end{lem}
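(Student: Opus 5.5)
The plan is the standard Guo-type macroscopic estimate (as in \cite{Guo1,DSSS}), built from the system \eqref{macroeq} and the bounds \eqref{Tijupperbound}; the only new feature is the time-dependent transport coefficient $T(t)^{\f12}$. First I would use the conservation laws \eqref{conforg}. Since $g=g_1+\mu^{\f12}g_2$, they give $\int_{\T^3}(a^{g_2},b^{g_2},c^{g_2})\,dx$ equal to minus the corresponding moments of $g_1$. Hence the spatial means of $a^{g_2},b^{g_2},c^{g_2}$ are bounded by $\|g_1\|_{L^2_xL^2_4}$. By Poincaré on $\T^3$,
\[
\|(a,b,c)\|_{H^2_x}\ls\|\na_x(a,b,c)\|_{H^1_x}+\|g_1\|_{L^2_xL^2_4},
\]
which explains the term $\|g_1\|^2_{L^2_xL^2_4}$ on the right of \eqref{macroscopic}. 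It therefore suffices to control $\|\na_x\pa^\al_x(a,b,c)\|^2_{L^2_x}$ for $|\al|\le1$.

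Next, for $c$, I would apply $\pa^\al_x$ to the last equation of \eqref{macroeq}, multiply by $\na_x\pa^\al_x c^{g_2}$ and integrate. This gives
\[
T^{\f12}\|\na_x\pa^\al_x c\|^2=(\pa^\al_x\sfT_{51},\na_x\pa^\al_x c)+\f d{dt}(\pa^\al_x\sfT_{52},\na_x\pa^\al_x c)-(\pa^\al_x\sfT_{52},\na_x\pa_t\pa^\al_x c).
\]
In the last term I would substitute $\pa_t c$ from the third equation. The divergence piece yields $T^{\f12}\|\na_x\pa^\al_x\sfT_{52}\|\,\|\na_x\pa^\al_x b\|$, and this is absorbed using \eqref{Tijupperbound} together with a small multiple of the $b$-dissipation. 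For $b$, I would use the fourth and fifth equations to build the elliptic identity $-T^{\f12}\D_x b_i=\sum_j\pa_j(\ldots)$. Testing with $\pa^\al_x b$ in the usual way controls $T^{\f12}\|\na_x\pa^\al_x b\|^2$ by $\sfT_{41},\sfT_{42}$, by $\f d{dt}$ of an interaction term, and by $\eta T^{\f12}\|\na_x\pa^\al_x(a,c)\|^2$ coming from $\pa_t c$, $\pa_t a$. For $a$, I would test the second equation with $\na_x\pa^\al_x a$. This gives $T^{\f12}\|\na_x\pa^\al_x a\|^2$ bounded by $\f d{dt}(\pa^\al_x b,\na_x\pa^\al_x a)$, by $\pa_t a$ (replaced via the first equation, producing $T^{\f12}\|\na_x\cdot b\|^2$ and $\sfT_{11}$), by $\sfT_{21}$, and by $T^{\f12}\|\na_x c\|$.

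I would then take $\cG$ as a combination $\kappa_1\cG_c+\kappa_2\cG_b+\cG_a$ with $\kappa_1\gg\kappa_2\gg1$, so that the cross terms are absorbed. Each piece is an $L^2_x$ pairing of macroscopic moments, so $|\cG|\ls\|g_2\|^2_{H^2_xL^2_v}$. Young's inequality splits every source term $\|\pa^\al_x\sfT_{k1}\|^2$ into the form $T^{-\f12}\|\cdot\|^2+\eta T^{\f12}\|\na(a,b,c)\|^2$. By \eqref{Tijupperbound} the source terms then give $T^{\f12}\|\<v\>^{-2}(\mathbf{I-P})g_2\|^2_{H^2_xL^2_v}\le T^{\f12}\|(\mathbf{I-P})g_2\|^2_{\cD_0}$ (from $T^{-\f12}\cdot T$), plus the $r$-moment sum with weight $T^{-\f12}$. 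The $\cL$ part of $\sfT_{51},\sfT_{41}$ carries $T^{-\f32}$, which is harmless because $T\ge T(0)/2$.

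The main obstacle is that the coefficient $T(t)^{\f12}$ is time dependent. When $\pa_t$ is moved off the interaction functionals, and when $\pa_t a,\pa_t c$ are substituted, powers of $T$ must match: every replaced $\pa_t$ produces exactly one factor $T^{\f12}$, which is consistent. The definition of $\cG$ must carry no explicit $T$ weight, for otherwise $T'$ terms appear. If such terms do arise, I would bound them by $T^{-1}T'\|g_2\|^2\ls|V|^{-1}\|g_2\|^2$ using \eqref{VTcondition0}. I would also need to check carefully that the highest derivative count ($|\al|=1$ plus one $\na_x$, i.e. $H^2_x$) closes without needing $H^3_x$. This is why $\sfT_{l2}$ is estimated with the extra $\<D_x\>$ in \eqref{Tijupperbound}.
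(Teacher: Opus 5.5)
Your proposal is correct and follows the paper's proof essentially step for step. The paper estimates $c$ from the last equation of \eqref{macroeq}, with $\pa_t c$ replaced via the third equation. It estimates $b$ from the elliptic identity built from the fourth and fifth equations, with $\pa_t b$ replaced via the second equation; this, rather than $\pa_t a$ or $\pa_t c$, is where the small $\|\na_x\pa^\al_x(a,c)\|$ terms come from, since $\pa_t c$ cancels exactly in that identity. It estimates $a$ from the second equation, with $\pa_t a$ replaced via the first. It then takes a weighted sum with $T$-free interaction functionals and applies Poincar\'e, with the spatial means controlled by $g_1$ through \eqref{conforg}.

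The only small difference is in the $a$-step. There the paper also uses the sixth equation to replace $2T(t)^{\f12}\na_x c^{g_2}$ by $2\sfT_{51}+2\pa_t\sfT_{52}$. As a result no $\|\na_x c^{g_2}\|$ cross term appears, and the single combination ($c$-estimate) $+\,8\times$($b$-estimate) $+$ ($a$-estimate) closes. Your version keeps that cross term and absorbs it with a large weight on the $c$-estimate, which works equally well once the Young parameters are chosen after the weights.
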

\begin{proof}
We will complete the proof in several steps.
\smallskip

\textit{Step 1: Control of $\na_x\pa^\al_xc^{g_2}$.} Applying $\pa^\al_x,|\al|\leq 1$ on both sides of the last equation in \eqref{macroeq} and taking inner product with $\na_x\pa^\al_x c^{g_2}$ over $x\in\T^3$, we deduce that
\ben\label{a001}
T(t)^{\f12}\|\na_x\pa^\al_x c^{g_2}\|^2_{L^2_x}=\f d{dt}\int_{\T^3}\pa^\al_x\sfT_{52}\na_x\pa^\al_x c^{g_2}dx-\int_{\T^3}\na_x\pa^\al_x\sfT_{52}\pa^\al_x \pa_t c^{g_2}dx+\int_{\T^3}\pa^\al_x\sfT_{51}\na_x\pa^\al_xc^{g_2}dx.
\een
For the term $\pa_t\pa^\al_xc^{g_2}$ in \eqref{a001}, it follows from the third equation in \eqref{macroeq} that
\beno
\pa_t \pa^\al_xc^{g_2}=-\f13T(t)^{\f12}\na_x\cdot\pa^\al_xb^{g_2}+\pa^\al_x\sfT_{31}.
\eeno
By Cauchy-Schwarz inequality, for any small $\vep>0$, we have 
\beno
-\int_{\T^3}\na_x\pa^\al_x\sfT_{52}\pa^\al_x \pa_t c^{g_2}dx&=&\f13T(t)^{\f12}\int_{\T^3}\na_x\pa^\al_x\sfT_{52}\na_x\cdot\pa^\al_x b^{g_2}dx-\int_{\T^3}\na_x\pa^\al_x\sfT_{52}\pa^\al_x \sfT_{31}dx\\
&\leq&\vep T(t)^{\f12}\|\na_x\pa^\al_x b^{g_2}\|^2_{L^2_x}+C_\vep(T(t)^{\f12}\|\na_x\pa^\al_x\sfT_{52}\|^2_{L^2_x}+T(t)^{-\f12}\|\pa^\al_x\sfT_{31}\|^2_{L^2_x}).
\eeno
For the third term on the right-hand side of \eqref{a001}, also by Cauchy-Schwarz inequality, we have 
\beno
\int_{\T^3}\pa^\al_x\sfT_{51}\na_x\pa^\al_xc^{g_2}dx\leq \f12 T(t)^{\f12}\|\na_x\pa^\al_xc^{g_2}\|^2_{L^2_x}+2 T(t)^{-\f12}\|\pa^\al_x \sfT_{51}\|^2_{L^2_x}.
\eeno
Finally, by \eqref{Tijupperbound}, we can conclude that
\ben\label{c}
\notag&&\f12T(t)^{\f12}\|\na_x\pa^\al_x c^{g_2}\|^2_{L^2_x}\leq \f d{dt}\int_{\T^3}\pa^\al_x\sfT_{52}\na_x\pa^\al_x c^{g_2}dx+\vep T(t)^{\f12}\|\na_x\pa^\al_x b^{g_2}\|^2_{L^2_x}\\
&&+ C_{\al,\vep}\Big( T(t)^{\f12} \|(\mathbf{I-P})g_2\|^2_{\cD_0}+T(t)^{-\f12}\sum_{i=1}^{13}\int_{\T^3}\big(\pa^\al_xr(g_1,g_2),\mu^{\f12}e_i\big)^2_{L^2_v}dx\Big),
\een
where we note that $\|\<v\>^{-2}(\mathbf{I-P})g_2\|^2_{H^2_xL^2_2}\ls \|(\mathbf{I-P})g_2\|^2_{\cD_0}$.

\textit{Step 2: Control of $\na_x\pa^\al_xb^{g_2}$.} Next, we use the forth and fifth equations in \eqref{macroeq} to compute that
\beno
&&T(t)^{\f12}\D_x b_i^{g_2}=T(t)^{\f12}\sum_{j\neq i}\pa_{jj}b_i^{g_2}+T(t)^{\f12}\pa_{ii}b_i^{g_2}\\
&=&\sum_{j\neq i}\big(T(t)^{\f12}(-\pa_{ij}b_j^{g_2})+\pa_j(\sfT_{41})_{ij}+\pa_t\pa_j(\sfT_{42})_{ij}\big)+\pa_i(\sfT_{41})_{ii}+\pa_t\pa_i(\sfT_{42})_{ii}-\pa_t\pa_i c^{g_2}\\
&=&\sum_{j\neq i}\big(\pa_t\pa_ic^{g_2}-\pa_i(\sfT_{41})_{jj}-\pa_t\pa_i(\sfT_{42})_{jj}\big)+\sum_{j\neq i}\big(\pa_j(\sfT_{41})_{ij}+\pa_t\pa_j(\sfT_{42})_{ij}\big)+\pa_i(\sfT_{41})_{ii}+\pa_t\pa_i(\sfT_{42})_{ii}-\pa_t\pa_i c^{g_2}\\
&=&\sum_{j\neq i}\big(\pa_j(\sfT_{41})_{ij}+\pa_t\pa_j(\sfT_{42})_{ij}-\pa_i(\sfT_{41})_{jj}-\pa_t\pa_i(\sfT_{42})_{jj}\big)-T(t)^{\f12}\pa_{ii} b_i^{g_2}+2\big(\pa_i(\sfT_{41})_{ii}+\pa_t\pa_i(\sfT_{42})_{ii}\big).
\eeno
Applying $\pa^\al_x$ on both sides and taking inner product with $\pa^\al_xb_i^{g_2}$ over $x\in\T^3$, and summation over $i=1,2,3$, we deduce that
\beno
&&T(t)^{\f12}\|\na_x\pa^\al_x b^{g_2}\|^2_{L^2_x}=\sum_{i=1}^3\Big(-\int_{\T^3}\sum_{j\neq i}\pa^\al_x\big(\pa_j(\sfT_{41})_{ij}+\pa_t\pa_j(\sfT_{42})_{ij}-\pa_i(\sfT_{41})_{jj}-\pa_t\pa_i(\sfT_{42})_{jj}\big)\pa^\al_x b_i^{g_2}dx\Big)\\
&&-\sum_{i=1}^3\Big(T(t)^{\f12}\int_{\T^3}|\pa_i\pa^\al_x b_i^{g_2}|^2dx-2\int_{\T^3}\pa^\al_x\big(\pa_i(\sfT_{41})_{ii}+\pa_t\pa_i(\sfT_{42})_{ii}\big)\pa^\al_x b_i^{g_2}dx\Big)\\
&\leq& \sum_{j\neq i}\f d{dt}\int_{\T^3}\pa^\al_x\big(-\pa_j(\sfT_{42})_{ij}+\pa_i(\sfT_{42})_{jj}\big)\pa^\al_xb_i^{g_2}dx+2\sum_{i=1}^3\f d{dt}\int_{\T^3}\pa^\al_x\pa_i(\sfT_{42})_{ii}\pa^\al_xb_i^{g_2}dx\\
&&+\sum_{i=1}^3\Big(-\int_{\T^3}\sum_{j\neq i}\pa^\al_x\big(\pa_j(\sfT_{41})_{ij}-\pa_i(\sfT_{41})_{jj}\big)\pa^\al_x b_i^{g_2}dx\Big)-\sum_{i=1}^3\Big(2\int_{\T^3}\pa^\al_x\pa_i(\sfT_{41})_{ii}\pa^\al_x b_i^{g_2}dx\Big)\\
&&+\sum_{i=1}^3\Big(\int_{\T^3}\sum_{j\neq i}\pa^\al_x\big(\pa_j(\sfT_{42})_{ij}-\pa^\al_x\pa_i(\sfT_{42})_{jj}\big) \pa_t\pa^\al_xb_i^{g_2}dx\Big)+\sum_{i=1}^3\Big(2\int_{\T^3}\pa^\al_x\pa_i(\sfT_{42})_{ii} \pa_t\pa^\al_xb_i^{g_2}dx\Big).
\eeno
For the term $\pa_t\pa^\al_x b^{g_2}$, by the second equation in \eqref{macroeq}, we have that
\beno
\pa_t \pa^\al_xb^{g_2}+T(t)^{\f12}(\na_x \pa^\al_xa^{g_2}+2\na_x\pa^\al_xc^{g_2})=\pa^\al_x\sfT_{21},
\eeno
Then by \eqref{Tijupperbound} and Cauchy-Schwarz inequality, we conclude that
\begin{equation}\label{b}
	\begin{aligned}
		&\f12T(t)^{\f12}\|\na_x\pa^\al_x b^{g_2}\|^2_{L^2_x}\leq \sum_{j\neq i}\f d{dt}\int_{\T^3}\pa^\al_x\big(-\pa_j(\sfT_{42})_{ij}+\pa_i(\sfT_{42})_{jj}\big)\pa^\al_xb_i^{g_2}dx+2\sum_{i=1}^3\f d{dt}\int_{\T^3}\pa^\al_x\pa_i(\sfT_{42})_{ii}\pa^\al_xb_i^{g_2}dx\\
		&+\vep T(t)^{\f12}\big(\|\na_x\pa^\al_x a^{g_2}\|^2_{L^2_x}+\|\na_x\pa^\al_x c^{g_2}\|^2_{L^2_x}\big)+C_{\vep,\al}\Big( T(t)^{\f12} \|(\mathbf{I-P})g_2\|^2_{\cD_0}+T(t)^{-\f12}\sum_{i=1}^{13}\int_{\T^3}\big(\pa^\al_xr(g_1,g_2),\mu^{\f12}e_i\big)^2_{L^2_v}dx\Big).
\end{aligned}
\end{equation}

\textit{Step 3: Control of $\na_x\pa^\al_xa^{g_2}$.} For $\na_x\pa^\al_x a^{g_2}$, from the second and the sixth equations in \eqref{macroeq}, we have that
\beno
T(t)^{\f12}\na_x a^{g_2}=-\pa_tb^{g_2}+\sfT_{21}-2\sfT_{51}-2\pa_t\sfT_{52}.
\eeno
Applying $\pa^\al_x$ on both sides and taking inner product with $\na_x\pa^\al_x a^{g_2}$ over $\T^3$, we deduce that
\begin{align*}
&T(t)^{\f12}\|\na_x\pa^\al_x a^{g_2}\|^2_{L^2_x}=-\int_{\T^3}\pa_t (\pa^\al_xb^{g_2}+2\pa^\al_x\sfT_{52})\na_x\pa^\al_x a^{g_2}dx+\int_{\T^3}(\pa^\al_x\sfT_{21}-2\pa^\al_x\sfT_{51})\na_x\pa^\al_x a^{g_2}dx\\
=&-\f d{dt}\int_{\T^3}(\pa^\al_xb^{g_2}+2\pa^\al_x\sfT_{52})\na_x\pa^\al_x a^{g_2}dx+\int_{\T^3}\na_x(\pa^\al_xb^{g_2}+2\pa^\al_x\sfT_{52})\pa^\al_x \pa_ta^{g_2}dx+\int_{\T^3}(\pa^\al_x\sfT_{21}-2\pa^\al_x\sfT_{51})\na_x\pa^\al_x a^{g_2}dx.
\end{align*}
In view of \eqref{macroeq}, we have
\beno
\pa_t \pa^\al_xa^{g_2}+T(t)^{\f12}\na_x\cdot \pa^\al_xb^{g_2}=\pa^\al_x\sfT_{11}.
\eeno
Thus by Cauchy-Schwarz inequality and \eqref{Tijupperbound}, we can conclude that
\ben\label{a}
 \f12T(t)^{\f12}\|\na_x\pa^\al_x a^{g_2}\|^2_{L^2_x}&\leq& -\f d{dt}\int_{\T^3}(\pa^\al_xb^{g_2}+2\pa^\al_x\sfT_{52})\na_x\pa^\al_x a^{g_2}dx+2T(t)^{\f12}\|\na_x\pa^\al_x b^{g_2}\|^2_{L^2_x}\\
\notag&&+C_\al\Big( T(t)^{\f12} \|(\mathbf{I-P})g_2\|^2_{\cD_0}+T(t)^{-\f12}\sum_{i=1}^{13}\int_{\T^3}\big(\pa^\al_xr(g_1,g_2),\mu^{\f12}e_i\big)^2_{L^2_v}dx\Big).
\een

\textit{Step 4: Macroscopic estimates.} Compute  $\eqref{c}+\kappa\times \eqref{b}+\eqref{a}$ for some constant $\kappa$ and summation over $|\al|\leq 1$, we have
\beno
&&\f d{dt}\cG+T(t)^{\f12}\sum_{|\al|\leq 1}\Big((\f12-\ka\vep)\|\na_x\pa^\al_x a^{g_2}\|^2_{L^2_x}+(\f\ka2-2-\vep)\|\na_x\pa^\al_x b^{g_2}\|^2_{L^2_x}+(\f12-\ka\vep)\|\na_x\pa^\al_x c^{g_2}\|^2_{L^2_x}\Big)\\
&\leq&C_{\vep,\al,\ka}\Big( T(t)^{\f12} \|(\mathbf{I-P})g_2\|^2_{\cD_0}+T(t)^{-\f12}\sum_{|\al|\leq 1}\sum_{i=1}^{13}\int_{\T^3}\big(\pa^\al_xr(g_1,g_2),\mu^{\f12}e_i\big)^2_{L^2_v}dx\Big).
\eeno
with 
\ben\label{cG}
\cG&=&\sum_{|\al|\leq 1}\Big(-\int_{\T^3}\pa^\al_x\sfT_{52}\na_x\pa^\al_x c^{g_2}dx+\int_{\T^3}(\pa^\al_xb^{g_2}+2\pa^\al_x\sfT_{52})\na_x\pa^\al_x a^{g_2}dx\\
\notag&&+\ka\sum_{j\neq i}\int_{\T^3}\pa^\al_x\big(\pa_j(\sfT_{42})_{ij}-\pa_i(\sfT_{42})_{jj}\big)\pa^\al_xb_i^{g_2}dx+2\ka\sum_{i=1}^3\int_{\T^3}\pa^\al_x\pa_i(\sfT_{42})_{ii}\pa^\al_xb_i^{g_2}dx\Big).
\een
Thus we can choose $\ka=8$ and $\vep=1/32$ to get that
\ben\label{cG2}
\notag &&\f d{dt}\cG+\f14T(t)^{\f12}\sum_{|\al|\leq 1}\Big(\|\na_x\pa^\al_x a^{g_2}\|^2_{L^2_x}+\|\na_x\pa^\al_x b^{g_2}\|^2_{L^2_x}+\|\na_x\pa^\al_x c^{g_2}\|^2_{L^2_x}\Big)\\
&\ls& T(t)^{\f12} \|(\mathbf{I-P})g_2\|^2_{\cD_0}+T(t)^{-\f12}\sum_{|\al|\leq 1}\sum_{i=1}^{13}\int_{\T^3}\big(\pa^\al_xr(g_1,g_2),\mu^{\f12}e_i\big)^2_{L^2_v}dx.
\een

On one hand, thanks to \eqref{cG} and \eqref{Tijupperbound}, $\cG$ has the upper bound
\ben\label{cGupperbound}
|\cG|\ls\|(\mathbf{I-P})g_2\|^2_{H^{2}_xL^2_{v}}+\sum_{|\al|\leq 1}\Big(\|\na_x\pa^\al_x a^{g_2}\|^2_{L^2_x}+\|\pa^\al_x b^{g_2}\|^2_{L^2_x}+\|\na_x\pa^\al_x c^{g_2}\|^2_{L^2_x}\Big)\ls \|g_2\|^2_{H^2_xL^2_v}.
\een
On the other hand, by \eqref{conforg} and the fact $g=g_1+\mu^{\f12}g_2$, we can derive that
\beno
\int_{\T^3}(a^{g_2}(x),b^{g_2}(x),c^{g_2}(x))dx=-\int_{\T^3\times\R^3}g_1(1,v,|v|^2-3)dvdx=:\big(\bar{a},\bar{b},\bar{c}).
\eeno
Then by Poincar\'e inequality in $\T^3$ and the bound $|\bar{a}|^2+|\bar{b}|^2+|\bar{c}|^2\ls\|g_1\|^2_{L^2_{x}L^2_{4}}$, we have that
\beno
&&\sum_{|\al|\leq 1}\Big(\|\na_x\pa^\al_x a^{g_2}\|^2_{L^2_x}+\|\na_x\pa^\al_x b^{g_2}\|^2_{L^2_x}+\|\na_x\pa^\al_x c^{g_2}\|^2_{L^2_x}\Big)\\
&\gs& \sum_{|\al|\leq 2}\Big(\|\pa^\al_x( a^{g_2}-\bar{a})\|^2_{L^2_x}+\|\pa^\al_x (b^{g_2}-\bar{b})\|^2_{L^2_x}+\|\pa^\al_x (c^{g_2}-\bar{c})\|^2_{L^2_x}\Big)\\
&\gs& \sum_{|\al|\leq 2}\Big(\|\pa^\al_xa^{g_2}\|^2_{L^2_x}+\|\pa^\al_x b^{g_2}\|^2_{L^2_x}+\|\pa^\al_x c^{g_2}\|^2_{L^2_x}\Big)-\|g_1\|^2_{L^2_xL^2_4}.
\eeno
Together with \eqref{cG2} and \eqref{cGupperbound}, this yields the desired result \eqref{macroscopic} and completes the proof of the lemma.
\end{proof}

Combining Lemma \ref{micro} and Lemma \ref{macro}, we can derive the following proposition:
\begin{prop}\label{g2}
	Let $g_1$ and $g_2$ be the solutions to the coupled system \eqref{coupg1} and \eqref{coupg2}, then for $k\geq 17$, we have
\ben\label{barcEk2}
\notag&&\f 12\f d{dt}\|g_2\|^2_{\bar{\cE}_k}+T(t)^{-\f32}(\lam_2-C_k\|g_2\|_{\bar{\cE}_{17}})\|g_2\|^2_{\mathcal{D}_k}+\lam_2T(t)^{-1}T'(t)\|g_2\|^2_{\cE_{k+1}}\leq C_k\Big((T(t)^{-\f32}\lam_2^{-1}+T(t)^{-\f{11}2})\\
&&\times \|g_1\|^2_{Y_k}+(T(t)^{-1}T'(t)+T(t)^{-\f12}|R(t)|+T(t)^{-3}T'(t))\|g_2\|^2_{\cE_{k+\f12}}+T(t)^{-\f{11}2}\|g_2\|^4_{\cE_0}\Big)
\een
for some constants $\lam_2\gs_k\min\{T(0)^2,1\}$ and the energy norm $\|\cdot\|_{\bar{\cE}_k}$ satisfies \[\min\{T(0)^2,1\}\|\cdot\|^2_{\cE_k}\ls_k \|\cdot\|^2_{\bar{\cE}_k}\ls\|\cdot\|^2_{\cE_k}.\]
\end{prop}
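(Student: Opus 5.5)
The plan is to build $\|g_2\|^2_{\bar\cE_k}$ as a Lyapunov-type combination of the microscopic estimates of Lemma \ref{micro} and the macroscopic functional $\cG$ of Lemma \ref{macro}. The one real difficulty is matching time weights: the macroscopic dissipation in \eqref{macroscopic} carries $T(t)^{\f12}$, while the dissipation we want carries $T(t)^{-\f32}$. I would therefore set
\[
\|g_2\|^2_{\bar\cE_k}:=\|g_2\|^2_{\cE_k}+K\Big(\|g_2\|^2_{\cE_0}+\de\, T(t)^{-2}\cG\Big),
\]
with $K\ge1$ a large constant depending on $k$, and $\de\sim\min\{T(0)^2,1\}$ small. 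Since $T(t)\ge T(0)/2$ and $|\cG|\ls\|g_2\|^2_{H^2_xL^2_v}\ls\|g_2\|^2_{\cE_0}$, choosing $\de$ small gives $|\de T^{-2}\cG|\le\f12\|g_2\|^2_{\cE_0}$. This yields the two-sided norm equivalence. In the worst case $\de\sim T(0)^2$, the constant in the lower bound degrades to $\min\{T(0)^2,1\}$.

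\emph{Step 1 (combine and differentiate).} I would add $\eqref{cEkcDk}$, $K\times\eqref{cE0cD0}$ and $K\de T^{-2}\times\eqref{macroscopic}$. Differentiating the weight gives
\[
\f d{dt}\big(T^{-2}\cG\big)=T^{-2}\f d{dt}\cG-2T^{-3}T'\cG,
\]
and the last term is bounded by $T^{-3}T'\|g_2\|^2_{\cE_0}$, which accounts for the $T^{-3}T'$ coefficient in the statement. The macroscopic part then yields the dissipation $K\de T^{-\f32}\|\mP g_2\|^2_{H^2_xL^2_v}$. Adding this to the microscopic dissipation $KT^{-\f32}\lam_0\|(\mathbf{I-P})g_2\|^2_{\cD_0}$ from \eqref{cE0cD0} controls $K\de T^{-\f32}\|g_2\|^2_{\cD_0}$, up to the $\|g_1\|^2_{L^2_xL^2_4}$ term.

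\emph{Step 2 (absorb the bad terms).} On the right-hand side of \eqref{macroscopic}, the term $K\de T^{-\f32}\|(\mathbf{I-P})g_2\|^2_{\cD_0}$ is absorbed by the microscopic dissipation once $\de$ is small. The term $T^{-\f32}\|g_2\|_{\cE_0}\|\mP g_2\|^2$ from \eqref{cE0cD0} is absorbed into the $-C_k\|g_2\|_{\bar\cE_{17}}\|g_2\|^2_{\cD_k}$ structure. The term $C_kT^{-\f32}\|g_2\|^2_{\cD_0}$ in \eqref{cEkcDk} is absorbed by taking $K\de\gg C_k$; this is where $\lam_2\sim\de$ enters. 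The cross terms $T^{-\f32}\|g_2\|_{\cD_0}\|g_1\|_{Y_0}$ are handled by Young's inequality, which produces the $T^{-\f32}\lam_2^{-1}\|g_1\|^2_{Y_k}$ term. The $T'$-terms $-\f14T^{-1}T'\|g_2\|^2_{\cE_{k+1}}$ remain with a positive sign and give the $\lam_2T^{-1}T'\|g_2\|^2_{\cE_{k+1}}$ term on the left.

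\emph{Step 3 (the $r$-moments, the main obstacle).} I need to bound
\[
\de\,T^{-\f52}\sum_{|\al|\le1}\sum_{i=1}^{13}\int_{\T^3}\big(\pa^\al_xr(g_1,g_2),\mu^{\f12}e_i\big)^2_{L^2_v}dx,
\]
treating the pieces of \eqref{rg2} separately.
\begin{itemize}
\item The piece $T^{-\f32}\mu^{-\f12}A\chi_Mg_1$ is compactly supported in $v$. Squaring its coefficient gives $T^{-3}$, so this piece contributes $T^{-\f{11}2}\|g_1\|^2_{Y_k}$.
\item For $T^{-\f32}\Ga(g_2,g_2)$, I would test against $\mu^{\f12}e_i$ and use \eqref{upperQ2}, putting all $v$-derivatives on the smooth test function. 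Sobolev embedding on $\T^3$ handles $|\al|\le1$, giving $T^{-\f{11}2}\|g_2\|^4_{\cE_0}$.
\item The transport-type terms carrying $T^{-1}T'$ and $T^{-\f12}R$ are linear in $g_2$ with smooth test functions. Using \eqref{VTcondition0}, each coefficient is $\le1$, so their squares are bounded by the coefficients themselves, and these terms fall into $(T^{-1}T'+T^{-\f12}|R|)\|g_2\|^2_{\cE_{k+\f12}}$.
\end{itemize}
The delicate bookkeeping is to verify that every power of $T$ matches the claimed $T^{-\f{11}2}$ and $T^{-3}T'$ using only $T\ge T(0)/2$, and that $\de,K$ can be fixed depending only on $k$ and $T(0)$.
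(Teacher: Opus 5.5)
Your proposal is correct and follows the paper's own proof. The paper likewise forms $\|g_2\|^2_{\cE_0}+\kappa_1T(t)^{-2}\cG$ with $\kappa_1\sim\min\{T(0)^2,1\}$, picks up the $T^{-3}T'\cG$ term from differentiating the weight, bounds the $r$-moments via \eqref{upperQ2} and \eqref{VTcondition0} to obtain the $T^{-\frac{11}{2}}$ terms, and then adds the weighted estimate \eqref{cEkcDk}.

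The only point to fix is normalization. Absorbing $C_kT^{-\frac32}\|g_2\|^2_{\cD_0}$ forces $K\sim C_k/\delta$, so $K$ depends on $T(0)$ and not only on $k$. You should therefore divide your functional by $K$, or equivalently put the small weight $\kappa_3=1/K$ on $\|g_2\|^2_{\cE_k}$ as the paper does. This yields the uniform upper bound $\|\cdot\|^2_{\bar{\cE}_k}\lesssim\|\cdot\|^2_{\cE_k}$ and $\lambda_2\sim 1/K\gtrsim_k\min\{T(0)^2,1\}$, exactly as stated.
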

\begin{proof}
Firstly, we compute $\eqref{cE0cD0}+\f{\ka_1} 2T(t)^{-2}\times\eqref{macroscopic}$ to get that
\ben\label{001}
\notag&&\f 12\f d{dt}\big(\|g_2\|^2_{\cE_0}+\ka_1T(t)^{-2}\cG\big)+T(t)^{-\f32}(\lam_0-C_0\|g_2\|_{\cE_0}-C\ka_1)\|(\mathbf{I-P})g_2\|^2_{\mathcal{D}_0}+T(t)^{-\f32}(\ka_1-C\|g_2\|_{\cE_0})\\
\notag&&\times\|\mP g_2\|^2_{H^2_xL^2_v}+\f14T(t)^{-1}T'(t)\|g_2\|^2_{\cE_1}\ls T(t)^{-\f32}(\|g_2\|_{\cD_0}\|g_1\|_{Y_0}+\|g_1\|^2_{L^2_xL^2_4})+(T(t)^{-1}T'(t)+T(t)^{-\f12}|R(t)|)\\
&&\times \|g_2\|^2_{\cE_{\f12}}+\ka_1T(t)^{-3}T'(t)\cG+\ka_1T(t)^{-\f52}\sum_{|\al|\leq 1}\sum_{i=1}^{13}\int_{\T^3}\big(\pa^\al_xr(g_1,g_2),\mu^{\f12}e_i\big)^2_{L^2_v}dx.
\een
On one hand, since \(|\mathcal G|\lesssim \|g_2\|_{H^2_xL^2_v}^2=\|g_2\|_{\mathcal E_0}^2\) and $T(t)\geq T(0)/2$(see \eqref{VTcondition}), we can fix \(\kappa_1=\min\{T(0)^2,1\}\times\ka_2\) with a sufficiently small $\ka_2>0$ such that \(C\ka_1\leq C\kappa_2\le \lambda_0/4\) and the functional
\[
\|g_2\|_{\bar{\mathcal E}_0}^2:=\|g_2\|_{\mathcal E_0}^2+\kappa_1 T(t)^{-2}\mathcal G=\|g_2\|_{\mathcal E_0}^2+\kappa_2 \min\{T(0)^2,1\} T(t)^{-2}\mathcal G
\]
is equivalent to \(\|g_2\|_{\mathcal E_0}^2\). On the other hand, by the definition of \(r(g_1,g_2)\) in \eqref{rg2} and applying \eqref{upperQ2} to the term \(\Gamma(g_2,g_2)\), we obtain
\beno
&&T(t)^{-\f52}\sum_{|\al|\leq 1}\sum_{i=1}^{13}\int_{\T^3}\big(\pa^\al_xr(g_1,g_2),\mu^{\f12}e_i\big)^2_{L^2_v}dx\\
&\ls& T(t)^{-\f52}\Big(T(t)^{-3}\|g_1\|^2_{H^2_xL^2_v}+T(t)^{-3}\|g_2\|^4_{\cE_{0}}+(T(t)^{-1}T'(t)+T(t)^{-\f12}|R(t)|)^2\|g_2\|^2_{\cE_{0}}\\
&\ls& T(t)^{-\f{11}2}(\|g_1\|^2_{H^2_xL^2_v}+\|g_2\|^4_{\cE_0})+(T(t)^{-1}T'(t)+T(t)^{-\f12}|R(t)|)\|g_2\|^2_{\cE_{0}}.
\eeno
In the last inequality we used \eqref{VTcondition0}. Plug these into \eqref{001}, we have
\beno
\notag&&\f 12\f d{dt}\|g_2\|_{\bar{\mathcal E}_0}^2+T(t)^{-\f32}(\f34\lam_0-C_0\|g_2\|_{\cE_0})\|(\mathbf{I-P})g_2\|^2_{\mathcal{D}_0}+T(t)^{-\f32}(\ka_2\min\{T(0)^2,1\}-C\|g_2\|_{\cE_0})\\
\notag&&\times\|\mP g_2\|^2_{H^2_xL^2_v}\ls T(t)^{-\f32}(\|g_2\|_{\cD_0}\|g_1\|_{Y_0}+\|g_1\|^2_{L^2_xL^2_4})+(T(t)^{-1}T'(t)+T(t)^{-\f12}|R(t)|+T(t)^{-3}T'(t))\\
&&\times \|g_2\|^2_{\cE_{\f12}}+T(t)^{-\f{11}2}(\|g_1\|^2_{H^2_xL^2_v}+\|g_2\|^4_{\cE_{0}}).
\eeno
Noticing  $\|(\mathbf{I-P})g_2\|^2_{\mathcal{D}_0}+\|\mP g_2\|^2_{H^2_xL^2_v}\geq c_0 \|g_2\|^2_{\mathcal{D}_0}$ and $\|g_2\|_{\cD_0}\|g_1\|_{Y_0}\leq \vep \|g_2\|^2_{\cD_0}+\vep^{-1}\|g_1\|^2_{Y_0}$ for any $\vep>0$, if we set $2\lam_1:=c_0\min\{\f34\lam_0, \ka_2T(0)^2,\ka_2\}$ and choose $\vep=\lam_1$, we can get 
\begin{align}\label{barcE0}
&\f 12\f d{dt}\|g_2\|^2_{\bar{\cE}_0}+T(t)^{-\f32}(\lam_1-C\|g_2\|_{\cE_0})\|g_2\|^2_{\cD_0}\ls T(t)^{-\f32}(\lam_1^{-1}\|g_1\|^2_{Y_0}+\|g_1\|^2_{L^2_xL^2_4})\\
&\notag+(T(t)^{-1}T'(t)+T(t)^{-\f12}|R(t)|+T(t)^{-3}T'(t)) \|g_2\|^2_{\cE_{\f12}}+T(t)^{-\f{11}2}(\|g_1\|^2_{H^2_xL^2_v}+\|g_2\|^4_{\cE_{0}})
\end{align}
with the positive constant $1>\lam_1\gs \min\{T(0)^2,1\}$.

Next, we compute $\eqref{barcE0}+\ka_3\eqref{cEkcDk}$ with $0<\ka_3<1$ to get that
\beno
&&\f 12\f d{dt}(\|g_2\|^2_{\bar{\cE}_0}+\ka_3\|g_2\|^2_{\cE_k})+T(t)^{-\f32}(\lam_1-C_0\|g_2\|_{\cE_0}-\ka_3C_k)\|g_2\|^2_{\cD_0}+\ka_3T(t)^{-\f32}(\f{\lam_0} 2-C_k\|g_2\|_{\cE_{17}})\|g_2\|^2_{\mathcal{D}_k}\\
&&+\f{\ka_3}4T(t)^{-1}T'(t)\|g_2\|^2_{\cE_{k+1}}\leq C_k(T(t)^{-\f32}\lam_1^{-1}+T(t)^{-\f{11}2})\|g_1\|^2_{Y_k}\\
&&+C_k(T(t)^{-1}T'(t)+T(t)^{-\f12}|R(t)|+T(t)^{-3}T'(t))\|g_2\|^2_{\cE_{k+\f12}}+C_kT(t)^{-\f{11}2}\|g_2\|^4_{\cE_0},
\eeno
where we use the fact $\|g_1\|_{Y_0}+\|g_1\|_{L^2_xL^2_4}+\|g_1\|_{H^2_xL^2_v}\ls \|g_1\|_{Y_k},k\geq 17$. Fixing $\ka_3$ such that \(\kappa_3C_k=\lambda_1/2\), then $\ka_3\gs_k \lam_1\gs\min\{T(0)^2,1\} $ and the functional
\ben\label{func1}
\min\{T(0)^2,1\}\|g_2\|_{\mathcal E_k}^2\ls_k \ka_3 \|g_2\|_{\mathcal E_k}^2 \leq\|g_2\|_{\bar{\mathcal E}_k}^2:=\|g_2\|_{\bar{\mathcal E}_0}^2+\kappa_3 \|g_2\|_{\mathcal E_k}^2\leq \|g_2\|_{\mathcal E_k}^2
\een
is equivalent to \(\|g_2\|_{\mathcal E_k}^2\). Rearranging terms, we can obtain
\beno
&&\f 12\f d{dt}\|g_2\|^2_{\bar{\cE}_k}+T(t)^{-\f32}(\lam_2-C_k\|g_2\|_{\bar{\cE}_{17}})\|g_2\|^2_{\mathcal{D}_k}+\lam_2T(t)^{-1}T'(t)\|g_2\|^2_{\cE_{k+1}}\leq C_k\Big((T(t)^{-\f32}\lam_2^{-1}+T(t)^{-\f{11}2})\\
&&\times\|g_1\|^2_{Y_k}+(T(t)^{-1}T'(t)+T(t)^{-\f12}|R(t)|+T(t)^{-3}T'(t))\|g_2\|^2_{\cE_{k+\f12}}+T(t)^{-\f{11}2}\|g_2\|^4_{\cE_0}\Big)
\eeno
with the positive constant $\lam_2\gs_k \min\{T(0)^2,1\}$. 

The proof of the proposition is complete.
\end{proof}

 Now we state the main result for this coupled system as follows.
\begin{thm}\label{energyestimate}
	Let $g_1$ and $g_2$ be the solutions to the coupled system \eqref{coupg1} and \eqref{coupg2}, then for $k\geq 17$, there exist equivalent energy and dissipation norms  
	\begin{equation}\label{normT(0)}
    \begin{aligned}
    &\min\{T(0)^6,1\}(\|g_1\|^2_{X_k}+\|g_2\|^2_{\cE_k})\ls \|(g_1,g_2)\|^2_{\mathbf{E}_k}\ls\|g_1\|^2_{X_k}+\|g_2\|^2_{\cE_k},\\
	&\min\{T(0)^6,1\}(\|g_1\|^2_{Y_k}+\|g_2\|^2_{\cD_k})\ls\|(g_1,g_2)\|^2_{\mathbf{D}_k}\ls\|g_1\|^2_{Y_k}+\|g_2\|^2_{\cD_k},  
    \end{aligned}
	\end{equation}
such that
	\ben\label{energyinequality}
\notag	&&\f12\f d{dt}\|(g_1,g_2)\|^2_{\mathbf{E}_k}+T(t)^{-\f32}\big(\lam_0-C_k\ka_4^{-1}\lam_2^{-1}\|(g_1,g_2)\|_{\mathbf{E}_{17}}\big)\|(g_1,g_2)\|^2_{\mathbf{D}_k}+T(t)^{-1}T'(t)\big(\|g_1\|^2_{X_k}+\ka_4\lam_2\|g_2\|^2_{\cE_{k+1}}\big)\\
\notag	&\leq&C_k(T(t)^{-1}T'(t)+T(t)^{-\f12}|R(t)|+T(t)^{-3}T'(t))\|g_2\|^2_{\cE_{k+\f12}}+C_kT(t)^{-\f{11}2}\|g_2\|^4_{\cE_0}\\
  \notag  &&+C_k\big(T(t)^{-1}T'(t)+T(t)^{-\f12}|R(t)|+T(t)^{-1}|V(t)|^{-1}+T(t)^{-\f14}|V(t)|^{-\f32}\big)\|g_1\|^2_{X_{k-\f14}}\\
&&+C_k\big(T(t)^{-1}|V(t)|^{-1}+T(t)^{-\f14}|V(t)|^{-\f32}\big)\|g_2\|^2_{\cD_0}+C_k T(t)^{-1}|V(t)|^{-1}\|g_1\|_{L^2_{x,v}}
		\een
for some constants $\lam_0>0$, $\ka_4\gs_k\min\{T(0)^4,1\}$ and $\lam_2\gs_k \min\{T(0)^2,1\}$.
\end{thm}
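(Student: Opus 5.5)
The plan is to take a weighted sum of the $g_1$ estimate from Proposition \ref{g1} and the $g_2$ estimate from Proposition \ref{g2}. We put
\[
\|(g_1,g_2)\|^2_{\mathbf{E}_k}:=\|g_1\|^2_{X_k}+\ka_4\|g_2\|^2_{\bar{\cE}_k},\qquad \|(g_1,g_2)\|^2_{\mathbf{D}_k}:=\|g_1\|^2_{Y_k}+\ka_4\lam_2\|g_2\|^2_{\cD_k},
\]
with $\ka_4\in(0,1)$ still to be fixed. The equivalences \eqref{normT(0)} follow from the norm equivalence $\min\{T(0)^2,1\}\|\cdot\|^2_{\cE_k}\ls_k\|\cdot\|^2_{\bar{\cE}_k}\ls\|\cdot\|^2_{\cE_k}$ in Proposition \ref{g2}, provided $\ka_4\gs\min\{T(0)^4,1\}$ and $\lam_2\gs\min\{T(0)^2,1\}$. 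The products $\ka_4\min\{T(0)^2,1\}$ and $\ka_4\lam_2$ are then bounded below by $\min\{T(0)^6,1\}$, which is where the sixth power comes from.

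\textbf{Step 1: add the two inequalities.} We add \eqref{evolutiong1} and $\ka_4$ times \eqref{barcEk2}. The time derivatives combine into $\f12\f d{dt}\|(g_1,g_2)\|^2_{\mathbf{E}_k}$. The $T'$-terms on the left give $2T^{-1}T'\|g_1\|^2_{X_k}+\ka_4\lam_2T^{-1}T'\|g_2\|^2_{\cE_{k+1}}$, which dominates the claimed term, since $T'>0$ for $t>t_0$ by \eqref{VTcondition}. The remaining right-hand terms of \eqref{evolutiong1} are exactly the last three lines of \eqref{energyinequality}, and the $\|g_2\|^2_{\cE_{k+\f12}}$ and $\|g_2\|^4_{\cE_0}$ terms of \eqref{barcEk2} carry the factor $\ka_4\le1$.

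\textbf{Step 2: absorb the coupling term.} This is the main obstacle. Proposition \ref{g2} contains the term $C_k\ka_4(T^{-\f32}\lam_2^{-1}+T^{-\f{11}2})\|g_1\|^2_{Y_k}$, which has to be absorbed by the $g_1$ dissipation $\f{\lam_0}2T^{-\f32}\|g_1\|^2_{Y_k}$. Since $T\ge T(0)/2$, we have $T^{-\f{11}2}\le C\,T(0)^{-4}T^{-\f32}$. We therefore choose
\[
\ka_4=c_k\lam_0\min\{\lam_2,\,T(0)^4,\,1\},
\]
so that this term is at most $\f{\lam_0}4T^{-\f32}\|g_1\|^2_{Y_k}$. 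This gives $\ka_4\gs_k\min\{T(0)^4,1\}$ after using $\lam_2\gs\min\{T(0)^2,1\}$.

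\textbf{Step 3: the nonlinear dissipation terms.} The dissipation coefficients have the following nonlinear corrections:
\begin{itemize}
\item for $g_1$: $C_k(\|g_1\|_{X_{17}}+\|g_2\|_{\cE_0})$;
\item for $g_2$: $\ka_4C_k\|g_2\|_{\bar{\cE}_{17}}$;
\item a cross term $C_kT^{-\f32}\|g_1\|_{X_{17}}\|g_2\|^2_{\cD_0}$.
\end{itemize}
Each is bounded by $C_k\ka_4^{-1}\lam_2^{-1}\|(g_1,g_2)\|_{\mathbf{E}_{17}}\|(g_1,g_2)\|^2_{\mathbf{D}_k}$. This uses $\|g_2\|_{\cE_{17}}\ls\ka_4^{-1/2}\|(g_1,g_2)\|_{\mathbf{E}_{17}}$ and $\|g_2\|^2_{\cD_0}\le(\ka_4\lam_2)^{-1}\|(g_1,g_2)\|^2_{\mathbf{D}_k}$. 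The linear dissipation that remains is $\min\{\lam_0/4,1\}\|(g_1,g_2)\|^2_{\mathbf{D}_k}$, since the $g_2$ part is exactly $\ka_4\lam_2\|g_2\|^2_{\cD_k}$. After renaming $\lam_0$, this gives \eqref{energyinequality}.

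The $\|g_2\|^2_{\cD_0}$ term with coefficient $T^{-1}|V|^{-1}$ is not absorbed here. It is left on the right-hand side as stated, to be handled later through the time decay of $|V|^{-1}$.
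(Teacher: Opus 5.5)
Your proposal is correct and follows the paper's proof essentially step for step. It uses the same combination \eqref{evolutiong1} $+\,\ka_4\times$\eqref{barcEk2}, the same definitions $\|(g_1,g_2)\|^2_{\mathbf{E}_k}=\|g_1\|^2_{X_k}+\ka_4\|g_2\|^2_{\bar{\cE}_k}$ and $\|(g_1,g_2)\|^2_{\mathbf{D}_k}=\|g_1\|^2_{Y_k}+\ka_4\lam_2\|g_2\|^2_{\cD_k}$, and the same absorption of the coupling and cubic terms. Your time-independent choice $\ka_4=c_k\lam_0\min\{\lam_2,T(0)^4,1\}$ is a cleaner version of the paper's condition $\ka_4(C_k\lam_2^{-1}+C_kT(t)^{-4})=\lam_0/4$; in Step 3, the bound you actually need is $\|g_2\|_{\bar{\cE}_{17}}\le\ka_4^{-1/2}\|(g_1,g_2)\|_{\mathbf{E}_{17}}$, since the unbarred $\cE_{17}$ version loses a factor $\kappa_3^{-1/2}$.
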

\begin{proof}
	Combining Proposition \ref{g1} and Proposition \ref{g2}, we compute $\eqref{evolutiong1}+\ka_4\times \eqref{barcEk2}$ with $0<\ka_4<1$ to get that
	\beno
	&&\f12\f d{dt}(\|g_1\|^2_{X_k}+\ka_4\|g_2\|^2_{\bar{\cE}_k})+T(t)^{-\f32}\big(\f{\lam_0}2-C_k(\|g_1\|_{X_{17}}+\|g_2\|_{\bar{\cE}_{17}})-C_k\ka_4\lam_2^{-1}-C_k \ka_4T(t)^{-4}\big)\|g_1\|^2_{Y_k}\\
	&&+T(t)^{-\f32}(\ka_4\lam_2-\ka_4C_k\|g_2\|_{\bar{\cE}_{17}}-C_k\|g_1\|_{X_{17}})\|g_2\|^2_{\cD_{k}}+T(t)^{-1}T'(t)\big(\|g_1\|^2_{X_k}+\ka_4\lam_2\|g_2\|^2_{\cE_{k+1}}\big)\\
	&\leq&C_k(T(t)^{-1}T'(t)+T(t)^{-\f12}|R(t)|+T(t)^{-3}T'(t))\|g_2\|^2_{\cE_{k+\f12}}+C_kT(t)^{-\f{11}2}\|g_2\|^4_{\cE_0}\\
    &&+C_k\big(T(t)^{-1}T'(t)+T(t)^{-\f12}|R(t)|+T(t)^{-1}|V(t)|^{-1}+T(t)^{-\f14}|V(t)|^{-\f32}\big)\|g_1\|^2_{X_{k-\f14}}\\
&&+C_k\big(T(t)^{-1}|V(t)|^{-1}+T(t)^{-\f14}|V(t)|^{-\f32}\big)\|g_2\|^2_{\cD_0}+C_kT(t)^{-1}|V(t)|^{-1}\|g_1\|_{L^2_{x,v}},
	\eeno
    where we use the fact $\|g_2\|_{\cD_0}\leq \|g_2\|_{\cD_k}$ and $\|g_2\|_{\cE_0}\leq \|g_2\|_{\bar{\cE}_{17}}$. Choosing $\kappa_4$ satisfies $\ka_4(C_k\lam_2^{-1}+C_kT(t)^{-4})=\lam_0/4$. Note that $\lam_2\gs_k \min\{T(0)^2,1\}$ and $T(t)\geq T(0)/2$, then $\ka_4\gs_k \min\{ T(0)^4,1\}$ and by \eqref{func1}, the functional
\ben\label{func2}
\min\{T(0)^6,1\}(\|g_1\|^2_{X_k}+\|g_2\|^2_{{\cE}_k})\ls_k \|(g_1,g_2)\|^2_{\mathbf{E}_k}:=\|g_1\|^2_{X_k}+\ka_4\|g_2\|^2_{\bar{\cE}_k}\ls\|g_1\|^2_{X_k}+\|g_2\|^2_{{\cE}_k}.
\een
 Noticing that 
 \beno
 \|g_1\|_{X_{17}}+\|g_2\|_{\bar{\cE}_{17}}\ls_k \ka_4^{-\f12}\|(g_1,g_2)\|_{\mathbf{E}_{17}}, \quad \|g_1\|_{X_{17}}+\ka_4\|g_2\|_{\bar{\cE}_{17}}\ls_k \|(g_1,g_2)\|_{\mathbf{E}_{17}},
 \eeno
 we can obtain from above  
    	\beno
	&&\f12\f d{dt}\|(g_1,g_2)\|^2_{\mathbf{E}_k}+T(t)^{-\f32}\big(\f{\lam_0}4-C_k\ka_4^{-\f12}\|(g_1,g_2)\|_{\mathbf{E}_{17}}\big)\|g_1\|^2_{Y_k}\\
	&&+T(t)^{-\f32}(1-C_k\ka_4^{-1}\lam_2^{-1}\|(g_1,g_2)\|_{\mathbf{E}_{17}})\ka_4\lam_2\|g_2\|^2_{\cD_{k}}+T(t)^{-1}T'(t)\big(\|g_1\|^2_{X_k}+\ka_4\lam_2\|g_2\|^2_{\cE_{k+1}}\big)\\
	&\leq&C_k(T(t)^{-1}T'(t)+T(t)^{-\f12}|R(t)|+T(t)^{-3}T'(t))\|g_2\|^2_{\cE_{k+\f12}}+C_kT(t)^{-\f{11}2}\|g_2\|^4_{\cE_0}\\
    &&+C_k\big(T(t)^{-1}T'(t)+T(t)^{-\f12}|R(t)|+T(t)^{-1}|V(t)|^{-1}+T(t)^{-\f14}|V(t)|^{-\f32}\big)\|g_1\|^2_{X_{k-\f14}}\\
&&+C_k\big(T(t)^{-1}|V(t)|^{-1}+T(t)^{-\f14}|V(t)|^{-\f32}\big)\|g_2\|^2_{\cD_0}+C_k T(t)^{-1}|V(t)|^{-1}\|g_1\|_{L^2_{x,v}}.
	\eeno
Finally, we define the dissipation norm
\beno
\min\{T(0)^6,1\}(\|g_1\|^2_{Y_k}+\|g_2\|^2_{\cD_k}) \ls \|(g_1,g_2)\|^2_{\mathbf{D}_k}:=\|g_1\|^2_{Y_k}+\ka_4\lam_2\|g_2\|_{\cD_k}\ls \|g_1\|^2_{Y_k}+\|g_2\|^2_{\cD_k},
\eeno
and since $\ka_4^{-1}\lam_2^{-1}\geq\ka_4^{-\f12}$, we can get the desired result. 
\end{proof}

\section{Proof of the main results}\label{proof}
In this section we merge the a-priori estimates of Section \ref{coupledsystem} to prove global existence and decay.  We still list below the properties of $V(t),T(t)$ that will be frequently used.

$\bullet$ \textbf{The behavior of $V(t)$ and $T(t)$ when $t>t_0$: Part II.} For \(t>t_0\), there exists a constant $C_{k,T(0),V(0)}$ such that if $|E|>C_{k,T(0),V(0)}$, then 
\ben
&&T(t)\gs T(0),~~c_0|V(t)|^{-1}\leq T'(t)\ls |V(t)|^{-1},|R(t)|\leq |V(t)|^{-2};\label{VTcondition4}\\
&&T(t)^{-1}|V(t)|^{-1}+T(t)^{-\f12}|R(t)|+T(t)^{-\f14}|V(t)|^{-\f32}\ls T(t)^{-1}T'(t);\label{VTcondition2}\\
&&C_k\big(T(t)^{-1}|V(t)|^{-1}+T(t)^{-\f14}|V(t)|^{-\f32}\big)(1+T(0)^{-30}+T(0)^{-6})\leq \f {\lam_0 }5 T(t)^{-\f32};\label{VTcondition3}\\
&&T(t)^{-1}|V(t)|^{-2}\ls (1+|E|t)^{-2},\label{VTcondition5}
\een
where $\lam_0$ is defined in Theorem \ref{energyestimate}.

\begin{proof}[Proof of Theorem \ref{longtimebehavior} and Theorem \ref{longtimebehavior1}]
We will use \eqref{energyinequality} to close the energy estimates. Denote the terms on the right-hand side by $R_i,i=1,\cdots,5$ and  we first handle $R_4$. Since 
\beno
R_4&=&C_k\big(T(t)^{-1}|V(t)|^{-1}+T(t)^{-\f14}|V(t)|^{-\f32}\big)\|g_2\|^2_{\cD_0}\leq C_k\big(T(t)^{-1}|V(t)|^{-1}+T(t)^{-\f14}|V(t)|^{-\f32}\big)\|g_2\|^2_{\cD_k}\\
&\leq& C_k\big(T(t)^{-1}|V(t)|^{-1}+T(t)^{-\f14}|V(t)|^{-\f32}\big)(1+T(0)^{-6})\|(g_1,g_2)\|^2_{\mathbf{D}_k}\leq \f {\lam_0 }5 T(t)^{-\f32}\|(g_1,g_2)\|^2_{\mathbf{D}_k},
\eeno 
where we use \eqref{normT(0)} and \eqref{VTcondition3} in the last two steps. 

For $R_1$, by interpolation inequality, we have 
\beno
\|g_2\|^2_{\cE_{k+\f12}}\leq  \|g_2\|^{\f85}_{\cE_{k+1}}\|g_2\|^{\f25}_{\cE_{k-\f32}}\leq\epsilon \|g_2\|^2_{\cE_{k+1}}+C\epsilon^{-4} \|g_2\|^2_{\cD_k},\quad \forall \epsilon>0.
\eeno
Due to \eqref{VTcondition4} and \eqref{VTcondition2}, the coefficient 
\beno
&&T(t)^{-1}T'(t)+T(t)^{-\f12}|R(t)|+T(t)^{-3}T'(t)\ls (1+T(0)^{-2})T(t)^{-1}T'(t)+T(t)^{-\f12}|V(t)|^{-2}\\
&\ls&(1+T(0)^{-2})T(t)^{-1}T'(t) \ls (1+T(0)^{-2})(\ka_4\lam_2)^{-1}T(t)^{-1}T'(t)\ka_4\lam_2\ls (1+T(0)^{-8})T(t)^{-1}T'(t)\ka_4\lam_2,
\eeno
where we use the fact  $\ka_4^{-1}\lam_2^{-1}\ls_k 1+T(0)^{-6}$. Thus we let $\epsilon=\min\{T(0)^8,1\}\epsilon_1/C_k,\epsilon_1>0$ to get that 
\beno
R_1&\leq& \epsilon_1T(t)^{-1}T'(t)\ka_4\lam_2\|g_2\|^2_{\cE_{k+1}}+C_k\epsilon_1^{-4}(1+T(0)^{-24})T(t)^{-1}T'(t)\ka_4\lam_2\|g_2\|^2_{\cD_k}\\
&\leq&\epsilon_1T(t)^{-1}T'(t)\ka_4\lam_2\|g_2\|^2_{\cE_{k+1}}+C_k\epsilon_1^{-4}(1+T(0)^{-30})T(t)^{-1}T'(t)\|(g_1,g_2)\|^2_{\mathbf{D}_k}\\
&\leq& \f12 T(t)^{-1}T'(t)\ka_4\lam_2\|g_2\|^2_{\cE_{k+1}}+\f{\lam_0}5 T(t)^{-\f32}\|(g_1,g_2)\|^2_{\mathbf{D}_k},
\eeno
where we choose $\vep_1=\f12$ and use \eqref{normT(0)}  and \eqref{VTcondition3} in the last two step. 

Similarly, for $R_3$, we have the interpolation inequality
\beno
\|g_1\|^2_{X_{k-\f14}}\leq \|g_1\|^{\f53}_{X_k}\|g_1\|^{\f13}_{X_{k-\f32}}\leq \epsilon \|g_1\|^2_{X_k}+C\epsilon^{-5}\|g_1\|_{Y_k}.
\eeno
Due to \eqref{VTcondition2}, the coefficient 
\beno
T(t)^{-1}T'(t)+T(t)^{-\f12}|R(t)|+T(t)^{-1}|V(t)|^{-1}+T(t)^{-\f14}|V(t)|^{-\f32}\ls  T(t)^{-1}T'(t).
\eeno
Then choose suitably small $\epsilon>0$ and using \eqref{normT(0)}, \eqref{VTcondition3} again, we have 
\beno
R_3&\leq& \f12 T(t)^{-1}T'(t)\|g_1\|^2_{X_k}+C_kT(t)^{-1}T'(t)\|g_1\|^2_{Y_k}\\
&\ls& \f12 T(t)^{-1}T'(t)\|g_1\|^2_{X_k}+C_k(1+T(0)^{-6})T(t)^{-1}T'(t)\|(g_1,g_2)\|^2_{\mathbf{D}_k}\\
&\leq& \f12 T(t)^{-1}T'(t)\|g_1\|^2_{X_k} +\f{\lam_0}5T(t)^{-\f32}\|(g_1,g_2)\|^2_{\mathbf{D}_k}.
\eeno

For $R_2$ and $R_5$, noticing that
\beno
&&C_kT(t)^{-\f{11}2}\|g_2\|^4_{\cE_0}\leq C_k(1+T(0)^{-16})\|(g_1,g_2)\|^2_{\mathbf{E}_{17}}T(t)^{-\f32}\|(g_1,g_2)\|^2_{\mathbf{D}_k},\\
&&\mbox{and}\quad T(t)^{-1} |V(t)|^{-1}\|g_1\|_{L^2_{x,v}}\leq \f{\lam_0}5T(t)^{-\f32}\|(g_1,g_2)\|^2_{\mathbf{D}_k}+ (1+T(0)^{-6})T(t)^{-\f12}|V(t)|^{-2}.
\eeno
We can rewrite evolution of $(g_1,g_2)$ by combining the estimates of $R_i,i=1\cdots,5$ as follows:
	\ben\label{simpleenergyinequ}
\notag&&\f12\f d{dt}\|(g_1,g_2)\|^2_{\mathbf{E}_k}+T(t)^{-\f32}\Big(\f{\lam_0}5-C_k\ka_4^{-1}\lam_2^{-1}\|(g_1,g_2)\|_{\mathbf{E}_{17}}-C_k(1+T(0)^{-16})\|(g_1,g_2)\|^2_{\mathbf{E}_{17}}\Big)\\
&&\times\|(g_1,g_2)\|^2_{\mathbf{D}_k}\leq C_k(1+T(0)^{-6})T(t)^{-\f12}|V(t)|^{-2}.
		\een
Since $\ka_4^{-1}\lam_2^{-1}\ls_k 1+T(0)^{-6}$ and recall that $g_1(t_0)=g(t_0)$, $g_2(t_0)=0$, if we make the a priori assumption that 
\ben\label{aprioriassumption}
 C_k\sup_{t>t_0}\|(g_1,g_2)(t)\|^2_{\mathbf{E}_k}\ls\f{\min\{\lam_0,\lam_0^2\}}{400}\min\{1,T(0)^{16}\},
\een
it leads that
\beno
&&\|(g_1,g_2)(t)\|_{\mathbf{E}_{k}}^2\leq \|(g_1,g_2)(t_0)\|_{\mathbf{E}_{k}}^2+C_k(1+T(0)^{-6})\int_{t_0}^tT(t)^{-1}|V(t)|^{-2}d\tau\\
&\leq&\|g(t_0)\|_{X_{k}}^2+\f{C_k(1+T(0)^{-6})}{|E|}\int_{t_0}^\infty (1+\tau)^{-2}d\tau\leq \vep_1^2+C_k(1+T(0)^{-6})|E|^{-1},\quad t>t_0,
\eeno
where we use \eqref{func2} and \eqref{VTcondition5}. Consequently, since
\beno
\|g(t)\|^2_{X_k}&=&\|g_1+\mu g_2\|^2_{X_k}\leq C_k(\|g_1\|^2_{X_k}+\|g_2\|^2_{\cE_k})\leq C_k(1+T(0)^{-6})\|(g_1,g_2)(t)\|^2_{\mathbf{E}_{k}}\\
&\leq& C_k(1+T(0)^{-6})\vep_1^2+C_k(1+T(0)^{-6})(1+T(0)^{-6})|E|^{-1},
\eeno
 then the a priori assumption \eqref{aprioriassumption} can be maintained, provided that
\ben\label{varepsilon1}
\vep_1\leq C_{k,\lam_0}\min\{1,T(0)^{11}\}\eta,\quad |E|^{-1}\leq C_{k,\lam_0}\min\{1,T(0)^{28}\}\eta
\een
with small $\eta>0$. Meanwhile, $\|g(t)\|^2_{X_k}\ls \eta$ remains small and thus the argument for long-time behavior of \(T(t)\) and \(V(t)\) is closed, thanks to Lemma \ref{largetime}.
\medskip

Finally, we give the decay estimates of the solution. We remark that the constants \(C_i\) below depend on \(k\), \(T(0)\) and \(V(0)\), however, since we are concerned with the regime \(t\gg 1\), we shall not indicate this dependence explicitly. 

Choosing \(k=17\) and \(k=k_1>17\) separately, we obtain on the one hand that \(\sup_{t\in[t_0,\infty]}\|(g_1,g_2)(t)\|_{\mathbf{E}_{k_1}}\le C_{k_1}\) and that \eqref{aprioriassumption} holds for \(k=17\). On the other hand, by \eqref{longtimeTVR} and \eqref{VTcondition5}, we have 
\beno
T(t)^{-\f32}\geq C_1\big(\ln (3+|E|t)\big)^{-\f32},\quad T(t)^{-\f12}|V(t)|^{-2}\leq C_2\<Et\>^{-2},\quad t> t_0.
\eeno
 Then we obtain from \eqref{simpleenergyinequ} that
\ben
\f d{dt}\|(g_1,g_2)\|_{\mathbf{E}_{17}}^2+C(\ln (3+|E|t))^{-\f32}\|(g_1,g_2)\|_{\mathbf{D}_{17}}^2\leq C_2\<Et\>^{-2},\quad t>t_0.
\een
The interpolation inequality
\beno
\|(g_1,g_2)\|_{\mathbf{E}_{17}}\leq \|(g_1,g_2)\|^\th_{\mathbf{D}_{17}}\|(g_1,g_2)\|^{1-\th}_{\mathbf{E}_{k_1}},\quad \mbox{with}\quad \th=\f{k_1-17}{k_1-31/2}<1
\eeno
implies that
\beno
\|(g_1,g_2)\|_{\mathbf{D}_{17}}\geq \|(g_1,g_2)\|^{1-\f1\th}_{\mathbf{E}_{k_1}}\|(g_1,g_2)\|^{\f1\th}_{\mathbf{E}_{17}}\geq C_{k_1}\|(g_1,g_2)\|^{\f1\th}_{\mathbf{E}_{17}}.
\eeno
Thus we have
\beno
Z'(t)+C_1(\ln (3+|E|t))^{-\f32} Z(t)^{1+\th_1}\leq C_{2}\<Et\>^{-2},\quad \mbox{with}\quad \th_1=\f1\th-1=\f3{2(k_1-17)},
\eeno
where $Z(t):=\|(g_1,g_2)(t)\|^2_{\mathbf{E}_{17}}$.

If the initial data satisfies $C_1(\ln (3+|E|t_0))^{-\f32} Z(t_0)^{1+\th_1}\leq 2C_2\<Et_0\>^{-2}$, at this point, if for any $t\geq t_0$, $C_1(\ln (3+|E|t))^{-\f32} Z(t)^{1+\th_1}\leq 2C_2\<Et\>^{-2}$, we have $Z(t)\leq (2C_2/C_1)^{\f1{1+\th_1}} ((\ln (3+|E|t))^{\f32}\<Et\>^{-2})^{\f1{1+\th_1}}$. Otherwise, by continuity, there is a time interval $t\in[t_1,t_2)$ such that
\ben\label{Z1}
Z'(t)+\f{C_1}2(\ln (3+|E|t))^{-\f32} Z(t)^{1+\th_1}\leq 0,\quad t\in[t_1,t_2)
\een
 and $C_1(\ln (3+|E|t_1))^{-\f32} Z(t_1)^{1+\th_1}= 2C_2\<Et_1\>^{-2}$ which yields that
\beno
Z(t)&\leq& \Big(Z^{-\th_1}(t_1)+\f{C_1\th_1}2\big(\ln(3+|E|t)\big)^{-\f32}(t-t_1)\Big)^{-\f1{\th_1}}\\
&\leq&\Big(\big(\f{C_1}{2C_2}\big)^{\f{\th_1}{1+\th_1}}\big(\ln(3+|E|t_1)\big)^{-\f32\f{\th_1}{1+\th_1}}\<Et_1\>^{\f{2\th_1}{1+\th_1}}+\f{C_1\th_1}2\big(\ln(3+|E|t)\big)^{-\f32}(t-t_1)\Big)^{-\f1{\th_1}}.
\eeno
Observing that
\beno
\big(\f{C_1}{2C_2}\big)^{\f{\th_1}{1+\th_1}}\big(\ln(3+|E|t_1)\big)^{-\f32\f{\th_1}{1+\th_1}}\<Et_1\>^{\f{2\th_1}{1+\th_1}}\geq \f{C_1\th_1}2 \big(\ln(3+|E|t)\big)^{-\f32}t_1,~~t\geq t_1
\eeno
 for large \(|E|\) depending on \(k_1\), \(T(0)\) and \(V(0)\), but independent of \(t_1\). Indeed, 
\beno
\big(\ln(3+|E|t_1)\big)^{-\f32\f{\th_1}{1+\th_1}}\<Et_1\>^{\f{2\th_1}{1+\th_1}}\gs \big(\ln(3+|E|t)\big)^{-\f32}\f{2|E|}{1+\th_1}\th_1t_1,~~t\geq t_1,
\eeno
where we use $(1+x)^a\geq ax$ for any $a,x>0$. Thus we can choose $|E|\gs C_1(1+\th_1)\big(\f{2C_2}{C_1}\big)^{\f{\th_1}{1+\th_1}}$ to get that
\beno
Z(t)\leq C_3\Big(\big(\ln(3+|E|t)\big)^{-\f32}t\Big)^{-\f1{\th_1}},\quad t\in[t_1,t_2).
\eeno
 Therefore, we conclude that
\ben\label{Z2}
Z(t)\leq C_4\max\Big\{(\ln (3+|E|t))^{\f32}\<Et\>^{-2})^{1-\f{3}{2k_1-31}},\Big(\big(\ln(3+|E|t)\big)^{-\f32}t\Big)^{-\f{2(k_1-17)}3}\Big\},\quad t>t_0.
\een
Note that the constant $C_4$ does not depend on $t_0$.

If  the initial data satisfies $C_1(\ln (3+|E|t_0))^{-\f32} Z(t_0)^{1+\th_1}> 2C_2\<Et_0\>^{-2}$ and it holds for any $t\geq t_0$, then by \eqref{Z1}, we deduce that 
\beno
Z(t)\leq\Big(Z^{-\th_1}(t_0)+\f{C_1\th_1}2\big(\ln(3+|E|t)\big)^{-\f32}(t-t_0)\Big)^{-\f1{\th_1}}\leq C_4 \Big(\big(\ln(3+|E|t)\big)^{-\f32}t\Big)^{-\f1{\th_1}},~~t\geq t_0.
\eeno
Otherwise, there exists a smallest time \(t_3\) such that $C_1(\ln (3+|E|t_3))^{-\f32} Z(t_3)^{1+\th_1}\leq 2C_2\<Et_3\>^{-2}$. Taking \(t_3\) as the initial time as $t_0$ and by the previous argument, we can still obtain \eqref{Z2}.

 Consequently, combining \eqref{varepsilon1} (note that to obtain \(\|g(t_0)\|_{X_k}<\varepsilon_1\), we require \(\|g(0)\|_{X_k}\le \varepsilon_1^2\) according to Corollary \ref{col1}), \eqref{Z2} and \eqref{longtimeTVR}, we complete the proof of Theorem \ref{longtimebehavior}. Theorem \ref{longtimebehavior1} follows immediately by the change of variables \eqref{changeofvariable}.
\end{proof}

\section*{Acknowledgement}
The research of L.-B. He was supported by NSF of China under Grant No.11771236 and New Cornerstone Investigator Program 100001127. Jie Ji was supported by Jiangsu Funding Program for Excellent Postdoctoral Talent and Basic Research Program of Jiangsu under Grant No.BK20251376.

\printbibliography
 
\end{document}